\documentclass[reqno, 11pt]{amsart}
\title[Decay estimates for discrete bi-Schr\"{o}dinger operators on the lattice $\Z$]{Decay estimates for discrete bi-Schr\"{o}dinger operators on the lattice $\Z$ }
\author{ Sisi Huang and Xiaohua Yao }
\address{Sisi Huang, Department of Mathematics, Central China Normal University, Wuhan, 430079, P.R. China}
\email{hss@mails.ccnu.edu.cn}

\address{Xiaohua Yao, Department of Mathematics and  Key Laboratory of Nonlinear Analysis and Applications(Ministry of Education), Central China Normal University, Wuhan, 430079, P.R. China}
\email{yaoxiaohua@ccnu.edu.cn}

\thanks{The work is partially supported by NSFC No.12171182}
\keywords{Decay estimates, Discrete bi-Schr\"{o}dinger operators, Limiting absorption principle, Asymptotic expansion, Beam equation}
\usepackage{color}
\usepackage{amsmath}
\usepackage{amssymb}
\usepackage[mathscr]{euscript}
\usepackage{paralist}
\usepackage{amsthm}
\usepackage{ulem}
\usepackage{bm}
\usepackage{extarrows}
\usepackage{diagbox}
\usepackage{multirow}
\usepackage{makecell}
\usepackage{threeparttable}
\usepackage{tikz}
\usetikzlibrary{patterns}
\usetikzlibrary{decorations.markings}
\usepackage{caption}
\usepackage{verbatim}

\usepackage{cite}
\usepackage{hyperref}
\parindent = 16pt
\parskip = 2pt
\textwidth 6.5in \textheight 8.7in
\setlength{\topmargin}{0.1in}
\addtolength{\topmargin}{-\headheight}
\addtolength{\topmargin}{-\headsep}

\setlength{\oddsidemargin}{0in} \oddsidemargin  0.0in
\evensidemargin 0.0in

\widowpenalty=10000

\newtheorem{definition}{Definition}[section]
\newtheorem{theorem}[definition]{Theorem}
\newtheorem{lemma}[definition]{Lemma}
\newtheorem{remark}[definition]{Remark}
\newtheorem{proposition}[definition]{Proposition}
\newtheorem{claim}[definition]{Claim}
\newtheorem{corollary}[definition]{Corollary}
\newcommand\R{\mathbb{R}}
\newcommand\Z{\mathbb{Z}}
\newcommand\B{\mathbb{B}}
\newcommand\C{\mathbb{C}}
\newcommand\N{\mathbb{N}}
\newcommand\T{\mathbb{T}}
\newcommand\mscH{\mathcal{H}}
\newcommand\mcaF{\mathcal{F}}
\newcommand\mcaI{\mathcal{I}}
\newcommand\mcaJ{\mathcal{J}}

\newcommand\mcaD{\mathcal{D}}

\newcommand\mcaN{\mathcal{N}}
\newcommand\mcaP{\mathcal{P}}
\newcommand\mcaE{\mathcal{E}}
\numberwithin{equation}{section}
\begin{document}

\begin{abstract}
It was known that the discrete Laplace operator $\Delta$ on the lattice $\mathbb{Z}$ satisfies the following sharp time decay estimate:
$$\left\|e^{it\Delta}\right\|_{\ell^1\rightarrow\ell^{\infty}}\lesssim|t|^{-\frac{1}{3}},\quad t\neq0,$$
which is slower than the usual $|t|^{-\frac{1}{2}}$ decay in the continuous case on $\mathbb{R}$. However in this paper,  we have showed that the discrete bi-Laplacian $\Delta^2$ on $\mathbb{Z}$ actually exhibits  the same sharp decay estimate $|t|^{-\frac{1}{4}}$ as its continuous counterpart.

In view of these free decay estimates,  this paper further investigates  the discrete bi-Schr\"{o}dinger operators of the form $H=\Delta^2+V$ on the lattice space $\ell^2(\mathbb{Z})$, where $V(n)$ is a real valued potential of $\mathbb{Z}$. Under suitable decay conditions on $V$ and assuming that both 0 and 16 are regular spectral points of $H$,  we establish the following sharp $\ell^1-\ell^{\infty}$ dispersive estimates:
$$\left\|e^{-itH}P_{ac}(H)\right\|_{\ell^1\rightarrow\ell^{\infty}}\lesssim|t|^{-\frac{1}{4}},\quad t\neq0,$$
where $P_{ac}(H)$ denotes the spectral projection onto the absolutely continuous spectrum space of $H$. Additionally, the following decay estimates for   beam equation  are also derived:
$$\|{\cos}(t\sqrt H)P_{ac}(H)\|_{\ell^1\rightarrow\ell^{\infty}}+\left\|\frac{{\sin}(t\sqrt H)}{t\sqrt H}P_{ac}(H)\right\|_{\ell^1\rightarrow\ell^{\infty}}\lesssim|t|^{-\frac{1}{3}},\quad t\neq0.$$
\end{abstract}

\maketitle

\tableofcontents

\section{Introduction and main results}
\subsection{Introduction}
Let $\ell^2(\Z)$ denote the complex Hilbert space  consisting of square summable sequences $\{\phi(n)\}_{n\in\Z}$. The non-negative discrete Laplacian $-\Delta$ is defined as
\begin{equation*}
\left((-\Delta)\phi\right)(n):=-\phi(n+1)-\phi(n-1)+2\phi(n),\quad n\in\Z, \quad\forall\ \phi\in\ell^2(\Z).
\end{equation*}
In this paper, we are devoted to considering the time decay estimates of the solutions for the following fourth order Schr\"{o}dinger equation on the lattice $\Z$:
\begin{equation}\label{Bi-Schrodinger equation}
\left\{\begin{aligned}&i(\partial_tu)(t,n)-(\Delta^2u+Vu)(t,n)=0,\ \ (t,n)\in\R\times\Z,\\
&u(0,n)=\varphi_0(n),\end{aligned}\right.
\end{equation}
and the discrete beam equation:
\begin{equation}\label{Beam equation}
\left\{\begin{aligned}&(\partial_{tt}v)(t,n)+(\Delta^2v+Vv)(t,n)=0,\ \ (t,n)\in\R\times\Z,\\
&v(0,n)=\varphi_1(n),\ \left(\partial_{t}v\right)(0,n)=\varphi_2(n),\end{aligned}\right.
\end{equation}
where $\varphi_j\in\ell^2(\Z)$ for $j=0,1,2$, and $V$ is a real-valued decay potential satisfying $|V(n)|\lesssim \left<n\right>^{-\beta}$ for some $\beta>0$ with $\left<n\right>=(1+|n|^2)^{\frac{1}{2}}$.

 The discrete bi-Laplace operator $\Delta^2$ on the lattice $\Z$ is the discrete analogue of the fourth-order differential operator $\frac{d^4}{dx^4}$ on the real line. The equations \eqref{Bi-Schrodinger equation} and \eqref{Beam equation} are discretizations of classical continuous models studied in \cite{SWY22} and \cite{CLSY25}, respectively. These discretizations not only serve as numerical tools in computational mathematics but also hold profound significance in mathematics physics, particularly in quantum physics. For instance, discrete Schr\"{o}dinger equations are standard
models for random media dynamics, as discussed in Aizenman-Warzel \cite{AW15}, while the discrete beam equation describes the deformation of elastic beam under  certain force (cf. \"{O}chsner \cite{Och21}).

Denote $H:=\Delta^2+V$. Then both $\Delta^2$ and $H$ are bounded self-adjoint operators on $\ell^2(\Z)$, generating the associated unitary groups $e^{-it\Delta^2}$ and $e^{-itH}$, respectively. The solutions to equations \eqref{Bi-Schrodinger equation} and \eqref{Beam equation} are given as follows:
\begin{equation}\label{solutions for Bi-Schrodinger}
u(t,n)=e^{-itH}\varphi_0(n),
\end{equation}
\begin{equation}\label{solutions for Beam-equation}
v(t,n)={\rm cos}(t\sqrt H)\varphi_1(n)+\frac{{\rm sin}(t\sqrt H)}{\sqrt H}\varphi_2(n).
\end{equation}
The expression \eqref{solutions for Beam-equation} above depends on the branch chosen of $\sqrt z$ with $\Im z\geq0$, so the solution $v(t,n)$ is well-defined even if $H$ is not positive. In the sequel, we are devoted to establishing the time decay estimates of the propagator operators $e^{-itH}$, ${\rm cos}(t\sqrt H)$ and $\frac{{\rm sin}(t\sqrt H)}{\sqrt H}$.

For the free case,~i.e., $V\equiv0$, then $\sqrt{H}=-\Delta$. By virtue of  Fourier transform,
 it is well-known that the following sharp decay estimates hold (cf. \cite{SK05}):
\begin{equation}\label{eitlaplacian}
\left\|e^{it\Delta}\right\|_{\ell^1\rightarrow\ell^{\infty}}\lesssim|t|^{-\frac{1}{3}},\quad t\neq0.
\end{equation}
As a consequence of \eqref{eitlaplacian}, one has
\begin{equation}\label{cos-sin lapacian}
\|{\rm cos}(t\Delta )\|_{\ell^1\rightarrow\ell^{\infty}}+\left\|\frac{{\rm sin}(t\Delta )}{t\Delta }\right\|_{\ell^1\rightarrow\ell^{\infty}}\lesssim|t|^{-\frac{1}{3}}.
\end{equation}
It is worth noting that the decay estimates \eqref{eitlaplacian} and \eqref{cos-sin lapacian} are slower than the usual $|t|^{-\frac{1}{2}}$ in the continuous case, cf.\cite{GS04}. However, interestingly, for the discrete bi-Laplacian on $\Z$, we can prove that
\begin{equation}\label{eitlap2}
\left\|e^{-it\Delta^2}\right\|_{\ell^1\rightarrow\ell^{\infty}}\lesssim|t|^{-\frac{1}{4}},
\end{equation}
which is sharp and the same as the continuous case on the line \cite{SWY22}, for details see Section \ref{Sec of decay for free}.

When $V\not\equiv0$, the decay estimates for the solution operators of equation \eqref{Beam equation} are affected by the spectrum of $H$, which in turn depends on the conditions of potential $V$.
In this paper, we assume that the potential $V$ has fast decay and $H$ has no embedded positive eigenvalues in the continuous spectrum interval $(0,16)$. Under such assumptions, let $\lambda_j$ be the discrete eigenvalues of $H$ and $H\phi_j=\lambda_j\phi_j$ for $\phi_j\in\ell^2(\Z)$, $P_{ac}(H)$ denote the spectral projection onto the absolutely continuous spectrum of $H$ and $P_j$ be the projection on the eigenspace corresponding to the discrete eigenvalue $\lambda_j$. Then the solutions of the equations \eqref{Bi-Schrodinger equation} and \eqref{Beam equation} can be respectively further written as
\begin{equation}\label{decomp of eitH}
u(t,n)=\sum\limits_{j}e^{-it\lambda_j}P_j\varphi_{0}(n)+e^{-itH}P_{ac}(H)\varphi_0(n):=u_{d}(t,n)+u_{c}(t,n),
\end{equation}
\begin{equation}\label{decom of v}
v(t,n)=v_{d}(t,n)+v_{c}(t,n),
\end{equation}
where
\begin{align*}
v_{d}(t,n)&=\sum\limits_{j}^{}{\rm cosh}(t\sqrt{-\lambda_j})\left<\varphi_1,\phi_j\right>\phi_j(n)+\frac{{\rm sinh}(t\sqrt{-\lambda_j})}{\sqrt{-\lambda_j}}\left<\varphi_2,\phi_j\right>\phi_j(n),\\
v_{c}(t,n)&={\rm cos}(t\sqrt{H})P_{ac}(H)\varphi_1(n)+\frac{{\rm sin}(t\sqrt H)}{\sqrt H}P_{ac}(H)\varphi_2(n).
\end{align*}
Observe that the discrete part $u_{d}(t,n)$ of $u$ has no any time decay estimates. Similarly,  the existence of discrete negative/positive eigenvalues of $H$ will lead to the exponential growth/dissipation of $v_{d}(t,n)$ as $t$ becomes large.
Therefore, the main goal of this paper is to investigate the time decay estimates for the continuous components $u_c(t,n)$ and $v_c(t,n)$ in \eqref{decomp of eitH} and \eqref{decom of v} under certain decay conditions on potential $V$ and  assuming that thresholds $0$ and $16$ are regular points of $H$ (see Definition \ref{defin of regular point 1} below).

 To achieve this, we will make use of Stone's formula. We first establish the limiting absorption principle for the operator $H$ and then study the asymptotic expansions of resolvent $R^{\pm}_V(\lambda)$ near $\lambda=0$ and $\lambda=16$ under the regular conditions.   Finally, we employ the Van der Corput Lemma to derive the desired estimates.
\subsection{Main results}
For $a,b\in\R^{+}$, $a\lesssim b$ means $a\leq cb$ with some constant $c>0$. Let $\sigma\in\R$, denote by $W_{\sigma}(\Z)$:=$\underset{s>\sigma}{\bigcap}\ell^{2,-s}(\Z)$ the intersection space, where
$$\ell^{2,s}(\Z)=\Big\{\phi=\left\{\phi(n)\right\}_{n\in\Z}:\|\phi\|^2_{\ell^{2,s}}=\sum_{n\in\Z}^{}\left<n\right>^{2s}|\phi(n)|^2<\infty\Big\}.$$
Note that $W_{\sigma_2}(\Z)\subseteq W_{\sigma_1}(\Z)$ if $\sigma_2<\sigma_1$ and $\ell^2(\Z)\subseteq W_0(\Z)$.
\begin{definition}\label{defin of regular point 1}
{\rm Let $H=\Delta^2+V$ be defined on the lattice $\Z$ and $|V(n)|\lesssim \left<n\right>^{-\beta}$ for some $\beta>0$. Then
\begin{itemize}
\item [(i)] We say that {\bf{$0$ is a regular point of}} $H$ if the discrete equation $H\phi=0$ has no solution in $W_{\frac{3}{2}}(\Z)$.
\item [(ii)] We say that {\bf{$16$ is a regular point of}} $H$ if the discrete equation $H\phi=16\phi$ has no solution in $W_{\frac{1}{2}}(\Z)$.
\end{itemize}
}
\end{definition}
\begin{remark}
{\rm We remark that the regular condition introduced above shares similarity with the concept of a generic potential as discussed in \cite{PS08} for the discrete Schr\"{o}dinger operator $-\Delta+V$ on $\Z$. In the continuous analogue studied in \cite{SWY22}, the statement that zero is a regular point means that zero is neither an eigenvalue nor a resonance.
}
\end{remark}
The main results  are summarized as follows.
\begin{theorem}\label{main-theorem}
Let $H=\Delta^2+V$ with $|V(n)|\lesssim \left<n\right>^{-\beta}$ for $\beta>15$. Suppose that $H$ has no positive eigenvalues in the interval $\mcaI=\rm{(}0,16\rm{)}$, and let $P_{ac}(H)$ denote the spectral projection onto the absolutely continuous spectrum space of $H$. If both endpoints $0$ and $16$ of $\mcaI$ are regular points of $H$, then the following decay estimates hold:
\begin{equation}\label{eitH decay-estimate}
\|e^{-itH}P_{ac}(H)\|_{\ell^1\rightarrow\ell^{\infty}}\lesssim|t|^{-\frac{1}{4}},\quad t\neq0,
\end{equation}
and
\begin{equation}\label{cos-sin decay-estimate}
\|{\rm cos}(t\sqrt H)P_{ac}(H)\|_{\ell^1\rightarrow\ell^{\infty}}+\left\|\frac{{\rm sin}(t\sqrt H)}{t\sqrt H}P_{ac}(H)\right\|_{\ell^1\rightarrow\ell^{\infty}}\lesssim|t|^{-\frac{1}{3}},\quad t\neq0.
\end{equation}
\end{theorem}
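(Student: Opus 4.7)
The plan is to apply Stone's formula for $H$ and reduce the claim to an $(n,m)$-uniform bound on the spectral-density kernel. Explicitly, for the Schr\"odinger group
$$e^{-itH}P_{ac}(H)(n,m)=\frac{1}{2\pi i}\int_0^{16}e^{-it\lambda}\bigl[R_V^+(\lambda)-R_V^-(\lambda)\bigr](n,m)\,d\lambda,$$
where $R_V^\pm(\lambda):=(H-(\lambda\pm i0))^{-1}$, and \eqref{eitH decay-estimate} amounts to proving this integral is $O(|t|^{-1/4})$ uniformly in $n,m\in\Z$. The analogous representations for $\cos(t\sqrt H)P_{ac}(H)$ and $(t\sqrt H)^{-1}\sin(t\sqrt H)P_{ac}(H)$ arise by replacing $e^{-it\lambda}$ with $\cos(t\sqrt\lambda)$ and $\sin(t\sqrt\lambda)/(t\sqrt\lambda)$. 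I would fix a smooth partition $1\equiv\chi_0+\chi_M+\chi_{16}$ on $[0,16]$ with $\chi_0,\chi_{16}$ supported in small neighborhoods of the thresholds, and split the integral accordingly.

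On the support of $\chi_M$, the limiting absorption principle proved earlier in the paper gives $R_V^\pm(\lambda)$ as a smooth function of $\lambda$ with values in the bounded operators from $\ell^{2,s}$ to $\ell^{2,-s}$ for $s$ large. Changing variables via $\lambda=M(\theta):=(2-2\cos\theta)^2$, the resulting integrand is smooth in $\theta$ with no stationary points of the phase in the support of the amplitude, so repeated integration by parts yields decay faster than any polynomial in $|t|$. Near the thresholds, I would derive finite-order asymptotic expansions of $R_V^\pm(\lambda)(n,m)$ from the resolvent identity
$$R_V^\pm(\lambda)=\bigl(I+R_0^\pm(\lambda)V\bigr)^{-1}R_0^\pm(\lambda),$$
combined with the expansions of $R_0^\pm(\lambda)$ in fractional powers of $\lambda$ near $0$ and of $16-\lambda$ near $16$, which reflect the quartic vanishing of $M$ at $\theta=0$ and its quadratic vanishing at $\theta=\pi$. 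The regular conditions in Definition \ref{defin of regular point 1} enter, via a Jensen--Nenciu type inversion lemma, as the precise nonvanishing hypothesis needed to invert the leading-order operator $I+R_0^\pm(0)V$ on the weighted space naturally attached to $W_{3/2}(\Z)$, and analogously $I+R_0^\pm(16)V$ on the space attached to $W_{1/2}(\Z)$; the fast decay $\beta>15$ furnishes enough orders in the expansion with coefficients controlled pointwise uniformly in $n,m$.

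Inserting these expansions into the spectral integral and parametrizing by $\theta$, the near-threshold contributions reduce to oscillatory integrals of the form $\int e^{-itM(\theta)}\theta^k a(n,m,\theta)\,d\theta$ near $\theta=0$, with analogous integrals near $\theta=\pi$. Van der Corput's lemma at the quartic critical point $\theta=0$ delivers the sharp $|t|^{-1/4}$ bound, while the quadratic critical point at $\theta=\pi$ gives $|t|^{-1/2}$; combined with the interior rapid decay this yields \eqref{eitH decay-estimate}. The beam-equation estimate \eqref{cos-sin decay-estimate} follows from the same scheme, but now the relevant phase is $\pm t\sqrt\lambda=\pm t(2-2\cos\theta)$, i.e.\ the symbol of $-\Delta$, so that the full $(n,m)$-dependent oscillatory integrals carry phase $(n-m)\theta\pm t(2-2\cos\theta)$ whose worst stationary points are cubic (at $\cos\theta=0$ with $|n-m|\sim 2|t|$), giving the sharp $|t|^{-1/3}$ rate exactly as in \eqref{eitlaplacian}. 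The hard part will be the threshold analysis: setting up the correct functional framework for the expansions, identifying the leading operators whose invertibility is \emph{equivalent} to the regular conditions, and extracting coefficients with uniform pointwise bounds in $n,m$ rather than merely weighted $\ell^2$ bounds. Once these expansions are in hand, the time-decay estimates follow from the oscillatory integral estimates in a standard way.
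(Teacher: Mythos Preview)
Your overall architecture—Stone's formula, threshold/interior decomposition, resolvent expansions at the thresholds via a Jensen--Nenciu scheme, and Van der Corput for the oscillatory integrals—matches the paper's. However, there is a genuine gap in your treatment of the interior region and, more broadly, in how you plan to obtain bounds that are \emph{uniform in $n,m$}.

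On the support of $\chi_M$ you assert that the phase has no stationary points and that repeated integration by parts gives super-polynomial decay. This is not correct for the $\ell^1\to\ell^\infty$ estimate. The kernel $R_V^\pm(\lambda)(n,m)$, built from $R_0^\pm$ via the resolvent identity, carries oscillatory factors of the form $e^{-i\theta_\pm|n-m_j|}$ coming from the explicit free kernel \eqref{kernel of R0 boundary}. If you treat these as part of the amplitude, each integration by parts produces a factor of $|n-m_j|$, destroying uniformity in $n,m$. If instead you absorb them into the phase, the full phase becomes $-t\bigl[(2-2\cos\theta)^2 - s\theta\bigr]$ with $s=s(n,m,t)$ ranging over all of $\R$, and stationary points can lie anywhere in $(0,\pi)$. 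The paper handles this by proving a \emph{uniform-in-$s$} Van der Corput estimate (Lemma~\ref{Von der} and Corollary~\ref{corollary}):
\[
\sup_{s\in\R}\left|\int e^{-it[(2\pm2\cos\theta)^2-s\theta]}\phi(\theta)\,d\theta\right|\lesssim|t|^{-1/4},
\]
the exponent $1/4$ coming from the worst case $s=0$. This estimate is then applied on \emph{each} of the three subintervals, including the interior one; the interior contribution is not better than $|t|^{-1/4}$ in general.

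The same uniformity issue affects your threshold discussion. Writing the integrand as $e^{-itM(\theta)}\theta^k a(n,m,\theta)$ is only useful if $a$ and $\partial_\theta a$ are bounded uniformly in $n,m$, which fails once the oscillatory factors sit in $a$. The paper resolves this in two moves: (i) all $e^{-i\theta_\pm|\cdot|}$ factors are absorbed into the phase and the uniform Van der Corput lemma is invoked; (ii) cancellation lemmas (Lemma~\ref{cancelation of caseI} near $0$ and Lemma~\ref{cancelation of caseIII} near $16$) exploit the orthogonality built into the projections $S_0$, $Q$, $\widetilde{Q}$ appearing in the expansion of $(M^\pm(\mu))^{-1}=(U+vR_0^\pm v)^{-1}$, so as to trade the $\mu^{-3}$ (resp.\ $(2-\mu)^{-1/2}$) singularity of $R_0^\pm$ for extra powers of $\mu$ with amplitudes that \emph{are} uniformly controlled after one $\theta$-derivative. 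Your formulation via $(I+R_0^\pm V)^{-1}$ is in principle workable, but you will need the analogous cancellation structure; the paper's symmetric factorization through $v=|V|^{1/2}$ makes that structure transparent. Once the uniformity issue is fixed, your Van der Corput heuristics for both \eqref{eitH decay-estimate} and \eqref{cos-sin decay-estimate} are essentially the right ones.
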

\begin{remark}\label{Rem of M}{\rm Some remarks on Theorem \ref{main-theorem} are given as follows:}
{\rm \begin{itemize}
\item[(i)] When $V\equiv0$, the estimates \eqref{eitH decay-estimate} and  \eqref{cos-sin decay-estimate}  are sharp, as shown in Theorems \ref{D-E for free case} and \ref{theorem of strichartz estimate}. 
  \vskip0.1cm
\item[(ii)] Note that $\Delta^2_{\Z}$ has two thresholds:~$0$~(degenerate) and $16$~(non-degenerate). As shown in  \cite{SWY22}  for the continuous case, the classification of resonances at the degenerate point $0$ is complicated, suggesting that there may be additional technique challenges arise in the discrete setting, which will be discussed elsewhere.   
    \vskip0.1cm
\item[(iii)] We notice that the absence of positive eigenvalues  has been an indispensable assumption in deriving all kinds of dispersive estimates. For $H=\Delta^2+V$ on the lattice $\Z$, Horishima and L\H{o}rinczi have demonstrated in \cite{HL14} that $H$ has no eigenvalues in the interval $\mcaI$ for $V(n)=c\delta_0(n)$~$(c\neq 0)$, the $\delta$-potential with mass $c$ concentrated on $n=0$. On the other hand, in contrast to the extensive results on the eigenvalue problems for discrete Schr\"{o}dinger operators $-\Delta+V$, cf.\cite{HMO11,HSSS12,Kru12,BS12,IM14,HHNO16,Man19,Liu19,HMK22,Liu22,LMT24}, more studies are needed to establish the absence of positive eigenvalue for higher order cases.
\end{itemize}
}
\end{remark}
\subsection{The idea of proof}\label{subsec of idea of proof}
In this subsection, we outline the main ideas behind the proof of Theorem \ref{main-theorem}.
Throughout this paper, we denote by $K$ the operator with kernel $K(n,m)$, i.e.,
 $$(Kf)(n):=\sum\limits_{m\in\Z}^{}K(n,m)f(m).$$

To derive Theorem \ref{main-theorem}, based on the following two formulas:
 \begin{equation*}
{\rm cos}(t\sqrt H)=\frac{e^{-it\sqrt H}+e^{it\sqrt H}}{2},\quad \frac{{\rm sin}(t\sqrt H)}{t\sqrt H}=\frac{1}{2t}\int_{-t}^{t}{\rm cos}\left(s\sqrt H\right)ds,
\end{equation*}
 it suffices to show that the estimates \eqref{eitH decay-estimate} and \eqref{cos-sin decay-estimate} hold for $e^{-itH}P_{ac}(H)$ and $e^{-it\sqrt{H}}P_{ac}(H)$, respectively. Using Stone's formula, their kernels are expressed as follows:
\begin{equation}\label{kernel of eitHPacH}
\left(e^{-itH}P_{ac}(H)\right)(n,m)=\frac{2}{\pi i}\int_{0}^{2}e^{-it\mu^4}\mu^3\left[R^{+}_V\left(\mu^4\right)-R^{-}_V\left(\mu^4\right)\right](n,m)d\mu,
\end{equation}
\begin{equation}\label{kernel of eitsqrtHPacH}
\left(e^{-it\sqrt H}P_{ac}(H)\right)(n,m)=\frac{2}{\pi i}\int_{0}^{2}e^{-it\mu^2}\mu^3\left[R^{+}_V\left(\mu^4\right)-R^{-}_V\left(\mu^4\right)\right](n,m)d\mu.
\end{equation}
Notice that the  difference between \eqref{kernel of eitHPacH} and \eqref{kernel of eitsqrtHPacH} lies in the power of $\mu$ in the exponent. This change affects the decay rate, which shifts from $\frac{1}{4}$ to $\frac{1}{3}$.
 In the following discussion, due to the similarity, we only address three fundamental problems that arise in the estimate of \eqref{kernel of eitHPacH}.
\subsubsection{Limiting absorption principle}\label{subsubsection of LAP}
According to \eqref{kernel of eitHPacH}, the first difficulty is to show the existence of boundary value $R^{\pm}_{V}(\mu^4)$ for any $\mu\in(0,2)$.

It was well-known that the limiting absorption principle (LAP) generally states that the resolvent $R_V(z)$ may converge in a suitable way as $z$ approaches spectrum points, which plays a fundamental role in spectral and scattering theory. For instance, see Agmon's work \cite{Agm75} for the Schr\"{o}dinger operator $-\Delta+V$ in $\R^{d}$. In the discrete setting, the LAP for discrete Schr\"{o}dinger operators $-\Delta+V$ on $\Z^d$ has received much attention (cf.\cite{Esk67,SV01,BS98,BS99,KKK06,KKV08,IK12,Man17,PS08} and references therein). 

However, to the best of our knowledge, it seems that LAP is  open for higher-order Schr\"{o}dinger operators on the lattice $\Z^d$. Hence, based on the commutator estimates \cite{JMP84} and Mourre theory~(cf. \cite{ABG96,Mou81,Mou83}),  we will first demonstrate that under appropriate conditions on $V$, $R^{\pm}_{V}(\mu^4)$ for $H=\Delta^2+V$ exist as  bounded operators from $\ell^{2,s}(\Z)$ to $\ell^{2,-s}(\Z)$ for  $s>1/2$~(see Theorem \ref{LAP-theorem}).

\subsubsection{Asymptotic expansions of $R^{\pm}_{V}(\mu^4)$}\label{subsubsection of asy}
As indicated in Theorem \ref{LAP-theorem}, the second challenge lies in deriving the asymptotic behaviors of $R^{\pm}_{V}(\mu^4)$ near $\mu=0$ and $\mu=2$.

To this end, let $R^{\pm}_{0}\left(\mu^4\right)$ be the boundary value of the free resolvent $R_{0}(z):=(\Delta^2-z)^{-1}$, and define $$M^{\pm}\left(\mu\right)=U+vR^{\pm}_{0}\left(\mu^4\right)v,\quad v(n)=\sqrt{|V(n)|},\ U={\rm sign}\left(V(n)\right),\ \mu\in(0,2),$$
which is invertible on $\ell^2(\Z)$ by the assumption of absence of positive eigenvalues in $\mcaI$ and Theorem \ref{LAP-theorem}. Then
 \begin{equation}\label{reso identity 1}
R^{\pm}_V\left(\mu^4\right)=R^{\pm}_{0}\left(\mu^4\right)-R^{\pm}_0\left(\mu^4\right)v\left(M^{\pm}\left(\mu\right)\right)^{-1}vR^{\pm}_0\left(\mu^4\right),
\end{equation}
from which we turn to study the asymptotic expansions of $\left(M^{\pm}\left(\mu\right)\right)^{-1}$ near $\mu=0$ and $\mu=2$.

The basic idea behind the expansion of $\left(M^{\pm}\left(\mu\right)\right)^{-1}$ is the Neumann  expansion, which in turn depends on the expansion of $R^{\pm}_{0}(\mu^4)$. In this respect, Jensen and Kato initiated their seminal work in \cite{JK79} for Schr\"odinger operator $-\Delta_{\R^3}+V$ on $\R^3$. Since then, the method has been widely applied (cf. \cite{JN01, SWY22}). 
When considering the discrete bi-Laplacian $\Delta^2$ on the lattice $\Z$, we will face two distinct difficulties. 
Firstly, compared with Laplacian $-\Delta_{\Z}$ on the lattice, the threshold $0$ now is a {\bf degenerate critical value}~( i.e., $M(0)=M^{'}(0)=M^{''}(0)=0$, where the symbol $M(x)=(2-2{\rm cos}x)^2$ is defined in \eqref{unitary equivalent}). This degeneracy leads to additional steps to expand the $\left(M^{\pm}\left(\mu\right)\right)^{-1}$. Secondly, in contrast to the continuous analogue \cite{SWY22}, we encounter another threshold 16~(i.e., corresponding to $\mu=2$). 

The kernels of boundary values $R^{\pm}_0(\mu^4)$, as presented in \eqref{kernel of R0 boundary}, are given by
 \begin{align*}
  \ R^{\pm}_0(\mu^4,n,m)=\frac{1}{4\mu^3}\left(\frac{\pm ie^{-i\theta_{\pm}|n-m|}}{\sqrt{1-\frac{\mu^2}{4}}}-\frac{e^{b(\mu)|n-m|}}{\sqrt{1+\frac{\mu^2}{4}}}\right),
  \end{align*}
  where $\theta_{\pm}$ satisfies ${\rm cos}\theta_{\pm}=1-\frac{\mu^2}{2}$ and $b(\mu)={\rm ln} \big(1+\frac{\mu^2}{2}-\mu(1+\frac{\mu^2}{4})^{\frac{1}{2}}\big)$. This can be formally expanded near $\mu=0$ and $2$ respectively as follows:
   \begin{align*}
  R^{\pm}_0\left(\mu^4,n,m\right)&\thicksim \sum\limits_{j=-3}^{+\infty}\mu^{j}G^{\pm}_j(n,m),\quad \mu\rightarrow0,\\
R^{\pm}_0\left((2-\mu)^4,n,m\right)&\thicksim\sum\limits_{j=-1}^{+\infty}\mu^{\frac{j}{2}}\widetilde{G}^{\pm}_j(n,m),\quad \mu\rightarrow0,
 \end{align*}  
where $G^{\pm}_j(n,m)$, $\widetilde{G}^{\pm}_j(n,m)$ are specific  kernels defined in \eqref{expan coeffie}.  
Given these expansions of $R_0^\pm$ above, the asymptotic expansions of $(M^{\pm}(\mu))^{-1}$ can be derived near $\mu=0$ and $\mu=2$ under the assumptions that 0 and 16 are regular thresholds of $H$.

The asymptotic expansions of $\left(M^{\pm}\left(\mu\right)\right)^{-1}$ is presented in Theorem \ref{asy theor of Mpm mu}. Their proofs will be given in Section \ref{proof of asy}.

\subsubsection{Treatment of oscillatory integral}
Equipped with the two tools mentioned above, the final step is to handle the oscillatory integral \eqref{kernel of eitHPacH} by Van der Corput Lemma \cite[P. ${332-334}$]{Ste93}.
Specifically,  we decompose \eqref{kernel of eitHPacH} into three parts:
\begin{equation}\label{kernel of eitHPacH(3 sec)}
\left(e^{-itH}P_{ac}(H)\right)=\frac{2}{\pi i}\Big(\int_{0}^{\mu_0}+\int_{\mu_0}^{2-\mu_0}+\int_{2-\mu_0}^{2}\Big)e^{-it\mu^4}\mu^3\left[R^{+}_V\left(\mu^4\right)-R^{-}_V\left(\mu^4\right)\right]d\mu.
\end{equation}
where $\mu_0$ is a sufficient small fixed positive constant.
 Substituting \eqref{reso identity 1} into the first and third integrals,  and the following \eqref{reso identity 2} into the second integral,
\begin{align}\label{reso identity 2}
R^{\pm}_V\left(\mu^4\right)&=R^{\pm}_{0}\left(\mu^4\right)-R^{\pm}_{0}\left(\mu^4\right)VR^{\pm}_{0}\left(\mu^4\right)+R^{\pm}_{0}\left(\mu^4\right)VR^{\pm}_V\left(\mu^4\right)VR^{\pm}_{0}\left(\mu^4\right),
\end{align}
then we obtain 
\begin{equation}\label{kernel of eitHPacH(4 section)}
\left(e^{-itH}P_{ac}(H)\right)(n,m)=-\frac{2}{\pi i}\sum\limits_{j=0}^{3}(K^{+}_{j}-K^{-}_{j})(t,n,m),
\end{equation}
where
\begin{align}\label{kernels of Ki}
\begin{split}
K^{\pm}_{0}(t,n,m)&=\int_{0}^{2}e^{-it\mu^4}\mu^3R^{\mp}_0\left(\mu^4,n,m\right)d\mu,\\
K^{\pm}_{1}(t,n,m)&=\int_{0}^{\mu_0}e^{-it\mu^4}\mu^3\left[R^{\pm}_0\left(\mu^4\right)v\left(M^{\pm}\left(\mu\right)\right)^{-1}vR^{\pm}_0\left(\mu^4\right)\right](n,m)d\mu,\\
K^{\pm}_{2}(t,n,m)&=\int_{\mu_0}^{2-\mu_0}e^{-it\mu^4}\mu^3\left[R^{\pm}_0\left(\mu^4\right)VR^{\pm}_0\left(\mu^4\right)-R^{\pm}_0\left(\mu^4\right)VR^{\pm}_V\left(\mu^4\right)VR^{\pm}_0\left(\mu^4\right)\right](n,m)d\mu,\\
K^{\pm}_{3}(t,n,m)&=\int_{2-\mu_0}^{2}e^{-it\mu^4}\mu^3\left[R^{\pm}_0\left(\mu^4\right)v\left(M^{\pm}\left(\mu\right)\right)^{-1}vR^{\pm}_0\left(\mu^4\right)\right](n,m)d\mu.
\end{split}
\end{align}
Thus, it suffices to show the decay estimate \eqref{eitH decay-estimate} holds for each component $K^{+}_{j}-K^{-}_j$, which will be dealt with in Section \ref{Sec of decay for free} and Section \ref{Sec of proof}.

\subsection{The organization of paper}
In Section \ref{sec of asy expa}, we prepare some preliminary materials including the basics about free resolvent,
the limiting absorption principle (Theorem \ref{LAP-theorem})
and the asymptotic expansions of $(M^{\pm}(\mu))^{-1}$ (Theorem \ref{asy theor of Mpm mu}). Detailed proof of these two theorems are presented in Section \ref{proof of LAP} and Section  \ref{proof of asy}, respectively. 

In Section \ref{Sec of decay for free}, we prove the decay estimate for the free case and demonstrate its sharpness. Section \ref{Sec of proof} focuses on estimating the kernels $(K^{+}_j-K^{-}_{j})(t,n,m)$ defined in \eqref{kernels of Ki} for $j=1,2,3$. Finally, we give a short review of commutator estimates and Mourre theory in Appendix \ref{section of Appendix}.
\section{Asymptotic expansions of $R^{\pm}_{V}(\mu^4)$}\label{sec of asy expa}
\subsection{Free resolvent}\label{Subsec of free resol}
In this subsection, we will give some preliminaries about $\Delta^2$ on $\Z$. 
Define the following Fourier transform $\mcaF$: $\ell^2(\Z)\rightarrow L^2(\T), \T=\R/2\pi\Z$,
\begin{equation}\label{fourier transform}
(\mcaF\phi)(x):=\sum_{n\in\Z}^{}(2\pi)^{-{\frac{1}{2}}}e^{-inx}\phi(n), \quad\forall\ \phi\in\ell^2(\Z),
\end{equation}
then we have
\begin{equation}\label{unitary equivalent}
(\mcaF\Delta^2\phi)(x)=(2-2{\rm cos}x)^2(\mcaF\phi)(x):=M(x)(\mcaF\phi)(x),\quad  x\in\T=[-\pi,\pi],
\end{equation}
which implies that the spectrum of $\Delta^2$ is purely absolutely continuous and equals $[0,16]$.
Let
  $$R_0(z):=(\Delta^2-z)^{-1},\quad z\in\C\setminus[0,16],$$
  be the resolvent of $\Delta^2$ and denote by $R^{\pm}_0(\lambda)$ its boundary value on $(0,16)$, namely,
  \begin{align*}
  R^{\pm}_{0}(\lambda)=\lim\limits_{\varepsilon\downarrow0}R_{0}(\lambda\pm i\varepsilon ),\quad\lambda\in(0,16).
  \end{align*}
   Denote by $\B(s,s')$ the space of all bounded linear operators from $\ell^{2,s}(\Z)$ to $\ell^{2,s'}(\Z)$. Then the existence of $R^{\pm}_0(\lambda)$ as an element of $\B(s,-s)$ for $s>\frac{1}{2}$ follows from the following limiting absorption principle for $-\Delta$~(cf.\cite{KKK06}):
  $$R^{\pm}_{-\Delta}(\mu):=\lim\limits_{\varepsilon\downarrow0}R_{-\Delta}(\mu\pm i\varepsilon )\quad{\rm exists}\ {\rm in\ the\ norm\ of }\ \B(s,-s)\ {\rm for}\ s>\frac{1}{2},\ \mu\in(0,4),$$
 and the resolvent formula:
  \begin{equation}\label{unity partition}
 R_0(z)=\frac{1}{2\sqrt z}\left(R_{-\Delta}(\sqrt z)-R_{-\Delta}(-\sqrt z)\right),\quad\sqrt{z}=\sqrt{|z|}e^{i\frac{arg z}{2}},\  0<argz<2\pi,
 \end{equation}
where $R_{-\Delta}(\omega)=(-\Delta-\omega)^{-1}$ is the resolvent of $-\Delta$.

\begin{lemma}\label{kernel of lapla}
{\cite[Lemma 2.1]{KKK06} For $\omega\in\C\setminus[0,4]$, the kernel of resolvent $R_{-\Delta}(\omega)$ is given by
\begin{equation}\label{kernel of lapl resolvent}
R_{-\Delta}(\omega,n,m)=\frac{-ie^{-i\theta(\omega)|n-m|}}{2{\rm sin}\theta(\omega)},\quad n,m\in\Z,
\end{equation}
where $\theta(\omega)$ is the solution of the equation
\begin{equation}\label{map}
2-2{\rm cos}\theta=\omega
\end{equation}
in the domain $\mcaD:=\left\{\theta(\omega)=a+ib:-\pi\leq a\leq\pi,b<0\right\}$.
}
\end{lemma}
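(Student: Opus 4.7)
The plan is to represent $R_{-\Delta}(\omega)$ as a Fourier multiplier and then evaluate the resulting integral by contour methods. Since $-\Delta$ is diagonalized by $\mcaF$ via $(\mcaF(-\Delta)\phi)(x)=(2-2\cos x)(\mcaF\phi)(x)$, and $2-2\cos x-\omega$ is non-vanishing on $\T$ for $\omega\in\C\setminus[0,4]$, the kernel of $R_{-\Delta}(\omega)$ is represented by the Fourier coefficient
\begin{equation*}
R_{-\Delta}(\omega,n,m)=\frac{1}{2\pi}\int_{-\pi}^{\pi}\frac{e^{i(n-m)x}}{2-2\cos x-\omega}\,dx.
\end{equation*}
The change of variable $x\mapsto-x$ shows that the kernel depends on $n,m$ only through $k:=|n-m|$, so we may assume $n\geq m$ and work with $e^{ikx}$ in the numerator.

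Next I would convert the above to a contour integral via $z=e^{ix}$. Using $2\cos x=z+z^{-1}$ and $dx=dz/(iz)$, one computes $2-2\cos x-\omega=-(z^2-(2-\omega)z+1)/z$, which yields
\begin{equation*}
R_{-\Delta}(\omega,n,m)=-\frac{1}{2\pi i}\oint_{|z|=1}\frac{z^k}{z^2-(2-\omega)z+1}\,dz.
\end{equation*}
The two roots $z_\pm$ of the denominator satisfy $z_+z_-=1$, so exactly one lies inside the unit disk. Introducing $\theta$ through the relation \eqref{map}, i.e.\ $2-2\cos\theta=\omega$, one finds $z_\pm=e^{\pm i\theta}$. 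Choosing the branch $\theta=\theta(\omega)\in\mcaD$ guarantees $\Im\theta<0$, so that $|e^{-i\theta}|=e^{\Im\theta}<1$ and $z_-=e^{-i\theta(\omega)}$ is the unique pole inside $\{|z|<1\}$. Since $2z_--(2-\omega)=z_--z_+=-2i\sin\theta(\omega)$, the residue theorem gives
\begin{equation*}
R_{-\Delta}(\omega,n,m)=-\frac{e^{-i\theta(\omega)k}}{-2i\sin\theta(\omega)}=\frac{-ie^{-i\theta(\omega)|n-m|}}{2\sin\theta(\omega)},
\end{equation*}
which is the claimed formula.

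The main obstacle is the branch analysis: one must verify that $\theta\mapsto 2-2\cos\theta$ restricts to a biholomorphism $\mcaD\to\C\setminus[0,4]$, so that $\theta(\omega)$ is well defined and analytic. This can be done by factoring the map as $\theta\mapsto w:=e^{-i\theta}\mapsto 2-(w+w^{-1})$: the first arrow sends $\mcaD$ (with the identification $a\equiv a+2\pi$) onto the punctured open unit disk $\{0<|w|<1\}$, while the Joukowski transform $w\mapsto w+w^{-1}$ maps this disk biholomorphically onto $\C\setminus[-2,2]$. Composing these yields the required bijection onto $\C\setminus[0,4]$, and the uniqueness of the pole inside the unit circle that was used above is an immediate consequence. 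Once this branch issue is resolved, the residue computation is routine.
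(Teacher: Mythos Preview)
Your proof is correct and follows the standard route: Fourier representation of the resolvent kernel, passage to a contour integral on the unit circle via $z=e^{ix}$, and evaluation by residues, together with the branch analysis showing that $\theta\mapsto 2-2\cos\theta$ is a bijection from $\mcaD$ onto $\C\setminus[0,4]$. The paper itself does not prove this lemma; it simply quotes it from \cite{KKK06}, so there is no argument in the paper to compare against. Your proof is essentially the one given in that reference.
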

\begin{remark}
{\rm Precisely, let
$C^{\pm}=\{\omega=x\pm iy:y>0\}$ and $\mcaD_{\mp}=\{\theta(\omega)=a+ ib\in\mcaD:\pm a<0\}$. Define directed lines and line segments $\ell_{i},\ell'_{i},\tilde{\ell}_{i}$ as follows:
 \begin{align*}
 &\ell_1=\{x:x\in(-\infty,0)\},\quad\ell_2=\{x:x\in(0,4)\},\quad\ell_3=\{x:x\in(4,\infty)\},\\
 &\ell'_1=\{ib:-\infty<b<0\}, \quad\ell'_2=\{a:a\in(0,\pi)\}, \quad\tilde{\ell}_2=\{a:a\in(-\pi,0)\},\\ &\ell'_{3}=\{\pi+ib:b\in(0,-\infty)\},\quad\tilde{\ell}_3=\{-\pi+ib:b\in(-\infty,0)\},
 \end{align*}
Denote by $\ell^{-}_{i}$ the line with opposite direction of $\ell_{i}$, then the map $\theta(\omega)$ defined in \eqref{map} between $\C\setminus[0,4]\longrightarrow \mcaD~(\omega\mapsto\theta(\omega))$ has the following corresponding relation(see Figure \ref{myplot} below).
\begin{figure}[htbp!]\label{figure}
\centering
 \captionsetup{labelformat=empty}
\begin{minipage}{0.4\textwidth}
\centering
\begin{tikzpicture}[>=stealth]
\draw[->] (-2.5,0)--(2.5,0)node[below left]{$x$};
\draw[->]  (0,-2.5)--(0,2.5)node[below right]{$y$};
\draw[green,thick, fill=none] (0,0)circle (1.5pt)node[below left, black]{0};
\draw[blue, fill=none] (1,0)circle (1.5pt)node[below, black]{4};
\draw[red,thick](0.06,0)--(0.94,0);
\draw[->,red,thick](0.06,0)--(0.6,0);
\draw[green,thick] (-2.5,0)--(-0.06,0);
\draw[->,green,thick] (-2.5,0)--(-1,0);
\draw[blue,thick] (1.06,0)--(2.5,0);
\draw[->,blue,thick] (1.06,0)--(1.7,0);
\fill[pattern=north west lines, pattern color=magenta, opacity=0.5] (-2.5,0.06) rectangle (2.5,2.5);
\node at (-1.25,1.25){$C^{+}$};
\fill[pattern=north west lines, pattern color=cyan, opacity=0.9] (-2.5,-2.5) rectangle (2.5,-0.06);
\node at (-1.25,-1.25){$C^{-}$};
\node at (-1.3,0.2){$\textcolor{green}{\ell_1}$};
\node at (0.5,-0.3){$\textcolor{red}{\ell_2}$};
\node at (1.8,-0.3){$\textcolor{blue}{\ell_3}$};
\node[circle, draw, inner sep=1pt] at (2,2) {$\omega$};
\end{tikzpicture}
\end{minipage}
\begin{tikzpicture}[overlay, remember picture]
        \draw[->, thick] (0.2,1)--(2,1); 
        \node[below, font=\bfseries] at (1,0.9){$\theta(C^{\pm})=\mcaD_{\mp}$};
        \node[below, font=\bfseries] at (1,0.4){$\theta(\ell_i)=\ell'_{i}$};
        \node[below, font=\bfseries] at (1,-0.1){$\theta(\ell^{-}_j)=\tilde{\ell}_{j}$};
        \node[below, font=\bfseries] at (1,-0.7){($i=1,2,3,\ j=2,3$)};
        \node[above, font=\bfseries] at (1,1.06) {${\rm \ \ cos}\theta(\omega)=1-\frac{\omega}{2}$}; 
    \end{tikzpicture}
\hspace{0.1\textwidth}
\begin{minipage}{0.4\textwidth}
\centering
\begin{tikzpicture}[>=stealth]
\draw[->] (-2.5,0)--(2.5,0) node[below left]{$a$};
\draw[->]  (0,-2.5)--(0,2.5)node[below right]{$b$};
\draw[green,thick, fill=none] (0,0)circle (1.5pt)node[below left, black]{0};
\draw[blue, thick,fill=none] (1,0)circle (1.5pt)node[below right, black]{$\pi$};
\draw[blue, thick,fill=none] (-1,0)circle (1.5pt)node[below left, black]{$-\pi$};
\draw[red,thick](0.06,0)--(0.94,0);
\draw[->,red,thick](0.06,0)--(0.56,0);
\draw[red,thick](-0.94,0)--(-0.06,0);
\draw[->,red,thick](-0.94,0)--(-0.4,0);
\draw[green,thick](0,-2.5)--(0,-0.06);
\draw[->,green,thick](0,-2.5)--(0,-1);
\draw[blue,thick](-1,-2.5)--(-1,-0.06);
\draw[->,blue,thick](-1,-2.5)--(-1,-1);
\draw[blue,thick](1,-2.5)--(1,-0.06);
\draw[-<,blue,thick](1,-2.5)--(1,-1);
\fill[pattern=north west lines, pattern color=magenta, opacity=0.5] (-0.95,-2.5) rectangle (-0.06,-0.06);
\node at (-0.5,-1.25){$\mcaD_-$};
\fill[pattern=north west lines, pattern color=cyan, opacity=0.9] (0.05,-2.5) rectangle (0.95,-0.06);
\node at (0.5,-1.25){$\mcaD_+$};
\node at (0,-2.75){$\textcolor{green}{\ell'_1}$};
\node at (-1,-2.75){$\textcolor{blue}{\tilde{\ell}_{3}}$};
\node at (-0.6,0.3){$\textcolor{red}{\tilde{\ell}_{2}}$};
\node at (0.6,0.3){$\textcolor{red}{\ell'_{2}}$};
\node at (1,-2.75){$\textcolor{blue}{\ell'_{3}}$};
\node[circle, draw, inner sep=1pt] at (2,2) {$\theta$};
\end{tikzpicture}
\end{minipage}
\caption{Figure 1: The map $\theta(\omega)$ from $\C\setminus[0,4]$ to $\mcaD$.}
\label{myplot}
\end{figure}
}
\end{remark}
Therefore, for any $n,m\in\Z$, it concludes that
\begin{itemize}
\item [(i)] If $\lambda\in (0,4)$, one obtains that
\begin{equation}\label{kernel of lapa boundary}
R^{\pm}_{-\Delta}(\lambda,n,m)=\frac{-ie^{-i\theta_\pm(\lambda)|n-m|}}{2{\rm sin}\theta_\pm(\lambda)},
\end{equation}
\end{itemize}
where $\theta_{\pm}(\lambda)$ satisfies the equation $2-2{\rm cos}\theta=\lambda$ with $\theta_{+}(\lambda)\in(-\pi,0)$, $\theta_{-}(\lambda)\in(0,\pi)$ and $\theta_{+}=-\theta_{-}$.
\begin{itemize}
\item [(ii)] If $\lambda\in(-\infty,0)$, then
    \begin{equation}\label{expre of sin theta}
    {\rm sin}\theta(\lambda)=-i\sqrt{-\lambda+\frac{\lambda^2}{4}}=i\frac{e^{-i\theta(\lambda)}-e^{i\theta (\lambda)}}{2}.
    \end{equation}
\end{itemize}
\begin{lemma}
{ For $\mu\in(0,2)$, the kernel of $R^{\pm}_0(\mu^4)$ is as follows:
\begin{equation}\label{kernel of R0 boundary}
R^{\pm}_{0}(\mu^4,n,m)=\frac{-i}{4\mu^2}\left(\frac{e^{-i\theta_\pm|n-m|}}{{\rm sin}\theta_\pm}-\frac{e^{-i\theta|n-m|}}{{\rm sin}\theta}\right)=\frac{1}{4\mu^3}\left(\frac{\pm ie^{-i\theta_{\pm}|n-m|}}{\sqrt{1-\frac{\mu^2}{4}}}-\frac{e^{b(\mu)|n-m|}}{\sqrt{1+\frac{\mu^2}{4}}}\right),
\end{equation}
where $\theta_{\pm}:=\theta_\pm(\mu^2)$, $\theta:=\theta(-\mu^2)$ and $b(\mu)={\rm ln} \big(1+\frac{\mu^2}{2}-\mu(1+\frac{\mu^2}{4})^{\frac{1}{2}}\big)$.
}
\end{lemma}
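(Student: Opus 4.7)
The plan is to derive this kernel formula by combining the resolvent decomposition \eqref{unity partition} with the explicit kernel of $R_{-\Delta}(\omega)$ in Lemma \ref{kernel of lapla}, and then simplify using the explicit solutions of the equation $2-2\cos\theta=\pm\mu^2$. First I would fix $\mu\in(0,2)$ and take $z=\mu^4\pm i\varepsilon$ in \eqref{unity partition}. Because of the branch convention $\sqrt{z}=\sqrt{|z|}e^{i\arg z/2}$ with $0<\arg z<2\pi$, one has $\sqrt{\mu^4+i\varepsilon}\to \mu^2$ from the upper half-plane and $\sqrt{\mu^4-i\varepsilon}\to -\mu^2$ from the lower half-plane; moreover $-\mu^2\in(-4,0)$ lies off the spectrum of $-\Delta$, so $R_{-\Delta}(-\mu^2)$ is already well defined without a boundary limit. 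Letting $\varepsilon\downarrow 0$ in \eqref{unity partition} therefore yields
\begin{equation*}
R^{\pm}_{0}(\mu^4)=\frac{1}{2\mu^2}\bigl(R^{\pm}_{-\Delta}(\mu^2)-R_{-\Delta}(-\mu^2)\bigr).
\end{equation*}

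Next I would substitute the kernels from \eqref{kernel of lapl resolvent}--\eqref{kernel of lapa boundary} to immediately obtain the first equality in the statement, namely
\begin{equation*}
R^{\pm}_{0}(\mu^4,n,m)=\frac{-i}{4\mu^2}\left(\frac{e^{-i\theta_{\pm}|n-m|}}{\sin\theta_{\pm}}-\frac{e^{-i\theta|n-m|}}{\sin\theta}\right),
\end{equation*}
with $\theta_{\pm}:=\theta_{\pm}(\mu^2)$ and $\theta:=\theta(-\mu^2)$.

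For the second equality I would compute $\sin\theta_{\pm}$ and $\sin\theta$ explicitly. From $\cos\theta_{\pm}=1-\mu^2/2$ together with $\theta_{+}\in(-\pi,0)$, $\theta_{-}\in(0,\pi)$ one gets
\begin{equation*}
\sin\theta_{\pm}=\mp\sqrt{1-(1-\mu^2/2)^2}=\mp\mu\sqrt{1-\mu^2/4},
\end{equation*}
which turns $-i/(4\mu^2\sin\theta_{\pm})$ into $\pm i/(4\mu^3\sqrt{1-\mu^2/4})$. For the second term I would use the constraint $\theta\in\mcaD$: since $\cos\theta=1+\mu^2/2>1$, writing $\theta=a+ib$ forces $\sin a\sinh b=0$, and the choice $a=0$, $b<0$ is the only one compatible with $\mcaD$. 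Then $\cosh b=1+\mu^2/2$ gives the quadratic $e^{2b}-(2+\mu^2)e^b+1=0$, whose root in $(0,1)$ is $e^{b}=1+\mu^2/2-\mu\sqrt{1+\mu^2/4}$, i.e.\ $b=b(\mu)$. This identifies $e^{-i\theta|n-m|}=e^{b(\mu)|n-m|}$ and, via \eqref{expre of sin theta}, $\sin\theta=-i\mu\sqrt{1+\mu^2/4}$, so the second term becomes $-e^{b(\mu)|n-m|}/(4\mu^3\sqrt{1+\mu^2/4})$, producing the second equality.

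There is no real obstacle beyond careful bookkeeping: the only subtlety is tracking which side of the branch cut one is on when taking $\varepsilon\downarrow 0$ in \eqref{unity partition}, and correctly selecting the unique $\theta\in\mcaD$ with $\cos\theta=1+\mu^2/2$ so that the exponential in the kernel decays in $|n-m|$. Both issues are resolved by the normalizations already built into the excerpt (the branch rule in \eqref{unity partition} and the definition of $\mcaD$ in Lemma \ref{kernel of lapla}).
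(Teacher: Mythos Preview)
Your proposal is correct and follows essentially the same approach as the paper's proof: take $z=\mu^4\pm i\varepsilon$ in \eqref{unity partition}, pass to the limit to obtain $R^{\pm}_{0}(\mu^4)=\frac{1}{2\mu^2}\bigl(R^{\pm}_{-\Delta}(\mu^2)-R_{-\Delta}(-\mu^2)\bigr)$, and then invoke \eqref{kernel of lapl resolvent}, \eqref{kernel of lapa boundary}, \eqref{expre of sin theta}. Your write-up simply makes explicit the computations of $\sin\theta_{\pm}$, $\sin\theta$, and the identification $e^{-i\theta}=e^{b(\mu)}$ that the paper leaves implicit; one small slip is that $\sqrt{\mu^4-i\varepsilon}\to-\mu^2$ from the \emph{upper} half-plane (since $\arg\sqrt z$ is just below $\pi$), but as you yourself note, $-\mu^2\notin[0,4]$, so this has no effect on the conclusion.
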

  \begin{proof}
 Firstly, for any $\mu\in(0,2)$ and $\varepsilon>0$, let $z=\mu^4\pm i\varepsilon$ in \eqref{unity partition} and take limit $\varepsilon\rightarrow0$, one obtains that
 \begin{equation}\label{R0mu4 and Rdeltamu2}
R^{\pm}_{0}(\mu^4)=\frac{1}{2\mu^2}\left(R^{\pm}_{-\Delta}(\mu^2)-R_{-\Delta}(-\mu^2)\right),
 \end{equation}
 and then based on \eqref{kernel of lapl resolvent}, \eqref{kernel of lapa boundary} and \eqref{expre of sin theta}, the desired \eqref{kernel of R0 boundary} is proved.
  \end{proof}

Roughly speaking, from the second equality in \eqref{kernel of R0 boundary}, one can observe that $R^{\pm}_0(\mu^4)$ exhibits singularity of $\mu^{-3}$ near $\mu=0$ and $(2-\mu)^{-\frac{1}{2}}$ near $\mu=2$. By means of Taylor's expansion and Euler's formula, we can get the formal expansions:
 \begin{align}\label{Taylor expansion}
 R^{\pm}_0\left(\mu^4,n,m\right)\thicksim \sum\limits_{j=-3}^{+\infty}\mu^{j}G^{\pm}_j(n,m),\quad R^{\pm}_0\left((2-\mu)^4,n,m\right)\thicksim\sum\limits_{j=-1}^{+\infty}\mu^{\frac{j}{2}}\widetilde{G}^{\pm}_j(n,m),\quad \mu\rightarrow0,
 \end{align}
 where $G^{\pm}_j(n,m)$, $\widetilde{G}^{\pm}_j(n,m)$ are as follows:
 \begin{itemize}
 \item $G^{\pm}_{-3}(n,m)=\frac{-1\pm i}{4}$,\quad $G^{\pm}_{-2}(n,m)=0$,\quad $G^{\pm}_{-1}(n,m)=\frac{1\pm i}{4}\left(\frac{1}{8}-\frac{1}{2}|n-m|^2\right)$,
 \item $G^{\pm}_{0}(n,m)=\frac{1}{12}\left(|n-m|^3-|n-m|\right)$,
 \item $\widetilde{G}^{\pm}_{-1}(n,m)=\frac{\pm i}{32}(-1)^{|n-m|}$,\quad $\widetilde{G}^{\pm}_{0}(n,m)=\frac{(-1)^{|n-m|}}{32\sqrt2}\left(2\sqrt2 |n-m|-\left(2\sqrt2-3\right)^{|n-m|}\right)$,
 \item for $j\in\N_+$,
     \begin{equation}\label{expan coeffie}
G^{\pm}_{j}(n,m)=\sum\limits_{k=0}^{j+3}c^{\pm}_{k,j}|n-m|^k,\quad c_{k,j}\in\C,
\end{equation}
\begin{equation*}
\widetilde{G}^{\pm}_{j}(n,m)=\sum\limits_{k=0}^{j+1}d^{\pm}_{k,j}(n,m)|n-m|^{k},\ d^{\pm}_{j+1,j}(n,m)=\frac{(\mp2i)^{j}(-1)^{|n-m|}}{16(j+1)!}.
\end{equation*}
 \end{itemize}
 Indeed, we further claim that the expansions \eqref{Taylor expansion} hold in the space $\B(s,-s)$ for suitable $s$. 
\begin{lemma}\label{Puiseux expan of R0}
{Let $N$ be an integer and $\mu\in(0,2)$,
\begin{itemize}
\item [(i)] Suppose that $N\geq-3$ and $s>\frac{1}{2}+N+4$, then
\begin{equation}\label{Puiseux expan of R0 0}
R^{\pm}_0\left(\mu^4\right)=\sum\limits_{j=-3}^{N}\mu^{j}G^{\pm}_j+r^{\pm}_{N}\left(\mu\right),\quad\mu\rightarrow0\ {\rm in}\ \B(s,-s),
\end{equation}
\end{itemize}
where $\left\|r^{\pm}_{N}\left(\mu\right)\right\|_{\B(s,-s)}=O\left(\mu^{N+1}\right)$ as $\mu\rightarrow0$ and $G^{\pm}_j$ are integral operators with kernels given by \eqref{expan coeffie}.
Moreover, in the same sense, the \eqref{Puiseux expan of R0 0} can be differentiated $N+4$ times in $\mu$.
\begin{itemize}
\item[(ii)] Suppose that $N\geq-1$ and $s>\frac{1}{2}+N+2$, then
\begin{equation}\label{Puiseux expan of R0 2}
R^{\pm}_0\left((2-\mu)^4\right)=\sum\limits_{j=-1}^{N}\mu^{\frac{j}{2}}\widetilde{G}^{\pm}_j+\mcaE^{\pm}_{N}\left(\mu\right),\quad\mu\rightarrow0\ {\rm in}\ \B(s,-s),
\end{equation}
\end{itemize}
where $\left\|\mcaE^{\pm}_{N}\left(\mu\right)\right\|_{\B(s,-s)}=O\left(\mu^{\frac{N+1}{2}}\right)$ as $\mu\rightarrow0$, and $\widetilde{G}^{\pm}_j$ are integral operators with kernels given by \eqref{expan coeffie}.
Furthermore, in the same sense, the \eqref{Puiseux expan of R0 2} can be differentiated $N+2$ times in $\mu$.}
\end{lemma}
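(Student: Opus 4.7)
The plan is to work directly from the explicit kernel formula \eqref{kernel of R0 boundary} and perform a term-by-term Taylor/Puiseux expansion of each factor, then bound the resulting operator remainders on $\ell^{2,s}\to\ell^{2,-s}$ via a polynomial-kernel estimate. Writing
\[
R^{\pm}_0(\mu^4,n,m) \;=\; \frac{1}{4\mu^3}\bigl(\pm i\, A^{\pm}(\mu,n,m) - B(\mu,n,m)\bigr),
\]
with $A^{\pm}(\mu,n,m) = (1-\mu^2/4)^{-1/2} e^{-i\theta_{\pm}|n-m|}$ and $B(\mu,n,m) = (1+\mu^2/4)^{-1/2} e^{b(\mu)|n-m|}$, reduces everything to expanding $A^{\pm}$ and $B$ as power (resp.\ Puiseux) series with coefficients that are polynomials in $|n-m|$ of controlled degree.

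For part (i), solving $\cos\theta_{\pm} = 1-\mu^2/2$ in the correct branches gives $\theta_{\pm} = \mp\mu \mp \mu^3/24 + O(\mu^5)$, which is analytic in $\mu$. Hence $e^{-i\theta_{\pm}|n-m|}$, together with the binomial series $(1\pm\mu^2/4)^{-1/2}$ and the logarithmic series $b(\mu) = -\mu + \mu^3/24 + O(\mu^5)$, produce formal Taylor series in $\mu$ whose coefficient at order $\mu^k$ is a polynomial in $|n-m|$ of degree at most $k$. The crucial observation is that $\pm i A^{\pm}(\mu,n,m)$ and $B(\mu,n,m)$ agree at orders $\mu^0$ and $\mu^1$ in such a way that, after dividing by $4\mu^3$, the orders $\mu^{-2}$ and lower cancel (giving $G_{-2}^{\pm}=0$), while the $\mu^{-3}$ term survives with coefficient $(-1\pm i)/4$. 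Truncating the Taylor expansion of each factor at order $N+4$ with explicit Taylor remainder shows that the remainder kernel $r^{\pm}_N(\mu,n,m)$ is dominated by $\mu^{N+1}\bigl(1+|n-m|^{N+4}\bigr)$ uniformly for $\mu$ in a neighborhood of $0$.

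Part (ii) proceeds similarly but with the twist that $\mu = 2$ is a branch point for $\theta_{\pm}$. Writing $\mu = 2-\nu$ with $\nu\to 0^{+}$, one finds $\cos\theta_{\pm} = -1 + 2\nu - \nu^2/2$, so $\theta_{\pm} = \pm\bigl(\pi - 2\sqrt{\nu} + O(\nu^{3/2})\bigr)$; hence $e^{-i\theta_{\pm}|n-m|} = (-1)^{|n-m|}\bigl(1 \pm 2i\sqrt{\nu}|n-m| + \cdots\bigr)$ is a genuine Puiseux series in $\sqrt{\nu}$. Simultaneously, $(1-\mu^2/4)^{-1/2} = \nu^{-1/2}(1-\nu/4)^{-1/2}$ produces the prefactor singularity responsible for the $\widetilde{G}_{-1}^{\pm}$ term, while $(1+\mu^2/4)^{-1/2}$ and $e^{b(\mu)|n-m|}$ remain analytic at $\mu=2$ (producing in particular the exponentially decaying factor $e^{b(2)|n-m|} = (3-2\sqrt{2})^{|n-m|}$). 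Collecting half-powers of $\nu$ yields the stated expansion.

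Finally, the operator-norm bound in both parts follows from a Hilbert-Schmidt estimate: any kernel satisfying $|K(n,m)| \lesssim \langle n-m\rangle^{k}$ defines a bounded map $\ell^{2,s}(\Z) \to \ell^{2,-s}(\Z)$ whenever $s > k + 1/2$, since $\langle n-m\rangle \lesssim \langle n\rangle\langle m\rangle$ gives $\sum_{n,m}\langle n\rangle^{-2s}\langle m\rangle^{-2s}\langle n-m\rangle^{2k} < \infty$. Applied with $k = N+4$ in (i) and $k = N+2$ in (ii), this matches the stated weight hypotheses. Differentiability is treated by applying the same analysis to $\partial_{\mu}^{j}$ of the explicit kernel, each derivative dropping one power of $\mu$ without increasing the polynomial degree of the coefficients in $|n-m|$. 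The main technical obstacle is the bookkeeping: carefully tracking the polynomial degree contributed by each factor so as to verify the cancellation identities at $\mu\to 0$ and so that the threshold weights $s > \tfrac{1}{2}+N+4$ and $s > \tfrac{1}{2}+N+2$ emerge sharply.
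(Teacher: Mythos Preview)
Your approach is essentially the same as the paper's: Taylor/Puiseux expand the explicit kernel \eqref{kernel of R0 boundary}, observe that the remainder kernel is bounded by $\mu^{N+1}$ times a polynomial in $|n-m|$ of degree at most $N+4$ (resp.\ $N+2$), and then invoke the Hilbert--Schmidt estimate via $|n-m|^{2k}\lesssim\langle n\rangle^{2k}\langle m\rangle^{2k}$ to conclude boundedness in $\B(s,-s)$ for $s>\tfrac12+N+4$ (resp.\ $s>\tfrac12+N+2$). The paper's proof is terser---it simply writes the remainder as $r_N^\pm(\mu,n,m)=\mu^{N+1}\sum_{k=0}^{N+4}a_k^\pm(\mu)|n-m|^k$ with $a_k^\pm=O(1)$ and applies the same weighted-$\ell^2$ bound---whereas you spell out the expansions of $\theta_\pm(\mu)$, $b(\mu)$, and the branch-point structure at $\mu=2$ in more detail, but the substance is identical.
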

\begin{proof}
We only deal with (i) since (ii) can follow in a similar way. Given that $N\geq-3$ and $s>\frac{1}{2}+N+4$. Firstly, by Taylor's expansion with remainders, one obtains that
$$r^{\pm}_{N}(\mu,n,m)=\mu^{N+1}\sum\limits_{k=0}^{N+4}a^{\pm}_{k}(\mu)|n-m|^{k},$$
where $a^{\pm}_{k}(\mu)=O(1)$ as $\mu\rightarrow0$. Since that $s>\frac{1}{2}+N+4$ and $|n-m|^{2k}\lesssim\left<n\right>^{2k}\left<m\right>^{2k}$ for any $k\in\N$, we have
$$\sum\limits_{n\in\Z}^{}\sum\limits_{m\in\Z}\left<n\right>^{-{2s}}|n-m|^{2(N+4)}\left<m\right>^{-{2s}}<\infty,$$
then it follows that $\left\|r^{\pm}_{N}\left(\mu\right)\right\|_{\B(s,-s)}=O\left(\mu^{N+1}\right)$. As for the differentiability, note that for each differentiation of \eqref{kernel of R0 boundary}, we just obtain a power of $|n-m|$. Therefore, repeating the process above, we can get the desired conclusion.
\end{proof}
\subsection{Asymptotic expansions of $\left(M^{\pm}\left(\mu\right)\right)^{-1}$}\label{subsection of asympotic}
In the previous Subsection \ref{Subsec of free resol}, we obtained the limiting absorption principle (LAP) for the free case. At the beginning of this subsection, we will establish the LAP under a certain perturbation $V$.
\begin{theorem}\label{LAP-theorem}
{ Let $H=\Delta^2+V$ with $|V(n)|\lesssim \left<n\right>^{-\beta}$ for $\beta>1$ and $\mcaI=(0,16)$. Denote by $[\beta]$ the biggest integer no more than $\beta$, then}
\begin{itemize}
{
\item [(i)] The point spectrum $\sigma_p(H)\cap\mcaI$ is discrete, each eigenvalue has a finite multiplicity and the singular continuous spectrum $\sigma_{sc}(H)=\varnothing$.
\vskip0.2cm
\item [(ii)] Let $j\in\left\{0,\cdots,[\beta]-1\right\}$ and $j+\frac{1}{2}<s\leq[\beta]$, then 
the following norm limits
    \begin{equation*}
   \frac{d^{j}}{d\lambda^j}(R^{\pm}_V(\lambda))=\lim\limits_{\varepsilon\downarrow0}R^{(j)}_V(\lambda\pm i\varepsilon) \quad {\rm in} \quad \B(s,-s)
    \end{equation*}
are norm continuous from $\mcaI\setminus\sigma_p(H)$ to $\B(s,-s)$,
    }
\end{itemize}
{ where $R_{V}(z)=(H-z)^{-1}$ is the resolvent of $H$ and $R^{(j)}_{V}(z)$ denotes the jth derivative of $R_{V}(z)$.}
\end{theorem}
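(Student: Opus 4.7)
The plan is to use Mourre theory in the spirit of \cite{Mou81,ABG96} for the bulk of the statement and the commutator refinement of Jensen–Mourre–Perry \cite{JMP84} for the higher derivatives of $R_V^{\pm}$. The essential ingredient is to construct a conjugate operator $A$ on $\ell^2(\Z)$ for which $H=\Delta^2+V$ satisfies a strict Mourre estimate on every compact subinterval $\mcaJ \Subset \mcaI=(0,16)$.

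A natural candidate is the symmetrization
\begin{equation*}
A = \tfrac{1}{2}\bigl(N\,\phi(D) + \phi(D)\,N\bigr),
\end{equation*}
where $N$ is multiplication by $n$ on $\ell^2(\Z)$ and $\phi(D)$ is the Fourier multiplier with symbol $\phi(x)=M'(x)=8\sin x\,(1-\cos x)$, with $M(x)=(2-2\cos x)^2$ the symbol of $\Delta^2$. A direct symbolic computation in the Fourier picture, using $[\Delta^2,N]=iM'(D)$, gives $i[\Delta^2,A]=(M'(D))^2\ge 0$. Since $M'$ vanishes on $\T$ only at $x=0,\pm\pi$, which are precisely the preimages of the two thresholds $\lambda=0,16$, the symbol $(M')^2$ is bounded below by a positive constant on $M^{-1}(\mcaJ)$ for every compact $\mcaJ\Subset\mcaI$. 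This yields the strict Mourre estimate
\begin{equation*}
E_{\mcaJ}(\Delta^2)\,i[\Delta^2,A]\,E_{\mcaJ}(\Delta^2)\;\ge\;c_{\mcaJ}\,E_{\mcaJ}(\Delta^2),\qquad c_{\mcaJ}>0.
\end{equation*}

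Under the decay hypothesis $|V(n)|\lesssim\langle n\rangle^{-\beta}$, the perturbation $V$ is a compact diagonal operator on $\ell^2(\Z)$, so Weyl's theorem already gives $\sigma_{ess}(H)=[0,16]$. Moreover, since $[N,V]=0$, the iterated commutators $\mathrm{ad}_A^{k}(V)$ reduce to sums of terms involving iterated commutators of $V$ against $\phi(D)$, each dressed with powers of $N$; the polynomial decay $\beta>k$ then ensures that these terms are relatively $\Delta^2$-compact in the scales $\B(s,-s)$ for suitable $s$. This places $H$ in class $C^{[\beta]}(A)$, and the Mourre estimate for $\Delta^2$ propagates to $H$ modulo a compact remainder on every compact $\mcaJ\Subset\mcaI$. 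The virial theorem combined with Mourre's differential inequality then yields (i), namely the discreteness of $\sigma_p(H)\cap\mcaI$ with finite multiplicities and the vanishing of $\sigma_{sc}(H)$, together with the first-order LAP $R_V^{\pm}(\lambda)\in\B(s,-s)$ for $s>1/2$. The higher derivatives in (ii) follow by iterating in the scheme of \cite{JMP84}: provided $H\in C^{j+1}(A)$ (ensured by $j\le[\beta]-1$) and $s>j+1/2$, the boundary values $R_V^{(j)}(\lambda\pm i0)$ exist in $\B(s,-s)$ and depend norm-continuously on $\lambda\in\mcaI\setminus\sigma_p(H)$.

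The main obstacle is the technical verification that $V$ belongs to $C^{[\beta]}(A)$ in the norm-differentiable sense of \cite{ABG96}, rather than merely as a formal commutator identity; equivalently, one must show that $t\mapsto e^{itA}Ve^{-itA}$ is $[\beta]$-times norm-differentiable as a bounded operator. Because $A$ intertwines the position operator $N$ with the torus pseudo-differential multiplier $\phi(D)$, this requires a careful combination of the symbolic calculus on $\T$ with the polynomial-weighted decay of $V$, and it is precisely here that the hypothesis on $\beta$ is fully consumed. A secondary, but unavoidable, point is that the Mourre constant $c_{\mcaJ}$ degenerates as $\mcaJ$ approaches either threshold in $\{0,16\}$, so the present LAP does not control the resolvent at the thresholds themselves; the threshold asymptotics must be read off separately from the Puiseux-type expansions of Lemma \ref{Puiseux expan of R0}.
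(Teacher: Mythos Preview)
Your approach is essentially the paper's: establish a Mourre estimate for $H$ with respect to a conjugate operator $A$, verify $H\in C^{[\beta]}(A)$, and invoke \cite{JMP84} for the higher derivatives. The only substantive difference is the choice of $A$: the paper uses the standard discrete dilation generator $iA=\mathcal{N}\mathcal{P}-\mathcal{P}^{*}\mathcal{N}$ inherited from the second-order theory \cite{BS99}, for which $i[\Delta^{2},A]=2\Delta^{2}(4+\Delta)$, while you take $A=\tfrac12\bigl(N M'(D)+M'(D)N\bigr)$ tailored to $\Delta^{2}$, giving $i[\Delta^{2},A]=(M'(D))^{2}$. Both commutators are Fourier multipliers that are strictly positive on $M^{-1}(\mcaJ)$ for every $\mcaJ\Subset(0,16)$, so either choice works. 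The paper's choice has the practical advantage that the regularity $V\in C^{[\beta]}(A)$ can be quoted directly from \cite[Proposition~5.1]{BS99}; with your $A$ you must redo that verification, though the argument is the same in spirit since $M'(D)$ is a trigonometric polynomial in the shift.

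One step you pass over is nontrivial in the paper: the output of \cite{JMP84} lives in the $\langle A\rangle^{-s}$ scale, not the position-weighted scale $\ell^{2,\pm s}$ appearing in the statement of the theorem. The paper devotes Proposition~\ref{LAP1} to this transfer, using the resolvent identity at $z_{0}=-i$ together with the bound $\langle A\rangle^{s}R_{V}(\pm i)\langle\mathcal{N}\rangle^{-s}\in\B(0,0)$ for $0\le s\le[\beta]$ (obtained by complex interpolation and repeated commutation of $A$ past $R_{V}(\pm i)$). Your $A$ is likewise first order in $N$ with bounded coefficients, so the same scheme goes through, but it is not automatic and should be stated.
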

The derivation of this LAP is based on the commutator estimates and Mourre theory~(refer to Appendix \ref{section of Appendix}), with a detailed proof provided in Section \ref{proof of LAP}. The upper bound of $s$  is closely related to the regularity of $H$ (as defined in Definition \ref{def of regularity}).

\vskip0.2cm
Throughout this paper, we assume that $H$ has no positive eigenvalues in $\mcaI$. As a consequence of Theorem \ref{LAP-theorem}, $R^{\pm}_V(\mu^4)$ exists in $\B(s,-s)$ with $\frac{1}{2}<s\leq[\beta]$ for any $\mu\in(0,2)$. In what follows, we will further investigate the asymptotic behaviors of $R^{\pm}_V(\mu^4)$ near $\mu=0$ and $\mu=2$. 
To this end, we introduce
\begin{equation}\label{expr of M mu}
M^{\pm}(\mu):=U+vR^{\pm}_{0}\left(\mu^4\right)v,\quad\mu\in(0,2),\ v(n)=\sqrt{|V(n)|},\ U={\rm sign}\left(V(n)\right),\ n\in\Z,
\end{equation}
and denote by $\left(M^{\pm}\left(\mu\right)\right)^{-1}$ the inverse of $M^{\pm}(\mu)$ as long as it exists.
\vskip0.3cm
\begin{lemma}\label{lemma of inverse of M pm mu}
{  Let $H,V,\mcaI$ be as in Theorem \ref{LAP-theorem}. 
Then for any $\mu\in(0,2)$, $M^{\pm}\left(\mu\right)$ is invertible on $\ell^2(\Z)$ and satisfies the relation below in $\B(s,-s)$ with $\frac{1}{2}<s\leq\frac{\beta}{2}$,
\begin{equation}\label{inverti relation}
R^{\pm}_V\left(\mu^4\right)=R^{\pm}_{0}\left(\mu^4\right)-R^{\pm}_0\left(\mu^4\right)v\left(M^{\pm}\left(\mu\right)\right)^{-1}vR^{\pm}_0\left(\mu^4\right).
\end{equation}
}
\end{lemma}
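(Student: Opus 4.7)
The plan is to derive the symmetric resolvent identity from the standard second resolvent equation via the factorization $V=vUv$, and then to prove the invertibility of $M^{\pm}(\mu)$ on $\ell^{2}(\Z)$ by combining compactness of $vR_{0}^{\pm}(\mu^{4})v$ with the Fredholm alternative and the hypothesis that $H$ has no eigenvalues in $\mcaI$.

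For the algebraic step, I would work first at complex $z$ with $\operatorname{Im} z\neq 0$. The standard resolvent equation $R_{V}(z)=R_{0}(z)-R_{V}(z)VR_{0}(z)$, together with $V=vUv$, gives $R_{V}(z)v\,(I+UvR_{0}(z)v)=R_{0}(z)v$ after multiplying on the right by $v$. Since $U^{2}=I$ (extending $U$ by $1$ on the zero set of $V$, which is harmless because $v$ vanishes there), the factor in parentheses equals $U\,M(z)=U(U+vR_{0}(z)v)$, so whenever $M(z)$ is invertible we get $R_{V}(z)v=R_{0}(z)v\,M(z)^{-1}U$; substituting back into the resolvent equation produces
\begin{equation*}
R_{V}(z)=R_{0}(z)-R_{0}(z)v\,M(z)^{-1}vR_{0}(z).
\end{equation*}
I would then pass to the limit $z=\mu^{4}\pm i\varepsilon\to \mu^{4}\pm i0$: the left-hand side converges in $\B(s,-s)$ by Theorem \ref{LAP-theorem}, and since $|v(n)|\lesssim\langle n\rangle^{-\beta/2}$ with $\beta/2\geq s>1/2$, multiplication by $v$ maps $\ell^{2}\hookrightarrow \ell^{2,s}$ and $\ell^{2,-s}\hookrightarrow \ell^{2}$; consequently Lemma \ref{Puiseux expan of R0} yields $vR_{0}(z)v\to vR_{0}^{\pm}(\mu^{4})v$ in $\B(\ell^{2})$ and hence $M(z)\to M^{\pm}(\mu)$ in $\B(\ell^{2})$, justifying the boundary identity in $\B(s,-s)$.

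The main obstacle is the invertibility of $M^{\pm}(\mu)$ on $\ell^{2}(\Z)$ for every $\mu\in(0,2)$. I would first show that $vR_{0}^{\pm}(\mu^{4})v$ is compact on $\ell^{2}(\Z)$ by truncating $v$ to $v_{N}:=v\,\mathbf{1}_{\{|n|\leq N\}}$: each $v_{N}R_{0}^{\pm}(\mu^{4})v_{N}$ is of finite rank (its image lies in a finitely supported subspace), and the norm difference $\|vR_{0}^{\pm}(\mu^{4})v-v_{N}R_{0}^{\pm}(\mu^{4})v_{N}\|_{\B(\ell^{2})}$ tends to zero as $N\to\infty$ thanks to the decay of $v$ together with the LAP bound $\|R_{0}^{\pm}(\mu^{4})\|_{\B(s,-s)}<\infty$. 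The Fredholm alternative then reduces invertibility of $M^{\pm}(\mu)=U+vR_{0}^{\pm}(\mu^{4})v$ to triviality of its kernel. If $\phi\in\ell^{2}$ lies in the kernel, set $\psi:=R_{0}^{\pm}(\mu^{4})v\phi\in\ell^{2,-s}$; then $U\phi=-v\psi$ gives $v\phi=-vUv\psi=-V\psi$, so $(\Delta^{2}-\mu^{4})\psi=v\phi=-V\psi$, that is $H\psi=\mu^{4}\psi$. The delicate step I expect to be the hardest is the bootstrap that upgrades $\psi$ from $\ell^{2,-s}$ to an honest $\ell^{2}$ eigenfunction: iterating $\psi=-R_{0}^{\pm}(\mu^{4})V\psi$ and using the decay $|V(n)|\lesssim\langle n\rangle^{-\beta}$ together with the higher derivative LAP of Theorem \ref{LAP-theorem} successively improves the weighted decay of $\psi$; once $\psi\in\ell^{2}$, the assumption that $H$ has no eigenvalues in $\mcaI=(0,16)$ forces $\psi\equiv 0$ and hence $\phi=0$, completing the proof.
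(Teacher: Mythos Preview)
Your derivation of the symmetric resolvent identity at complex $z$ and the passage to the boundary via the free LAP and the weight condition $\frac12<s\le\frac{\beta}{2}$ is correct and is exactly what the paper does.

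Where you diverge from the paper is in the proof of invertibility of $M^{\pm}(\mu)$. The paper does \emph{not} run a Fredholm-plus-bootstrap argument. Instead it uses Theorem~\ref{LAP-theorem} for $H$ (together with the standing assumption $\sigma_p(H)\cap\mcaI=\varnothing$) as a black box: once $R_V^{\pm}(\mu^4)$ is known to exist in $\B(s,-s)$, the identity
\[
\bigl(U+vR_0(z)v\bigr)\bigl(U-UvR_V(z)vU\bigr)=I
\]
(valid for $\operatorname{Im}z\neq0$ by the second resolvent equation) survives the limit $z\to\mu^4\pm i0$ in $\B(\ell^2)$, because $v:\ell^2\to\ell^{2,s}$ and $v:\ell^{2,-s}\to\ell^2$ are bounded when $s\le\beta/2$. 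Thus $(M^{\pm}(\mu))^{-1}=U-UvR_V^{\pm}(\mu^4)vU$ is exhibited explicitly, and no spectral analysis of $M^{\pm}(\mu)$ is needed.

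Your Fredholm route is a legitimate alternative, but the bootstrap you sketch has a gap. Iterating $\psi=-R_0^{\pm}(\mu^4)V\psi$ together with the \emph{higher-derivative} LAP of Theorem~\ref{LAP-theorem} does not improve the weight on $\psi$: the boundary value $R_0^{\pm}(\mu^4)$ only maps $\ell^{2,s}\to\ell^{2,-s}$, so each iteration loses what $V$ gains, and you remain stuck in $\bigcap_{s>1/2}\ell^{2,-s}$, not $\ell^2$. The correct mechanism at an \emph{interior} energy is different: from $M^{+}(\mu)\phi=0$ one takes the imaginary part to get $\langle v\phi,\operatorname{Im}R_0^{+}(\mu^4)\,v\phi\rangle=0$, which via the explicit kernel \eqref{kernel of R0 boundary} forces the oscillatory contribution to $R_0^{+}(\mu^4)v\phi$ to vanish, leaving only the exponentially decaying piece and hence $\psi\in\ell^2$. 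This is Agmon's division-type argument, not a weighted iteration. So your plan is salvageable, but the step you flagged as ``delicate'' is not carried by the tool you named; the paper's route simply sidesteps the whole issue.
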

\begin{proof}
Firstly, for any $\mu\in(0,2)$, the invertibility of $M^{\pm}(\mu)$ follows from the absence of eigenvalues in $\mcaI$ and Theorem \ref{LAP-theorem}. Then based on the following resolvent identity:
\begin{equation}
R_{V}(z)=R_0(z)-R_0(z)v\left(U+vR_0(z)v\right)^{-1}vR_0(z),
\end{equation}
the relation \eqref{inverti relation} can be deduced from Theorem \ref{LAP-theorem} and the fact that $\left\{v(n)\left<n\right>^{-s}\right\}_{n\in\Z}\in\ell^{\infty}$ by $\frac{1}{2}<s\leq\frac{\beta}{2}$.
\end{proof}
Lemma \ref{lemma of inverse of M pm mu} indicates that one can reduce the asymptotic behaviors of $R^{\pm}_V(\mu^4)$ near $\mu=0$ and $\mu=2$ to those of $\left(M^{\pm}\left(\mu\right)\right)^{-1}$. For this purpose, let
$$\left<f,g\right>:=\sum\limits_{m\in\Z}^{}f(m)\overline{g(m)},\quad f,g\in\ell^2(\Z),$$
and 
\begin{equation}\label{P}
P:=\left\|V\right\|^{-1}_{\ell^1}\left<\cdot,v\right>v,\quad \widetilde{P}:=JPJ^{-1}=\left\|V\right\|^{-1}_{\ell^1}\left<\cdot,\tilde{v}\right>\tilde{v},
\end{equation}
where $\tilde{v}=Jv$ and $J$ is a unitary operator on $\ell^2(\Z)$ given by
\begin{equation}\label{J}
(J\phi)(n)=(-1)^{n}\phi(n), \quad n\in\Z,\ \phi\in\ell^2(\Z).
\end{equation}
We see that $P$ and $\widetilde{P}$ are orthogonal projections onto the span of $v$, $\tilde{v}$ in $\ell^2(\Z)$, respectively, i.e., $P\ell^2(\Z)$ =span$\{v\}$ and $\widetilde{P}\ell^2(\Z)$=span$\{\tilde{v}\}$.

Define $Q,S_0$ and $\widetilde{Q}$ as the orthogonal projections onto the following spaces  respectively:
\begin{align}\label{definition of Q,S0,Qtuta}
\begin{split}
Q\ell^2(\Z):&=\left\{f\in\ell^2(\Z):\left<f,v\right>=0\right\}=\left({\rm span}\{v\}\right)^{\bot},\\
S_0\ell^2(\Z):&=\big\{f\in\ell^2(\Z):\left<f,v_{k}\right>=0,\ v_{k}(n)=n^kv(n),\ k=0,1\big\}=\left({\rm span}\{v,v_1\}\right)^{\bot},\\
\widetilde{Q}\ell^2(\Z):&=\left\{f\in\ell^2(\Z):\left<f,\tilde{v}\right>=0\right\}=\left({\rm span}\{\tilde{v}\}\right)^{\bot}.
\end{split}
\end{align}
Then by definition, it follows that  for any $f\in\ell^2(\Z)$,
\begin{align}
\left<Qf,v\right>&=0,\quad Qv=0,\quad Q=I-P,\label{cancel of Q}\\
\big<\widetilde{Q}f,\tilde{v}\big>&=0,\quad \widetilde{Q}\tilde{v}=0,\quad \widetilde{Q}=I-\widetilde{P},\label{cancel of Qtuta}\\
\left<S_0f,v_k\right>&=0,\quad\ S_0(v_k)=0,\quad k=0,1.\label{cancel of s0}
\end{align}

Finally, for any $k>0$,  denote by $\Gamma_k(\mu)$  a $\mu$-dependent operator which satisfies
\begin{equation}\label{estimate of Gamma}
\|\Gamma_{k}(\mu)\|_{\B(0,0)}+\mu\left\|\frac{\partial}{\partial\mu}(\Gamma_{k}(\mu))\right\|_{\B(0,0)}\lesssim\mu^{k},\quad \mu>0.
\end{equation}
\begin{theorem}\label{asy theor of Mpm mu}
{ Let $H=\Delta^2+V$ with $|V(n)|\lesssim \left<n\right>^{-\beta}$ for some $\beta>0$. 
Then there exists $\mu_0>0$ small enough, such that $\left(M^{\pm}\left(\mu\right)\right)^{-1}$ satisfy the following asymptotic expansions on $\ell^2(\Z)$ for any $0<\mu<\mu_0$:
\begin{itemize}
\item [(i)] If 0 is a regular point of $H$ and $\beta>15$, then
\begin{align}\label{asy expan on 0}
\left(M^{\pm}\left(\mu\right)\right)^{-1}=S_0A_{01}S_0+\mu QA^{\pm}_{11}Q+\mu^2\left(QA^{\pm}_{21}Q+S_0A^{\pm}_{22}+A^{\pm}_{23}S_0\right)+\mu^3A^{\pm}_{31}+\Gamma_{4}(\mu).
\end{align}
\vskip0.2cm
\item [(ii)] If 16 is a regular point of $H$ and $\beta>7$, then
\begin{align}\label{asy expan on 2}
\left(M^{\pm}\left(2-\mu\right)\right)^{-1}=\widetilde{Q}B_{01}\widetilde{Q}+\mu^{\frac{1}{2}}B^{\pm}_{11} +\Gamma_{1}(\mu),
\end{align}
\end{itemize}
where
  $A_{01},A^{\pm}_{kj},B_{01},B^{\pm}_{11}$ are $\mu$-independent bounded operators on $\ell^2(\Z)$ defined in \eqref{asy expan on 0} and \eqref{asy expan on 2}.
}
\end{theorem}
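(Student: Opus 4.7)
The strategy is a Feshbach--Schur (Jensen--Nenciu) inversion scheme applied to the operator pencil $M^\pm(\mu)$, exploiting the special rank structure of the most singular coefficients of $R^\pm_0(\mu^4)$. Substituting the Puiseux expansion of Lemma~\ref{Puiseux expan of R0}(i) into \eqref{expr of M mu}, one obtains near $\mu = 0$
\begin{equation*}
M^\pm(\mu) = \mu^{-3} v G_{-3}^\pm v + \mu^{-1} v G_{-1}^\pm v + T_0 + \mu\, v G_1^\pm v + \mu^2 v G_2^\pm v + \mu^3 v G_3^\pm v + \Gamma_4(\mu),
\end{equation*}
with $T_0 := U + vG_0 v$, and from Lemma~\ref{Puiseux expan of R0}(ii), near $\mu = 2$,
\begin{equation*}
M^\pm(2-\mu) = \mu^{-1/2} v\widetilde G_{-1}^\pm v + \widetilde T_0 + \mu^{1/2} v \widetilde G_1^\pm v + \Gamma_1(\mu),
\end{equation*}
with $\widetilde T_0 := U + v \widetilde G_0 v$. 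The decay hypotheses $\beta > 15$ and $\beta > 7$ are calibrated so that sandwiching the $\B(s,-s)$-remainders of Lemma~\ref{Puiseux expan of R0} by the rapidly decaying factor $v$ yields $\B(0,0)$-bounds of the required order, with enough margin to absorb the four (resp.\ two) term-by-term differentiations built into the definition of $\Gamma_k(\mu)$.

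For part (i), the crux is the algebraic cancellation $Q(vG_{-3}^\pm v)Q = 0$ and $S_0(vG_{-1}^\pm v)S_0 = 0$. The first identity is immediate because $G_{-3}^\pm \equiv \frac{-1\pm i}{4}$ is constant, so $vG_{-3}^\pm v$ is rank one with range in $\mathrm{span}\{v\}$, which $Q = I - P$ annihilates. The second follows by expanding $|n-m|^2 = n^2 - 2nm + m^2$: the operator $vG_{-1}^\pm v$ has range in $\mathrm{span}\{v, v_1, v_2\}$, and for $f \in S_0\ell^2(\Z)$ the image reduces to a multiple of $v$, which $S_0$ kills from the left. I would then apply the Jensen--Nenciu inversion lemma twice. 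A first application with the splitting $I = P + Q$ removes the $\mu^{-3}$ direction: the $P$-block $P M^\pm(\mu) P$ is essentially a scalar operator on $\mathrm{span}\{v\}$ with leading term of order $\mu^{-3}$, hence invertible with inverse of order $\mu^3$; the Schur complement on $Q\ell^2(\Z)$ then carries only the softer $\mu^{-1}$ singularity. A second Jensen--Nenciu application inside $Q\ell^2(\Z)$ with the splitting induced by $S_0$ removes this remaining singularity, invoking the regularity hypothesis at $0$ to extract $A_{01} = (S_0 T_0 S_0)^{-1}$ as the leading coefficient. Expanding the innermost Schur-complement inverse as a Neumann series in $\mu$ and reassembling through the two inversion steps yields \eqref{asy expan on 0}, with each $A_{kj}^\pm$ an explicit polynomial in $A_{01}$, the projections, and the $G_j^\pm$'s.

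For part (ii), the situation is simpler: only one singular power $\mu^{-1/2}$ is present, and since $\widetilde G_{-1}^\pm(n,m) = \frac{\pm i}{32}(-1)^{n+m}$ factors as $\tilde v(n)\tilde v(m)$ up to weight, the operator $v\widetilde G_{-1}^\pm v$ is rank one with range in $\mathrm{span}\{\tilde v\}$. A single Jensen--Nenciu reduction with the splitting $I = \widetilde P + \widetilde Q$ suffices: the $\widetilde P$-block is an invertible scalar of order $\mu^{-1/2}$, and the regularity hypothesis at $16$ gives invertibility of $\widetilde Q \widetilde T_0 \widetilde Q$ on $\widetilde Q\ell^2(\Z)$, yielding $B_{01} = (\widetilde Q \widetilde T_0 \widetilde Q)^{-1}$. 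A short Neumann expansion of the Schur complement extracts the $\mu^{1/2}$-correction $B_{11}^\pm$ and confirms the remainder lies in $\Gamma_1(\mu)$.

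The main obstacle is the bookkeeping in part (i): two nested Schur complements, each Neumann-expanded to the appropriate order, generate many cross terms that must be regrouped into the precise format of \eqref{asy expan on 0}. Verifying uniform-in-$\mu$ smallness of the remainder together with its derivative (the $\mu\|\partial_\mu \Gamma_4\|_{\B(0,0)} \lesssim \mu^4$ bound built into $\Gamma_4(\mu)$) requires differentiating the Puiseux expansion four times, and this is exactly what dictates the sharp decay threshold $\beta > 15$.
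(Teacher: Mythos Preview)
Your proposal is correct and follows essentially the same route as the paper's proof: factor out the leading singular rank-one term, apply the Jensen--Nenciu inversion lemma (Lemma~\ref{lemm of expa}) twice for part~(i) with the projections $Q$ and then $S_0$, once for part~(ii) with $\widetilde Q$, invoke the regularity hypotheses via Remark~\ref{remar of charac of regular} to invert $S_0T_0S_0$ and $\widetilde Q\widetilde T_0\widetilde Q$, and expand the nested Neumann series. The paper's only additional wrinkle is that in Step~2 it phrases the second reduction through the fact $\mathrm{Ker}(QvG_{-1}vQ)|_{Q\ell^2}=S_0\ell^2(\Z)$ and proves directly that $QvG_{-1}vQ+S_0$ is invertible on $Q\ell^2(\Z)$, rather than stating the vanishing $S_0(vG_{-1}^\pm v)S_0=0$ as you do; these are equivalent formulations of the same rank structure.
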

The proof of Theorem \ref{asy theor of Mpm mu} will be presented in Section \ref{proof of asy}. We point out that under the assumptions in Theorem \ref{asy theor of Mpm mu}, the regular conditions given in Definition \ref{defin of regular point 1} can be characterized by the invertibility of  the operators $S$ and  $\widetilde{S}$ defined in \eqref{S,Stuta}, i.e., 
    \begin{align}\label{charc in remark}
    \begin{split}
0\ {\rm is\  a\  regular\  point\  of}\  H\ &\Leftrightarrow\  S=\{0\},\\
 16\ {\rm is\  a\  regular\  point\  of} \ H\ &\Leftrightarrow\ \widetilde{S}=\{0\}.
 \end{split}
 \end{align}



\section{Decay estimates for the free case and sharpness}\label{Sec of decay for free}

When $V\equiv0$, the decay estimate \eqref{cos-sin decay-estimate} follows directly from the estimate $|t|^{-\frac{1}{3}}$ of $e^{it\Delta}$. Hence in this section, we will establish the decay estimate \eqref{eitH decay-estimate} for the free group $e^{-it\Delta^2}$.
\begin{theorem}\label{D-E for free case}
{ For $t\neq0$, one has the following decay estimate:
\begin{equation}\label{eitH0 decay estimate}
\left\|e^{-it\Delta^2}\right\|_{\ell^1(\Z )\rightarrow\ell^{\infty}(\Z )}\lesssim|t|^{-\frac{1}{4}}.
\end{equation}
}
\end{theorem}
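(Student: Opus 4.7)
By the unitary equivalence \eqref{unitary equivalent}, $e^{-it\Delta^2}$ is the Fourier multiplier with symbol $e^{-itM(x)}$, where $M(x) = (2-2\cos x)^2 = 16\sin^4(x/2)$. Its kernel is
\begin{equation*}
K(t,n,m) = \frac{1}{2\pi}\int_{-\pi}^{\pi} e^{-itM(x) + i(n-m)x}\, dx,
\end{equation*}
and since $\|e^{-it\Delta^2}\|_{\ell^1\to\ell^\infty} = \sup_{n,m\in\Z}|K(t,n,m)|$, it suffices to establish that the oscillatory integral
\begin{equation*}
I(t,y) := \int_{-\pi}^{\pi} e^{-itM(x) + iyx}\, dx
\end{equation*}
satisfies $|I(t,y)| \lesssim |t|^{-1/4}$ uniformly in $y = n-m \in \Z$. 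The plan is to obtain this uniform bound via van der Corput's lemma, after analyzing the critical structure of the phase $M$.

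A direct calculation shows that $M(0) = M'(0) = M''(0) = M'''(0) = 0$ but $M^{(4)}(0) = 24$, so $x = 0$ is a degenerate critical point of order four; that $M''(x) = 8 + 8\cos x - 16\cos^2 x$ has further zeros at $\pm 2\pi/3$, where $M'''$ does not vanish; and that $\pm\pi$ are non-degenerate critical points with $M''(\pm\pi) = -16$. I would then fix a small $\delta > 0$ and employ a smooth partition of unity on $[-\pi,\pi]$ that splits $I(t,y)$ into finitely many pieces of three types: an interval around $0$ on which $|M^{(4)}| \geq c_0 > 0$, two intervals around $\pm 2\pi/3$ on which $|M'''| \geq c_3 > 0$, and all remaining intervals (in particular those around $\pm\pi$ and in the intermediate regions, where one checks that $M''$ is bounded away from zero) on which $|M''| \geq c_2 > 0$.

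With the total phase $\Psi(x) = -tM(x) + yx$, the decisive observation is that for every $k \geq 2$, $\Psi^{(k)}(x) = -t M^{(k)}(x)$ is \emph{independent of $y$}; hence the lower bounds on $M^{(k)}$ above translate into matching bounds on $|\Psi^{(k)}|$ that are uniform in $y$. Applying van der Corput's lemma \cite[Ch.~VIII]{Ste93} on each piece then gives contributions of order $(c_0 t)^{-1/4}$, $(c_3 t)^{-1/3}$, and $(c_2 t)^{-1/2}$, respectively; summing these delivers $|I(t,y)| \lesssim |t|^{-1/4}$, as required. The main obstacle is really conceptual rather than computational: the slow $|t|^{-1/4}$ rate is forced entirely by the order-four vanishing of $M$ at $x = 0$ (precisely the property that makes $0$ the degenerate threshold of Remark \ref{Rem of M}(ii)), while the other critical behaviors of $M$ contribute strictly faster decay and are irrelevant for the final rate. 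The fact that adding the linear term $yx$ leaves every derivative of $\Psi$ of order $\geq 2$ untouched is exactly what promotes a pointwise phase-integral estimate into a uniform $\ell^1\to\ell^\infty$ operator-norm bound.
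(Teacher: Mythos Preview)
Your proof is correct and indeed more direct than the paper's, but the two take genuinely different routes. You work with the Fourier multiplier representation $K(t,n,m)=\frac{1}{2\pi}\int_{-\pi}^{\pi}e^{-itM(x)+i(n-m)x}\,dx$ and apply van der Corput's lemma to the phase $\Psi(x)=-tM(x)+yx$ after a direct analysis of the critical structure of $M$ on the whole torus; the paper instead expresses the kernel via Stone's formula as $\frac{2}{\pi i}\int_0^2 e^{-it\mu^4}\mu^3(R_0^+-R_0^-)(\mu^4,n,m)\,d\mu$, inserts the explicit resolvent kernel \eqref{kernel of R0 boundary}, and only after a change of variables $\cos\theta_+=1-\mu^2/2$ arrives at the oscillatory integral \eqref{K01+} with phase $(2-2\cos\theta_+)^2-s\theta_+$, which it then handles by Lemma~\ref{Von der} --- essentially the same phase analysis you carry out, but reached through the resolvent. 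The paper even remarks that the Fourier method works, but deliberately chooses Stone's formula because it is the template needed for the perturbed operator $H=\Delta^2+V$, where no simple multiplier representation is available and the decomposition \eqref{kernel of eitHPacH(4 section)} into the kernels $K_j^\pm$ becomes essential. Your approach buys simplicity and transparency for the free estimate itself; the paper's buys a unified framework that carries over verbatim to Sections~\ref{Sec of proof}.

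One small point worth making explicit in your write-up: when you introduce a smooth partition of unity you are no longer applying the bare van der Corput bound but its amplitude version (cf.\ Corollary~\ref{corollary}(ii) or \cite[p.~334]{Ste93}), which costs an extra $\|\chi'\|_{L^1}$ factor; since your cutoffs are fixed this is harmless, but it should be said. Alternatively, because the $k\ge 2$ van der Corput constant is independent of the endpoints, you can simply split $[-\pi,\pi]$ into finitely many sharp subintervals and avoid cutoffs altogether.
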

\begin{remark}
{\rm It is well-known that decay estimate for $e^{it\Delta}$ is derived using the Fourier
 transform, whose kernel is given by:
  \begin{equation*}
\left(e^{it\Delta}\right)(n,m)=(2\pi)^{-\frac{1}{2}}\int_{-\pi}^{\pi}e^{-it(2-2{\rm cos}x)-i(n-m)x}dx.
 \end{equation*}
Hence,
$$\left\|e^{it\Delta}\right\|_{\ell^1(\Z )\rightarrow\ell^{\infty}(\Z )}\lesssim \sup\limits_{s\in\R}\left|\int_{-\pi}^{\pi}e^{-it\left(2-2{\rm cos}x-sx\right)}dx\right|\lesssim |t|^{-\frac{1}{3}}.$$}
\end{remark}

Note  that  Fourier method may establish the decay estimate for $e^{-it\Delta^2}$. 
However, we would like to use Stone's formula to derive the free estimate \eqref{eitH0 decay estimate} in Theorem \ref{D-E for free case}
since it offers key insights for studying the perturbation case. 
To this end, we first establish the following lemma.
\begin{lemma}\label{Von der}
{ For $t\neq0$, the following estimate holds:}
\begin{equation}\label{von der esti}
\sup\limits_{s\in\R}\left|\int_{-\pi}^{0}e^{-it\left[(2\pm2{\rm cos}x)^2-sx\right]}dx\right|\lesssim |t|^{-\frac{1}{4}}.
\end{equation}
\end{lemma}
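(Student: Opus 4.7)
The plan is to bound the oscillatory integral by applying the van der Corput lemma \cite[pp.~332--334]{Ste93} of different orders on a finite partition of $[-\pi,0]$ chosen independently of $s$. The crucial observation is that for the phase $\phi_s^{\pm}(x):=(2\pm2\cos x)^2-sx$, the higher derivatives $(\phi_s^{\pm})^{(k)}=f_{\pm}^{(k)}(x)$ for $k\ge 2$ (with $f_\pm(x):=(2\pm2\cos x)^2$) do not depend on $s$, so any lower bound on $|f_\pm^{(k)}|$ over a subinterval immediately produces a bound on the corresponding oscillatory integral uniform in $s$.

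First, I would reduce the $+$ case to the $-$ case by the substitution $y=x+\pi$: the identity $(2+2\cos x)^2=(2-2\cos y)^2$ together with the evenness of $f_-$ show that it suffices to bound $\sup_{s\in\R}\bigl|\int_{-\pi}^{0}e^{-it[f_-(x)-sx]}\,dx\bigr|$ with $f_-(x)=6-8\cos x+2\cos 2x$. Direct computation yields
\[
f_-''(x)=8(2\cos x+1)(1-\cos x),\qquad f_-'''(x)=8\sin x(4\cos x-1),\qquad f_-^{(4)}(x)=64\cos^2x-8\cos x-32.
\]
On $[-\pi,0]$, the zeros of $f_-''$ occur only at $x=0$ (a degenerate critical point, with $f_-^{(4)}(0)=24$) and at $x=-2\pi/3$ (a simple zero, with $f_-'''(-2\pi/3)=12\sqrt 3$).

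Next I would fix small $\delta_1,\delta_2>0$ so that $|f_-^{(4)}|\ge 12$ on $[-\delta_1,0]$, $|f_-'''|\ge c_0>0$ on $[-2\pi/3-\delta_2,-2\pi/3+\delta_2]$, and $|f_-''|\ge c_1>0$ on the two remaining pieces $[-\pi,-2\pi/3-\delta_2]$ and $[-2\pi/3+\delta_2,-\delta_1]$ (note $f_-''(-\pi)=-16$, so no trouble occurs at the left endpoint). Applying the van der Corput lemma of orders $4$, $3$, $2$, $2$ on the four subintervals yields $s$-uniform contributions of sizes $|t|^{-1/4}$, $|t|^{-1/3}$, $|t|^{-1/2}$, $|t|^{-1/2}$, and summing together with the trivial bound $|I|\le\pi$ for $|t|\le 1$ gives $\sup_{s}|I|\lesssim|t|^{-1/4}$.

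The principal obstacle is that $f_-^{(4)}$ itself has interior zeros on $[-\pi,0]$ at $\cos x=(1\pm\sqrt{129})/16$, ruling out a single global application of the fourth-order van der Corput lemma. The rescue lies in the fact that the lower-order derivatives $f_-''$ and $f_-'''$ do not all vanish simultaneously away from $x=0$: this allows us to trade order $4$ for order $2$ or $3$ on a finite number of well-chosen pieces, so that order $4$ is needed only in an arbitrarily small neighborhood of the single genuinely quartic critical point $x=0$.
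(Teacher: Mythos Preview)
Your proof is correct and uses the same core ingredients as the paper: both reduce the $+$ case to the $-$ case by a translation, locate the zeros of $\Phi_s''=f_-''$ at $x=0$ and $x=-2\pi/3$, check the nonvanishing of $f_-^{(4)}(0)=24$ and $f_-'''(-2\pi/3)=12\sqrt3$, and invoke van der Corput. The organizational difference is that the paper argues by singling out the special parameter values $s=0$ and $s=-6\sqrt3$ (those placing a critical point of $\Phi_s$ at a zero of $f_-''$) to identify the worst decay exponent, whereas you fix an $s$-independent partition of $[-\pi,0]$ into four pieces and apply van der Corput of orders $4,3,2,2$; your arrangement makes the uniformity of the constant in $s$ completely transparent, while the paper's case-by-case reading leaves that uniformity implicit.
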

\begin{remark}
{\rm 
Note that the estimate above also holds on the interval $[0,\pi]$ by the variable substitution $x\rightarrow-x$. 
    }
\end{remark}
\begin{proof}
This estimate is a concrete application of the Van der Corput Lemma (see e.g. \cite[P. ${332-334}$]{Ste93}). For any $s\in\R$, observe that by substituting $x=-\pi-y$, we obtain
$$\int_{-\pi}^{0}e^{-it[(2+2{\rm cos}x)^2-sx]}dx=e^{-its\pi}\int_{-\pi}^{0}e^{-it[(2-2{\rm cos}y)^2+sy]}dy.$$
Hence it suffices to prove that
\begin{equation}\label{Oci int 1}
\sup\limits_{s\in\R}\left|\int_{-\pi}^{0}e^{-it\Phi_s(x)dx}\right|\lesssim |t|^{-\frac{1}{4}},
\end{equation}
where $$\Phi_{s}(x):=(2-2{\rm cos}x)^2-sx,\quad x\in[-\pi,0].$$

First, a direct calculation yields that
$$\Phi'_{s}(x)=8(1-{\rm cos}x){\rm sin}x-s,\quad \Phi''_{s}(x)=8(1-{\rm cos}x)(2{\rm cos}x+1).$$
Note that $\Phi'_s(0)=\Phi'_s(-\pi)=-s$, and
$$\Phi''_{s}(x)=0,\ x\in[-\pi,0]\Longleftrightarrow x=-\frac{2\pi}{3}\ {\rm or}\ x=0,$$
it follows that $\Phi'_{s}(x)$ is monotonically increasing on $[-\frac{2\pi}{3},0]$ and decreasing on $[-\pi,-\frac{2\pi}{3})$. Consequently, for any $s\in\R$, the equation $\Phi'_{s}(x)=0$ has at most two solutions on $[-\pi,0]$. By the Van der Corput Lemma, slower decay rates of the oscillatory integral \eqref{Oci int 1} occur in the cases of $s=0$ and $s=-6\sqrt 3$. For the other values of $s$, the rate is either $|t|^{-1}$ or $|t|^{-\frac{1}{2}}$.

If $s=0$, then $$\Phi'_{0}(x)=0,\ x\in[-\pi,0]\Longleftrightarrow x=0\ {\rm or}\ x=-\pi.$$
Moreover, we can compute:
$$\Phi''_{0}(-\pi)=-16\neq0\ {\rm and}\ \Phi''_{0}(0)=\Phi^{(3)}_{0}(0)=0, \Phi^{(4)}_{0}(0)=24\neq0,$$
Thus, by the Van der Corput Lemma, \eqref{Oci int 1} is controlled by $|t|^{-\frac{1}{4}}$.

If $s=-6\sqrt 3$, then
$$\Phi'_{-6\sqrt 3}(x)=0,\ x\in[-\pi,0]\Longleftrightarrow x=-\frac{2\pi}{3},$$
and
$$\Phi''_{-6\sqrt 3}\left(-\frac{2\pi}{3}\right)=0\ {\rm but}\ \Phi^{(3)}_{-6\sqrt 3}\left(-\frac{2\pi}{3}\right)\neq0,$$
Similarly, this implies that the decay rate is $|t|^{-\frac{1}{3}}$. In summary, we obtain the desired estimate of $|t|^{-\frac{1}{4}}$.
\end{proof}
From the above proof, we  can  immediately obtain the following corollary, which plays a key role in the estimates for the kernels $K^{\pm}_{j}(t,n,m)$ defined in \eqref{kernels of Ki}.
\begin{corollary}\label{corollary} { The following conclusions hold:

 (i)~For any interval $[a,b]\subseteq[-\pi,0]$, the estimate \eqref{von der esti} still holds on $[a,b]$.

 (ii)~Let $[a,b]\subseteq[-\pi,0]$. Suppose that $\phi(x)$ is a continuously differentiable function on $(a,b)$ with $\phi'(x)\in L^{1}((a,b))$. Moreover, $\lim\limits_{x\rightarrow a^+}\phi(x)$ and $\lim\limits_{x\rightarrow b^-}\phi(x)$ exist. Then
\begin{equation}\label{von der esti complex}
\sup\limits_{s\in\R}\left|\int_{a}^{b}e^{-it\left[(2\pm2{\rm cos}x)^2-sx\right]}\phi(x)dx\right|\lesssim |t|^{-\frac{1}{4}}\left(\big|\lim\limits_{x\rightarrow b^-}\phi(x)\big|+\int_{a}^{b}|\phi'(x)|dx\right).
\end{equation}
}
\end{corollary}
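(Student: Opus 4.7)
The plan is to reduce both assertions to Lemma~\ref{Von der} using only a routine subinterval version of Van der Corput together with a single integration by parts.

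For part~(i), I would revisit the proof of Lemma~\ref{Von der}. The bound $|t|^{-1/4}$ obtained there came from a worst-case analysis of the critical points of $\Phi_s(x)=(2\mp2\cos x)^{2}-sx$ on $[-\pi,0]$: for each $s\in\R$ the equation $\Phi_s'(x)=0$ has at most two solutions, and the worst degeneracy is the vanishing of $\Phi_0',\Phi_0'',\Phi_0^{(3)}$ at $x=0$ with $\Phi_0^{(4)}(0)\neq 0$, which produces the $|t|^{-1/4}$ decay by the Van der Corput lemma; all other values of $s$ produce either $|t|^{-1/2}$ or $|t|^{-1/3}$ decay. On any subinterval $[a,b]\subseteq[-\pi,0]$ the critical set of $\Phi_s$ inside $[a,b]$ is a subset of the original critical set on $[-\pi,0]$, so splitting $[a,b]$ at its critical points (if any) and applying the Van der Corput lemma on each piece yields the same uniform bound $|t|^{-1/4}$. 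The case of the $+$ sign in the phase is reduced to the $-$ case by the same change of variables $x\mapsto-\pi-y$ already employed in the proof of Lemma~\ref{Von der}.

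For part~(ii), I would combine~(i) with integration by parts. Define the antiderivative
\begin{equation*}
F_s(x):=\int_{a}^{x}e^{-it[(2\pm2\cos y)^{2}-sy]}\,dy,\qquad x\in[a,b].
\end{equation*}
Applying part~(i) to the interval $[a,x]$ gives $\sup_{s\in\R,\,x\in[a,b]}|F_s(x)|\lesssim|t|^{-1/4}$. Since $\phi'\in L^{1}((a,b))$ and the one-sided limits $\phi(a^+),\phi(b^-)$ exist, $\phi$ extends to a bounded continuous function on $[a,b]$, and the integration by parts identity
\begin{equation*}
\int_{a}^{b}e^{-it[(2\pm2\cos x)^{2}-sx]}\phi(x)\,dx=F_s(b)\phi(b^-)-\int_{a}^{b}F_s(x)\phi'(x)\,dx
\end{equation*}
is valid. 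Taking absolute values and inserting the uniform bound on $F_s$ yields~\eqref{von der esti complex}.

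No genuine obstacle is expected: the delicate Van der Corput analysis has already been carried out in the proof of Lemma~\ref{Von der}, and what remains is bookkeeping. The only mild point to verify is the legitimacy of integration by parts at the endpoints, and this is immediate from the $L^{1}$ hypothesis on $\phi'$ together with the existence of the one-sided limits $\phi(a^+)$ and $\phi(b^-)$.
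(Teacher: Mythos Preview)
Your proposal is correct and matches the paper's intended argument: the paper does not give a separate proof, noting only that the corollary follows immediately from the proof of Lemma~\ref{Von der}, and your two steps (subinterval Van der Corput for (i), integration by parts against the antiderivative $F_s$ for (ii)) are precisely the standard way to extract such a corollary, cf.\ \cite[p.~334]{Ste93}. The only cosmetic remark is that for the $+$ sign on a subinterval $[a,b]$, the substitution $x\mapsto-\pi-y$ sends $[a,b]$ to $[-\pi-b,-\pi-a]\subseteq[-\pi,0]$, so part~(i) for the $-$ phase indeed covers it.
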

Now we return to the proof of Theorem \ref{D-E for free case}.
\begin{proof}[Proof of Theorem \ref{D-E for free case}]
First, using Stone's formula, the kernel of $e^{-it\Delta^2}$ is given by:
\begin{equation}\label{kernel of 4order}
\left(e^{-it\Delta^2}\right)(n,m)=\frac{2}{\pi i}\int_{0}^{2}e^{-it\mu^4}\mu^3(R^{+}_0-R^{-}_0)(\mu^4,n,m)d\mu=-\frac{2}{\pi i}(K^{+}_{0}-K^{-}_{0})(t,n,m),
\end{equation}
where
\begin{equation}\label{K0}
K^{\pm}_{0}(t,n,m)=\int_{0}^{2}e^{-it\mu^4}\mu^3R^{\mp}_0(\mu^4,n,m)d\mu.
\end{equation}
Thus, it suffices to show that
$$\left|K^{\pm}_{0}(t,n,m)\right|\lesssim|t|^{-\frac{1}{4}},\quad t\neq0,$$
uniformly in $n,m\in\Z$.

Next, we focus on the case of $K^{-}_0(t,n,m)$, and the case of $K^{+}_0(t,n,m)$ can be handled similarly. Taking formula \eqref{kernel of R0 boundary} into \eqref{K0}, we obtain
\begin{align}\label{decompose of K0+}
\begin{split}
K^{-}_0(t,n,m)&=\frac{-i}{4}\left(\int_{0}^{2}e^{-it\mu^4}\frac{\mu e^{-i\theta_+|n-m|}}{{\rm sin}\theta_+}d\mu-\int_{0}^{2}e^{-it\mu^4}\frac{\mu e^{b(\mu)|n-m|}}{{\rm sin}\theta}d\mu\right)\\
&:=\frac{-i}{4}(K^{-}_{0,1}-K_{0,2})(t,n,m).
\end{split}
\end{align}

 On one hand, we have
\begin{equation}\label{decay esti of K01+}
\sup\limits_{n,m\in\Z}|K^{-}_{0,1}(t,n,m)|\lesssim|t|^{-\frac{1}{4}},\quad t\neq0.
\end{equation}
To see this, we consider the following variable substitution:
 \begin{equation}\label{varible substi1}
 {\rm cos}\theta_{+}=1-\frac{\mu^2}{2} \Longrightarrow\  \frac{d\mu}{d\theta_+}=\frac{{\rm sin}\theta_+}{\mu},
\end{equation}
where $\theta_+\rightarrow0$ as $\mu\rightarrow0$ and $\theta_+\rightarrow-\pi$ as $\mu\rightarrow2$.
 Then $K^{-}_{0,1}(t,n,m)$ can be rewritten as:
 \begin{equation}\label{K01+}
 K^{-}_{0,1}(t,n,m)=-\int_{-\pi}^{0}e^{-it\left[(2-2{\rm cos}\theta_+)^2-\theta_{+}\left(-\frac{|n-m|}{t}\right)\right]}d\theta_+,\quad t\neq0.
\end{equation}
Thus, \eqref{decay esti of K01+} follows from Lemma \ref{Von der} since that
$$|\eqref{K01+}|\leq\left|\sup\limits_{s\in\R}\int_{-\pi}^{0}e^{-it\left[(2-2{\rm cos}\theta_+)^2-s\theta_{+}\right]}d\theta_+\right|\lesssim |t|^{-\frac{1}{4}},\quad t\neq0.$$

 On the other hand, we have
\begin{equation}\label{decay estim of K02}
\sup\limits_{n,m\in\Z}|K_{0,2}(t,n,m)|\lesssim|t|^{-\frac{1}{4}},\quad t\neq0.
\end{equation}
By \eqref{expre of sin theta}, it follows that ${\rm sin}\theta=-i\mu(1+\frac{\mu^2}{4})^{\frac{1}{2}}$, and thus
\begin{equation}
F_{0}(\mu,n,m):=\frac{\mu e^{b(\mu)|n-m|}}{{\rm sin}\theta}=f_0(\mu)e^{b(\mu)|n-m|},\quad \mu\in(0,2),\ 
\end{equation}
where $f_0(\mu)=i\left(1+\frac{\mu^2}{4}\right)^{-\frac{1}{2}}$. Clearly, for any $n,m\in\Z$, $F_0(\mu,n,m)$ is continuously differentiable on $(0,2)$. Moreover, since $b(\mu)={\rm ln} \big(1+\frac{\mu^2}{2}-\mu(1+\frac{\mu^2}{4})^{\frac{1}{2}}\big)$, we have
$$\lim\limits_{\mu\rightarrow0^{+}}F_{0}(\mu,n,m)=i,\quad \lim\limits_{\mu\rightarrow2^{-}}F_{0}(\mu,n,m)=\frac{\sqrt2i}{2}(3-2\sqrt2)^{|n-m|}.$$
Additionally,
\begin{align}\label{differentiate on F0}
\frac{\partial F_0}{\partial\mu}(\mu,n,m)=f'_0(\mu)e^{b(\mu)|n-m|}+f_0(\mu)\frac{\partial}{\partial\mu}\left(e^{b(\mu)|n-m|}\right),\quad \mu\in(0,2).
\end{align}
We claim that
$$\sup\limits_{n,m\in\Z}\left\|\frac{\partial F_0}{\partial\mu}(\mu,n,m)\right\|_{L^{1}((0,2))}\lesssim 1.$$
Indeed, the first term in \eqref{differentiate on F0} is uniformly bounded because $f_0(\mu)$ is continuously differentiable on $(0,2)$ and $b(\mu)<0$. For the second term, notice that $b'(\mu)<0$ for any $\mu\in(0,2)$, then
\begin{equation}\label{integral ebnm}
\left\|\frac{\partial}{\partial\mu}\left(e^{b(\mu)|n-m|}\right)\right\|_{L^{1}((0,2))}\leq \int_{0}^{2}-b'(\mu)|n-m|e^{b(\mu)|n-m|}d\mu\leq2.
\end{equation}
By the Van der Corput Lemma, 
we conclude that
$$|K_{0,2}(t,n,m)|\lesssim|t|^{-\frac{1}{4}}\left(\big|\lim\limits_{\mu\rightarrow2^{-}}F_0(\mu,n,m)\big|+\sup\limits_{n,m\in\Z}\left\|\frac{\partial F_0}{\partial\mu}(\mu,n,m)\right\|_{L^{1}((0,2))}\right)\lesssim|t|^{-\frac{1}{4}},\quad t\neq0.$$
Therefore, combining \eqref{decay esti of K01+}, \eqref{decay estim of K02} and \eqref{kernel of 4order}, we obtain the desired estimate \eqref{eitH0 decay estimate}.
\end{proof}
Finally, we demonstrate that the decay rate $\frac{1}{4}$ in \eqref{eitH0 decay estimate} is sharp, that is, $\frac{1}{4}$ is the supremum of all $\alpha$ for which there exists a constant $C_{\alpha}$ such that
\begin{equation}\label{sharness meaning}
\left\|e^{-it\Delta^2}\phi\right\|_{\ell^{\infty}}\leq C_{\alpha}|t|^{-\alpha}\|\phi\|_{\ell^{1}},\quad t\neq0,
\end{equation}
holds for every sequence $\{\phi(n)\}_{n\in\Z}\in\ell^1(\Z)$.
\begin{theorem}\label{theorem of strichartz estimate}
{ Consider the free discrete bi-Schr\"{o}dinger inhomogeneous equation on the lattice $\Z$:
$$i(\partial_tu)(t,n)-(\Delta^2u)(t,n)+F(t,n)=0,$$
with the initial value $\{u(0,n)\}_{n\in\Z}\in\ell^2(\Z)$. Then
\begin{itemize}
 \item [(i)] The following Strichartz estimates hold
\begin{equation}\label{strichartz estimate}
\|\{u(t,n)\}\|_{L^{q}_t\ell^{r}}\leq C\big(\|\{u(0,n)\}\|_{\ell^2}+\|\{F(t,n)\}\|_{L^{\tilde{q}^{'}}_t\ell^{\tilde{r}^{'}}}\big),
\end{equation}
\end{itemize}
where $(\tilde{q},\tilde{r}),(q,r)\in\big\{(x,y)\neq(2,\infty):\frac{1}{x}+\frac{1}{4y}\leq\frac{1}{8},\ x,y\geq2\big\}$, $\tilde{q}^{'},\tilde{r}^{'}$ denote the dual exponents of $\tilde{q}$ and $\tilde{r}$, respectively and
$$\|\{u(t,n)\}\|_{L^{q}_t\ell^{r}}=\left(\int_{0}^{\infty}\left(\sum\limits_{n\in\Z}|u(t,n)|^{r}\right)^{\frac{q}{r}}dt\right)^{\frac{1}{q}};$$
 \begin{itemize}
 \item [(ii)] The decay estimate \eqref{eitH0 decay estimate} is sharp.
 \end{itemize}}
\end{theorem}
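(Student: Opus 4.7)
The argument splits into two independent parts.

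For part (i), the plan is to apply the abstract Strichartz framework of Keel--Tao to the unitary evolution $U(t) = e^{-it\Delta^2}$. The two hypotheses required are the $\ell^2$-energy identity $\|U(t)\varphi\|_{\ell^2} = \|\varphi\|_{\ell^2}$, immediate from self-adjointness of $\Delta^2$, and the dispersive bound $\|U(t)U(s)^*\|_{\ell^1\to\ell^\infty} = \|U(t-s)\|_{\ell^1\to\ell^\infty} \lesssim |t-s|^{-1/4}$ already furnished by Theorem~\ref{D-E for free case}. With decay exponent $\sigma = 1/4$, Keel--Tao then delivers the homogeneous and inhomogeneous Strichartz estimates \eqref{strichartz estimate} for every pair $(q,r)$ on the sharp admissibility line $\frac{1}{q} + \frac{1}{4r} = \frac{1}{8}$ with $q, r \geq 2$, together with the corresponding dual inhomogeneous pair $(\tilde q, \tilde r)$. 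The broader sub-admissible condition $\frac{1}{q} + \frac{1}{4r} \leq \frac{1}{8}$ appearing in the statement is then obtained at no cost from the discrete embedding $\ell^{r_0}(\Z) \hookrightarrow \ell^r(\Z)$ for $r_0 \leq r$: for any sub-admissible $(q,r)$, one selects $r_0 \leq r$ on the admissibility line with the same $q$, whence $\|u\|_{L^q_t \ell^r} \leq \|u\|_{L^q_t \ell^{r_0}}$ reduces the estimate to the already-proven boundary case.

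For part (ii), the plan is to test \eqref{sharness meaning} against the unit mass $\varphi = \delta_0 \in \ell^1(\Z)$, for which $\|\varphi\|_{\ell^1} = 1$ and, via the Fourier representation \eqref{fourier transform}--\eqref{unitary equivalent},
\begin{equation*}
(e^{-it\Delta^2}\delta_0)(0) = \frac{1}{2\pi}\int_{-\pi}^{\pi} e^{-it(2-2\cos x)^2}\,dx.
\end{equation*}
The phase $\Phi(x) = (2-2\cos x)^2$ vanishes at $x = 0$ exactly to order four, since $\Phi(x) = x^4 + O(x^6)$ near the origin, while the only other critical point $x = \pm\pi$ is non-degenerate with $\Phi''(\pm\pi) \neq 0$. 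Introducing a smooth partition of unity $1 = \chi_0(x) + \chi_\pi(x)$ with $\chi_0$ supported in a small neighbourhood of $0$ and $\chi_\pi$ near $\pm\pi$, the substitution $y = t^{1/4} x$ applied to the $\chi_0$-piece together with a standard stationary phase analysis for a degenerate critical point yields
\begin{equation*}
\int_{-\pi}^{\pi} \chi_0(x)\, e^{-it\Phi(x)}\,dx = c\,|t|^{-1/4} + o(|t|^{-1/4}), \qquad |t| \to \infty,
\end{equation*}
with $c = \int_{\R} e^{-iy^4}\,dy \neq 0$ (a classical Fresnel-type integral). The contribution from the non-degenerate critical point inside $\chi_\pi$ decays at the faster rate $|t|^{-1/2}$ by ordinary stationary phase, hence is negligible against the leading term. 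Consequently $|(e^{-it\Delta^2}\delta_0)(0)| \gtrsim |t|^{-1/4}$ for all sufficiently large $|t|$, which prevents \eqref{sharness meaning} from holding for any $\alpha > 1/4$.

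The principal technical obstacle lies in the sharpness argument of (ii): one needs an asymptotic equality rather than the mere upper bound supplied by the Van der Corput lemma. The key step is ensuring that the leading-order contribution from the degenerate critical point at the origin does not vanish and is not cancelled by the subleading piece coming from $x = \pm\pi$. This is precisely what the rescaling $y = t^{1/4}x$ and the nonvanishing of $\int_{\R} e^{-iy^4}\,dy$ deliver, so the difficulty is entirely contained in the standard machinery of stationary phase for phases with higher-order vanishing. Part (i), by contrast, is a direct invocation of a known abstract theorem once Theorem \ref{D-E for free case} is in hand, and requires only the minor bookkeeping of the embedding argument to extend from the admissibility line to the sub-admissible region.
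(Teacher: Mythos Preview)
Your treatment of part (i) coincides with the paper's: both simply invoke Keel--Tao \cite{KT98} with $\sigma=1/4$ using the energy identity and Theorem~\ref{D-E for free case}; your extra remark about the $\ell^{r_0}\hookrightarrow\ell^r$ embedding to pass from the admissibility line to the sub-admissible region is a natural elaboration the paper leaves implicit.

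For part (ii) your route is genuinely different from the paper's. You argue directly: evaluate $(e^{-it\Delta^2}\delta_0)(0)=(2\pi)^{-1}\int_{-\pi}^{\pi}e^{-it(2-2\cos x)^2}\,dx$ by stationary phase, isolating the fourth-order degenerate critical point at $x=0$ to extract the leading term $c\,|t|^{-1/4}$ with $c=\int_{\R}e^{-iy^4}\,dy\neq 0$, while the non-degenerate contribution from $x=\pm\pi$ is $O(|t|^{-1/2})$ and hence subleading. This is correct and self-contained. The paper instead proceeds indirectly: it first proves that the Strichartz range $\frac{1}{q}+\frac{1}{4r}\leq\frac{1}{8}$ is sharp via a Knapp-type counterexample (choosing $\hat f_{\mathrm{time}}(s,x)=\chi(\varepsilon^{-4}s)\chi(\varepsilon^{-1}x)$ and comparing powers of $\varepsilon$ on both sides of the dual estimate), and then argues by contradiction that a decay exponent $\alpha>1/4$ would, via Keel--Tao, yield Strichartz estimates strictly outside the already-established sharp range. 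Your approach is more elementary and gives the sharpness of the dispersive estimate in one stroke; the paper's approach is less direct for that single conclusion but has the benefit of simultaneously certifying that the Strichartz admissibility region in part (i) cannot be enlarged.
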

\begin{proof}
\textbf{(i)}~From the energy identity $\|\{u(t,n)\}\|_{\ell^2}=\|\{u(0,n)\}\|_{\ell^2}$ and the decay estimate \eqref{eitH0 decay estimate}, the Strichartz estimates \eqref{strichartz estimate} follow directly from \cite[Theorem 1.2]{KT98} by Keel and Tao.

\textbf{(ii)} To prove the sharpness of the decay estimate, we first establish the sharpness of the Strichartz estimates by constructing a Knapp counter-example. By duality, we have
\begin{equation}\label{equivalent relation}
\|\{e^{-it\Delta^2}u(0,n)\}\|_{L^{q}_{t}\ell^{r}}\leq C\|u(0,\cdot)\|_{\ell^2}\Leftrightarrow\left\|\phi(\cdot)\right\|_{\ell^2}\leq C\|\{F(t,n)\}\|_{L^{q'}_{t}\ell^{r'}}.
\end{equation}
where
$$\phi(n)=\int_{\R}^{}e^{-it\Delta^2}F(t,n)dt.$$
Based on the Fourier transform defined in  \eqref{fourier transform}, one obtains that
$$\mcaF\phi(x)=(2\pi)^{\frac{1}{2}}\hat{f}_{time}(-(2-2{\rm cos}x)^2,x),$$
where $\hat{f}_{time}(s,x)$ is the time Fourier transform of $f$, defined by $$\hat{f}_{time}(s,x)=(2\pi)^{-\frac{1}{2}}\int_{\R}^{}f(t,x)e^{-ist}dt,$$
and $f(t,x)=\mcaF F(t,n)=(2\pi)^{-\frac{1}{2}}\sum\limits_{n\in\Z}^{}e^{-inx}F(t,n)$. Therefore, the right side inequality of \eqref{equivalent relation} can be further expressed as follows:
\begin{equation}\label{equivent}
\left(\int_{-\pi}^{\pi}\big|\hat{f}_{time}\left(-(2-2{\rm cos}x)^2,x\right)\big|dx\right)^{\frac{1}{2}}\leq C'\|\{F(t,n)\}\|_{L^{q'}_{t}\ell^{r'}}.
\end{equation}

For any $0<\varepsilon\ll1$, we choose
$$\hat{f}_{time}(s,x)=\chi(\varepsilon^{-4}s)\chi(\varepsilon^{-1}x),$$
where $\chi$ is the characteristic function of the interval $(-1,1)$. This yields that
$$F(t,n)=c\frac{{\rm sin}\left(\varepsilon^{4}t\right)}{t}\frac{{\rm sin}(\varepsilon n)}{n}.$$
On one hand, using Taylor's expansion $(2-2{\rm cos}x)^2=O(x^4),\  x\rightarrow0,$ we find that
$$\left(\int_{-\pi}^{\pi}\big|\hat{f}_{time}\left(-(2-2{\rm cos}x)^2,x\right)\big|dx\right)^{\frac{1}{2}}\gtrsim\varepsilon^{\frac{1}{2}}.$$
On the other hand, observe that
\begin{align*}
\sum\limits_{n\in\Z}^{}\frac{|{\rm sin}(\varepsilon n)|^{r'}}{|n|^{r'}}&=\big(\sum\limits_{|n|\leq\frac{1}{\varepsilon}}^{}+\sum\limits_{|n|>\frac{1}{\varepsilon}}^{}\big)\frac{|{\rm sin}(\varepsilon(n))|^{r'}}{|n|^{r'}}\\
&\leq C''\varepsilon^{r'}\frac{1}{\varepsilon}+\sum\limits_{|n|>\frac{1}{\varepsilon}}^{}\frac{1}{|n|^{r'}}\lesssim \varepsilon^{r'-1},
\end{align*}
then it follows that $$\|\{F(t,n)\}\|_{L^{q'}_{t}\ell^{r'}}\lesssim\varepsilon^{\frac{1}{r}+\frac{4}{q}}.$$
Since $\varepsilon$ is arbitrary small, then $\frac{1}{2}\geq\frac{1}{r}+\frac{4}{q}$.

Then the decay rate in \eqref{eitH0 decay estimate} is also sharp. Indeed, if not, i.e., there exists an estimate of the form \eqref{eitH0 decay estimate} with $\alpha>\frac{1}{4}$. By \cite[Theorem 1.2]{KT98}, then this would imply Strichartz estimates in the range $\frac{1}{q}+\frac{\alpha}{r}\leq\frac{\alpha}{2}$. Since $\alpha>\frac{1}{4}$, then there exists $q,r\geq2$ satisfying
\begin{equation*}
\frac{1}{q}+\frac{\alpha}{r}\leq\frac{\alpha}{2}\ \ {\rm and}\ \ \frac{1}{q}+\frac{1}{4r}>\frac{1}{8}.
\end{equation*}
This contradicts the sharpness of the Strichartz estimates established above.
\end{proof}
\section{Proof of Theorem \ref{main-theorem}}\label{Sec of proof}
 This section is devoted to presenting a detailed proof of \eqref{eitH decay-estimate} for $e^{-itH}P_{ac}(H)$, from which \eqref{cos-sin decay-estimate} follows similarly.
To begin with, we recall the decomposition:
\begin{equation}\label{kernel of eitHPacH(4 section)1}
\left(e^{-itH}P_{ac}(H)\right)(n,m)=-\frac{2}{\pi i}\sum\limits_{j=0}^{3}(K^{+}_{j}-K^{-}_{j})(t,n,m),
\end{equation}
where $K^{\pm}_{j}(t,n,m)(j=0,1,2,3)$ are defined in \eqref{kernels of Ki}.
As demonstrated in Section \ref{Sec of decay for free}, the estimate for $K^{\pm}_0(t,n,m)$ has already been established. In what follows, we will focus on deriving the corresponding estimates for the kernels $K^{\pm}_{j}(t,n,m)$ with $j=1,2,3$.
\begin{theorem}\label{theorem of estimate for K123}
{ Under the assumptions in Theorem \ref{main-theorem}, let $K^{\pm}_{j}(t,n,m)(j=1,2,3)$ be defined as in  \eqref{kernels of Ki}. Then the following estimates hold:
\begin{equation*}
\sup\limits_{n,m\in\Z}\left|\left(K^{+}_{1}-K^{-}_{1}\right)(t,n,m)\right|\lesssim |t|^{-\frac{1}{4}},\quad t\neq0,
\end{equation*}
and
\begin{equation*}
\sup\limits_{n,m\in\Z}\left|K^{\pm}_{2}(t,n,m)\right|+\sup\limits_{n,m\in\Z}\left|K^{\pm}_{3}(t,n,m)\right|\lesssim |t|^{-\frac{1}{4}}, \quad t\neq0.
\end{equation*}
}
\end{theorem}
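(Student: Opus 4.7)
The plan is to handle $K^{\pm}_1$, $K^{\pm}_2$, and $K^{\pm}_3$ separately, in each case combining the resolvent expansions from Section \ref{sec of asy expa} with the oscillatory integral machinery developed in Section \ref{Sec of decay for free}. Since the argument for the outgoing and incoming boundary values is symmetric, I will only describe the ``$+$''~case.

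For $K^{\pm}_1$ (near the degenerate threshold $\mu=0$), I substitute the Puiseux expansion \eqref{Puiseux expan of R0 0} of $R^{\pm}_0(\mu^4)$ together with \eqref{asy expan on 0} for $(M^{\pm}(\mu))^{-1}$ into the integrand and multiply out. A naive term-count gives amplitude singularity up to $\mu^{-6}$, which is not integrable, so the main work is a term-by-term cancellation. Since $G^{\pm}_{-3}(n,m)=\tfrac{-1\pm i}{4}$ is a constant kernel, one has $vG^{\pm}_{-3}v=\tfrac{-1\pm i}{4}\|V\|_{\ell^1}P$, so $G^{\pm}_{-3}vQ=0$ and $G^{\pm}_{-3}vS_0=0$ via \eqref{cancel of Q}--\eqref{cancel of s0}. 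Similarly, since $G^{\pm}_{-1}(n,m)$ is a polynomial of degree two in $|n-m|$, $vG^{\pm}_{-1}v$ has range in the span of $\{v,v_1,n\mapsto n^2v(n)\}$, and a short computation using $S_0v=S_0v_1=0$ shows that $S_0\,vG^{\pm}_{-1}v\,S_0=0$ and $Q\,vG^{\pm}_{-1}v\,Q$ reduces to a lower-order rank-one operator. Matching this against the leading term $S_0A_{01}S_0$ and the $\mu\,QA^{\pm}_{11}Q$, $\mu^2$-pieces of \eqref{asy expan on 0} in turn, every amplitude of singular order $\mu^{-j}$ with $j\geq 1$ is annihilated by the appropriate projection acting from the correct side. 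What survives is a finite sum of expressions
\begin{equation*}
\int_0^{\mu_0} e^{-it\mu^4}\mu^{j}F^{\pm}(\mu,n,m)\,d\mu,\qquad j\geq 0,
\end{equation*}
where $F^{\pm}(\mu,n,m)=\sum_{n',m'}R^{\pm}_0(\mu^4,n,n')\,A(n',m')\,R^{\pm}_0(\mu^4,m',m)$ involves only bounded, $\mu$-independent operators $A$. To each such integral I apply the change of variable $\cos\theta_\pm=1-\mu^2/2$ from \eqref{varible substi1} to handle the oscillatory factor $e^{-i\theta_\pm|n-n'|}$ in \eqref{kernel of R0 boundary}, and treat the exponentially decaying factor $e^{b(\mu)|n-n'|}$ exactly as for the kernel $K_{0,2}$ in the proof of Theorem \ref{D-E for free case}. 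Corollary \ref{corollary}(ii) then gives the $|t|^{-1/4}$ bound, once I verify that the amplitude is bounded at $\mu=\mu_0$ and has uniformly integrable $\mu$-derivative in $(n,m)$; the polynomial growth in $|n-n'|$ inherited from $G^{\pm}_j$ is absorbed by $v(n')v(m')\lesssim\langle n'\rangle^{-\beta/2}\langle m'\rangle^{-\beta/2}$, which, together with the regularity requirement underlying \eqref{asy expan on 0}, is exactly what forces $\beta>15$.

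For $K^{\pm}_2$, $\mu$ is bounded away from both thresholds, so each $R^{\pm}_0(\mu^4,n,m)$ and its $\mu$-derivatives are pointwise uniformly bounded, and $R^{\pm}_V(\mu^4)$ is norm-continuously differentiable in $\B(s,-s)$ by Theorem \ref{LAP-theorem}. Using the Born identity \eqref{reso identity 2} (already encoded in the definition of $K^{\pm}_2$), each summand becomes an oscillatory integral on $[\mu_0,2-\mu_0]$ whose amplitude, after Schur-testing against $v\otimes v$ and the decay of $V$, has uniformly bounded $L^1$ $\mu$-derivative. Corollary \ref{corollary}(ii) applied in the variable $\theta_\pm$ delivers $|t|^{-1/4}$ at once. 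For $K^{\pm}_3$ (near the non-degenerate threshold $\mu=2$), I apply the expansions \eqref{Puiseux expan of R0 2} and \eqref{asy expan on 2} in the variable $2-\mu$. The strongest singular piece $\mu^{-1/2}\widetilde{G}^{\pm}_{-1}$ has kernel $\tfrac{\pm i}{32}(-1)^{|n-m|}$, so $v\widetilde{G}^{\pm}_{-1}v$ is a scalar multiple of $\widetilde P$ and is killed by $\widetilde Q$ from either side, matching the leading piece $\widetilde Q B_{01}\widetilde Q$ of $(M^{\pm}(2-\mu))^{-1}$. After cancellation, the remaining amplitude is at worst weighted by $\mu^{1/2}$ from $\mu^{1/2}B^{\pm}_{11}$, and the stationary-phase point in the $\theta_+$-variable sits at $\theta_+=-\pi$ where $\Phi''_0(-\pi)=-16\neq 0$, so Lemma \ref{Von der} gives a local $|t|^{-1/2}$ bound, stronger than required.

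The main obstacle is the bookkeeping in the $K^{\pm}_1$ step: systematically organizing the double expansion of $R^{\pm}_0(\mu^4)v(M^{\pm}(\mu))^{-1}vR^{\pm}_0(\mu^4)$ so that every amplitude of negative order in $\mu$ is paired with an annihilating projection $S_0$ or $Q$ coming from the correct order in \eqref{asy expan on 0}, and simultaneously controlling, uniformly in $(n,m)$, the $L^1$-norms of the $\mu$-derivatives of the surviving amplitudes despite the polynomial growth of the $G^{\pm}_j$-kernels. This is also the step that dictates the strong decay hypothesis $\beta>15$ rather than a weaker one.
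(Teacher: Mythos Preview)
Your plan for $K^{\pm}_1$ has a genuine gap, and it shows up in two linked places. First, Puiseux-expanding $R^{\pm}_0(\mu^4)$ and cancelling the negative-order pieces algebraically does \emph{not} give kernels that are bounded uniformly in the free variables $n,m$: the coefficients $G^{\pm}_j(n,m)$ are polynomials in $|n-m|$ (e.g.\ $G^{\pm}_0$ grows like $|n-m|^3$), so a surviving term such as $G^{\pm}_0 v S_0A_{01}S_0 v G^{\pm}_0$ produces a kernel growing polynomially in $n$ and $m$, which no amount of decay of $v(n')v(m')$ in the inner variables can repair. This is exactly why the paper does \emph{not} expand $R^{\pm}_0$ here; instead it keeps the full oscillatory kernel \eqref{kernel of R0 boundary} and extracts the needed powers of $\mu$ by Taylor-expanding the exponentials $e^{-i\theta_{\pm}|n-m|}$ around $n$ (or $m$) and using $\langle S_0f,v\rangle=\langle S_0f,v_1\rangle=0$ to kill the first two terms (Lemma \ref{cancelation of caseI}). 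That lemma is the missing engine in your argument: it gains $\mu$ or $\mu^2$ while leaving a \emph{bounded} oscillatory remainder $e^{-i\theta_{\pm}|n-\rho m|}$ that can be fed into Corollary \ref{corollary}.

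Second, even with Lemma \ref{cancelation of caseI} in hand, you cannot bound $K^{+}_1$ and $K^{-}_1$ separately, as your ``only the $+$ case'' remark suggests. For the leading piece $S_0A_{01}S_0$ each factor $R^{\pm}_0 v S_0$ gains only one power of $\mu$ (Lemma \ref{cancelation of caseI}(3),(4)), so $\mu^3\cdot R^{\pm}_0 v S_0A_{01}S_0 v R^{\pm}_0$ is still of order $\mu^{-1}$ near $0$. The paper resolves this by the add--subtract trick \eqref{deformation}, which produces a factor $(R^{+}_0-R^{-}_0)vS_0$ on one side; for this \emph{difference} the cosine structure $F(s)=e^{is}+e^{-is}$ has $F'(0)=0$, so Lemma \ref{cancelation of caseI}(1),(2) gain $\mu^2$ rather than $\mu$, yielding an integrable amplitude. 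This is precisely why the theorem is stated for $K^{+}_1-K^{-}_1$ rather than for each sign. Your outline for $K^{\pm}_2$ is essentially the paper's Proposition \ref{Proposition of K2}, and your idea for $K^{\pm}_3$ is morally right, but there too the paper does not Puiseux-expand $R^{\pm}_0$; it uses the unitary $J$ and a second cancellation lemma (Lemma \ref{cancelation of caseIII}) to remove the $(2-\mu)^{-1/2}$ singularity while keeping the kernel uniformly bounded.
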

By combining Theorems \ref{D-E for free case} and \ref{theorem of estimate for K123}, we derive the \eqref{eitH decay-estimate}, thus completing the proof of Theorem \ref{main-theorem}.
\vskip0.2cm
To prove Theorem \ref{theorem of estimate for K123}, we will analyse the cases presented in Propositions \ref{Proposition of K1},\ \ref{Proposition of K2} and \ref{Proposition of K3}. Each case will be addressed in detail in the following three subsections, respectively.
\subsection{The estimates of kernels $(K_1^{+}-K^{-}_1)(t,n,m)$}\label{subsec of Case I}
\begin{proposition}\label{Proposition of K1}
{ Let $H=\Delta^2+V$ with $|V(n)|\lesssim \left<n\right>^{-\beta}$ for $\beta>15$. Suppose that 
$0$ is a regular point of $H$. Then
\begin{equation}\label{K1p-K1m}
\left|\left(K^{+}_{1}-K^{-}_{1}\right)(t,n,m)\right|\lesssim |t|^{-\frac{1}{4}},\quad t\neq0,
\end{equation}
uniformly in $ n,m\in\Z.$
}
\end{proposition}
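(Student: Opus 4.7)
The plan is to substitute the asymptotic expansions from Lemma \ref{Puiseux expan of R0}(i) and Theorem \ref{asy theor of Mpm mu}(i) into the defining formula \eqref{kernels of Ki} for $K_1^{\pm}$, exploit the orthogonality relations $S_0v = S_0v_1 = 0$ and $Qv = 0$ to kill the singular contributions in $\mu$, and then bound each surviving oscillatory integral via the Van der Corput variant Corollary \ref{corollary}, in the spirit of the treatment of $K_0^{\pm}$ in the proof of Theorem \ref{D-E for free case}.

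A naive substitution in the kernel of $R_0^{\pm}(\mu^4)\,v\,(M^{\pm}(\mu))^{-1}\,v\,R_0^{\pm}(\mu^4)$ yields a $\mu^{-6}$ singularity at $\mu = 0$. The crucial algebraic observation is that the singular coefficients $G_{-3}^{\pm}, G_{-1}^{\pm}, G_0^{\pm}$ are low-degree polynomials in $|n-m|$ (up to a sign function in the odd cases), so that for $j\leq 0$ the function $k\mapsto v(k)G_j^{\pm}(n,k)$ lies in, or differs by an $O(1)$ piece from, the span of $v, v_1, v_2, v_3$. Combined with the leading $S_0 A_{01} S_0$ term in \eqref{asy expan on 0}, together with $S_0 v = S_0 v_1 = 0$ and with $Qv = 0$ for the $\mu QA_{11}^{\pm}Q$ and $\mu^2 QA_{21}^{\pm}Q$ corrections, the singular contributions up to and including $\mu^{-3}$ are annihilated, so that after the $\mu^3$ prefactor what remains extends continuously to $\mu = 0$ and is $C^1$ on $(0,\mu_0)$. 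The hypothesis $\beta > 15$ is dictated by the number of expansion terms in Lemma \ref{Puiseux expan of R0}(i) needed to make the remainder smaller than the surviving contribution.

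For each surviving term I would split $R_0^{\pm}(\mu^4, n, m)$ in \eqref{kernel of R0 boundary} into its oscillatory piece $\propto e^{-i\theta_{\pm}|n-m|}$ and its exponentially decaying piece $\propto e^{b(\mu)|n-m|}$. After the change of variable $\cos\theta_{\pm} = 1 - \mu^2/2$ from \eqref{varible substi1}, the contribution of the oscillatory piece takes the form
\begin{equation*}
\int_{-\pi}^{0} e^{-it[(2-2\cos\theta_{+})^2 - s\theta_{+}]}\,\Psi_{n,m}(\theta_{+})\,d\theta_{+},
\end{equation*}
where $s$ is a linear combination of spatial distances $|n-\cdot|$, $|\cdot-m|$ inherited from the two $R_0^{\pm}$ factors, and $\Psi_{n,m}$ has $L^1((-\pi,0))$-norm and $L^1$-derivative uniformly bounded in $n, m \in \Z$, thanks to the fast decay $|V(n)|\lesssim \left<n\right>^{-\beta}$ and the $\ell^2$-boundedness of the operators $A_{01}, A_{ij}^{\pm}$ in \eqref{asy expan on 0}. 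Corollary \ref{corollary}(ii) then delivers the $|t|^{-1/4}$ bound uniformly in $n, m$. The contribution of the exponentially decaying piece $e^{b(\mu)|n-m|}$ is handled as in the treatment of $K_{0,2}$ in the proof of Theorem \ref{D-E for free case}, using in particular the uniform derivative bound \eqref{integral ebnm}.

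The main obstacle is the algebraic bookkeeping: one must enumerate the finitely many products of expansion coefficients that survive after the $S_0$ and $Q$ cancellations, check in each case that $\mu^3$ times the resulting amplitude is bounded and $C^1$ on $(0, \mu_0)$, and verify that the regular point assumption on $0$ delivers the $\ell^2(\Z)$-boundedness of every operator appearing in \eqref{asy expan on 0}. Once this is completed, each surviving summand falls into one of the two templates already handled in Section \ref{Sec of decay for free}, and the bound $|(K_1^+-K_1^-)(t,n,m)|\lesssim |t|^{-1/4}$ follows uniformly in $n, m$.
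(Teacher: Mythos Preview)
Your overall strategy---expand $(M^{\pm}(\mu))^{-1}$ via Theorem~\ref{asy theor of Mpm mu}(i), use the orthogonality of $S_0$ and $Q$ to cancel the $\mu$-singularities coming from $R_0^{\pm}$, then apply Corollary~\ref{corollary}---is indeed the paper's strategy. However, the mechanism you propose for the cancellation step has a real gap.

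You suggest expanding $R_0^{\pm}(\mu^4)$ as the $\mu$-power series of Lemma~\ref{Puiseux expan of R0}(i) and killing the singular coefficients $G_{-3}^{\pm},G_{-1}^{\pm}$ by noting that they are polynomials in $|n-m|$ whose coefficients land in the span of $v,v_1,\dots$. The problem is twofold. First, $S_0$ annihilates only $v$ and $v_1$, so the $|n-m|^2$ part of $G_{-1}^{\pm}$ produces a $v_2$-contribution that survives; you then have to track this piece explicitly, and the accounting becomes ambiguous once you also have $G_0^{\pm}$ containing $|n-m|$ and $|n-m|^3$, which do not decompose cleanly. Second, and more seriously, once you have replaced $R_0^{\pm}$ by a polynomial-in-$|n-m|$ expansion you have destroyed the oscillatory phase $e^{-i\theta_{\pm}|n-m|}$, so there is nothing left to absorb into the Van~der~Corput phase after the change of variable \eqref{varible substi1}. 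Your subsequent sentence ``split $R_0^{\pm}$ into its oscillatory and decaying pieces'' is then inconsistent with having already expanded.

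The paper resolves this with Lemma~\ref{cancelation of caseI}, which never expands $R_0^{\pm}$ in $\mu$. Instead it Taylor-expands the explicit exponentials $e^{-i\theta_{\pm}|n-m|}$ and $e^{b(\mu)|n-m|}$ in the \emph{spatial} variable $m$ about $m=0$, with integral remainder (the formulas \eqref{ii Taylor 1}--\eqref{ii Taylor 2} from \cite[Lemma~2.5]{SWY22}). The zeroth- and first-order terms in $m$ are proportional to $v(m)$ and $mv(m)$, hence killed by $S_0$ (or by $Q$ for the first one), while the remainder keeps the exact oscillatory form $e^{-i\theta_{\pm}|n-\rho m|}$ and picks up an explicit factor $\theta_{\pm}^{2}\sim\mu^2$ (for $S_0$) or $\theta_{\pm}\sim\mu$ (for $Q$). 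This is precisely what lets one write the surviving integrals as $\int e^{-it[(2-2\cos\theta_+)^2 - s\theta_+]}\Psi(\theta_+)\,d\theta_+$ with $\Psi$ having uniformly bounded value and $L^1$-derivative. A second ingredient you are missing is that for the leading term $S_0A_{01}S_0$ the paper first telescopes
\[
R_0^{+}BR_0^{+}-R_0^{-}BR_0^{-}=(R_0^{+}-R_0^{-})BR_0^{+}+R_0^{-}B(R_0^{+}-R_0^{-}),
\]
so that one copy of $R_0^{\pm}$ is replaced by the difference $R_0^{+}-R_0^{-}$, whose kernel is an even cosine in $\theta_+|n-m|$ with $F'(0)=0$; this is what makes the two-order gain in Lemma~\ref{cancelation of caseI}(1)--(2) possible. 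Without this device the $S_0$-cancellation alone would not suffice.
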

In this subsection, we always assume that $0$ is a regular point of $H$ and $\beta>15$.
Before proof, we first make some preparations. Recall that the kernel of $K^{\pm}_{1}$ is given by:
\begin{equation}\label{kernel of K1}
K^{\pm}_{1}(t,n,m)=\int_{0}^{\mu_0}e^{-it\mu^4}\mu^3\left[R^{\pm}_0\left(\mu^4\right)v\left(M^{\pm}\left(\mu\right)\right)^{-1}vR^{\pm}_0\left(\mu^4\right)\right](n,m)d\mu.
\end{equation}
Using the expansion of $\left(M^{\pm}\left(\mu\right)\right)^{-1}$ from \eqref{asy expan on 0}, namely, $$\left(M^{\pm}\left(\mu\right)\right)^{-1}=S_0A_{01}S_0+\mu QA^{\pm}_{11}Q+\mu^2\left(QA^{\pm}_{21}Q+S_0A^{\pm}_{22}+A^{\pm}_{23}S_0\right)+\mu^3A^{\pm}_{31}+\Gamma_{4}(\mu),$$
and substituting it into \eqref{kernel of K1}, we obtain
\begin{align}
\left(K^{+}_{1}-K^{-}_{1}\right)(t,n,m)=\left(K_{11}+K^{+}_{12}-K^{-}_{12}\right)(t,n,m),
\end{align}
where
\begin{align}
K_{11}(t,n,m)&=\int_{0}^{\mu_0}e^{-it\mu^4}\mu^3\left(R^{+}_0\left(\mu^4\right)vS_0A_{01}S_0vR^{+}_0\left(\mu^4\right)-R^{-}_0\left(\mu^4\right)vS_0A_{01}S_0vR^{-}_0\left(\mu^4\right)\right)(n,m)d\mu,\label{kernel of K11}\\
K^{\pm}_{12}(t,n,m)&=\int_{0}^{\mu_0}e^{-it\mu^4}\mu^3\left[R^{\pm}_0\left(\mu^4\right)v\left(\left(M^{\pm}\left(\mu\right)\right)^{-1}-S_0A_{01}S_0\right)vR^{\pm}_0\left(\mu^4\right)\right](n,m)d\mu.\label{kernel of K12}
\end{align}
Hence, the estimate for $K^{+}_1-K^{-}_{1}$ further reduces to that of $K_{11}$ and $K^{\pm}_{12}$, respectively. Now  we establish the following crucial lemma.
\begin{lemma}\label{cancelation of caseI}
{ Let $Q,S_0$ be defined as in \eqref{definition of Q,S0,Qtuta}. Then, for any $f\in\ell^2(\Z)$, the following statements hold:
\begin{itemize}
\item [(1)] $\left[(R^{+}_0-R^{-}_0)\left(\mu^4\right)vS_0f\right](n)=\frac{i\theta^2_{+}}{4\mu^2{\rm sin}\theta_{+}}\sum\limits_{m\in\Z}^{}\int_{0}^{1}(1-\rho)F(-\theta_{+}|n-\rho m|)d\rho\cdot m^2(vS_{0}f)(m),$
    \vskip0.2cm
\item [(2)]
$
S_0\big(v\left(R^{+}_0-R^{-}_0\right)\left(\mu^4\right)f\big)=S_0\left(\frac{i\theta^2_{+}n^2v(n)}{4\mu^2{\rm sin}\theta_{+}}\sum\limits_{m\in\Z}^{}\int_{0}^{1}(1-\rho)F(-\theta_{+}|m-\rho n|)d\rho\cdot f(m)\right),
$
 \vskip0.2cm
\item [(3)]
$~\left(R^{\pm}_0\left(\mu^4\right)vWf\right)(n)=\sum\limits_{m\in\Z}^{}\int_{0}^{1}{\rm sign}(n-\rho m)\left(\frac{\theta_{\pm}e^{-i\theta_{\pm}|n-\rho m|}}{{4\mu^2\rm sin}\theta_{\pm}}-\frac{g(\mu)e^{b(\mu)|n-\rho m|}}{4\mu^2}\right)d\rho \cdot m(vWf)(m),$
\vskip0.2cm
\item [(4)]
$W\big(vR^{\pm}_0\left(\mu^4\right)f\big)=Wf^{\pm}$,
\end{itemize}
where $W=Q,S_0$ and
\begin{align}\label{gmu}
\begin{split}
F(s)&=e^{is}+e^{-is},\quad g(\mu)=-\frac{b(\mu)}{\mu\big(1+\frac{\mu^2}{4}\big)^{\frac{1}{2}}}, \\
f^{\pm}(n)&=nv(n)\sum\limits_{m\in\Z}^{}\int_{0}^{1}{\rm sign}(m-\rho n)\left(\frac{\theta_{\pm}e^{-i\theta_{\pm}|m-\rho n|}}{4\mu^2{\rm sin}\theta_{\pm}}-\frac{g(\mu)e^{b(\mu)|m-\rho n|}}{4\mu^2}\right)d\rho\cdot f(m).
\end{split}
\end{align}
}
\end{lemma}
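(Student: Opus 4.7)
The plan is to derive all four identities by Taylor expansion with integral remainder, exploiting the vanishing-moment properties of the projections $S_0$ and $Q$ recorded in \eqref{cancel of Q}--\eqref{cancel of s0}. In each of (1)--(4), one of the factors $S_0$, $Q$ either premultiplies or postmultiplies the resolvent and kills the low-order terms of a Taylor expansion of the kernel; the leftover integral remainder is exactly the right-hand side.

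\textbf{Reduction of the difference kernel (parts (1), (2))}: Using $\theta_{-}=-\theta_{+}$ and $\sin\theta_{-}=-\sin\theta_{+}$ in the first equality of \eqref{kernel of R0 boundary}, the exponentially decaying pieces $e^{b(\mu)|n-m|}$ in $R^{+}_{0}$ and $R^{-}_{0}$ cancel identically, giving
\begin{equation*}
(R^{+}_{0}-R^{-}_{0})(\mu^{4},n,m)=\frac{-i}{4\mu^{2}\sin\theta_{+}}\bigl(e^{-i\theta_{+}|n-m|}+e^{i\theta_{+}|n-m|}\bigr)=\frac{-i}{4\mu^{2}\sin\theta_{+}}F(-\theta_{+}|n-m|).
\end{equation*}
Since $F(s)=2\cos s$ is even and smooth, I can write $F(-\theta_{+}|n-m|)=F(-\theta_{+}(n-m))$ and apply Taylor's theorem with integral remainder to second order. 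For (1), expanding in $m$ around $m=0$ and using $F''=-F$ yields
\begin{equation*}
F(-\theta_{+}(n-m))=F(-\theta_{+}n)+\theta_{+}F'(-\theta_{+}n)m-\theta_{+}^{2}m^{2}\int_{0}^{1}(1-\rho)F(-\theta_{+}(n-\rho m))\,d\rho.
\end{equation*}
Summing against $(vS_{0}f)(m)$, the first two terms vanish by $\langle S_{0}f,v\rangle=\langle S_{0}f,v_{1}\rangle=0$, and combining the remainder with the prefactor gives (1). Part (2) is dual: expand $F(-\theta_{+}(n-m))$ in $n$ around $n=0$; the zeroth- and first-order Taylor terms, once multiplied by $v(n)$, are of the form $v(n)c_{0}(m)+v_{1}(n)c_{1}(m)$ and thus lie in $\mathrm{span}\{v,v_{1}\}$ as functions of $n$, hence are annihilated by the outer $S_{0}$; the $n^{2}$-remainder matches the right-hand side of (2).

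\textbf{First-order expansion for parts (3), (4))}: A direct differentiation of \eqref{kernel of R0 boundary} using $\partial_{m}|n-m|=-\mathrm{sign}(n-m)$, together with the identities $\sqrt{1-\mu^{2}/4}=\mp\sin\theta_{\pm}/\mu$ (valid because $\theta_{+}\in(-\pi,0)$, $\theta_{-}\in(0,\pi)$) and $g(\mu)=-b(\mu)/(\mu\sqrt{1+\mu^{2}/4})$, produces
\begin{equation*}
\partial_{m}R^{\pm}_{0}(\mu^{4},n,m)=\mathrm{sign}(n-m)\left(\frac{\theta_{\pm}e^{-i\theta_{\pm}|n-m|}}{4\mu^{2}\sin\theta_{\pm}}-\frac{g(\mu)e^{b(\mu)|n-m|}}{4\mu^{2}}\right),
\end{equation*}
which is exactly the integrand appearing on the right-hand side of (3). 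Taylor's theorem in $m$ around $m=0$ to first order reads $R^{\pm}_{0}(\mu^{4},n,m)=R^{\pm}_{0}(\mu^{4},n,0)+m\int_{0}^{1}\partial_{m}R^{\pm}_{0}(\mu^{4},n,\rho m)\,d\rho$; after summation against $(vWf)(m)$, the constant-in-$m$ piece is proportional to $\langle Wf,v\rangle=0$ (since $W\in\{Q,S_{0}\}$ annihilates $v$), and (3) follows. For (4), expand $R^{\pm}_{0}(\mu^{4},n,m)$ in $n$ around $n=0$; the $n$-independent term $v(n)R^{\pm}_{0}(\mu^{4},0,m)f(m)$ is a scalar multiple of $v(n)$ as a function of $n$ and is killed by $W$, while the first-order remainder, with $\partial_{n}=-\partial_{m}$ for a function of $|n-m|$, produces precisely $f^{\pm}$.

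\textbf{Main obstacle}: No deep analysis is involved; the entire argument is orthogonality plus Taylor remainder. The only delicate point is the sign-bookkeeping required to match the prefactors in the derivative of $R^{\pm}_{0}$ with the $\sin\theta_{\pm}$ and $g(\mu)$ normalisations used in the statement. The non-smoothness of $|n-m|$ at $n=m$ is a measure-zero issue and does not obstruct the integral form of Taylor's formula; for $F(-\theta_{+}|n-m|)$ in (1)--(2), the evenness of $F$ removes the kink entirely and legitimises the second-order expansion.
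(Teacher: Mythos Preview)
Your proposal is correct and follows essentially the same strategy as the paper: Taylor expansion with integral remainder combined with the vanishing moments \eqref{cancel of Q}--\eqref{cancel of s0}. The paper packages the Taylor step by citing \cite[Lemma~2.5]{SWY22} and splits the kernel in (3)--(4) into the two pieces $F_1^{\pm}(s)=e^{\pm is}$, $F_2(s)=e^{-s}$ before expanding, whereas you differentiate $R_0^{\pm}$ directly and, for (1)--(2), exploit the evenness of $F$ to write $F(-\theta_+|n-m|)=F(-\theta_+(n-m))$ and thereby sidestep the absolute value in the second-order expansion; these are cosmetic differences, not a different route.
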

\vskip0.2cm
\begin{remark}\label{remark on cancel lemma}
{\rm Roughly speaking, compared to the free kernel $R^{\pm}_0(\mu^4,n,m)$:
$$R^{\pm}_{0}(\mu^4,n,m)=\frac{-i}{4\mu^2}\left(\frac{e^{-i\theta_\pm|n-m|}}{{\rm sin}\theta_\pm}-\frac{e^{b(\mu)|n-m|}}{{\rm sin}\theta}\right).$$
The kernels considered in this lemma have less singularity near $\mu=0$. More precisely, the kernels in (1) and (2) contribute a factor of $\mu^2$, while those in (3) and (4) contribute a factor of $\mu$. In fact, noting that 
$\mu^2=2(1-{\rm cos}\theta_+)$, we see that $\theta_+$ and $\mu$ are infinitesimals of the same order as $\mu\rightarrow0$, i.e., $\theta_+=O(\mu)$, which plays a key role in the subsequent decay estimates. 
}
\end{remark}
\begin{proof}[Proof of Lemma \ref{cancelation of caseI}]
\underline{\textbf{(1)~and~(2)}}
From the first equality of \eqref{kernel of R0 boundary} and the fact that $\theta_-=-\theta_+$, we have
\begin{equation}
\left(R^{+}_0-R^{-}_0\right)\left(\mu^4,n,m\right)=\frac{-i}{4\mu^2{\rm sin}\theta_{+}}F(-\theta_{+}|n-m|),
\end{equation}
where $F(s)=e^{is}+e^{-is}$.
Then
\begin{align}\label{kernel (1)}
\left[(R^{+}_0-R^{-}_0)\left(\mu^4\right)vS_0f\right](n)=\frac{-i}{4\mu^2{\rm sin}\theta_{+}}\sum\limits_{m\in\Z}^{}F\left(-\theta_{+}|n-m|\right)v(m)\left(S_0f\right)(m),
\end{align}
and
\begin{align}\label{kernel (2)}
S_0\left(v\left(R^{+}_0-R^{-}_0\right)\left(\mu^4\right)f\right)=S_0\left(v(n)\frac{-i}{4\mu^2{\rm sin}\theta_{+}}\sum\limits_{m\in\Z}^{}F\left(-\theta_{+}|n-m|\right)f(m)\right).
\end{align}
Notice that $F'(0)=0$, then by \cite[Lemma 2.5]{SWY22} and $F''(s)=-F(s)$, it follows that
\begin{align}
F(-\theta_+|n-m|)&=F(-\theta_+|n|)+\theta_+m{\rm sign}(n)F'(-\theta_+|n|)\nonumber\\
&\quad-\theta^2_+m^2\int_{0}^{1}(1-\rho)F(-\theta_+|n-\rho m|)d\rho,\label{ii Taylor 1}\\
F(-\theta_+|n-m|)&=F(-\theta_+|m|)+\theta_+n{\rm sign}(m)F'(-\theta_+|m|)\nonumber\\
&\quad-\theta^2_+n^2\int_{0}^{1}(1-\rho)F(-\theta_+|m-\rho n|)d\rho.\label{ii Taylor 2}
\end{align}
Taking \eqref{ii Taylor 1} into \eqref{kernel (1)} and \eqref{ii Taylor 2} into \eqref{kernel (2)}, and using the cancellation properties \eqref{cancel of s0}, we obtain the desired results (1) and (2).
\vskip0.3cm
\underline{\textbf{(3)~and~(4)}} As before, by \eqref{kernel of R0 boundary}, if we denote $F^{\pm}_1(s):=e^{\pm is}$ and $F_2(s):=e^{-s}$, then
$$R^{\pm}_{0}(\mu^4,n,m)=\frac{-i}{4\mu^2}\left(\frac{F^{\pm}_1\left(\mp\theta_{\pm}|n-m|\right)}{{\rm sin}\theta_\pm}-\frac{F_2(-b(\mu)|n-m|)}{{\rm sin}\theta}\right).$$
Applying \cite[Lemma 2.5]{SWY22} to $F^{\pm}_{1}$ and $F_2$, and observing that $(F^{\pm}_{1})'(s)=\pm iF^{\pm}_{1}(s)$, $F'_2(s)=-F_2(s)$, we have
\begin{align*}
\begin{split}
F^{\pm}_1\left(\mp\theta_{\pm}|n-m|\right)&=F^{\pm}_1\left(\mp\theta_{\pm}|n|\right)+i\theta_{\pm}m\int_{0}^{1}{\rm sign}(n-\rho m)F^{\pm}_{1}\left(\mp\theta_{\pm}|n-\rho m|\right)d\rho\\
&=F^{\pm}_1\left(\mp\theta_{\pm}|m|\right)+i\theta_{\pm}n\int_{0}^{1}{\rm sign}(m-\rho n)F^{\pm}_{1}\left(\mp\theta_{\pm}|m-\rho n|\right)d\rho,\\
F_2(-b(\mu)|n-m|)&=F_2(-b(\mu)|n|)-b(\mu)m\int_{0}^{1}{\rm sign}(n-\rho m)F_2(-b(\mu)|n-\rho m|)d\rho\\
&=F_2(-b(\mu)|m|)-b(\mu)n\int_{0}^{1}{\rm sign}(m-\rho n)F_2(-b(\mu)|m-\rho n|)d\rho.
\end{split}
\end{align*}
Following the same approach used in the proofs of \underline{\textbf{(1)~and~(2)}}, and utilizing the cancelation condition $Wv=0$, $\left<Wf,v\right>=0$ for $W=Q,S_0$, we then prove (3) and (4).
\end{proof}
Next we begin the proof of Proposition \ref{Proposition of K1}. First, we address the term $K_{11}$.
\begin{proposition}\label{claim of K11}
{ Under the assumptions in Propositions \ref{Proposition of K1}, let $K_{11}(t,n,m)$ be defined as in \eqref{kernel of K11}. Then
\begin{equation}\label{estimate of K11 in Claim}
\sup\limits_{n,m\in\Z}\left|K_{11}(t,n,m)\right|\lesssim |t|^{-\frac{1}{4}},\quad t\neq0.
\end{equation}
 }
\end{proposition}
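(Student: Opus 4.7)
The plan is to combine the algebraic identity
\[
R_0^+ T R_0^+ - R_0^- T R_0^- = (R_0^+ - R_0^-) T R_0^+ + R_0^- T (R_0^+ - R_0^-), \qquad T := v S_0 A_{01} S_0 v,
\]
with the cancellation identities of Lemma \ref{cancelation of caseI} and the substitution $\cos\theta_+ = 1 - \mu^2/2$ already used in Theorem \ref{D-E for free case}. Writing $K_{11} = K_{11}^{(a)} + K_{11}^{(b)}$, by symmetry it is enough to treat
\[
K_{11}^{(a)}(t,n,m) = \int_0^{\mu_0} e^{-it\mu^4}\mu^3 \bigl[(R_0^+ - R_0^-)(\mu^4)\, v S_0 A_{01} S_0 v\, R_0^+(\mu^4)\bigr](n,m)\, d\mu,
\]
the other piece $K_{11}^{(b)}$ being handled with parts (2) and (3) of Lemma \ref{cancelation of caseI} in place of (1) and (4).

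For $K_{11}^{(a)}$ I would apply Lemma \ref{cancelation of caseI}(1) to $(R_0^+ - R_0^-) v S_0$, which produces the prefactor $\frac{i\theta_+^2}{4\mu^2\sin\theta_+}$ together with the bounded oscillatory factor $F(-\theta_+|n-\rho'\ell|)$, and Lemma \ref{cancelation of caseI}(4) with $W=S_0$ to $S_0 v R_0^+$, which produces the prefactors $\frac{\theta_+}{4\mu^2\sin\theta_+}$ and $\frac{g(\mu)}{4\mu^2}$ together with the factors $e^{-i\theta_+|m-\rho n_0|}$ and $e^{b(\mu)|m-\rho n_0|}$, respectively. Because $\theta_+ \sim \mu$ as $\mu \to 0$, combining these prefactors with $\mu^3$ and the Jacobian $\sin\theta_+/\mu$ of the substitution collapses all singular powers of $\mu$ near the origin and leaves a smooth bounded amplitude $\phi(\theta_+)$ on $[\alpha_0, 0]$, with $\alpha_0 := -\arccos(1-\mu_0^2/2)$. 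After Fubini, each resulting summand takes the form
\[
\ell^2 v(\ell)\, n_0 v(n_0)\,\int_{\alpha_0}^0 e^{-it[(2-2\cos\theta_+)^2 - s\theta_+]}\phi(\theta_+)\, d\theta_+,
\]
where $s = s(n, m, \ell, n_0, \rho, \rho') \in \R$ is a linear phase correction produced by the products of the oscillatory factors, and $\phi$ is independent of $s$. Corollary \ref{corollary}(i) then bounds this integral by $C|t|^{-1/4}$ uniformly in $s$; the summand carrying $e^{b(\mu)|m-\rho n_0|}$ is handled by Corollary \ref{corollary}(ii), whose $L^1$-derivative hypothesis reduces, upon changing back to the $\mu$ variable, to the uniform estimate $\int_0^{\mu_0}|b'(\mu)|\,|k|\,e^{b(\mu)|k|}\,d\mu \leq 1$, $\forall k \in \Z$, already exploited in the proof of Theorem \ref{D-E for free case}.

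To close the argument, insert the uniform $|t|^{-1/4}$ bound into the outer double sum:
\[
|K_{11}^{(a)}(t,n,m)| \leq C|t|^{-1/4} \sum_{\ell, n_0 \in \Z} |[S_0 A_{01} S_0](\ell, n_0)|\, \ell^2|v(\ell)|\, n_0|v(n_0)|.
\]
Cauchy--Schwarz in $n_0$, using $\sum_{n_0}|[S_0 A_{01} S_0](\ell, n_0)|^2 \leq \|S_0 A_{01} S_0\|_{\mathrm{op}}^2$, followed by a plain summation in $\ell$, majorises this by $C|t|^{-1/4}\,\|S_0 A_{01} S_0\|_{\mathrm{op}}\, \|n\, v(n)\|_{\ell^2}\, \|\ell^2\, v(\ell)\|_{\ell^1}$; both weighted norms are finite under the assumption $\beta > 15$, giving \eqref{estimate of K11 in Claim}.

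The main obstacle is the amplitude-regularity claim underlying Step 2: after combining the two Lemma-produced prefactors, the Jacobian of $\mu \mapsto \theta_+$, and the weight $\mu^3$, one must verify that $\phi(\theta_+)$ extends smoothly across $\theta_+ = 0$ with $\|\phi\|_\infty + \|\phi'\|_{L^1((\alpha_0, 0))}$ bounded independently of all internal and external indices. In particular, the apparent $\mu^{-2}$ singularity from Lemma \ref{cancelation of caseI}(4) has to be exactly cancelled by the $\theta_+^2$ produced by Lemma \ref{cancelation of caseI}(1) together with the Jacobian; this cancellation is precisely what makes the Van der Corput-type estimate of Corollary \ref{corollary} applicable with a constant uniform in $n, m, \ell, n_0, \rho, \rho'$ and the choice of signs.
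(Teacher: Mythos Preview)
Your proposal is correct and follows essentially the same route as the paper's proof: the same algebraic splitting $R_0^+TR_0^+-R_0^-TR_0^-=(R_0^+-R_0^-)TR_0^++R_0^-T(R_0^+-R_0^-)$, the same use of Lemma~\ref{cancelation of caseI} (parts (1) and (3)/(4), the latter two being equivalent via the symmetry of the free resolvent kernel), the same substitution $\cos\theta_+=1-\mu^2/2$, and the same appeal to Corollary~\ref{corollary}. The only cosmetic difference is in the final summability step, where you invoke Cauchy--Schwarz and the row-$\ell^2$ bound $\sum_{n_0}|[S_0A_{01}S_0](\ell,n_0)|^2\le\|S_0A_{01}S_0\|_{\mathrm{op}}^2$ to obtain $\|S_0A_{01}S_0\|_{\mathrm{op}}\,\|nv\|_{\ell^2}\,\|\ell^2v\|_{\ell^1}$, whereas the paper records the bound as $\|\langle\cdot\rangle^2v\|_{\ell^2}^2\,\|S_0A_{01}S_0\|_{\B(0,0)}$; both are finite under $\beta>15$.
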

\begin{proof}
To make best use of the cancelation properties of $S_0$ to eliminate the high singularity near $\mu=0$, we employ a trick by adding and subtracting a term. This allows us to rewrite:
\begin{align}
&R^{+}_0\left(\mu^4\right)vS_0A_{01}S_0vR^{+}_0\left(\mu^4\right)-R^{-}_0\left(\mu^4\right)vS_0A_{01}S_0vR^{-}_0\left(\mu^4\right)\nonumber\\
&=\left(R^{+}_0-R^{-}_0\right)\left(\mu^4\right)vS_0A_{01}S_0vR^{+}_0\left(\mu^4\right)+R^{-}_0\left(\mu^4\right)vS_0A_{01}S_0v\left(R^{+}_0-R^{-}_0\right)\left(\mu^4\right).\label{deformation}
\end{align}
Substituting \eqref{deformation} into \eqref{kernel of K11}, we reduce the estimate \eqref{estimate of K11 in Claim} to bounding the following two kernels:
$$K_{11}(t,n,m)=\left(\widetilde{K}_{11}+\widetilde{\widetilde{K}}_{11}\right)(t,n,m),$$
where
\begin{align*}
\widetilde{K}_{11}(t,n,m)&=\int_{0}^{\mu_0}e^{-it\mu^4}\mu^3\left(\left(R^{+}_0-R^{-}_0\right)\left(\mu^4\right)vS_0A_{01}S_0vR^{+}_0\left(\mu^4\right)\right)(n,m)d\mu,\\
\widetilde{\widetilde{K}}_{11}(t,n,m)&=\int_{0}^{\mu_0}e^{-it\mu^4}\mu^3\left(R^{-}_0\left(\mu^4\right)vS_0A_{01}S_0v\left(R^{+}_0-R^{-}_0\right)\left(\mu^4\right)\right)(n,m)d\mu.
\end{align*}
By symmetry, it suffices to deal with the term $\widetilde{K}_{11}(t,n,m)$.

From Lemma \ref{cancelation of caseI}, we have
\begin{align}\label{kernel of Ktuta11}
\widetilde{K}_{11}(t,n,m)&=\sum\limits_{m_1,m_2\in\Z}^{}\int_{[0,1]^2}(1-\rho _1){\rm sign}( M_2)\left(\Omega^{+}_{11}+\Omega^{-}_{11}\right)(t,N_1,M_2)d\rho_1 d\rho_2\nonumber\\
&\quad\times m^2_1m_2\left(vS_0A_{01}S_0v\right)(m_1,m_2),
\end{align}
where $N_1=n-\rho_1m_1$, $ M_2=m-\rho_2m_2$, and
\begin{align}\label{instrument of Omega11}
\begin{split}
\Omega^{\pm}_{11}(t, N_1,M_2)&=\int_{0}^{\mu_0}e^{-it\mu^4}\mu^3\Lambda^{\pm}_{11}(\mu,N_1,M_2)d\mu,\\
\Lambda^{\pm}_{11}(\mu,N_1,M_2)&=\frac{i}{16\mu^4}\left(\frac{\theta^3_{+}e^{-i\theta_{+}(|M_2|\pm |N_1|)}}{{\rm sin}^2\theta_{+}}-\frac{\theta^2_+e^{\mp i\theta_{+}| N_1|}}{{\rm sin}\theta_+}g(\mu)e^{b(\mu)|M_2|}\right),
\end{split}
\end{align}
with $g(\mu)$ defined in \eqref{gmu}.
In what follows, we will show that
\begin{equation}\label{Omega pm11}
|\Omega^{\pm}_{11}(t, N_1,M_2)|\lesssim |t|^{-\frac{1}{4}},\quad t\neq0,
\end{equation}
uniformly in $N_1,M_2$.
Once \eqref{Omega pm11} is established, then by the condition $\beta>15$, we have
$$|\widetilde{K}_{11}(t,n,m)|\lesssim|t|^{-\frac{1}{4}}\|\left<\cdot\right>^{2}v(\cdot)\|^2_{\ell^2}\|S_0A_{01}S_0\|_{\B (0,0)}\lesssim|t|^{-\frac{1}{4}},\quad t\neq0,$$
uniformly in $n,m\in\Z$, which proves \eqref{estimate of K11 in Claim}.

To establish \eqref{Omega pm11}, we focus on $\Omega^{+}_{11}$ for brevity, and the analysis for $\Omega^{-}_{11}$ is similar.
From \eqref{instrument of Omega11}, we have
\begin{equation}\label{omegap11 Omegap111 Omegap112}
\Omega^{+}_{11}(t, N_1,M_2)=\frac{i}{16}\left(\Omega^{+,1}_{11}-\Omega^{+,2}_{11}\right)(t, N_1,M_2),
\end{equation}
where
\begin{align}
\Omega^{+,1}_{11}(t, N_1,M_2)&=\int_{0}^{\mu_0}e^{-it\mu^4}e^{-i\theta_+(|N_1|+|M_2|)}\frac{\theta^3_+}{\mu{\rm sin}^2\theta_+}d\mu,\label{kernel of Omega p11}\\
\Omega^{+,2}_{11}(t, N_1,M_2)&=\int_{0}^{\mu_0}e^{-it\mu^4}e^{-i\theta_+|N_1|}\frac{\theta^2_+g(\mu)}{\mu{\rm sin}\theta_+}e^{b(\mu)|M_2|}d\mu.\label{kernel of Omega p12}
\end{align}

On one hand, one has
\begin{equation}\label{estimate for Omega p1 11}
\sup\limits_{N_1,M_2}\left|\Omega^{+,1}_{11}(t, N_1,M_2)\right|\lesssim|t|^{-\frac{1}{4}}.
\end{equation}
Indeed, applying the same variable substitution as in \eqref{varible substi1} to \eqref{kernel of Omega p11}, we obtain
\begin{equation}\label{new expre of Omegap1 11}
\Omega^{+,1}_{11}(t, N_1,M_2)=-\int_{r_0}^{0}e^{-it\left[(2-2{\rm cos}\theta_+)^2-\theta_{+}\left(-\frac{|N_1|+|M_2|}{t}\right)\right]}F_{11}(\theta_+)d\theta_+,\quad t\neq0,
\end{equation}
where $r_0\in (-\pi,0)$ satisfying ${\rm cos}r_0=1-\frac{\mu^2_0}{2}$ and
$$F_{11}(\theta_+)=\frac{\theta^3_+}{2(1-{\rm cos}\theta_+){\rm sin}\theta_+}.$$
Notice that $F_{11}(\theta_+)$ is continuously differentiable on $(r_0,0)$ and
$$\lim\limits_{\theta_+\rightarrow0}F_{11}(\theta_+)=1,\quad\lim\limits_{\theta_+\rightarrow0}F'_{11}(\theta_+)=0,$$
By Corollary \ref{corollary}, it follows that
$$|\eqref{new expre of Omegap1 11}|\lesssim \sup\limits_{s\in\R}\left|\int_{r_0}^{0}e^{-it\left[(2-2{\rm cos}\theta_+)^2-s\theta_+\right]}F_{11}(\theta_+)d\theta_+\right|\lesssim |t|^{-\frac{1}{4}}\left(1+\int_{r_0}^{0}|F'_{11}(\theta_+)|d\theta_+\right)\lesssim|t|^{-\frac{1}{4}}.$$
Thus, \eqref{estimate for Omega p1 11} is proved.

On the other hand,
\begin{equation}\label{estimate for Omega p2 11}
\sup\limits_{N_1,M_2}\left|\Omega^{+,2}_{11}(t, N_1,M_2)\right|\lesssim|t|^{-\frac{1}{4}}.
\end{equation}
Similarly, we apply the same variable substitution as in \eqref{varible substi1}, yielding
\begin{equation}\label{new expre of Omegap2 11}
\Omega^{+,2}_{11}(t, N_1,M_2)=-\int_{r_0}^{0}e^{-it\left[(2-2{\rm cos}\theta_+)^2-\theta_{+}\left(-\frac{|N_1|}{t}\right)\right]}\widetilde{F}_{11}(\theta_+,M_2)d\theta_+,
\end{equation}
where
$$\widetilde{F}_{11}(\theta_+,M_2)=\frac{\theta^2_+}{2(1-{\rm cos}\theta_+)}g(\mu(\theta_+))e^{b(\mu(\theta_+))|M_2|}:=f_{11}(\theta_+)e^{b(\mu(\theta_+))|M_2|}.$$
We claim that $\lim\limits_{\theta_+\rightarrow0} \widetilde{F}_{11}(\theta_+,M_2)$ exists and
\begin{equation}\label{estimate of parti of Ftuta11}
\sup\limits_{M_2}\int_{r_0}^{0}\Big|\frac{\partial\widetilde{F}_{11} }{\partial\theta_+}(\theta_+,M_2)\Big|d\theta_+\lesssim 1.
\end{equation}
Then, by Corollary \eqref{corollary}, \eqref{estimate for Omega p2 11} follows from
$$|\eqref{new expre of Omegap2 11}|\lesssim |t|^{-\frac{1}{4}}\left(\big|\lim\limits_{\theta_+\rightarrow0} \widetilde{F}_{11}(\theta_+,M_2)\big|+\int_{r_0}^{0}\Big|\frac{\partial\widetilde{F}_{11}}{\partial\theta_+}(\theta_+,M_2)\Big|d\theta_+\right)\lesssim |t|^{-\frac{1}{4}}.$$
Indeed, for any $M_2$, 
observe that $\mu(\theta_{+})\rightarrow0,b(\mu(\theta_{+}))\rightarrow0$ as $\theta_+\rightarrow0$, and
$$
\lim\limits_{\mu\rightarrow0}g(\mu)=1,\quad\lim\limits_{\mu\rightarrow0}g'(\mu)=0,$$
thus one can verify that $\lim\limits_{\theta_+\rightarrow0}f^{(k)}_{11}(\theta_+)$ exists for $k=0,1$. Consequently, $\lim\limits_{\theta_+\rightarrow0} \widetilde{F}_{11}(\theta_+,M_2)$ exists.
A direct calculation yields that
$$\frac{\partial\widetilde{F}_{11}}{\partial\theta_+}\left(\theta_+,M_2\right)=\underbrace{f'_{11}(\theta_+)e^{b(\mu(\theta_+))|M_2|}}_{I_1}+f_{11}(\theta_+)\underbrace{b'(\mu(\theta_+))\mu'(\theta_+)|M_2|e^{b(\mu(\theta_+))|M_2|}}_{I_2},$$
where $I_1$ is uniformly bounded on $(r_0,0)$ since $b(\mu)<0$ for any $\mu\in(0,2)$, and the existence of $\lim\limits_{\theta_+\rightarrow0}f'_{11}(\theta_+)$. Moreover, $\|I_2\|_{L^{1}([r_0,0))}$ is controlled by 2 uniformly in $M_2$ from \eqref{integral ebnm}.
Therefore, \eqref{estimate of parti of Ftuta11} is established.

Combining \eqref{estimate for Omega p1 11},\eqref{estimate for Omega p2 11} and \eqref{omegap11 Omegap111 Omegap112}, \eqref{Omega pm11} holds for the $``+"$ case and we complete the proof of \eqref{estimate of K11 in Claim}.
\end{proof}
\vskip0.3cm
In the second part of this subsection, we deal with the $K^{\pm}_{12}(t,n,m)$ defined in \eqref{kernel of K12}. By \eqref{asy expan on 0}, it can be written as the following sum:
\begin{align}\label{kernel of K12(sum)}
K^{\pm}_{12}(t,n,m)=\sum\limits_{j=1}^{6}K^{\pm,j}_{12}(t,n,m),
\end{align}
where
\begin{align}\label{kernels of Kpmi12}
\begin{split}
K^{\pm,1}_{12}(t,n,m)&=\int_{0}^{\mu_0}e^{-it\mu^4}\mu^3\left[R^{\pm}_0\left(\mu^4\right)v\left(\mu QA^{\pm}_{11}Q\right)vR^{\pm}_0\left(\mu^4\right)\right](n,m)d\mu,\\
K^{\pm,2}_{12}(t,n,m)&=\int_{0}^{\mu_0}e^{-it\mu^4}\mu^3\left[R^{\pm}_0\left(\mu^4\right)v\left(\mu^2 QA^{\pm}_{21}Q\right)vR^{\pm}_0\left(\mu^4\right)\right](n,m)d\mu,\\
K^{\pm,3}_{12}(t,n,m)&=\int_{0}^{\mu_0}e^{-it\mu^4}\mu^3\left[R^{\pm}_0\left(\mu^4\right)v\left(\mu^2 S_0A^{\pm}_{22}\right)vR^{\pm}_0\left(\mu^4\right)\right](n,m)d\mu,\\
K^{\pm,4}_{12}(t,n,m)&=\int_{0}^{\mu_0}e^{-it\mu^4}\mu^3\left[R^{\pm}_0\left(\mu^4\right)v\left(\mu^2 A^{\pm}_{23}S_0\right)vR^{\pm}_0\left(\mu^4\right)\right](n,m)d\mu,\\
K^{\pm,5}_{12}(t,n,m)&=\int_{0}^{\mu_0}e^{-it\mu^4}\mu^3\left[R^{\pm}_0\left(\mu^4\right)v\left(\mu^3 A^{\pm}_{31}\right)vR^{\pm}_0\left(\mu^4\right)\right](n,m)d\mu,\\
K^{\pm,6}_{12}(t,n,m)&=\int_{0}^{\mu_0}e^{-it\mu^4}\mu^3\left[R^{\pm}_0\left(\mu^4\right)v\Gamma_4(\mu)vR^{\pm}_0\left(\mu^4\right)\right](n,m)d\mu.\\
\end{split}
\end{align}

  Based on Lemma \ref{cancelation of caseI},  the kernel $(R^{\pm}_0\left(\mu^4\right)vBvR^{\pm}_0\left(\mu^4\right))(n,m)$ contribute at least a factor of $\mu^3$ for $B=\mu QA^{\pm}_{11}Q,\ \mu^2QA^{\pm}_{21}Q,\ \mu^2S_0A^{\pm}_{22},\ \mu^2A^{\pm}_{23}S_0,\ \mu^3A^{\pm}_{31}$. This implies that one can follow a process similar to that used for $\widetilde{K}_{11}(t,n,m)$ to verify the same decay estimates for $K^{+,j}_{12}(t,n,m)$ with $j=1,2,3,4,5$. In fact, it is not difficult to derive the following corollary.
\begin{corollary}\label{estimate for Kpm12 2,3,4,5,6}
{ Under the assumptions in Proposition \ref{Proposition of K1}, let $K^{+,j}_{12}(t,n,m)$ be as in \eqref{kernels of Kpmi12}. Then
\begin{equation*}
\sup\limits_{n,m\in\Z}\left|K^{+,j}_{12}(t,n,m)\right|\lesssim |t|^{-\frac{1}{4}},\quad t\neq0,
\end{equation*}
holds for $j=1,2,3,4,5$.}
\end{corollary}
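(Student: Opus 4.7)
The strategy is to follow the template established for $\widetilde K_{11}$ in the proof of Proposition~\ref{claim of K11}: apply Lemma~\ref{cancelation of caseI} to exploit the projection structure of the middle factor, perform the change of variables $\cos\theta_+=1-\mu^2/2$ to convert each integral into one over $\theta_+\in(r_0,0)$ with phase $(2-2\cos\theta_+)^2-s\theta_+$, and then invoke Corollary~\ref{corollary} to extract the $|t|^{-1/4}$ decay. Since $K^{+,j}_{12}$ involves the single-signed product $R^+_0 vBvR^+_0$ rather than a difference, I will use parts (3) and (4) of Lemma~\ref{cancelation of caseI} (which apply to individual $R^\pm_0$'s) rather than (1) and (2); each such use replaces the $\mu^{-3}$ singularity of the free kernel by an effective $\mu^{-2}$, thereby gaining a factor of $\mu$ in $\mu\to 0$.

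The case analysis is organized by the projections present in $B$. For $j=1,2$ with $B=\mu QA^\pm_{11}Q$ and $\mu^2 QA^\pm_{21}Q$, I apply part (3) to the left factor $R^\pm_0 vQ$ and part (4) to the right factor $QvR^\pm_0$, gaining $\mu^2$ from the two cancellations; combined with the explicit $\mu^k$ ($k=1,2$) supplied by $B$ and the $\mu^3$ from the outer integrand, the post-substitution $\theta_+$-amplitude is $O(\mu^{k-1})$ near $\theta_+=0$ and hence uniformly bounded. For $j=3,4$ with $B=\mu^2 S_0 A^\pm_{22}$ or $\mu^2 A^\pm_{23} S_0$, I apply (3) or (4) on whichever side carries the $S_0$ projection, gaining a single factor of $\mu$; the remaining $\mu^{-3}$ from the unmodified resolvent is then absorbed by the extra $\mu^2$ supplied by $B$. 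For $j=5$ with $B=\mu^3 A^\pm_{31}$ no projection is available, but the explicit $\mu^3$ from $A^\pm_{31}$ exactly balances the full $\mu^{-6}$ singularity of the two free resolvents, again leaving a bounded $\theta_+$-amplitude after substitution.

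In every case the resulting oscillatory integral has the same shape as $\Omega^{+,1}_{11}$ and $\Omega^{+,2}_{11}$ in the proof of Proposition~\ref{claim of K11}: a smooth amplitude on $(r_0,0)$ admitting a finite limit as $\theta_+\to 0^-$ and a derivative that is $L^1$ uniformly in the auxiliary parameters $(n,m,m_1,m_2,\rho_1,\rho_2)$. The only potentially singular contribution comes from $b'(\mu)\sim\mu^{-1}$ appearing in $\partial_\mu e^{b(\mu)|\cdot|}$, and this is tamed by the elementary bound \eqref{integral ebnm}. Corollary~\ref{corollary} then yields the $|t|^{-1/4}$ decay uniformly in $(n,m)$ and in the $m_i,\rho_i$ parameters. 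The remaining summation over $m_1,m_2$ is handled exactly as in Proposition~\ref{claim of K11}: the weights $m_i$ or $m_i^2$ produced by Lemma~\ref{cancelation of caseI} multiply $v(m_i)\lesssim\langle m_i\rangle^{-\beta/2}$, and the $\ell^2$-boundedness of the coefficients $A^\pm_{kj}$ from Theorem~\ref{asy theor of Mpm mu} reduces the sum to $\|\langle\cdot\rangle^2 v\|^2_{\ell^2}$, which is finite because $\beta>15$.

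The main obstacle I anticipate is the $j=5$ case, which lacks any projection-induced cancellation: the entire $\mu^{-6}$ singularity of the two resolvents has to be absorbed by the explicit $\mu^3$ attached to the bounded operator $A^\pm_{31}$, with no smoothing $m$-weighted prefactor to help. Here one must verify by direct computation that, after the substitution $\cos\theta_+=1-\mu^2/2$, the amplitude has a finite limit as $\theta_+\to 0^-$ and an $L^1$ derivative on $(r_0,0)$ uniformly in $n,m$; this amounts to controlling the derivatives of the products $\theta_+^{a}\sin^{-b}\theta_+\cdot g(\mu(\theta_+))^c\cdot e^{b(\mu(\theta_+))(|\cdot|+|\cdot|)}$ by the same device used in \eqref{integral ebnm}. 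Once this bookkeeping is settled, the Van der Corput argument via Corollary~\ref{corollary} delivers the claimed bound for all five kernels.
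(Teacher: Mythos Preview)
Your proposal is correct and takes essentially the same approach as the paper. The paper does not supply a detailed proof of this corollary; it merely observes, just before the statement, that for each choice of $B$ the kernel $R^{\pm}_0 vBvR^{\pm}_0$ gains at least a factor of $\mu^3$ via Lemma~\ref{cancelation of caseI}, so that ``one can follow a process similar to that used for $\widetilde K_{11}(t,n,m)$.'' Your outline fills in precisely these details: you correctly use parts (3)--(4) of Lemma~\ref{cancelation of caseI} (which act on a single $R^{\pm}_0$) rather than parts (1)--(2) (which require the difference $R^+_0-R^-_0$), and your $\mu$-accounting for each $j$ is accurate. One minor remark: the weights produced by parts (3)--(4) are $m_i$, not $m_i^2$, so the final sum is controlled by $\|\langle\cdot\rangle v\|_{\ell^2}^2$ rather than $\|\langle\cdot\rangle^2 v\|_{\ell^2}^2$; either is finite under $\beta>15$.

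Your flagged concern about $j=5$ is not a genuine obstacle. After the explicit $\mu^3$ attached to $A^{\pm}_{31}$ cancels the combined $\mu^{-6}$ singularity of the two free resolvents, the amplitude is a product of factors of the form $\mu/\sin\theta_+$, $g(\mu)$, and $e^{b(\mu)|\cdot|}$, each of which has already been shown (in the analyses of $\Omega^{+,1}_{11}$, $\Omega^{+,2}_{11}$, and $K_{0,2}$) to admit a finite limit as $\theta_+\to 0^-$ and an $L^1$ $\theta_+$-derivative uniformly in the spatial arguments, with the only delicate derivative handled by \eqref{integral ebnm}. No new idea is needed beyond this bookkeeping.
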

Finally, we focus on addressing $K^{+,6}_{12}(t,n,m)$ to complete the estimate for $K^{\pm}_{12}(t,n,m)$.
\begin{proposition}\label{claim of Kp612}
{ Under the assumptions in Propositions \ref{Proposition of K1}, let $K^{\pm,6}_{12}(t,n,m)$ be defined as in \eqref{kernels of Kpmi12}. Then
\begin{equation}\label{k126}
\sup\limits_{n,m\in\Z}\left|K^{+,6}_{12}(t,n,m)\right|\lesssim |t|^{-\frac{1}{4}},\quad t\neq0.
\end{equation}
}
\end{proposition}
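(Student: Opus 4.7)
The plan is to apply the Van der Corput Lemma to $K^{+,6}_{12}(t, n, m) = \int_0^{\mu_0} e^{-it\mu^4} A(\mu, n, m)\, d\mu$ with amplitude $A(\mu, n, m) := \mu^3 [R^{+}_0(\mu^4) v \Gamma_4(\mu) v R^{+}_0(\mu^4)](n, m)$. Since the phase $\Phi(\mu) = \mu^4$ satisfies $\Phi^{(4)}(\mu) \equiv 24$, van der Corput with $k = 4$ reduces the target to controlling $\sup_{\mu \in (0,\mu_0)} |A(\mu, n, m)|$ and $\int_0^{\mu_0} |\partial_\mu A(\mu, n, m)|\, d\mu$ uniformly in $n, m \in \Z$. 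Unlike the cases $j = 1, \ldots, 5$ in \eqref{kernels of Kpmi12}, the operator $\Gamma_4(\mu)$ carries no projection $Q$ or $S_0$, so Lemma \ref{cancelation of caseI} is unavailable; instead, I exploit the sharper smallness $\|\Gamma_4(\mu)\|_{\B(0,0)} \lesssim \mu^4$ and $\|\partial_\mu \Gamma_4(\mu)\|_{\B(0,0)} \lesssim \mu^3$ from \eqref{estimate of Gamma}.

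For the sup bound, I factor $R^{+}_0(\mu^4, n, m) = \mu^{-3} \tilde{R}^{+}(\mu, n, m)$ via \eqref{kernel of R0 boundary}, where $|\tilde{R}^{+}(\mu, n, m)| \lesssim 1$ on $(0, \mu_0) \times \Z^2$ thanks to $|e^{-i\theta_+(\mu)|n-m|}| = 1$ and $b(\mu) < 0$. Writing the kernel as the double sum $\sum_{m_1, m_2} R^{+}_0(\mu^4, n, m_1) v(m_1) \Gamma_4(\mu)(m_1, m_2) v(m_2) R^{+}_0(\mu^4, m_2, m)$ and applying Cauchy–Schwarz in $(m_1, m_2)$, the uniform pointwise bound $|R^{+}_0(\mu^4, \cdot, \cdot)| \lesssim \mu^{-3}$ combined with $\|\Gamma_4(\mu)\|_{\B(0,0)} \lesssim \mu^4$ and $v \in \ell^2(\Z)$ (guaranteed by $\beta > 15$) yields $|A(\mu, n, m)| \lesssim \mu^3 \cdot \mu^{-6} \cdot \mu^4 = \mu$, uniformly in $n, m$.

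For the derivative, the product rule produces three summands. The contribution from $R^{+}_0 v (\partial_\mu \Gamma_4) v R^{+}_0$ is handled by the same Cauchy–Schwarz argument with the $\mu^3$ bound for $\partial_\mu \Gamma_4$, producing a term bounded by $\mu^3 \cdot \mu^{-3} = 1$ pointwise and hence integrable on $(0, \mu_0)$. The two remaining terms involve $\partial_\mu R^{+}_0(\mu^4) = -3\mu^{-4} \tilde{R}^{+} + \mu^{-3} \partial_\mu \tilde{R}^{+}$; the first piece is controlled as in the sup bound, and gives a uniform contribution.

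The main obstacle is the piece $\mu^{-3} \partial_\mu \tilde{R}^{+}(\mu, n, m)$, which carries a linear-in-$|n-m|$ growth from differentiating the oscillatory factor $e^{-i\theta_+(\mu)|n-m|}$ and cannot be controlled uniformly by naive Cauchy–Schwarz. My remedy is to recombine this oscillation with the dispersive phase $e^{-it\mu^4}$ and change variables $\mu \mapsto \theta_+$ via $\cos\theta_+ = 1 - \mu^2/2$, so that the composite phase becomes $(2 - 2\cos\theta_+)^2 - s\theta_+/t$ with $s = -|n-m|/t$; this falls within the scope of Corollary \ref{corollary}, which delivers a $|t|^{-1/4}$ bound uniformly in $s$, thus absorbing the growth into the phase. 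The exponentially decaying companion $e^{b(\mu)|n-m|}$ generates bounded amplitudes via $\sup_x x e^{-cx} \lesssim c^{-1}$ together with $|b(\mu)| \sim \mu$, analogous to the treatment of $F_0$ in the proof of Theorem \ref{D-E for free case}. Assembling these pieces and using the fast decay $\beta > 15$ to control the weighted $\ell^2$-norms of $v$ that arise completes the uniform $|t|^{-1/4}$ estimate; the symmetric $K^{-,6}_{12}$ case follows identically with $R^{-}_0$ in place of $R^{+}_0$.
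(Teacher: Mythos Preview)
Your proposal assembles the right ingredients—the bounds $\|\Gamma_4(\mu)\|\lesssim\mu^4$, $\|\partial_\mu\Gamma_4(\mu)\|\lesssim\mu^3$ from~\eqref{estimate of Gamma}, the substitution $\mu\mapsto\theta_+$, and Corollary~\ref{corollary}—and these are precisely the tools the paper uses. The logical structure, however, has a gap. You first commit to Van der Corput with phase $\mu^4$ and amplitude $A(\mu,n,m)$, then discover that $\int_0^{\mu_0}|\partial_\mu A|\,d\mu$ is not uniformly bounded because differentiating the oscillatory factor $e^{-i\theta_+(\mu)|\cdot|}$ produces linear growth, and only then propose to ``recombine'' that oscillation with the dispersive phase. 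But once you are inside the Van der Corput framework with a fixed phase/amplitude split, you cannot reassign a piece of $\partial_\mu A$ back to the phase. The correct organization—exactly what the paper does—is to expand the product $R^+_0(\mu^4,n,m_1)R^+_0(\mu^4,m_2,m)$ into its four oscillatory/decaying constituents via~\eqref{kernel of R0 boundary} \emph{before} invoking any oscillatory-integral estimate, and then treat each piece separately: the pieces carrying factors $e^{-i\theta_+|n-m_1|}$ or $e^{-i\theta_+|m_2-m|}$ are handled after the change of variable $\mu\mapsto\theta_+$ using Corollary~\ref{corollary}, while the purely decaying piece is handled directly with phase $\mu^4$.

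There is also a concrete slip in your remedy paragraph: the oscillatory contribution is $e^{-i\theta_+(|n-m_1|+|m_2-m|)}$, not $e^{-i\theta_+|n-m|}$, so the parameter $s$ in the composite phase depends on the summation variables $m_1,m_2$, not on $n,m$ alone. This forces you to interchange the sum $\sum_{m_1,m_2}$ with the $d\theta_+$ integral, apply Corollary~\ref{corollary} termwise with amplitude proportional to $\mu^{-3}\Gamma_4(\mu)(m_1,m_2)$ (whose matrix elements inherit the bounds $\lesssim\mu$ and $\lesssim 1$ for the derivative from the operator-norm estimate~\eqref{estimate of Gamma}), and then sum over $m_1,m_2$ using the decay of $v$. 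With these two structural fixes your argument aligns with the paper's proof.
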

\begin{proof}
 We consider the $``+"$ case for instance. By \eqref{kernel of R0 boundary} and \eqref{kernels of Kpmi12}, it follows that
$$K^{+,6}_{12}(t,n,m)=-\frac{1}{16}\sum\limits_{j=1}^{4}\int_{0}^{\mu_0}e^{-it\mu^4}\Lambda^{+,j}_{12}(\mu,n,m)d\mu:=-\frac{1}{16}\sum\limits_{j=1}^{4}{\Omega}^{+,j}_{12}(t,n,m),$$
where $N_1=n-m_1$, $M_2=m-m_2$ and
\begin{align}\label{kernels of Omegaj12}
\begin{split}
\Lambda^{+,1}_{12}(\mu,n,m)&=\sum\limits_{m_1,m_2\in\Z}^{}e^{-i\theta_+(|N_1|+|M_2|)}\frac{\left(v\Gamma_4(\mu)v\right)(m_1,m_2)}{\mu{\rm sin}^2\theta_+},\\
\Lambda^{+,2}_{12}(\mu,n,m)&=\sum\limits_{m_1,m_2\in\Z}^{}e^{-i\theta_+|N_1|}\frac{\left(v\Gamma_4(\mu)v\right)(m_1,m_2)}{-\mu{\rm sin}\theta_+{\rm sin}\theta}e^{b(\mu)|M_2|},\\
\Lambda^{+,3}_{12}(\mu,n,m)&=\sum\limits_{m_1,m_2\in\Z}^{}e^{-i\theta_+|M_2|}\frac{\left(v\Gamma_4(\mu)v\right)(m_1,m_2)}{-\mu{\rm sin}\theta_+{\rm sin}\theta}e^{b(\mu)|N_1|},\\
\Lambda^{+,4}_{12}(\mu,n,m)&=\sum\limits_{m_1,m_2\in\Z}^{}\frac{\left(v\Gamma_4(\mu)v\right)(m_1,m_2)}{\mu{\rm sin}^2\theta}e^{b(\mu)(|N_1|+|M_2|)}.
\end{split}
\end{align}
Noting the symmetry between $\Lambda^{+,2}_{12}$ and $\Lambda^{+,3}_{12}$, it suffices to analyse the kernels $\Omega^{+,j}_{12}(t,n,m)$ for $j=1,2,4$.

On one hand, applying the variable substitution \eqref{varible substi1} to $\Omega^{+,j}_{12}(t,n,m)$ for $j=1,2$, we obtain that
\begin{align}\label{Omega121}
\begin{split}
\Omega^{+,1}_{12}(t,n,m)&=\int_{r_0}^{0}\sum\limits_{m_1,m_2\in\Z}^{}e^{-it(2-2{\rm cos}\theta_+)^2}e^{-i\theta_+(|N_1|+|M_2|)}\\
&\quad \times v(m_1)\varphi_1(\mu(\theta_+))(m_1,m_2)v(m_2)d\theta_+,
\end{split}
\end{align}
and
\begin{align}\label{Omega122}
\begin{split}
\Omega^{+,2}_{12}(t,n,m)&=\int_{r_0}^{0}\sum\limits_{m_1,m_2\in\Z}^{}e^{-it(2-2{\rm cos}\theta_+)^2}e^{-i\theta_+|N_1|}\\
&\quad\times v(m_1)\varphi_2(\mu(\theta_+))(m_1,m_2)v(m_2)e^{b(\mu(\theta_+))|M_2|}d\theta_+,
\end{split}
\end{align}
where
$$\varphi_{j}(\mu)=\frac{\Gamma_4(\mu)}{\mu^3}g_j(\mu)\ {\rm with}\ g_1(\mu)=\frac{1}{\sqrt{1-\frac{\mu^2}{4}}}\ {\rm and}\ g_2(\mu)=\frac{i}{\sqrt{1+\frac{\mu^2}{4}}}.$$
Then, 
\begin{align*}
\begin{split}
\left|\eqref{Omega121}\right|&\leq\sup\limits_{s\in\R}^{}\left|\int_{r_0}^{0}e^{-it\left[(2-2{\rm cos}\theta_+)^2-s\theta_+\right]}\Phi_1(\mu(\theta_+))d\theta_+\right|,\\
\left|\eqref{Omega122}\right|&\leq\sup\limits_{s\in\R}^{}\left|\int_{r_0}^{0}e^{-it\left[(2-2{\rm cos}\theta_+)^2-s\theta_+\right]}\Phi_2(\mu(\theta_+),m)d\theta_+\right|,
\end{split}
\end{align*}
where
\begin{align}
\begin{split}
\Phi_1(\mu)&=\sum\limits_{m_1\in\Z}^{}v(m_1)(\varphi_{1}(\mu)v)(m_1),\\
\Phi_2(\mu,m)&=\sum\limits_{m_1\in\Z}^{}v(m_1)\left(\varphi_{2}(\mu)\left(v(\cdot)e^{b(\mu)|\cdot-m|}\right)\right)(m_1).\\
\end{split}
\end{align}
In what follows, we will show that
\begin{align}\label{limit of Phi12}
\lim\limits_{\theta_+\rightarrow0}\Phi_1(\mu(\theta_+))=\lim\limits_{\theta_+\rightarrow0}\Phi_{2}(\mu(\theta_+),m)=0,
\end{align}
and
\begin{align}\label{derivative of Phi12}
\|\left(\Phi_1(\mu(\theta_+))\right)'\|_{L^{1}([r_0,0))}+\left\|\frac{\partial\Phi_2}{\partial\theta_+}(\mu(\theta_+),m)\right\|_{L^{1}([r_0,0))}\lesssim 1,
\end{align}
uniformly in $m\in\Z$. Then, by Corollary \ref{corollary}, we can obtain that
\begin{equation}\label{estimate of Omega12,1,2}
\big|\Omega^{+,1}_{12}(t,n,m)\big|+\big|\Omega^{+,2}_{12}(t,n,m)\big|\lesssim |t|^{-\frac{1}{4}},\quad t\neq0.
\end{equation}
Indeed, noting that by virtue of \eqref{estimate of Gamma}, for $\mu\in(0,\mu_0]$,
\begin{equation}\label{property of varphi1,2}
\|\varphi_{j}(\mu)\|_{\B(0,0)}\lesssim\mu,\quad\left\|\varphi'_{j}(\mu)\right\|_{\B(0,0)}\lesssim1,\quad j=1,2,
\end{equation}
which implies that
\begin{align}
\begin{split}
\left|\Phi_{1}(\mu)\right|+\left|\Phi_{2}(\mu,m)\right|\lesssim\mu,\quad \left|\partial_{\mu}\Phi_{1}(\mu)\right|\lesssim1.
\end{split}
\end{align}
uniformly in $m\in \Z$. This proves \eqref{limit of Phi12}.
Since $\mu'(\theta_+)<0$, we have
$$\int_{r_0}^{0}\left|\left(\Phi_1(\mu(\theta_+))\right)'\right|d\theta_+ =\int_{r_0}^{0}\left|\left(\partial_{\mu}\Phi_1\right)(\mu(\theta_+))\mu'(\theta_+)\right|d\theta_+\lesssim1.$$
Moreover, one can caculate that
\begin{align*}
\partial_{\mu}\left(\Phi_{2}(\mu,m)\right)&=\sum\limits_{m_1\in\Z}^{}v(m_1)\varphi'_2(\mu)\left(v(\cdot)e^{b(\mu)|\cdot-m|}\right)(m_1)\\
&\quad+\sum\limits_{m_1\in\Z}^{}v(m_1)\varphi_2(\mu)\left(v(\cdot)\partial_{\mu}\left(e^{b(\mu)|\cdot-m|}\right)\right)(m_1).
\end{align*}
Then by \eqref{property of varphi1,2},
\begin{align*}
\left|\partial_{\mu}\left(\Phi_{2}(\mu,m)\right)\right|\lesssim\|v\|^2_{\ell^{2}}+\|v\|_{\ell^{2}}\left\|v(\cdot)\partial_{\mu}\left(e^{b(\mu)|\cdot-m|}\right)\right\|_{\ell^1}.
\end{align*}
Hence, by \eqref{integral ebnm},
$$\int_{r_0}^{0}\left|\frac{\partial\Phi_2}{\partial\theta_+}\left(\mu(\theta_+),m\right)\right|d\theta_+\lesssim1+\sum\limits_{m_1\in\Z}^{}|v(m_1)|\int_{r_0}^{0}\left|\partial_{\mu}\left(e^{b(\mu)|m_1-m|}\right)\right|d\theta_+\lesssim1,$$
uniformly in $m\in \Z$.
Thus, \eqref{derivative of Phi12} is obtained. 

On the other hand, the kernel of ${\Omega}^{+,4}_{12}(t,n,m)$ is as follows:
$${\Omega}^{+,4}_{12}(t,n,m)=\int_{0}^{\mu_0}e^{-it\mu^4}\Phi_{4}(\mu,n,m)d\mu,$$
where
\begin{align*}
\Phi_{4}(\mu,n,m)&=\sum\limits_{m_1\in\Z}^{}v(m_1)e^{b(\mu)|n-m_1|}\left(\varphi_4(\mu)\left(v(\cdot)e^{b(\mu)|\cdot-m|}\right)\right)(m_1),\\
\varphi_4(\mu)&=\frac{\Gamma_4(\mu)}{\mu^3}g_4(\mu),\ g_4(\mu)=\frac{-1}{1+\frac{\mu^2}{4}}.
\end{align*}
By applying the Van der Corput Lemma directly, it follows that
\begin{equation}\label{omega124}
\big|{\Omega}^{+,4}_{12}(t,n,m)\big|\lesssim |t|^{-\frac{1}{4}}\left(|\Phi_{4}(\mu_0,n,m)|+\left\|\frac{\partial\Phi_4}{\partial\mu}(\mu,n,m)\right\|_{L^{1}((0,\mu_0])}\right)\lesssim |t|^{-\frac{1}{4}},\quad t\neq0,
\end{equation}
uniformly in $n,m\in\Z$, where the uniform boundedness of $|\Phi_{4}(\mu_0,n,m)|$ relies on the facts that $b(\mu)<0$ and $\varphi_4(\mu)$ also satisfies \eqref{property of varphi1,2}. The uniform estimate for the integral of partial derivative can be derived similarly to $\Phi_2(\mu,m)$. Therefore, combining \eqref{estimate of Omega12,1,2} and \eqref{omega124}, the desired result \eqref{k126} is obtained.
\end{proof}
In summary, combining Corollary \ref{estimate for Kpm12 2,3,4,5,6}, Proposition \ref{claim of Kp612} and Proposition \ref{claim of K11}, then Proposition \ref{Proposition of K1} is proved.
\subsection{The estimates of kernels $K_2^{\pm}(t,n,m)$}\label{subsection of case II}
\begin{proposition}\label{Proposition of K2}
{ Let $H=\Delta^2+V$ with $|V(n)|\lesssim \left<n\right>^{-\beta}$ for $\beta>2$. 
Then
\begin{equation}\label{estimate of Kpm2}
\sup\limits_{n,m\in\Z}\left|K^{\pm}_{2}(t,n,m)\right|\lesssim |t|^{-\frac{1}{4}},\quad t\neq0.
\end{equation}
}
\end{proposition}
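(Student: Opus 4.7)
The plan is to bring $K_2^\pm$ into the same algebraic form as $K_1^\pm$ and $K_3^\pm$, namely $R_0^\pm v M^\pm(\mu)^{-1} v R_0^\pm$, only now integrated over the compact interval $[\mu_0, 2-\mu_0]$ which avoids both thresholds, and then apply the Van der Corput bound of Corollary~\ref{corollary} uniformly in $n, m \in \Z$. First, using the second resolvent identity \eqref{reso identity 2} together with Lemma~\ref{lemma of inverse of M pm mu}, the integrand in \eqref{kernels of Ki} (for $j=2$) simplifies:
\begin{equation*}
R_0^\pm(\mu^4) V R_0^\pm(\mu^4) - R_0^\pm(\mu^4) V R_V^\pm(\mu^4) V R_0^\pm(\mu^4) = R_0^\pm(\mu^4) - R_V^\pm(\mu^4) = R_0^\pm(\mu^4)\, v\, M^\pm(\mu)^{-1}\, v\, R_0^\pm(\mu^4).
\end{equation*}
Thus $K_2^\pm$ has exactly the same structure as $K_1^\pm$ in \eqref{kernel of K1}, only that the $\mu$-integration now lives on a compact interval bounded strictly away from $\mu = 0$ and $\mu = 2$.

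Next, I would perform the change of variable $\cos\theta_+ = 1 - \mu^2/2$, which carries $[\mu_0, 2-\mu_0]$ diffeomorphically onto a closed subinterval $[\theta_a, \theta_b] \subset (-\pi, 0)$ bounded strictly away from both endpoints. Expanding the kernel as a double sum
\begin{equation*}
\bigl[R_0^\pm v M^\pm(\mu)^{-1} v R_0^\pm\bigr](n, m) = \sum_{m_1, m_2 \in \Z} v(m_1) v(m_2)\, R_0^\pm(\mu^4, n, m_1)\, M^\pm(\mu)^{-1}(m_1, m_2)\, R_0^\pm(\mu^4, m_2, m),
\end{equation*}
and splitting each free-resolvent kernel via \eqref{kernel of R0 boundary} into its oscillatory piece carrying $e^{\mp i\theta_\pm|\cdot|}$ and its exponentially decaying piece carrying $e^{b(\mu)|\cdot|}$, one arrives at a finite sum of oscillatory integrals in $\theta_+$ whose phase takes the form $(2 - 2\cos\theta_+)^2 - s\theta_+$ with a shift $s$ depending affinely on $n, m, m_1, m_2$ and $t$. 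Corollary~\ref{corollary} then supplies a $|t|^{-1/4}$ bound for each such integral, uniformly in $s$, provided the corresponding amplitude is pointwise bounded and has uniformly bounded total variation on $[\theta_a, \theta_b]$.

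The main obstacle is this uniform amplitude control, which reduces to the two kernel estimates
\begin{equation*}
\sup_{\mu \in [\mu_0, 2-\mu_0]}\, \sup_{m_1, m_2 \in \Z}\, \bigl(|M^\pm(\mu)^{-1}(m_1, m_2)| + |\partial_\mu M^\pm(\mu)^{-1}(m_1, m_2)|\bigr) < \infty,
\end{equation*}
via $|M^{-1}(m_1, m_2)| \leq \|M^\pm(\mu)^{-1}\|_{\B(0,0)}$ and the analogous bound for its derivative. The first follows from Lemma~\ref{lemma of inverse of M pm mu} together with continuity of $M^\pm(\mu)^{-1}$ in $\mu$ on the compact interval. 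The second, via $\partial_\mu M^\pm(\mu)^{-1} = -M^\pm(\mu)^{-1}\bigl(v\,\partial_\mu R_0^\pm(\mu^4)\,v\bigr) M^\pm(\mu)^{-1}$, combines Theorem~\ref{LAP-theorem} at $j = 1$ (valid since $\beta > 2$ gives $[\beta] \geq 2$ and permits $s \in (3/2, [\beta]]$) with the weighted mapping $v : \ell^2(\Z) \to \ell^{2,s}(\Z)$ coming from $|v(n)| \lesssim \langle n\rangle^{-\beta/2}$. Combined with the $L^1$-control \eqref{integral ebnm} for $\partial_\mu e^{b(\mu)|\cdot|}$, Corollary~\ref{corollary} then yields $|t|^{-1/4}$ per term, and summation against $|v(m_1) v(m_2)|$ converges because $v \in \ell^1(\Z)$ whenever $\beta > 2$, which delivers the uniform bound $|K_2^\pm(t, n, m)| \lesssim |t|^{-1/4}$.
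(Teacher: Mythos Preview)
Your reduction to the $R_0^\pm v (M^\pm)^{-1} v R_0^\pm$ form and the term-by-term Van der Corput strategy are sound in outline, but there is a genuine gap in the decay hypothesis. To bound $|\partial_\mu M^\pm(\mu)^{-1}(m_1,m_2)|$ uniformly via $\partial_\mu (M^\pm)^{-1} = -(M^\pm)^{-1}\bigl(v\,\partial_\mu R_0^\pm(\mu^4)\,v\bigr)(M^\pm)^{-1}$, you need $v\,\partial_\mu R_0^\pm(\mu^4)\,v$ to be bounded on $\ell^2(\Z)$. The LAP at $j=1$ gives $\partial_\mu R_0^\pm(\mu^4)\in\B(s,-s)$ for $s>3/2$, but multiplication by $v$ maps $\ell^2\to\ell^{2,s}$ only when $s\le\beta/2$, since $|v(n)|\lesssim\langle n\rangle^{-\beta/2}$. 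Thus you are forced into $\beta/2>3/2$, i.e.\ $\beta>3$, whereas the proposition is stated for $\beta>2$; in the range $2<\beta\le3$ your derivative bound on $(M^\pm)^{-1}$ does not close.

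The paper avoids this loss by \emph{not} passing to the symmetric form on $[\mu_0,2-\mu_0]$: it keeps the integrand in the shape $R_0^\pm V R_0^\pm - R_0^\pm V R_V^\pm V R_0^\pm$ and treats the two pieces separately (Propositions~\ref{claim of Kpm21} and~\ref{claim of Kpm22}). The point is that the full potential $V$ carries decay $\langle n\rangle^{-\beta}$ rather than $\langle n\rangle^{-\beta/2}$. Writing $R_0^\pm(\mu^4,n,m)=e^{-i\theta_\pm|n-m|}\widetilde R_0^\pm(\mu,n,m)$ with $\widetilde R_0^\pm$ uniformly bounded and of bounded variation (Lemma~\ref{property of R0tuta}), the amplitude for the $R_V^\pm$ piece is controlled through $\|\langle\cdot\rangle^{\varepsilon}V(\cdot)\widetilde R_0^\pm\|_{\ell^2}$ paired with $\|R_V^\pm\|_{\B(\varepsilon,-\varepsilon)}$ and $\|\partial_\mu R_V^\pm\|_{\B(\varepsilon,-\varepsilon)}$; this only requires $3/2<\varepsilon<\beta-\tfrac12$, so $\beta>2$ suffices. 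In short, the square-root splitting $V=vUv$ costs you a factor of two in the required decay; for the main theorem ($\beta>15$) your argument would go through, but not for the sharp hypothesis of Proposition~\ref{Proposition of K2}.
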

Recall from \eqref{kernels of Ki} that
\begin{equation}\label{kernel of K2(two parts)}
K^{\pm}_{2}(t,n,m)=\left(K^{\pm}_{21}-K^{\pm}_{22}\right)(t,n,m),
\end{equation}
where
\begin{align}\label{kernels of Kpm21,22}
\begin{split}
K^{\pm}_{21}(t,n,m)&=\int_{\mu_0}^{2-\mu_0}e^{-it\mu^4}\mu^3\left(R^{\pm}_{0}\left(\mu^4\right)VR^{\pm}_{0}\left(\mu^4\right)\right)(n,m)d\mu,\\
K^{\pm}_{22}(t,n,m)&=\int_{\mu_0}^{2-\mu_0}e^{-it\mu^4}\mu^3\left(R^{\pm}_{0}\left(\mu^4\right)VR^{\pm}_V\left(\mu^4\right)VR^{\pm}_{0}\left(\mu^4\right)\right)(n,m)d\mu.
\end{split}
\end{align}

Considering that there is no singularity on the interval $[\mu_0,2-\mu_0]$, it is convenient in this part to use the second form of $R^{\pm}_0\left(\mu^{4},n,m\right)$ given in \eqref{kernel of R0 boundary}, namely,
\begin{align}
R^{\pm}_0\left(\mu^{4},n,m\right)=e^{-i\theta_{\pm}|n-m|}\left(A^{\pm}(\mu)+B(\mu)e^{(b(\mu)+i\theta_{\pm})|n-m|}\right):=e^{-i\theta_{\pm}|n-m|}\widetilde{R}^{\pm}_{0}(\mu,n,m),\label{R0 in the R0tuta form}
\end{align}
where
\begin{equation}\label{Apm,Bmu}
A^{\pm}(\mu)=\frac{\pm i}{4\mu^3\sqrt{1-\frac{\mu^2}{4}}},\quad B(\mu)=\frac{-1}{4\mu^3\sqrt{1+\frac{\mu^2}{4}}}.
\end{equation}

It is straitforward to verify that $\widetilde{R}^{\pm}_{0}(\mu,n,m)$ satisfies the following property, which will be frequently used in this subsection. We summarize it below.
\begin{lemma}\label{property of R0tuta}
{
Let $0<a<b<2$, and let $\widetilde{R}^{\pm}_{0}(\mu,n,m)$ be defined as in \eqref{R0 in the R0tuta form}. Then there exists a constant $C(a,b)>0$ such that $$\sup\limits_{\mu\in[a,b]}\left|\widetilde{R}^{\pm}_{0}(\mu,n,m)\right|+\int_{a}^{b}\left|\partial_{\mu}\left(\widetilde{R}^{\pm}_{0}(\mu,n,m)\right)\right|d\mu\leq C(a,b),$$
uniformly in $n,m\in\Z$.}
\end{lemma}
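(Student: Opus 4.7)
The overall plan is to exploit the fact that on the compact interval $[a,b]\subset(0,2)$ every elementary ingredient appearing in the formula $\widetilde{R}^{\pm}_{0}(\mu,n,m)=A^{\pm}(\mu)+B(\mu)e^{(b(\mu)+i\theta_{\pm}(\mu))|n-m|}$ stays bounded away from its singularities at the thresholds $\mu=0$ and $\mu=2$, and that the only factor carrying $(n,m)$-dependence is the exponential $e^{(b(\mu)+i\theta_{\pm}(\mu))|n-m|}$, whose modulus equals $e^{b(\mu)|n-m|}$ and is bounded by $1$ since $b(\mu)<0$ on $(0,2)$. Consequently, the sup bound is immediate: $|\widetilde{R}^{\pm}_{0}(\mu,n,m)|\leq |A^{\pm}(\mu)|+|B(\mu)|$, and both functions are continuous on the compact $[a,b]$.

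For the $L^{1}$ bound on the derivative, I would compute
\begin{equation*}
\partial_{\mu}\widetilde{R}^{\pm}_{0}=(A^{\pm})'(\mu)+B'(\mu)e^{(b+i\theta_{\pm})|n-m|}+B(\mu)\bigl(b'(\mu)+i\theta_{\pm}'(\mu)\bigr)|n-m|e^{(b+i\theta_{\pm})|n-m|}.
\end{equation*}
The first two summands are pointwise bounded in modulus by $|(A^{\pm})'(\mu)|+|B'(\mu)|$, both continuous on $[a,b]$, so their contribution to the integral is a constant depending only on $a,b$. The only genuine difficulty is the third summand, because of the prefactor $|n-m|$; I will split it into a $b'(\mu)$-piece and a $\theta_{\pm}'(\mu)$-piece.

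For the $b'$-piece, recalling from the treatment of \eqref{integral ebnm} that $b'(\mu)<0$ on $(0,2)$, the integrand is (up to the bounded factor $B(\mu)$) exactly $-\frac{d}{d\mu}e^{b(\mu)|n-m|}$, so the integral telescopes to $e^{b(a)|n-m|}-e^{b(b)|n-m|}\leq 1$ uniformly in $n,m$. For the $\theta_{\pm}'$-piece, the monotonicity of $b$ gives $b(\mu)\leq b(a)<0$ on $[a,b]$; setting $c=-b(a)>0$ and using the elementary bound $xe^{-cx}\leq (ec)^{-1}$, I get $|n-m|e^{b(\mu)|n-m|}\leq (ec)^{-1}$ uniformly in $n,m$, and since $|B(\mu)|$ and $|\theta_{\pm}'(\mu)|$ are continuous on $[a,b]$, this piece also contributes at most a constant depending only on $a,b$.

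The main (indeed only) obstacle is controlling the apparently dangerous factor $|n-m|$ produced by differentiating the exponential, and its resolution rests on two independent facts: the sign $b'<0$ that makes the $b'$-part a total derivative, and the sub-linear trade-off $xe^{-cx}\lesssim 1$ that absorbs the $|n-m|$ against the strict negativity of $b$ on $[a,b]$.
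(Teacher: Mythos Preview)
Your proof is correct and follows essentially the same route as the paper's (omitted but source-present) argument: both compute $\partial_\mu \widetilde{R}^{\pm}_0$ explicitly, dispose of the $(A^{\pm})'$ and $B'$ terms by continuity on the compact $[a,b]$, and reduce matters to controlling $|n-m|\,e^{b(\mu)|n-m|}$. The only cosmetic difference is in the treatment of the $\theta'_{\pm}$-contribution: the paper observes that $|b'(\mu)+i\theta'_{\pm}(\mu)|\lesssim -b'(\mu)$ on $[a,b]$ (since $|\theta'_{\pm}|$ is bounded and $-b'$ is bounded below there) and then invokes \eqref{integral ebnm} once, whereas you split off the $\theta'_{\pm}$-piece and kill it via the pointwise bound $x e^{-cx}\le (ec)^{-1}$ with $c=-b(a)>0$. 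Both devices are equivalent here.
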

\vskip0.2cm
\begin{proposition}\label{claim of Kpm21}
{ Under the assumptions in Proposition \ref{Proposition of K2}, let $K^{\pm}_{21}(t,n,m)$ be defined as in \eqref{kernels of Kpm21,22}, then \eqref{estimate of Kpm2} holds for $K^{\pm}_{21}(t,n,m)$.
}
\end{proposition}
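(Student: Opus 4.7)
The plan is to expand the kernel of the composition $R^{\pm}_{0}(\mu^4)VR^{\pm}_{0}(\mu^4)$ as a sum over an intermediate variable $m_1\in\Z$, exchange summation and $\mu$-integration (justified by decay of $V$), and then reduce each summand to a one-dimensional oscillatory integral to which the Van der Corput estimate of Corollary \ref{corollary} applies uniformly in all parameters.

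First I would write, using \eqref{R0 in the R0tuta form},
\begin{align*}
(R^{\pm}_{0}(\mu^4)VR^{\pm}_{0}(\mu^4))(n,m)
=\sum_{m_1\in\Z}e^{-i\theta_{\pm}(|n-m_1|+|m_1-m|)}\,\widetilde{R}^{\pm}_{0}(\mu,n,m_1)\,V(m_1)\,\widetilde{R}^{\pm}_{0}(\mu,m_1,m).
\end{align*}
Substituting into $K^{\pm}_{21}(t,n,m)$ and swapping $\sum_{m_1}$ with $\int_{\mu_0}^{2-\mu_0}$ (legitimate because $\widetilde R^{\pm}_0$ and $\mu^3$ are uniformly bounded on $[\mu_0,2-\mu_0]$ by Lemma \ref{property of R0tuta}, and $V\in\ell^1$ thanks to $\beta>2$), one arrives at
\begin{equation*}
K^{\pm}_{21}(t,n,m)=\sum_{m_1\in\Z}V(m_1)\int_{\mu_0}^{2-\mu_0}e^{-it\mu^4}e^{-i\theta_{\pm}(\mu)(|n-m_1|+|m_1-m|)}\,\mu^3\,\widetilde R^{\pm}_0(\mu,n,m_1)\widetilde R^{\pm}_0(\mu,m_1,m)\,d\mu.
\end{equation*}

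Next, following the scheme already employed for $K^{-}_{0,1}$ and $\widetilde K_{11}$, I would change variable via $\cos\theta_{\pm}=1-\mu^2/2$, so that $d\mu=\sin\theta_{\pm}/\mu\,d\theta_{\pm}$ and $\mu^4=(2-2\cos\theta_{\pm})^2$. The resulting integral takes the form
\begin{equation*}
\int_{a_0}^{b_0}e^{-it[(2-2\cos\theta_{\pm})^2-s\theta_{\pm}]}\,\Psi^{\pm}(\theta_{\pm};n,m,m_1)\,d\theta_{\pm},
\end{equation*}
with $s=-(|n-m_1|+|m_1-m|)/t$, the interval $[a_0,b_0]\subset(-\pi,0)$ (resp.\ $(0,\pi)$) corresponding to $[\mu_0,2-\mu_0]$, and the amplitude
\begin{equation*}
\Psi^{\pm}(\theta_{\pm};n,m,m_1)=\mu^2\,\sin\theta_{\pm}\,\widetilde R^{\pm}_0(\mu(\theta_{\pm}),n,m_1)\,\widetilde R^{\pm}_0(\mu(\theta_{\pm}),m_1,m).
\end{equation*}
Crucially, Lemma \ref{property of R0tuta} provides a bound $|\Psi^{\pm}|\leq C$ and $\|\partial_{\theta_\pm}\Psi^{\pm}\|_{L^1([a_0,b_0])}\leq C$ uniformly in $n,m,m_1$ (a change of variables is used to transfer the $\mu$-integral bound from the lemma, and the product rule together with boundedness of $\widetilde R^{\pm}_0$ keeps both factors controlled). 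Corollary \ref{corollary} then yields
\begin{equation*}
\Big|\int_{a_0}^{b_0}e^{-it[(2-2\cos\theta_{\pm})^2-s\theta_{\pm}]}\Psi^{\pm}\,d\theta_{\pm}\Big|\lesssim|t|^{-\frac{1}{4}}
\end{equation*}
uniformly in $s\in\R$, hence uniformly in $n,m,m_1$.

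Finally, summing over $m_1$ against $V(m_1)$ converges by the hypothesis $\beta>2$ (in fact $\beta>1$ would already suffice for $\ell^1$ summability), giving $\sup_{n,m\in\Z}|K^{\pm}_{21}(t,n,m)|\lesssim\|V\|_{\ell^1}|t|^{-1/4}$. The only technical point that needs care is verifying the uniform $L^1$ bound on $\partial_{\theta_\pm}\Psi^{\pm}$; this is where one must check that the factor $\sin\theta_{\pm}$ (from the Jacobian) combined with $\mu^2$ exactly cancels the mild $\mu^{-3}$ singularities absent on $[\mu_0,2-\mu_0]$ and that the terms arising from differentiating $e^{(b(\mu)+i\theta_{\pm})|n-m_1|}$ inside $\widetilde R^{\pm}_0$ remain $L^1$-integrable uniformly in the index differences, which is exactly the content of Lemma \ref{property of R0tuta} and the estimate \eqref{integral ebnm}. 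No other obstacle is expected.
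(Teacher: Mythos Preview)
Your proposal is correct and follows essentially the same approach as the paper: expand over $m_1$, factor the phase via $R^{\pm}_0=e^{-i\theta_\pm|\cdot|}\widetilde R^{\pm}_0$, change variables $\mu\mapsto\theta_\pm$ using \eqref{varible substi1}, invoke Lemma \ref{property of R0tuta} for the uniform amplitude and $L^1$-derivative bounds, apply Corollary \ref{corollary}, and sum in $m_1$ against $V\in\ell^1$. The paper packages the amplitude as $F^{\pm}_{21}(\mu,n,m,m_1)=\mu^3\widetilde R^{\pm}_0\widetilde R^{\pm}_0$ (leaving the Jacobian implicit in the $\theta_+$-integral) rather than your $\Psi^{\pm}=\mu^2\sin\theta_\pm\,\widetilde R^{\pm}_0\widetilde R^{\pm}_0$, but this is purely cosmetic.
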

\begin{proof}
By \eqref{kernels of Kpm21,22} and \eqref{R0 in the R0tuta form}, one has
\begin{align*}
K^{\pm}_{21}(t,n,m)=\sum\limits_{m_1\in\Z}^{}\Omega^{\pm}_{21}(t,n,m,m_1)V(m_1),
\end{align*}
where $N_1=n-m_1,M_1=m-m_1$, and
\begin{align}\label{Omegapm21}
\begin{split}
\Omega^{\pm}_{21}(t,n,m,m_1)&=\int_{\mu_0}^{2-\mu_0}e^{-it\mu^4}e^{-i\theta_{\pm}(|N_1|+|M_1|)}F^{\pm}_{21}(\mu,n,m,m_1)d\mu,\\
F^{\pm}_{21}(\mu,n,m,m_1)&=\mu^3\widetilde{R}^{\pm}_{0}(\mu,n,m_1)\widetilde{R}^{\pm}_{0}(\mu,m_1,m).
\end{split}
\end{align}

We focus on $\Omega^{+}_{21}$, and $\Omega^{-}_{21}$ follows in a similar way. Applying the variable substitution \eqref{varible substi1} to \eqref{Omegapm21}, then for $t\neq0$, we obtain
\begin{equation}\label{new expre of Omegap 21}
\Omega^{+}_{21}(t,n,m,m_1)=-\int_{r_1}^{r_0}e^{-it\left[(2-2{\rm cos}\theta_+)^2-\theta_{+}\left(-\frac{|N_1|+|M_1|}{t}\right)\right]}F^{+}_{21}(\mu(\theta_+),n,m,m_1)d\theta_+,
\end{equation}
where $r_1\in(-\pi,0)$ satisfying ${\rm{cos}}r_1=1-\frac{(2-\mu_0)^2}{2}$. Therefore, by Lemma \ref{property of R0tuta}, we have
 $$\sup\limits_{\mu\in[\mu_0,2-\mu_0]}\big|F^{+}_{12}(\mu,n,m,m_1)\big|+\big\|(\partial_{\mu}F^{+}_{12})(\mu,n,m,m_1)\|_{L^{1}([\mu_0,2-\mu_0])} \lesssim 1,$$
 uniformly in $n,m,m_1\in\Z$. 
 Therefore by Corollary \ref{corollary}, it follows that
$$\left|\Omega^{+}_{21}(t,n,m,m_1)\right|\lesssim |t|^{-\frac{1}{4}},\ {\rm uniformly\ in}\ n,m,m_1\in\Z.$$
Thus,
$|K^{+}_{21}(t,n,m)|\lesssim|t|^{-\frac{1}{4}}$ is obtained.
\end{proof}

\begin{proposition}\label{claim of Kpm22}
{ Under the assumptions in Proposition \ref{Proposition of K2}, let $K^{\pm}_{22}(t,n,m)$ be defined as in \eqref{kernels of Kpm21,22}. Then \eqref{estimate of Kpm2} holds for $K^{\pm}_{22}(t,n,m)$.
}
\end{proposition}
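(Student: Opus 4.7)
The plan is to run the same scheme as Proposition \ref{claim of Kpm21}, with the middle factor $R^{\pm}_{V}(\mu^4)$ controlled through the limiting absorption principle. First I would expand the triple operator product as a double sum and use the decomposition \eqref{R0 in the R0tuta form} to extract the oscillatory phases from the two outer free resolvent kernels, writing
\begin{equation*}
K^{\pm}_{22}(t,n,m)=\sum_{m_1,m_2\in\Z}V(m_1)V(m_2)\,\Omega^{\pm}_{22}(t,n,m,m_1,m_2),
\end{equation*}
where, with $N_1=n-m_1$ and $M_2=m_2-m$,
\begin{equation*}
\Omega^{\pm}_{22}(t,n,m,m_1,m_2)=\int_{\mu_0}^{2-\mu_0}e^{-it\mu^4}e^{-i\theta_{\pm}(|N_1|+|M_2|)}F^{\pm}_{22}(\mu,n,m,m_1,m_2)\,d\mu
\end{equation*}
and
\begin{equation*}
F^{\pm}_{22}(\mu,n,m,m_1,m_2)=\mu^{3}\widetilde{R}^{\pm}_{0}(\mu,n,m_1)R^{\pm}_{V}(\mu^4,m_1,m_2)\widetilde{R}^{\pm}_{0}(\mu,m_2,m).
\end{equation*}
Performing the substitution $\cos\theta_{+}=1-\mu^2/2$ puts each $\Omega^{\pm}_{22}$ into precisely the form required by Corollary \ref{corollary} with $s=-(|N_1|+|M_2|)/t$, so it suffices to bound $F^{\pm}_{22}$ and $\partial_{\mu}F^{\pm}_{22}$ suitably, uniformly in all indices and in $\mu\in[\mu_0,2-\mu_0]$.

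The two $\widetilde{R}^{\pm}_{0}$ factors are handled by Lemma \ref{property of R0tuta}, which supplies both a uniform supremum bound and a uniform bound on the $L^{1}$ norm of the $\mu$-derivative, independently of $n,m,m_1,m_2$. For the middle factor, I would invoke Theorem \ref{LAP-theorem}: since $\beta>2$ permits the choice $3/2<s\leq[\beta]$ with $j=1$, both $R^{\pm}_{V}(\mu^4)$ and $\partial_{\mu}R^{\pm}_{V}(\mu^4)$ exist as norm-continuous maps from the compact set $[\mu_0,2-\mu_0]$ into $\B(s,-s)$, hence are uniformly bounded in operator norm. Testing against the Dirac masses $\delta_{m_1},\delta_{m_2}$ through the duality pairing between $\ell^{2,s}$ and $\ell^{2,-s}$ translates these operator bounds into the pointwise kernel estimates
\begin{equation*}
|R^{\pm}_{V}(\mu^4,m_1,m_2)|+|\partial_{\mu}R^{\pm}_{V}(\mu^4,m_1,m_2)|\lesssim \left<m_1\right>^{s}\left<m_2\right>^{s},
\end{equation*}
holding uniformly for $\mu\in[\mu_0,2-\mu_0]$.

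Combining these ingredients, Corollary \ref{corollary} yields $|\Omega^{\pm}_{22}(t,n,m,m_1,m_2)|\lesssim |t|^{-1/4}\left<m_1\right>^{s}\left<m_2\right>^{s}$ uniformly in $n,m$, and the final summation
\begin{equation*}
\sum_{m_1,m_2\in\Z}|V(m_1)V(m_2)|\left<m_1\right>^{s}\left<m_2\right>^{s}\lesssim \Big(\sum_{k\in\Z}\left<k\right>^{s-\beta}\Big)^{2}
\end{equation*}
is finite as long as $\beta>s+1$, a condition amply satisfied under the hypotheses of Theorem \ref{main-theorem} (where $\beta>15$). The main obstacle is the interplay between the two competing constraints, namely that the LAP differentiability forces $s>3/2$ while convergence of the double sum demands $\beta>s+1$; once this tension is reconciled by choosing $s$ just above $3/2$, the remaining argument proceeds in direct parallel with the $K^{\pm}_{21}$ analysis, and combining with Proposition \ref{claim of Kpm21} completes the proof of \eqref{estimate of Kpm2}.
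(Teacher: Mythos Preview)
Your argument is correct and runs parallel to the paper's, but with one organizational difference that costs you half a unit on the $\beta$ threshold. The paper does \emph{not} pull the sum over $(m_1,m_2)$ outside the oscillatory integral; instead it works with the summed amplitude
\[
F^{+}_{22}(\mu,n,m)=\mu^{3}\sum_{m_1,m_2}\widetilde R^{+}_{0}(\mu,n,m_1)\,(VR^{+}_{V}(\mu^4)V)(m_1,m_2)\,\widetilde R^{+}_{0}(\mu,m_2,m)
\]
and bounds $|F^{+}_{22}|$ and $\int|\partial_\mu F^{+}_{22}|\,d\mu$ directly through the operator norms $\|R^{+}_{V}(\mu^4)\|_{\B(\varepsilon,-\varepsilon)}$ and $\|\partial_\mu R^{+}_{V}(\mu^4)\|_{\B(\varepsilon,-\varepsilon)}$, pairing against the vectors $\langle\cdot\rangle^{\varepsilon}V(\cdot)\widetilde R^{+}_{0}(\mu,n,\cdot)\in\ell^{2}$. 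The choices $\tfrac12<\varepsilon<\beta-\tfrac12$ (and $\tfrac32<\varepsilon<\beta-\tfrac12$ for the term where the derivative falls on $R^{+}_V$) make everything finite once $\beta>2$, which is precisely the hypothesis of Proposition~\ref{Proposition of K2}. Your termwise route instead extracts the pointwise kernel value $R^{\pm}_{V}(\mu^4,m_1,m_2)$ via $\delta$-pairing, paying $\langle m_1\rangle^{s}\langle m_2\rangle^{s}$ per term and forcing $\beta>s+1>\tfrac52$ for summability. So while your proof is valid, it only establishes the estimate for $\beta>\tfrac52$; you are right that this is ample for Theorem~\ref{main-theorem}, but it does not quite prove Proposition~\ref{Proposition of K2} under its stated hypothesis $\beta>2$.

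One point in your favour: the paper's passage from the integral with the $(m_1,m_2)$-dependent phase $e^{-i\theta_+(|N_1|+|M_2|)}$ to a single $\sup_{s}$ bound against the already-summed amplitude $F^{+}_{22}$ is asserted rather than justified, and is exactly the delicate step that your termwise organization makes transparent.
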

\begin{proof}
As before, by \eqref{kernels of Kpm21,22} and \eqref{R0 in the R0tuta form}, one has
\begin{align}\label{kernels of Kpm22(new)}
K^{\pm}_{22}(t,n,m)=\int_{\mu_0}^{2-\mu_0}e^{-it\mu^4}\sum\limits_{m_1,m_2\in\Z}^{}e^{-i\theta_+(|N_1|+|M_2|)}\mu^3f^{\pm}_{22}(\mu,n,m,m_1,m_2)d\mu,
\end{align}
where $N_1=n-m_1,M_2=m-m_2$, and
\begin{align}\label{fpm22}
\begin{split}
f^{\pm}_{22}(\mu,n,m,m_1,m_2)=\widetilde{R}^{\pm}_{0}(\mu,n,m_1)\left(VR^{\pm}_V\left(\mu^4\right)V\right)(m_1,m_2)\widetilde{R}^{\pm}_{0}(\mu,m_2,m).
\end{split}
\end{align}

We take $K^{+}_{22}$ for instance. Applying the variable substitution \eqref{varible substi1} to \eqref{kernels of Kpm22(new)}, and using Corollary \ref{corollary}, we obtain
\begin{align}\label{estimate of Kp22}
\begin{split}
\left|K^{+}_{22}(t,n,m)\right|&\leq\sup\limits_{s\in\R}\left|\int_{r_1}^{r_0}e^{-it\left[(2-2{\rm cos}\theta_+)^2-s\theta_+\right]}F^{+}_{22}(\mu(\theta_+),n,m)d\theta_+\right|\\
&\lesssim|t|^{-\frac{1}{4}}\left(\left|F^{+}_{22}(\mu_0,n,m)\right|+\int_{r_1}^{r_0}\left|\frac{\partial F^{+}_{22}}{\partial\theta_+}(\mu(\theta_+),n,m)\right|d\theta_+\right)\lesssim|t|^{-\frac{1}{4}},
\end{split}
\end{align}
where
\begin{equation}\label{Fp22}
F^{+}_{22}(\mu,n,m)=\mu^3\sum\limits_{m_1,m_2\in\Z}^{}f^{+}_{22}(\mu,n,m,m_1,m_2):=\mu^3\widetilde{F}^{+}_{22}(\mu,n,m)
\end{equation}
and for the last inequality, we have used their uniform boundedness in advance. In what follows, we explain it in detail.

On one hand, for any $\mu\in[\mu_0,2-\mu_0]$, by Lemma \ref{property of R0tuta}~(i) and the continuity of $R^{\pm}_{V}(\mu^4)$ in Theorem \ref{LAP-theorem}, take $\frac{1}{2}<\varepsilon_1<\beta-\frac{1}{2}$, then exists a constant $C(\mu_0)>0$ such that
$$|F^{+}_{22}(\mu,n,m)|\lesssim \left\|V(\cdot)\left<\cdot\right>^{\varepsilon_1}\right\|^2_{\ell^2}\left\|R^{+}_{V}(\mu^4)\right\|_{\B(\varepsilon_1,-\varepsilon_1)}\leq C(\mu_0).$$

On the other hand, 
a direct calculation yields that
\begin{align}\label{derivative of Fp22mu}
\begin{split}
(\partial_{\mu}\widetilde{F}^{+}_{22})(\mu,n,m)&=\sum\limits_{m_1,m_2\in\Z}^{}\left(\partial_{\mu}\widetilde{R}^{+}_{0}\right)(\mu,n,m_1)\left(VR^{+}_V\left(\mu^4\right)V\right)(m_1,m_2)\widetilde{R}^{+}_{0}(\mu,m_2,m)\\
&\quad+ \sum\limits_{m_1,m_2\in\Z}^{}\widetilde{R}^{+}_{0}(\mu,n,m_1)\left(V\partial_{\mu}\left(R^{+}_V\left(\mu^4\right)\right)V\right)(m_1,m_2)\widetilde{R}^{+}_{0}(\mu,m_2,m)\\
&\quad+\sum\limits_{m_1,m_2\in\Z}^{}\widetilde{R}^{+}_{0}(\mu,n,m_1)\left(VR^{+}_V\left(\mu^4\right)V\right)(m_1,m_2)\left(\partial_{\mu}\widetilde{R}^{+}_{0}\right)(\mu,m_2,m)\\
&:=L^{+}_1(\mu,n,m)+L^{+}_2(\mu,n,m)+L^{+}_3(\mu,n,m),
\end{split}
\end{align}
Then it suffices to verify that
\begin{equation}\label{integral of Li}
\int_{r_1}^{r_0}\left|L^{+}_{j}\left(\mu(\theta_+),n,m\right)\mu'(\theta_+)\right|d\theta_+\lesssim1,\quad j=1,2,3,
\end{equation}
uniformly in $n,m\in\Z$.
In fact, for $j=1$, take $\frac{1}{2}<\varepsilon_2<\beta-1$,
\begin{align*}
\left|L^{+}_{1}(\mu,n,m)\right|&\leq\left\|\left<\cdot\right>^{\varepsilon_2}V(\cdot)\left(\partial_{\mu}\widetilde{R}^{+}_{0}\right)(\mu,n,\cdot)\right\|_{\ell^2}\left\|R^{+}_{V}(\mu^4)\right\|_{\B(\varepsilon_2,-\varepsilon_2)}\left\|\left<\cdot\right>^{\varepsilon_2}V(\cdot)\widetilde{R}^{+}_{0}(\mu,\cdot,m)\right\|_{\ell^2}\\
&\lesssim\sum\limits_{m_1\in\Z}\left<m_1\right>^{\varepsilon_2}|V(m_1)|\left|\left(\partial_{\mu}\widetilde{R}^{+}_{0}\right)(\mu,n,m_1)\right|.
\end{align*}
By Lemma \ref{property of R0tuta}, it follows that
\begin{equation}
\int_{r_1}^{r_0}\left|L^{+}_{1}\left(\mu(\theta_+),n,m\right)\mu'(\theta_+)\right|d\theta_+\lesssim \sum\limits_{m_1\in\Z}\left<m_1\right>^{\varepsilon_2}|V(m_1)|\int_{\mu_0}^{2-\mu_0}\left|\left(\partial_{\mu}\widetilde{R}^{+}_{0}\right)(\mu,n,m_1)\right|d\mu\lesssim1.
\end{equation}
By symmetry, the same argument applies to $L^{+}_{3}$.

For $j=2$, for any $\mu\in[\mu_0,2-\mu_0]$, taking $\frac{3}{2}<\varepsilon_3<\beta-\frac{1}{2}$, we utilize the continuity of $\partial_{\mu}\left(R^{+}_V\left(\mu^4\right)\right)$ in Theorem \ref{LAP-theorem} to obtain that
\begin{align*}
\left|L^{+}_{2}(\mu,n,m)\right|\leq\left\|\left<\cdot\right>^{\varepsilon_3}V(\cdot)\widetilde{R}^{+}_{0}(\mu,n,\cdot)\right\|_{\ell^2}\left\|\partial_{\mu}\left(R^{+}_{V}(\mu^4)\right)\right\|_{\B(\varepsilon_3,-\varepsilon_3)}\left\|\left<\cdot\right>^{\varepsilon_3}V(\cdot)\widetilde{R}^{+}_{0}(\mu,\cdot,m)\right\|_{\ell^2}\lesssim1.
\end{align*}
This implies that \eqref{integral of Li} holds for $j=2$. Thus, we complete the proof.
\end{proof}
Therefore, combining Propositions \ref{claim of Kpm21} and \ref{claim of Kpm22}, then Proposition \ref{Proposition of K2} is established.
\subsection{The estimates of kernels $K_3^{\pm}(t,n,m)$}\label{subsection of case III}
\begin{proposition}\label{Proposition of K3}
{ Let $H=\Delta^2+V$ with $|V(n)|\lesssim \left<n\right>^{-\beta}$ for $\beta>7$, and suppose that 
16 is a regular point of $H$. Then,
\begin{equation}\label{estimate of Kpm3}
\left|K^{\pm}_{3}(t,n,m)\right|\lesssim |t|^{-\frac{1}{4}},\quad t\neq0,\ {\rm uniformly\ in}\ n,m\in\Z.
\end{equation}
}
\end{proposition}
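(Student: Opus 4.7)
The plan is to mirror the strategy of Subsection \ref{subsec of Case I} for $K^{\pm}_1$, now at the non-degenerate threshold $\mu=2$ using the expansion \eqref{asy expan on 2}. First I would apply the change of variable $\nu=2-\mu$ to rewrite
\[
K^{\pm}_{3}(t,n,m)=\int_{0}^{\mu_0}e^{-it(2-\nu)^4}(2-\nu)^3\left[R^{\pm}_0((2-\nu)^4)v(M^{\pm}(2-\nu))^{-1}vR^{\pm}_0((2-\nu)^4)\right](n,m)\,d\nu,
\]
and then substitute $(M^{\pm}(2-\nu))^{-1}=\widetilde{Q}B_{01}\widetilde{Q}+\nu^{1/2}B^{\pm}_{11}+\Gamma_{1}(\nu)$ to split $K^{\pm}_{3}=K^{\pm}_{31}+K^{\pm}_{32}+K^{\pm}_{33}$ accordingly.

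The crucial new ingredient is a cancellation lemma analogous to Lemma \ref{cancelation of caseI} but adapted to $\widetilde{Q}$ and the threshold $16$. The algebraic observation driving it is the identity $(-1)^{|n-m|}=(-1)^n(-1)^m$, so the leading singular kernel $\nu^{-1/2}\widetilde{G}^{\pm}_{-1}(n,m)=\frac{\pm i}{32\sqrt{\nu}}(-1)^{|n-m|}$ yields $v\widetilde{G}^{\pm}_{-1}v=\frac{\pm i}{32}|\tilde{v}\rangle\langle\tilde{v}|$, which is annihilated from either side by $\widetilde{Q}$ since $\widetilde{Q}\tilde{v}=0$. Writing $e^{-i\theta_{\pm}|n-m|}=(-1)^{|n-m|}e^{-i(\theta_{\pm}\pm\pi)|n-m|}$ with $\theta_{\pm}\pm\pi=\pm 2\sqrt{\nu}+O(\nu)$ near $\nu=0$, and Taylor expanding the latter factor in $\theta_{\pm}\pm\pi$ in the spirit of \cite[Lemma 2.5]{SWY22}, I would derive integral representations of $(R^{\pm}_0((2-\nu)^4)v\widetilde{Q}f)(n)$ and its dual version in which the leading $\nu^{-1/2}$ singularity has been removed, producing a net $\nu^{1/2}$ gain on each side.

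With this cancellation lemma in hand, each piece is handled in parallel with Subsection \ref{subsec of Case I}. For $K^{\pm}_{31}$: applying the lemma on both sides, the integrand becomes bounded in $\nu$ with integrable $\nu$-derivative, uniformly in $n,m$; switching to $\theta_+$ via $\cos\theta_+=1-\mu^2/2$ on $[-\pi,r_1]$, where the phase $\Phi_s(\theta_+)=(2-2\cos\theta_+)^2-s\theta_+$ satisfies $\Phi''_s(-\pi)=-16\neq 0$, Corollary \ref{corollary} yields the desired $|t|^{-1/4}$ bound (actually the faster $|t|^{-1/2}$). For $K^{\pm}_{32}$: the built-in $\nu^{1/2}$ compensates one of the two $\nu^{-1/2}$ singularities from $R^{\pm}_0$, so only a one-sided cancellation via the same lemma is needed before the Van der Corput step applies. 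For $K^{\pm}_{33}$: since $\|\Gamma_1(\nu)\|_{\B(0,0)}+\nu\|\partial_{\nu}\Gamma_1(\nu)\|_{\B(0,0)}=O(\nu)$, the remainder absorbs both $\nu^{-1/2}$ singularities, and the argument of Proposition \ref{claim of Kp612} applies mutatis mutandis.

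The main obstacle I anticipate is the precise formulation and verification of the cancellation lemma: one must track how the $(-1)^{|n-m|}$ oscillation couples with $\widetilde{Q}$ and check that the resulting representations (integrals over $\rho\in[0,1]$ of shifted-argument kernels such as $e^{-i(\theta_{\pm}\pm\pi)|n-\rho m|}$ and $e^{b(2-\nu)|n-\rho m|}$) yield integrands whose $L^{1}$-in-$\nu$ derivative bounds are uniform in $n,m$ after summation against $v$. The decay hypothesis $\beta>7$ enters precisely here: the Taylor expansions introduce polynomial factors $|n-m|^k$ of controlled order across both sides, and $\beta>7$ ensures that the double sums over $m_1,m_2\in\Z$ against the weights $\langle m_i\rangle^{\alpha}v(m_i)$ converge.
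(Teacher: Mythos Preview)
Your decomposition and overall strategy match the paper's: split $K^{\pm}_3=K^{\pm}_{31}+K^{\pm}_{32}+K^{\pm}_{33}$ according to \eqref{asy expan on 2}, use $\widetilde{Q}$-cancellation for the leading piece, the $\Gamma_1$ bound for the remainder, and balance powers of $\nu^{1/2}$ for the middle term. Your cancellation mechanism---factoring $e^{-i\theta_\pm|n-m|}=(-1)^{|n-m|}e^{-i(\theta_\pm\pm\pi)|n-m|}$ and using $(-1)^{|n-m|}=(-1)^n(-1)^m$ to bring $\tilde v$ into play---is precisely the paper's identity $JR^{\pm}_{-\Delta}(\mu^2)J=-R^{\mp}_{-\Delta}(4-\mu^2)$ written out at the kernel level; the paper records the resulting cancellation as Lemma \ref{cancelation of caseIII}.

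There is, however, a genuine gap in your plan for $K^{\pm}_{32}$. You propose to remove the remaining $\nu^{-1/2}$ singularity by ``one-sided cancellation via the same lemma,'' but that lemma requires a factor of $\widetilde{Q}$ adjacent to the free resolvent, and $B^{\pm}_{11}$ in \eqref{asy expan on 2} does not carry one: tracing the Neumann expansion in Section \ref{proof of asy} (equations \eqref{Relat of M and M tuta 2}--\eqref{inverse of M1 2}), the order-$\mu^{1/2}$ coefficient contains a contribution $\frac{1}{d^{\pm}}I$ with no $\widetilde{Q}$ on either side. The paper handles $K^{\pm}_{32}$ differently: the change of variables $\mu\mapsto\theta_+$ has Jacobian $d\mu=-\frac{\sin\theta_+}{\mu}\,d\theta_+$ with $\sin\theta_+=O(\sqrt{\nu})$ near $\mu=2$, so the substitution itself supplies an extra $\nu^{1/2}$; together with the explicit $\nu^{1/2}$ in front of $B^{\pm}_{11}$ this cancels both $\nu^{-1/2}$ singularities from the two copies of $R^{\pm}_0$ without any appeal to $\widetilde{Q}$ (see the Remark following Proposition \ref{claim of Kpm31} and Proposition \ref{claim of Kpm32}). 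Once you replace your one-sided cancellation step by this Jacobian observation, the rest of your argument goes through.
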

Under the assumptions of Proposition \ref{Proposition of K3}, we first recall that the kernel of $K^{\pm}_{3}(t,n,m)$ in \eqref{kernels of Ki} is given by:
\begin{equation*}
K^{\pm}_{3}(t,n,m)=\int_{2-\mu_0}^{2}e^{-it\mu^4}\mu^3\left[R^{\pm}_0\left(\mu^4\right)v\left(M^{\pm}\left(\mu\right)\right)^{-1}vR^{\pm}_0\left(\mu^4\right)\right](n,m)d\mu,
\end{equation*}
and the asymptotic expansion \eqref{asy expan on 2} of $\left(M^{\pm}\left(\mu\right)\right)^{-1}$:
\begin{equation*}
\left(M^{\pm}\left(\mu\right)\right)^{-1}=\widetilde{Q}B_{01}\widetilde{Q}+(2-\mu)^{\frac{1}{2}}B^{\pm}_{11} +\Gamma_{1}(2-\mu),\quad\mu\in[2-\mu_0,2).
\end{equation*}
Then, we further obtain:
$$K^{\pm}_{3}(t,n,m)=\sum\limits_{j=1}^{3}K^{\pm}_{3j}(t,n,m),$$
where
\begin{align}\label{kernels of Kpm3i}
\begin{split}
K^{\pm}_{31}(t,n,m)&=\int_{2-\mu_0}^{2}e^{-it\mu^4}\mu^3\left(R^{\pm}_0\left(\mu^4\right)v\widetilde{Q}B_{01}\widetilde{Q}vR^{\pm}_0\left(\mu^4\right)\right)(n,m)d\mu,\\
K^{\pm}_{32}(t,n,m)&=\int_{2-\mu_0}^{2}e^{-it\mu^4}\mu^3\left(R^{\pm}_0\left(\mu^4\right)v(2-\mu)^{\frac{1}{2}}B^{\pm}_{11} vR^{\pm}_0\left(\mu^4\right)\right)(n,m)d\mu,\\
K^{\pm}_{33}(t,n,m)&=\int_{2-\mu_0}^{2}e^{-it\mu^4}\mu^3\left(R^{\pm}_0\left(\mu^4\right)v\Gamma_{1}(2-\mu)vR^{\pm}_0\left(\mu^4\right)\right)(n,m)d\mu.
\end{split}
\end{align}
Thus, it suffices to show that \eqref{estimate of Kpm3} holds for $K^{\pm}_{3j}(t,n,m)$ with $j=1,2,3$, respectively.

Compared to Subsection \ref{subsec of Case I}, a distinction lies in that it is not straightforward to use the cancelation condition \eqref{cancel of Qtuta} of $\widetilde{Q}$ to eliminate the singularity near $\mu=2$. Therefore, before proceeding with the proof, we first address this issue. 

Recalling the unitary operator $J$ defined in \eqref{J}, i.e., $(J\phi)(n)=(-1)^{n}\phi(n)$, one can observe that
\begin{equation*}
JR_{-\Delta}(z)J=-R_{-\Delta}(4-z),\quad z\in\C\setminus[0,4].
\end{equation*}
By the limiting absorption principle, it follows that
\begin{equation}\label{relation of Rminus Laplacian 0and4}
JR^{\pm}_{-\Delta}(\mu^2)J=-R^{\mp}_{-\Delta}(4-\mu^2),\quad \mu\in(0,2).
\end{equation}
With this, we can establish the following cancelation lemma.
\begin{lemma}\label{cancelation of caseIII}
{ Let $\widetilde{Q}$ be defined in \eqref{definition of Q,S0,Qtuta} and $\tilde{v}=Jv$. Then for any $f\in\ell^{2}(\Z)$, we have
\begin{itemize}
\item[(i)] $\left(R^{\mp}_{-\Delta}(4-\mu^2)\tilde{v}\widetilde{Q}f\right)(n)=\frac{\theta_{\mp}}{2{\rm sin}\theta_{\mp}}\sum\limits_{m\in\Z}\int_{0}^{1}{\rm sign}(n-\rho m)e^{-i\theta_{\mp}|n-\rho m|}d\rho\cdot m\left(\tilde{v}\widetilde{Q}f\right)(m)$,
    \vskip0.2cm
\item [(ii)] $\widetilde{Q}\big(\tilde{v}R^{\mp}_{-\Delta}(4-\mu^2)f\big)=\widetilde{Q}\left(\frac{n\tilde{v}(n)\theta_{\mp}}{2{\rm sin}\theta_{\mp}}\sum\limits_{m\in\Z}\int_{0}^{1}{\rm sign}(m-\rho n)e^{-i\theta_{\mp}|m-\rho n|}d\rho \cdot f(m)\right)$,
\end{itemize}
where $\theta_{\pm}$ satisfies ${\rm cos}\theta_{\pm}=\frac{\mu^2}{2}-1$.
}
\end{lemma}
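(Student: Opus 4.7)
The plan is to mirror the derivation of parts (3) and (4) of Lemma \ref{cancelation of caseI}, adapted to the Laplacian resolvent $R^{\mp}_{-\Delta}(4-\mu^2)$ rather than $R^{\pm}_0(\mu^4)$. First I would recall from Lemma \ref{kernel of lapla} that the kernel of $R^{\mp}_{-\Delta}(4-\mu^2)$ equals $\frac{-i\,e^{-i\theta_{\mp}|n-m|}}{2\sin\theta_{\mp}}$, where $\theta_{\mp}$ is the solution of $2-2\cos\theta=4-\mu^2$ lying in the appropriate half of $\mcaD$, i.e. $\cos\theta_{\mp}=\frac{\mu^2}{2}-1$, which matches the notation used in the statement. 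Here the identity \eqref{relation of Rminus Laplacian 0and4} is what makes $\theta_{\mp}$ (rather than $\theta(-\mu^2)$) the relevant phase.

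Next, the key analytic tool is a first-order Taylor-with-remainder expansion of the exponential factor, obtained by differentiating the auxiliary function $\rho\mapsto e^{-i\theta_{\mp}|n-\rho m|}$ on $[0,1]$ and integrating. A direct chain-rule computation using $\frac{d}{d\rho}|n-\rho m|=-m\,{\rm sign}(n-\rho m)$ gives
\begin{equation*}
e^{-i\theta_{\mp}|n-m|}=e^{-i\theta_{\mp}|n|}+i\theta_{\mp}m\int_{0}^{1}{\rm sign}(n-\rho m)\,e^{-i\theta_{\mp}|n-\rho m|}\,d\rho,
\end{equation*}
which is exactly the specialization (with $F^{-}_1(s)=e^{-is}$, $(F^{-}_1)'=-iF^{-}_1$) of the Taylor formula from \cite[Lemma 2.5]{SWY22} employed in the proof of parts (3)-(4) of Lemma \ref{cancelation of caseI}. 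For part (ii) I will use the companion identity obtained by swapping the roles of $n$ and $m$.

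To prove (i), I would substitute the above expansion into $\sum_{m}\frac{-i\,e^{-i\theta_{\mp}|n-m|}}{2\sin\theta_{\mp}}(\tilde v\widetilde Q f)(m)$. The leading, $n$-dependent prefactor $e^{-i\theta_{\mp}|n|}$ pulls out of the sum and leaves $\sum_{m}(\tilde v\widetilde Q f)(m)=\langle\widetilde Q f,\tilde v\rangle$, which vanishes by \eqref{cancel of Qtuta} (note $\tilde v$ is real). Combining the constant $\frac{-i}{2\sin\theta_{\mp}}\cdot i\theta_{\mp}=\frac{\theta_{\mp}}{2\sin\theta_{\mp}}$ then yields exactly the claimed formula in (i). For (ii), I would substitute the swapped expansion into $\tilde v(n)\sum_{m}\frac{-i\,e^{-i\theta_{\mp}|n-m|}}{2\sin\theta_{\mp}}f(m)$; the resulting leading term takes the form $\tilde v(n)\cdot c(f)$ with $c(f)$ independent of $n$, which is annihilated by $\widetilde Q$ since $\widetilde Q\tilde v=0$ by \eqref{cancel of Qtuta}. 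The surviving integral term, after bundling constants, is precisely the right-hand side of (ii).

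There is no substantive obstacle here: the argument is a clean application of the same cancellation mechanism as in parts (3)-(4) of Lemma \ref{cancelation of caseI}, and the only thing requiring care is pairing each Taylor expansion with the correct cancellation identity from \eqref{cancel of Qtuta}, i.e. expanding about $m=0$ for (i) so that the $n$-frozen term can be paired with $\sum_m(\tilde v\widetilde Q f)(m)=\langle\widetilde Q f,\tilde v\rangle=0$, and expanding about $n=0$ for (ii) so that the $n$-independent term becomes a multiple of $\tilde v(n)$ that is killed by $\widetilde Q$ on the outside.
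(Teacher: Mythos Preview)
Your proposal is correct and follows essentially the same approach as the paper, which simply instructs the reader to repeat the argument of parts (3) and (4) of Lemma~\ref{cancelation of caseI} using the cancellation property \eqref{cancel of Qtuta} of $\widetilde{Q}$. You have spelled out precisely those details: the kernel formula for $R^{\mp}_{-\Delta}(4-\mu^2)$, the first-order Taylor identity from \cite[Lemma~2.5]{SWY22}, and the pairing of the leading term with $\langle\widetilde{Q}f,\tilde v\rangle=0$ in (i) and with $\widetilde{Q}\tilde v=0$ in (ii).
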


\begin{proof}
By utilizing the property \eqref{cancel of Qtuta} of $\widetilde{Q}$ and following the proof procedure for {\bf{(3) and (4)}} in Lemma \ref{cancelation of caseI}, we obtain the desired result.
\end{proof}
\begin{remark}
{\rm From \eqref{kernel of R0 boundary}, we observe that the singularity $(2-\mu)^{-\frac{1}{2}}$ of the kernel $R^{\pm}_0(\mu^4,n,m)$ near $\mu=2$ originates from that of $R^{\pm}_{-\Delta}(\mu^2)$. This singularity, in turn, can be transferred to that of $R^{\mp}_{-\Delta}(4-\mu^2)$ via the unitary transform $J$. Recalling \eqref{kernel of lapa boundary}, the kernel of $R^{\mp}_{-\Delta}(4-\mu^2)$ is given by:
$$R^{\mp}_{-\Delta}(4-\mu^2, n,m)=\frac{-i}{2{\rm sin}\theta_{\mp}}e^{-i\theta_{\mp}|n-m|}.$$
Noting that $\theta_{\pm}=O((2-\mu)^{\frac{1}{2}})$ as $\mu\rightarrow2$, this implies that the kernels presented in this lemma can eliminate the singularity near $\mu=2$.
}
\end{remark}

Using Lemma \ref{cancelation of caseIII}, we now establish the estimate for the kernel $K^{\pm}_{31}(t,n,m)$.
\begin{proposition}\label{claim of Kpm31}
{ Under the assumptions in Proposition \ref{Proposition of K3}, let $K^{\pm}_{31}(t,n,m)$ be defined as in \eqref{kernels of Kpm3i}. One has
\begin{equation}\label{estimate of K31}
\left|K^{\pm}_{31}(t,n,m)\right|\lesssim |t|^{-\frac{1}{4}},\quad t\neq0,\ {\rm uniformly\ in}\ n,m\in\Z.
\end{equation}
}
\end{proposition}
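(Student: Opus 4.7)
The plan is to mirror the treatment of $K_{11}$ in Proposition \ref{claim of K11}, but with the endpoint singularity at $\mu=2$ instead of $\mu=0$ and with the cancellation of $\widetilde Q$ in place of $S_0$. The first step is to split $R^\pm_0$ on $[2-\mu_0, 2]$ into its singular and smooth components by combining \eqref{R0mu4 and Rdeltamu2} with \eqref{relation of Rminus Laplacian 0and4}:
$$R^\pm_0(\mu^4)=A^\pm(\mu)+B^\pm(\mu),\quad A^\pm(\mu):=-\tfrac{1}{2\mu^2}\,JR^\mp_{-\Delta}(4-\mu^2)J,\quad B^\pm(\mu):=-\tfrac{1}{2\mu^2}\,R_{-\Delta}(-\mu^2),$$
where $A^\pm$ inherits the $(2-\mu)^{-1/2}$ singularity from $1/\sin\theta_\mp$, while $B^\pm$ extends smoothly across $\mu=2$ since $-\mu^2\notin[0,4]$ there. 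Expanding the sandwich $R^+_0\, v\widetilde Q B_{01}\widetilde Q v\, R^+_0$ produces four cross terms of types $(A,A)$, $(A,B)$, $(B,A)$, $(B,B)$.

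For the principal $(A,A)$ piece, the identities $Jv=vJ=\tilde v$ and $J^2=I$ recast it as
$$\tfrac{(-1)^{n+m}}{4\mu^4}\,R^-_{-\Delta}(4-\mu^2)\,\tilde v\widetilde Q\,B_{01}\,\widetilde Q\tilde v\,R^-_{-\Delta}(4-\mu^2).$$
I would then apply Lemma \ref{cancelation of caseIII}(i) to the right-hand factor $R^-_{-\Delta}(4-\mu^2)\tilde v\widetilde Q$ and (ii) to the left-hand factor $\widetilde Q\tilde v\, R^-_{-\Delta}(4-\mu^2)$. Each application replaces $1/(2\sin\theta_-)$ by the bounded prefactor $\theta_-/(2\sin\theta_-)$ (tending to $1/2$ as $\mu\to 2$), introduces an extra oscillatory factor $e^{-i\theta_-|\cdot-\rho\,\cdot|}$, and attaches a weight $|m_j|$ to $\tilde v(m_j)$; together they annihilate the full $1/\sin^2\theta_-$ singularity. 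The mixed $(A,B)$ and $(B,A)$ pieces need only a one-sided application of the lemma on the side carrying $A^\pm$, while $(B,B)$ requires no cancellation because $B^\pm$ is uniformly bounded on $[2-\mu_0,2]$.

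After the cancellations I would pass to the spectral variable $\theta_-$ via $\cos\theta_-=\mu^2/2-1$, which maps $\mu\in[2-\mu_0,2]$ bijectively onto $\theta_-\in[0,\theta_-^*]$, with $\mu^4=(2+2\cos\theta_-)^2$ and $d\mu=-(\sin\theta_-/\mu)\,d\theta_-$. The Jacobian absorbs one remaining $1/\sin\theta_-$, leaving a residual amplitude $\Psi(\theta_-)=c\,\theta_-^2/(\mu(\theta_-)^2\sin\theta_-)$ that is $C^1$ on $[0,\theta_-^*]$, vanishes linearly at $\theta_-=0$, and, crucially, is independent of $(n,m,m_1,m_2)$. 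Separating the linear phase off into the exponent, the inner oscillatory integral for the $(A,A)$ contribution becomes
$$\int_0^{\theta_-^*} e^{-it[(2+2\cos\theta_-)^2-s\theta_-]}\,\Psi(\theta_-)\,d\theta_-,\qquad s=-\tfrac{|n-\rho_1 m_1|+|m-\rho_2 m_2|}{t}.$$
Corollary \ref{corollary}, extended to subintervals of $[0,\pi]$ via the symmetry $\theta\mapsto -\theta$ noted after Lemma \ref{Von der}, then yields the $|t|^{-1/4}$ bound uniformly in $(n,m,m_1,m_2,\rho_1,\rho_2)$. The outer summation over $m_1,m_2$ against $|m_1 m_2\,(\tilde v\widetilde Q B_{01}\widetilde Q\tilde v)(m_1,m_2)|$ is $O(1)$ by the same Cauchy--Schwarz argument used in Proposition \ref{claim of K11}, requiring only $\|\langle\cdot\rangle\tilde v\|_{\ell^2}<\infty$, which is amply covered by $\beta>7$.

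The main technical hurdle is bookkeeping rather than analytical: one must verify that conjugation by $J$ correctly converts $v$ into $\tilde v$ on both sides of $\widetilde Q$ so that Lemma \ref{cancelation of caseIII} is applicable, check that the two Taylor remainders it produces genuinely cancel \emph{both} copies of $1/\sin\theta_-$, and confirm that after absorbing the Jacobian the residual amplitude $\Psi$ is position-independent so that all $(n,m,m_j)$-dependence is absorbed cleanly into the linear phase $-s\theta_-$ that Corollary \ref{corollary} handles. Once these structural points are in place, the mixed and regular pieces reduce to the same or an easier template, and the uniform estimate \eqref{estimate of K31} follows.
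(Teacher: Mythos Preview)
Your proposal is correct and follows essentially the same route as the paper: the same $A/B$ splitting of $R^\pm_0$ via \eqref{R0mu4 and Rdeltamu2} and \eqref{relation of Rminus Laplacian 0and4}, the same two-sided use of Lemma~\ref{cancelation of caseIII} on the $(A,A)$ piece (one-sided on the mixed pieces, none on $(B,B)$), the same change of variables $\cos\theta=\mu^2/2-1$, and the same appeal to Corollary~\ref{corollary} with the position-independent amplitude $\theta^2/(\mu^2\sin\theta)$. The only cosmetic differences are your choice of sign convention ($\theta_-\in[0,\pi]$ rather than the paper's $\theta_+\in[-\pi,0]$) and your treatment of the $``+"$ case while the paper writes out the $``-"$ case.
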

\begin{proof}
From \eqref{relation of Rminus Laplacian 0and4} and \eqref{R0mu4 and Rdeltamu2}, we obtain that
\begin{align}\label{R0vQB01QvR0 decomposition}
R^{\pm}_0\left(\mu^4\right)v\widetilde{Q}B_{01}\widetilde{Q}vR^{\pm}_0\left(\mu^4\right)=\frac{1}{4\mu^4}\sum\limits_{j=1}^{4}\Lambda^{\pm,j}_{31}(\mu),
\end{align}
where
\begin{align}\label{R0vQB01QvR0 decomposition parts}
\begin{split}
\Lambda^{\pm,1}_{31}(\mu)&=JR^{\mp}_{-\Delta}(4-\mu^2)\tilde{v}\widetilde{Q}B_{01}\widetilde{Q}\tilde{v}R^{\mp}_{-\Delta}(4-\mu^2)J,\\
\Lambda^{\pm,2}_{31}(\mu)&=JR^{\mp}_{-\Delta}(4-\mu^2)\tilde{v}\widetilde{Q}B_{01}\widetilde{Q}\tilde{v}JR_{-\Delta}\left(-\mu^2\right),\\
\Lambda^{\pm,3}_{31}(\mu)&=R_{-\Delta}\left(-\mu^2\right)J\tilde{v}\widetilde{Q}B_{01}\widetilde{Q}\tilde{v}R^{\mp}_{-\Delta}(4-\mu^2)J,\\
\Lambda^{\pm,4}_{31}(\mu)&=R_{-\Delta}\left(-\mu^2\right)J\tilde{v}\widetilde{Q}B_{01}\widetilde{Q}\tilde{v}JR_{-\Delta}\left(-\mu^2\right).
\end{split}
\end{align}
The kernel $K^{\pm}_{31}(t,n,m)$ in \eqref{kernels of Kpm3i} can be further expressed as follows:
\begin{align*}
K^{\pm}_{31}(t,n,m)=\sum\limits_{j=1}^{4}K^{\pm,j}_{31}(t,n,m),
\end{align*}
where
\begin{align*}
K^{\pm,j}_{31}(t,n,m)=\frac{1}{4}\int_{2-\mu_0}^{2}e^{-it\mu^4}\frac{\Lambda^{\pm,j}_{31}(\mu)}{\mu}(n,m)d\mu.
\end{align*}

By symmetry, it suffices to prove that the estimates \eqref{estimate of K31} hold for  $K^{\pm,j}_{31}(t,n,m)$ with $j=1,2,4$. For illustration, we focus on the $``-"$ case.

\textbf{(i)}~By virtue of Lemma \ref{cancelation of caseIII}, it follows that
\begin{align*}
\begin{split}
K^{-,1}_{31}(t,n,m)&=\sum\limits_{m_1,m_2\in\Z}\int_{[0,1]^2}(-1)^{n+m}{\rm sign}(N_1){\rm sign}(M_2)\Omega^{-,1}_{31}(t,N_1,M_2)d\rho_1d\rho_2\\
&\quad\times m_1m_2\left(\tilde{v}\widetilde{Q}B_{01}\widetilde{Q}\tilde{v}\right)(m_1,m_2),\\
\end{split}
\end{align*}
where $N_1=n-\rho_1m_1$, $M_2=m-\rho_2m_2$, and
\begin{align}\label{kernel of Omegam31 1}
\begin{split}
\Omega^{-,1}_{31}(t,N_1,M_2)=\frac{1}{16}\int_{2-\mu_0}^{2}e^{-it\mu^4}e^{-i\theta_{+}(|N_1|+|M_2|)}\frac{\theta^2_+}{\mu{\rm sin}^2\theta_+}d\mu.
\end{split}
\end{align}
We apply the following variable substitution to \eqref{kernel of Omegam31 1}:
\begin{align}\label{varible substi2}
{\rm cos}\theta_{+}=\frac{\mu^2}{2}-1 \Longrightarrow\  \frac{d\mu}{d\theta_+}=-\frac{{\rm sin}\theta_+}{\mu},\quad \theta_+\rightarrow0\ {\rm as}\  \mu\rightarrow2,
\end{align}
 obtaining that
 \begin{align}\label{new expr of Omega31 1}
 \Omega^{-,1}_{31}(t,N_1,M_2)=\frac{-1}{16}\int_{r_2}^{0}e^{-it(2+2{\rm cos}\theta_+)^2}e^{-i\theta_+(|N_1|+|M_2|)}F_{31}(\theta_{+})d\theta_+,
 \end{align}
with $r_2={\rm arccos}\left(\frac{(2-\mu_0)^2}{2}-1\right)\in(-\pi,0)$ and
 $$F_{13}(\theta_+)=\frac{\theta^2_+}{(2+2{\rm cos}\theta_+){\rm sin}\theta_+}.$$
Noting that $\lim\limits_{\theta_+\rightarrow0}F^{(k)}_{31}(\theta_+)$ exists for $k=0,1$, it concludes from Corollary \ref{corollary} that
\begin{equation}\label{estimate for omega31,1}
\big|\eqref{new expr of Omega31 1}\big|\lesssim |t|^{-\frac{1}{4}},\quad t\neq0,\ {\rm uniformly\ in}\ N_1,M_2,
\end{equation}
which implies that \eqref{estimate of K31} holds for $K^{-,1}_{31}(t,n,m)$.
\vskip0.2cm
\textbf{(ii)}~Similarly, we have
\begin{align*}
\begin{split}
K^{-,2}_{31}(t,n,m)&=\sum\limits_{m_1,m_2\in\Z}\int_{0}^{1}(-1)^{n}{\rm sign}(N_1)\Omega^{-,2}_{31}(t,N_1,M_2)d\rho\\
&\quad\times (-1)^{m_2}m_1\left(\tilde{v}\widetilde{Q}B_{01}\widetilde{Q}\tilde{v}\right)(m_1,m_2),
\end{split}
\end{align*}
where $N_1=n-\rho m_1$, $M_2=m-m_2$, and
\begin{align}\label{kernel of Omegam31 2}
\begin{split}
\Omega^{-,2}_{31}(t,N_1,M_2)
={\frac{1}{16}\int_{r_2}^{0}e^{-it\left[(2+2{\rm cos}\theta_+)^2-\theta_+\left(-\frac{|N_1|}{t}\right)\right]}\widetilde{F}_{31}(\theta_+,M_2)d\theta_+}
\end{split}
\end{align}
with $$\widetilde{F}_{31}(\theta_+,M_2)=\frac{\theta_+f_{31}(\mu(\theta_+))}{2(1+{\rm cos}\theta_+)}e^{b(\mu(\theta_+))|M_2|}:=\widetilde{f}_{31}(\theta_+)e^{b(\mu(\theta_+))|M_2|},\quad f_{31}(\mu)=\frac{-1}{\mu\sqrt{1+\frac{\mu^2}{4}}}.$$
 By a method similar to used for \eqref{new expre of Omegap2 11}, one can obtain the desired estimate \eqref{estimate of K31} for $K^{-,2}_{31}(t,n,m)$. 
\vskip0.2cm
\textbf{(iii)}~Finally, as for the $K^{-,4}_{31}$, we can calculate that
\begin{align*}
\begin{split}
K^{-,4}_{31}(t,n,m)=\sum\limits_{m_1,m_2\in\Z}\Omega^{-,4}_{31}(t,N_1,M_2)\left(J\tilde{v}\widetilde{Q}B_{01}\widetilde{Q}\tilde{v}J\right)(m_1,m_2),
\end{split}
\end{align*}
where $N_1=n-m_1$, $M_2=m-m_2$, and
\begin{align}\label{kernel of Omegam31 4}
\begin{split}
\Omega^{-,4}_{31}(t,N_1,M_2)=\frac{1}{16}\int_{2-\mu_0}^{2}e^{-it\mu^4}\tilde{g}(\mu)e^{b(\mu)(|N_1|+|M_2|)}d\mu,
\end{split}
\end{align}
with $\tilde{g}(\mu)=\frac{1}{\mu^3(1+\frac{\mu^2}{4})}$. Following a process similar to that for $K_{0,2}(t,n,m)$ defined in \eqref{decompose of K0+}, the estimate \eqref{estimate for omega31,1} holds for $\Omega^{-,4}_{31}$ and so does $K^{-,4}_{31}(t,n,m)$. This completes the proof.
\end{proof}
\begin{remark}
{\rm We note that both variable substitutions \eqref{varible substi1} and \eqref{varible substi2}  play important roles in handling the oscillatory integrals. However, they exhibit slight differences in addressing singularity. Specifically, the \eqref{varible substi1} does not alter the singularity near $\mu=0$, whereas \eqref{varible substi2} decreases a singularity of order $(2-\mu)^{-\frac{1}{2}}$. This implies that the kernel $K^{\pm}_{32}(t,n,m)$ has no singularity near $\mu=2$.
}
\end{remark}
Based on this observation, one can verify that the following proposition holds.
\begin{proposition}\label{claim of Kpm32}
{ Under the assumptions in Proposition \ref{Proposition of K3}, let $K^{\pm}_{32}(t,n,m)$ be defined as in \eqref{kernels of Kpm3i}. Then
\begin{equation*}
\left|K^{\pm}_{32}(t,n,m)\right|\lesssim |t|^{-\frac{1}{4}},\quad t\neq0,\ {\rm uniformly\ in}\ n,m\in\Z.
\end{equation*}
}
\end{proposition}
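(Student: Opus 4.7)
The plan is to mirror the proof of Proposition \ref{claim of Kpm31}, only this time the explicit factor $(2-\mu)^{1/2}$ appearing in the integrand of $K^{\pm}_{32}$ plays the role that the $\widetilde{Q}$-cancellation played there. Concretely, using the identity $JR^{\pm}_{-\Delta}(\mu^2)J=-R^{\mp}_{-\Delta}(4-\mu^2)$ together with \eqref{R0mu4 and Rdeltamu2} on both copies of $R^{\pm}_{0}(\mu^4)$, I decompose
\[
R^{\pm}_0(\mu^4)\,v\,(2-\mu)^{1/2}B^{\pm}_{11}\,v\,R^{\pm}_0(\mu^4)=\frac{(2-\mu)^{1/2}}{4\mu^4}\sum_{j=1}^{4}\widetilde{\Lambda}^{\pm,j}_{32}(\mu),
\]
where $\widetilde{\Lambda}^{\pm,j}_{32}$ is obtained from $\Lambda^{\pm,j}_{31}$ in \eqref{R0vQB01QvR0 decomposition parts} by replacing $\widetilde{Q}B_{01}\widetilde{Q}$ with $B^{\pm}_{11}$. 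Correspondingly $K^{\pm}_{32}=\sum_{j=1}^{4}K^{\pm,j}_{32}$, and by the symmetry between $j=2$ and $j=3$ it suffices to treat $j=1,2,4$.

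For each piece I apply the variable substitution \eqref{varible substi2}. The key observation, already noted in the remark preceding the proposition, is that under this substitution $2-\mu=O(\theta_+^2)$ and $d\mu/d\theta_+=-\sin\theta_+/\mu=O(|\theta_+|)$ near $\theta_+=0$, so the combined weight $(2-\mu)^{1/2}\,d\mu$ absorbs two factors of $|\theta_+|$. Since each singular copy of $R^{\mp}_{-\Delta}(4-\mu^2)$ contributes exactly one factor $(\sin\theta_\pm)^{-1}=O(|\theta_+|^{-1})$, all singularities in $\widetilde{\Lambda}^{\pm,1}_{32}$ and $\widetilde{\Lambda}^{\pm,2}_{32}$ are completely eliminated, while $\widetilde{\Lambda}^{\pm,4}_{32}$ contains only the regular resolvent $R_{-\Delta}(-\mu^2)$ and is handled directly as $K^{-,4}_{31}$ was in \eqref{kernel of Omegam31 4}. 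Thus each $K^{\pm,j}_{32}$ is rewritten as
\[
K^{\pm,j}_{32}(t,n,m)=\int_{r_2}^{0}e^{-it\bigl[(2+2\cos\theta_+)^2-s_{n,m}\theta_+\bigr]}F^{\pm,j}_{32}(\theta_+,n,m)\,d\theta_+,
\]
where $s_{n,m}$ collects the linear phases $|n-m_1|$, $|m_2-m|$ that survive and $F^{\pm,j}_{32}$ is bounded on $[r_2,0)$.

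Having arrived at this form, I invoke Corollary \ref{corollary} and bound each integral by $|t|^{-1/4}$ times $\bigl|\lim_{\theta_+\to 0^-}F^{\pm,j}_{32}\bigr|+\int_{r_2}^{0}|\partial_{\theta_+}F^{\pm,j}_{32}|\,d\theta_+$. The main obstacle is $K^{\pm,1}_{32}$, whose amplitude is a double sum
\[
\sum_{m_1,m_2\in\Z}\tilde{v}(m_1)B^{\pm}_{11}(m_1,m_2)\tilde{v}(m_2)
\]
weighted by bounded $\theta_+$-dependent factors coming from the resolvent kernels and the Jacobian. Uniform-in-$n,m$ boundedness follows from $B^{\pm}_{11}\in\B(0,0)$ and $\tilde{v}=Jv\in\ell^2$ (ensured by $\beta>7$). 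The $L^1$ bound on the $\theta_+$-derivative exploits that $B^{\pm}_{11}$ is $\mu$-independent, so differentiation only hits the resolvent factors, and the resulting $|n-m|$-type weights are controlled by the same integration device used in \eqref{integral ebnm} and in the analysis of $\widetilde{F}_{11},\widetilde{F}_{31}$ in Propositions \ref{claim of K11} and \ref{claim of Kpm31}. The terms $K^{\pm,2}_{32},K^{\pm,3}_{32}$ are strictly easier since only one resolvent is singular, and $K^{\pm,4}_{32}$ follows immediately from a direct application of the Van der Corput lemma as in the treatment of $K^{-,4}_{31}$.
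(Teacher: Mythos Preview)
Your overall strategy matches the paper's intended argument: the remark immediately preceding the proposition already records that the substitution \eqref{varible substi2} supplies one factor of $|\theta_+|$ through the Jacobian $d\mu=-\tfrac{\sin\theta_+}{\mu}\,d\theta_+$, and the explicit weight $(2-\mu)^{1/2}=O(|\theta_+|)$ supplies another, so together they neutralise the at most two copies of $(\sin\theta_\mp)^{-1}$ coming from $R^{\mp}_{-\Delta}(4-\mu^2)$. Your four-term decomposition and singularity count are correct.

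There is one genuine imprecision in the $j=1$ piece. You write $K^{\pm,1}_{32}$ as a single $\theta_+$-integral with phase parameter ``$s_{n,m}$'' and then describe the amplitude $F^{\pm,1}_{32}$ as a double sum over $m_1,m_2$. These are incompatible: the linear-phase coefficients $|n-m_1|+|m_2-m|$ depend on the summation indices, so they cannot be absorbed into one $s_{n,m}$ while the sum remains inside $F$. If the sum stays inside, differentiating $e^{-i\theta_+(|n-m_1|+|m_2-m|)}$ produces an $|n-m_1|+|m_2-m|$ factor which is \emph{not} handled by \eqref{integral ebnm} (that device is specific to the decaying factor $e^{b(\mu)|\cdot|}$, not the unimodular $e^{-i\theta_+|\cdot|}$) and is unbounded in $n,m$. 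The fix, mirroring exactly the treatment of $K^{-,1}_{31}$ in \eqref{new expr of Omega31 1}, is to pull the double sum \emph{outside} the $\theta_+$-integral first; for each fixed $(m_1,m_2)$ the linear phase is then absorbed into the $\sup_{s\in\R}$ of Corollary~\ref{corollary}, and the remaining amplitude is the scalar function $(2-\mu(\theta_+))^{1/2}/\bigl(\mu(\theta_+)^2\sin\theta_+\bigr)$, which has a finite limit and bounded derivative as $\theta_+\to 0^-$, so no appeal to \eqref{integral ebnm} is needed there. Your invocation of \eqref{integral ebnm} is appropriate only for $j=2,3,4$, where a genuine $e^{b(\mu)|\cdot|}$ factor from $R_{-\Delta}(-\mu^2)$ sits in the amplitude.
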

Finally, the estimate for $K^{\pm}_{33}(t,n,m)$ can be derived by following the proof of Proposition \ref{claim of Kp612}.
\begin{proposition}\label{claim of K33}
{ Under the assumptions in Propositions \ref{Proposition of K3}, let $K^{\pm}_{33}(t,n,m)$ be defined as in \eqref{kernels of Kpm3i}. Then
\begin{equation*}
\left|K^{\pm}_{33}(t,n,m)\right|\lesssim |t|^{-\frac{1}{4}},\quad t\neq0,\ {\rm uniformly\ in}\ n,m\in\Z.
\end{equation*}
}
\end{proposition}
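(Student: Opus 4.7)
The plan is to mirror the approach used for $K^{+,6}_{12}$ in Proposition \ref{claim of Kp612}, but adapted to the threshold $\mu=2$. First, I will substitute the kernel representation \eqref{kernel of R0 boundary} for both copies of $R^{\pm}_0(\mu^4)$ into the integrand. This produces four summands of the form $e^{-it\mu^4}\mu^{-1} \times (\text{amplitude}) \times (v\Gamma_1(2-\mu)v)(m_1,m_2)$, with amplitudes built from the oscillatory factor $\frac{e^{-i\theta_\pm|n-m_1|}}{\sin\theta_\pm}$ (singular at $\mu=2$ like $(2-\mu)^{-1/2}$) and the exponentially decaying factor $\frac{e^{b(\mu)|n-m_1|}}{\sin\theta}$ (regular at $\mu=2$, since $\sin\theta=-i\mu(1+\mu^2/4)^{1/2}\neq 0$).

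The key analytic input is the property \eqref{estimate of Gamma}, which gives $\|\Gamma_1(2-\mu)\|_{\B(0,0)}\lesssim (2-\mu)$ together with a matching bound on its derivative. For the purely oscillatory summand (two $e^{-i\theta_\pm|\cdot|}/\sin\theta_\pm$ factors) I will apply the variable substitution \eqref{varible substi2}, $\cos\theta_+ = \mu^2/2-1$, which has Jacobian $d\mu/d\theta_+ = -\sin\theta_+/\mu$. This substitution converts the factor $1/(\sin\theta_+)^2$ into $1/|\sin\theta_+|$ after absorbing the Jacobian, and more importantly, $\theta_+=O((2-\mu)^{1/2})$ as $\mu\to 2$, so combining with the extra smallness from $\Gamma_1(2-\mu)\lesssim (2-\mu)$ gives an amplitude of the form $\widetilde F(\theta_+,n,m)$ whose limit at $\theta_+\to 0$ exists and whose $\theta_+$-derivative lies in $L^1([r_2,0])$ uniformly in $n,m$. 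The mixed summands (one oscillatory factor, one $e^{b(\mu)|\cdot|}$ factor) are handled by the same substitution; the $e^{b(\mu)|\cdot|}$ factor contributes a $\mu$-derivative that is uniformly $L^1$-bounded by \eqref{integral ebnm}. The purely non-oscillatory summand (two $e^{b(\mu)|\cdot|}$ factors) has no singularity near $\mu=2$, so I will apply the Van der Corput lemma directly on $[2-\mu_0,2]$, just as was done for ${\Omega}^{+,4}_{12}$ in Proposition \ref{claim of Kp612}.

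In each of the four summands, the final bound follows from Corollary \ref{corollary}, giving $|t|^{-1/4}$ times a prefactor that is controlled uniformly in $n,m$ by Cauchy--Schwarz in the summations over $m_1,m_2$, using $\|\langle\cdot\rangle^s v\|_{\ell^2}<\infty$ for some $s>1/2$ (which holds since $\beta>7$) together with the operator-norm bound on $\Gamma_1$ and its derivative. The main technical obstacle will be verifying the uniform $L^1$ bound of the $\theta_+$-derivative of the amplitude in the two mixed summands: the derivative generates terms containing $\partial_\mu(e^{b(\mu)|m_1-m|})$ (or similar) multiplied by matrix elements of $\partial_\mu\Gamma_1(2-\mu)$, and one must carefully combine Cauchy--Schwarz with \eqref{integral ebnm} and the bound $\|\partial_\mu\Gamma_1(2-\mu)\|_{\B(0,0)}\lesssim 1$ from \eqref{estimate of Gamma} to get a uniform-in-$(n,m)$ estimate. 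Once this is done, summing the four contributions and using $\beta>7$ to ensure convergence of the $m_1,m_2$-sums completes the argument.
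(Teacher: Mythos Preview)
Your proposal is correct and follows essentially the same route as the paper, which for this proposition simply states that the estimate ``can be derived by following the proof of Proposition~\ref{claim of Kp612}.'' Your four-term splitting of $R_0^{\pm}\,v\,\Gamma_1(2-\mu)\,v\,R_0^{\pm}$, the use of the substitution \eqref{varible substi2} (equivalently \eqref{varible substi1} shifted by $-\pi$, which is exactly the $J$-mechanism) on the oscillatory pieces, the direct Van der Corput argument on the purely exponential piece, and the $L^1$-derivative bookkeeping via \eqref{estimate of Gamma} and \eqref{integral ebnm} are precisely the adaptations of the $K^{+,6}_{12}$ argument to the threshold $\mu=2$ that the paper has in mind.
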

Therefore, combining Propositions \ref{claim of Kpm31}, \ref{claim of Kpm32} and \ref{claim of K33}, then  Proposition \ref{Proposition of K3} is established. Together with Propositions \ref{Proposition of K1} and  \ref{Proposition of K2}, we finish  the whole proof of  Theorem \ref{theorem of estimate for K123}.

\section{Proof of Theorem \ref{LAP-theorem}}\label{proof of LAP}
In this section, we are devoted to  completing the proof of Theorem \ref{LAP-theorem}, i.e., the limiting absorption principle for $\Delta^2+V$.
To the end, it suffices to prove the following Proposition \ref{LAP1}.
\begin{proposition}\label{LAP1}
{ Let $H=\Delta^2+V$ with $|V(n)|\lesssim \left<n\right>^{-\beta}$ for some $\beta>1$ and $\mcaI=(0,16)$. Given $\lambda\in \mcaI$, let $\mcaJ$ be the neighborhood of $\lambda$ defined in \eqref{mourre estim} below. For any relatively compact interval $I\subseteq\mcaJ\setminus\sigma_{p}(H)$, define $\widetilde{I}=\{z:\Re z\in I,\ 0<|\Im z|\leq1\}$. Then, for any $j\in\left\{0,\cdots,[\beta]-1\right\}$ and $j+\frac{1}{2}<s\leq[\beta]$, the following statements hold:}
\begin{itemize}
{
\item [(i)] \begin{equation}\label{jth reso estim}
    \sup\limits_{z\in\widetilde{I}}\left\|R^{(j)}_V(z)\right\|_{\B(s,-s)}<\infty.
    \end{equation}
    \vskip0.1cm
\item[(ii)] $R^{(j)}_V(z)$ is uniformly continuous on $\widetilde{I}$ in the norm topology of $\B(s,-s)$.
\vskip0.1cm
\item[(iii)] For $\mu\in I$, the norm limits
\begin{equation*}
     \frac{d^{j}}{d\mu^j}(R^{\pm}_V(\mu))=\lim\limits_{\varepsilon\downarrow0}R^{(j)}_V(\mu\pm i\varepsilon)
    \end{equation*}
     exist in $\B(s,-s)$ and are uniformly norm continuous on $I$.
}
\end{itemize}
\end{proposition}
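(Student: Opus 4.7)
The plan is to apply Mourre's commutator method together with the higher-order regularity results of Jensen--Mourre--Perry~\cite{JMP84}. Throughout, I would work in the Fourier representation where $\mathcal{F}\Delta^2\mathcal{F}^{-1}$ is multiplication by the symbol $M(x)=(2-2\cos x)^2$ on $L^2(\mathbb{T})$, and I would introduce the conjugate operator
\[
A:=\tfrac{i}{2}\,\mathcal{F}^{-1}\bigl(a(x)\partial_x+\partial_x a(x)\bigr)\mathcal{F},
\]
with a smooth bounded real symbol $a(x)$ chosen so that $a(x)M'(x)>0$ on every compact subset of $\mathbb{T}\setminus\{0,\pm\pi\}$. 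The choice $a(x)=M'(x)=8(1-\cos x)\sin x$ is the simplest: it vanishes exactly at the critical set of $M$ and makes the formal commutator $[i\Delta^2,A]$ a Fourier multiplier with symbol $|M'(x)|^2$, which is strictly positive away from the thresholds. Since all thresholds $\{0,16\}$ are excluded from $\mathcal{I}=(0,16)$, for every $\lambda\in\mathcal{I}$ the level set $M^{-1}(\lambda)$ is a compact subset of $\mathbb{T}\setminus\{0,\pm\pi\}$, so the Mourre estimate
\[
E_{\mathcal{J}}(\Delta^2)\bigl[i\Delta^2,A\bigr]E_{\mathcal{J}}(\Delta^2)\geq c\,E_{\mathcal{J}}(\Delta^2),\qquad c=c(\lambda)>0,
\]
holds strictly on a neighborhood $\mathcal{J}$ of $\lambda$ (with $c(\lambda)$ degenerating as $\lambda\to\{0,16\}$).

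Next, I would verify the abstract hypotheses of the Mourre theory (cf.\ Appendix~\ref{section of Appendix} and \cite{ABG96,Mou81,Mou83}) in two steps. \textbf{Free operator:} $\Delta^2$ is of class $C^\infty(A)$, since each iterated commutator $\mathrm{ad}_A^{k}(\Delta^2)$ is a Fourier multiplier with a smooth $2\pi$-periodic symbol, hence bounded on $\ell^2(\mathbb{Z})$. \textbf{Perturbation:} Because $A$ is essentially a first-order differential operator in the Fourier variable, conjugation by $A$ corresponds, in the position representation, to the commutator with a weighted position operator; iterating, $\mathrm{ad}_A^{j}(V)$ is given (schematically) by multiplication by linear combinations of expressions of the form $n^{k}(\Delta^{k'}V)(n)$ with $k+k'\leq j$, which are bounded on $\ell^2(\mathbb{Z})$ provided $|V(n)|\lesssim\langle n\rangle^{-\beta}$ with $\beta\geq j$. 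Together with the relative compactness of $V$ (which follows from $\beta>1$ and the compactness of $\langle n\rangle^{-\beta}$ on $\ell^2$), one obtains that $H=\Delta^2+V$ is of class $C^{[\beta]}(A)$ and that the Mourre estimate for $\Delta^2$ transfers to $H$ on any relatively compact $I\subseteq\mathcal{J}\setminus\sigma_p(H)$, possibly with a smaller positive constant and up to a compact error that is absorbed on $I$.

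With these ingredients in place, I would invoke the Mourre--Jensen--Mourre--Perry machinery: part (i) of Proposition~\ref{LAP1} follows from the optimal resolvent estimates in \cite{JMP84} once $H\in C^{j+1}(A)$, which is guaranteed under $j+1\leq[\beta]$; the weight $s>j+\tfrac12$ is precisely the threshold at which the weighted estimate $\bigl\|\langle A\rangle^{-s}R_V^{(j)}(z)\langle A\rangle^{-s}\bigr\|$ is uniform on $\widetilde{I}$, and one passes from $\langle A\rangle^{-s}$ to $\langle n\rangle^{-s}$ using $\langle A\rangle\lesssim\langle n\rangle$ together with the boundedness of $\langle A\rangle^{s}\langle n\rangle^{-s}$ on $\ell^2$ for $s\leq[\beta]$. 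Parts (ii) and (iii) then follow from the standard argument that the uniform bound in part (i) plus the strong Hölder continuity of the resolvent in the larger space $\mathcal{B}(s',-s')$ with $s<s'\leq[\beta]$ yields uniform norm continuity up to the real axis.

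The main obstacle I anticipate is the bookkeeping that links the differentiation order $j$, the weight $s$, and the decay exponent $\beta$ through the chain $j\leq[\beta]-1$, $s>j+\tfrac12$, $s\leq[\beta]$. Concretely, one must check that $\mathrm{ad}_A^{k}V$ remains bounded on $\ell^2$ for all $k\leq[\beta]$; this requires writing out the symbol of $\mathrm{ad}_A^{k}$ as a differential operator of order $k$ in the Fourier variable with symbol made from $a$ and its derivatives, and verifying that the resulting position-side operator has matrix elements decaying like $\langle n\rangle^{-\beta+k}$ uniformly. A secondary technical point is that the Mourre estimate constant $c(\lambda)$ degenerates at the thresholds, so the neighborhoods $\mathcal{J}$ shrink as $\lambda\to\{0,16\}$; this is harmless for the statement of Theorem~\ref{LAP-theorem} since the boundary values are only claimed on $\mathcal{I}\setminus\sigma_p(H)$, and a standard covering argument on relatively compact subintervals $I$ suffices to conclude.
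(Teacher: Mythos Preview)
Your proposal is correct and follows the same broad strategy as the paper: establish a Mourre estimate for $H$ with respect to a suitable conjugate operator $A$, verify $H\in C^{[\beta]}(A)$, invoke the Jensen--Mourre--Perry commutator estimates (Theorem~\ref{Resol-Smoo}) to obtain uniform bounds with $\langle A\rangle^{-s}$ weights, and then transfer to $\langle n\rangle^{-s}$ weights. Two differences are worth noting. First, your conjugate operator uses the symbol $a(x)=M'(x)$, whereas the paper takes the simpler $iA=\mcaN\mcaP-\mcaP^{*}\mcaN$ (corresponding in Fourier to $a(x)=2\sin x$); both choices work, and the paper's yields the clean relation $ad^{1}_{iA}(\Delta^2)=2\Delta^2(4+\Delta)$, so that all higher iterates are polynomials in $\Delta$. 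Second, and more substantively, your passage from $\langle A\rangle^{-s}$ to $\langle n\rangle^{-s}$ via the bare claim that $\langle A\rangle^{s}\langle n\rangle^{-s}$ is bounded is correct in spirit but hides nontrivial work (for non-integer $s$ one must interpolate, and the integer endpoint requires controlling $A^{k}\langle n\rangle^{-k}$). The paper instead inserts the resolvent at a fixed nonreal point via the identity
\[
R_V(z)=R_V(-i)+(z+i)R_V(-i)^2+(z+i)^2R_V(-i)R_V(z)R_V(-i),
\]
and proves the more tractable statement that $\langle A\rangle^{s}R_V(\pm i)\langle\mcaN\rangle^{-s}\in\B(0,0)$ for $0\le s\le[\beta]$ by complex interpolation between $s=0$ and $s=[\beta]$, the latter being reduced to boundedness of $A^{\ell}R_V(\pm i)\langle\mcaN\rangle^{-[\beta]}$ for integer $\ell\le[\beta]$ through repeated use of $AR_V(\pm i)=ad^{1}_{A}(R_V(\pm i))+R_V(\pm i)A$ together with $H\in C^{[\beta]}(A)$. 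This resolvent-identity device is standard in the Mourre literature and makes the bookkeeping you flagged (linking $j$, $s$, and $\beta$) transparent: the upper bound $s\le[\beta]$ appears precisely because the commutator expansion terminates at order $[\beta]$.
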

Before presenting the proof, we  outline our main steps. Firstly, based on the theory developed by Jensen, Mourre and Perry in \cite{JMP84}(see also Theorem \ref{Resol-Smoo}), we aim to identify a suitable conjugate operator $A$. This operator will enable us to establish estimates for the derivatives of the resolvent $R_V(z)$ in the space $\mscH^{A}_{s}$ (the Besov space associated with $A$, as defined in \cite[Section 3.1]{BS99}). Subsequently, we will attempt to replace the space $\mscH^{A}_{s}$ with $\ell^{2,s}$, thereby obtaining the desired results.

We now introduce the conjugate operator $A$ considered here. Define the position operator $\mcaN$ as:
$$(\mcaN\phi)(n):=n\phi(n),\quad n\in\Z,\quad \forall\ \phi\in\mcaD(\mcaN)=\Big\{\phi\in\ell^2(\Z):\sum\limits_{n\in\Z}|n|^2|\phi(n)|^2<\infty\Big\},$$
and the difference operator $\mcaP$ on $\ell^2(\Z)$ by:
$$(\mcaP\phi)(n):=\phi(n+1)-\phi(n),\quad\forall\ \phi\in\ell^2(\Z).$$
It immediately follows that the dual operator $\mcaP^{*}$ of $\mcaP$ is given by:
$$(\mcaP^{*}\phi)(n):=\phi(n-1)-\phi(n),\quad\forall\ \phi\in\ell^2(\Z).$$
Let us consider the self-adjoint operator $A$ on $\ell^2(\Z)$ satisfying
\begin{equation}\label{A}
iA=\mcaN\mcaP-\mcaP^{*}\mcaN.
\end{equation}

To apply Theorem \ref{Resol-Smoo} to our specific case, it suffices to verify two conditions: the regularity of $H$ with respect to $A$~and the  Mourre estimate of the form \eqref{Mourre}. The first condition is verified in Lemma \ref{regularity of H}, while for the second, we derive the following estimate.
\begin{lemma}\label{mourre esti lemma}
{ Let $H=\Delta^2+V$, where $|V(n)|\lesssim \left<n\right>^{-\beta}$ with $\beta>1$ and let $A$ be defined as in \eqref{A}. Then, for any $\lambda\in\mcaI$, there exist constants $\alpha>0$, $\delta>0$ and a compact operator $K$ on $\ell^2(\Z)$, such that
\begin{equation}\label{mourre estim}
E_{H}(\mcaJ)ad^{1}_{iA}(H)E_{H}(\mcaJ)\geq\alpha E_{H}(\mcaJ)+K,\quad \mcaJ=(\lambda-\delta,\lambda+\delta),
\end{equation}
where $E_{H}(\mcaJ)$ represents the spectral projection of $H$ onto the interval $\mcaJ$ and $ad^{1}_{A}(H)$ is defined in \eqref{adk}.}
\end{lemma}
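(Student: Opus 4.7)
The strategy is a textbook Mourre scheme: verify a strict positive commutator estimate for the free operator $H_{0}=\Delta^{2}$ via Fourier analysis, then show that replacing $H_{0}$ by $H=H_{0}+V$ produces only compact corrections, which may be absorbed into the compact remainder $K$. To handle the free part I pass to the Fourier side, using $\mcaF\mcaN=i\partial_{x}\mcaF$, $\mcaF\mcaP=(e^{ix}-1)\mcaF$ and $\mcaF\mcaP^{*}=(e^{-ix}-1)\mcaF$. A direct calculation then identifies $iA$ on $L^{2}(\T)$ with the first order operator $-e^{ix}-2\sin(x)\partial_{x}$, so that
\[
    ad^{1}_{iA}(H_{0})\;=\;\pm\,2\sin(x)\,M'(x)\;=\;\pm\,16\sin^{2}(x)(1-\cos x)
\]
as a Fourier multiplier, the overall sign being fixed by whichever convention for $ad^{1}_{iA}$ is adopted in Appendix \ref{section of Appendix}. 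This symbol has a definite sign on $\T$ and vanishes exactly at $x=0$ and $x=\pm\pi$, which are precisely the preimages under $M(x)=(2-2\cos x)^{2}$ of the two thresholds $\{0,16\}$ of $\sigma(H_{0})$.

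For a given $\lambda\in\mcaI$, the level set $M^{-1}(\lambda)$ is a finite subset of $\T$ staying away from $\{0,\pm\pi\}$, so I may choose $\delta>0$ so small that $\mcaJ=(\lambda-\delta,\lambda+\delta)\subset\mcaI$ and $M^{-1}(\overline{\mcaJ})$ is compactly contained in $\T\setminus\{0,\pm\pi\}$. On this set the free commutator symbol is bounded below by some $2\alpha>0$, and since $E_{H_{0}}(\mcaJ)$ is Fourier multiplication by $\chi_{M^{-1}(\mcaJ)}$, I obtain the free (remainder-free) Mourre estimate $E_{H_{0}}(\mcaJ)\,ad^{1}_{iA}(H_{0})\,E_{H_{0}}(\mcaJ)\ge 2\alpha\,E_{H_{0}}(\mcaJ)$ on $\ell^{2}(\Z)$.

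For the perturbation, using $[\mcaN,\mcaP]=-S^{+}$ and $[\mcaN,\mcaP^{*}]=S^{-}$ (with $(S^{\pm}\phi)(n)=\phi(n\pm 1)$) together with $[\mcaP,V]=(V(\cdot+1)-V)S^{+}$ and $[\mcaP^{*},V]=(V(\cdot-1)-V)S^{-}$, a short computation gives
\[
    ad^{1}_{iA}(V)\;=\;\mcaN\bigl(V(\cdot+1)-V\bigr)S^{+}-\bigl(V(\cdot-1)-V\bigr)S^{-}\mcaN.
\]
Each coefficient is multiplication by a sequence of the form $n(V(n\pm 1)-V(n))=O(\langle n\rangle^{1-\beta})$, which vanishes at infinity \emph{precisely} under the hypothesis $\beta>1$; hence $ad^{1}_{iA}(V)$ is compact on $\ell^{2}(\Z)$. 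Moreover, because $V(n)\to 0$ the multiplication operator $V$ is itself compact, so the usual functional-calculus argument (Helffer--Sj\"{o}strand, say) yields that $E_{H}(\mcaJ)-E_{H_{0}}(\mcaJ)$ is compact as well. Combining these pieces,
\[
    E_{H}(\mcaJ)\,ad^{1}_{iA}(H)\,E_{H}(\mcaJ)\;=\;E_{H_{0}}(\mcaJ)\,ad^{1}_{iA}(H_{0})\,E_{H_{0}}(\mcaJ)\;+\;K_{0}\;\ge\;2\alpha\,E_{H}(\mcaJ)\;+\;K,
\]
with $K_{0},K$ compact, which is \eqref{mourre estim} (after a harmless shrinking of $\alpha$). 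The only genuinely delicate point I foresee is tracking the correct sign in the free commutator -- namely, which convention for $ad^{1}_{iA}$ makes $16\sin^{2}(x)(1-\cos x)$ appear with a positive sign -- together with checking uniformity of the requirement that $M^{-1}(\overline{\mcaJ})$ avoid the two critical points $x=0$ (which is degenerate for $M$) and $x=\pm\pi$; away from these thresholds the Fourier-side picture is completely transparent.
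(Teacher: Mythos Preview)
Your proposal is correct and follows essentially the same scheme as the paper. The only presentational difference is that the paper packages the free commutator as a function of $H_{0}$ itself, namely $[H_{0},iA]=2H_{0}(4-\sqrt{H_{0}})$, and then reads off positivity on $\mcaJ$ directly from the spectral calculus via $g(x)=2x(4-\sqrt{x})>0$ on $(0,16)$; this bypasses your step of tracking the Fourier preimage $M^{-1}(\overline{\mcaJ})$ and the sign convention for $ad^{1}_{iA}$, but the two viewpoints are of course equivalent (your symbol $16\sin^{2}x(1-\cos x)$ is exactly $g(M(x))$). The perturbative part---compactness of $[V,iA]$ from the $O(\langle n\rangle^{1-\beta})$ decay and compactness of $E_{H}(\mcaJ)-E_{H_{0}}(\mcaJ)$---is handled identically.
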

We delay the proof of this lemma to the end of this section. Now, combining this lemma and Lemma \ref{regularity of H}, one can apply Theorem \ref{Resol-Smoo} to $H$ to obtain the following estimates.
\begin{lemma}\label{LAP lemma}
{Let $H=\Delta^2+V$ with $|V(n)|\lesssim \left<n\right>^{-\beta}$ for some $\beta>1$ and let $A$ be defined as in \eqref{A}. Given $\lambda\in\mcaI$ and $\mcaJ$ is defined in \eqref{mourre estim}. Then, for any relatively compact interval $I\subseteq\mcaJ\setminus\sigma_{p}(H)$, any $j\in\left\{0,\cdots,[\beta]-1\right\}$ and $s>j+\frac{1}{2}$, one has
\begin{itemize}
\item [(i)]\begin{equation}\label{jth esti of Lem}
\sup\limits_{{\Re}z\in I,{\Im}z\neq0}\left\|\left<A\right>^{-s}R^{(j)}_V(z)\left<A\right>^{-s}\right\|<\infty.
\end{equation}
\item[(ii)] Denote $\widetilde{I}=\{z:\Re z\in I,\ 0<|\Im z|\leq1\}$, then $\left<A\right>^{-s}R^{(j)}_V(z)\left<A\right>^{-s}$ is H\"{o}lder continuous on $\widetilde{I}$ with the exponent $\delta(s,j)$ defined in \eqref{delta}.
    \vskip0.1cm
\item [(iii)] Let $\mu\in I$. The norm limits
$$\lim\limits_{\varepsilon\downarrow0}\left<A\right>^{-s}R^{(j)}_V(\mu\pm i\varepsilon)\left<A\right>^{-s}$$
exist and equal
$$\frac{d^{j}}{d\mu^j}(\left<A\right>^{-s}R^{\pm}_V(\mu)\left<A\right>^{-s}),$$
where
$$\left<A\right>^{-s}R^{\pm}_V(\mu)\left<A\right>^{-s}:=\lim\limits_{\varepsilon\downarrow0}\left<A\right>^{-s}R_V(\mu\pm i\varepsilon)\left<A\right>^{-s}.$$
The norm limits are H\"{o}lder continuous with exponent $\delta(s,n)$ given by \eqref{delta}.
\end{itemize}
}
\end{lemma}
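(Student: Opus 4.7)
The plan is a direct application of Theorem~\ref{Resol-Smoo} (the Jensen--Mourre--Perry resolvent smoothness theorem recalled in Appendix~\ref{section of Appendix}) to $H=\Delta^2+V$ with the conjugate operator $A$ introduced in \eqref{A}. Two hypotheses must be checked: that $H$ has sufficient regularity with respect to $A$, and that a \emph{strict} Mourre estimate (without compact remainder) holds on a neighborhood of every point of $I$. Regularity is precisely the content of Lemma~\ref{regularity of H}, which under $\beta>1$ produces the iterated commutators $ad^{k}_{iA}(H)$ as bounded operators on $\ell^2(\Z)$ for $k\leq[\beta]$; in particular every smoothness hypothesis needed to apply Theorem~\ref{Resol-Smoo} up to order $[\beta]-1$ is satisfied.

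For the second hypothesis I begin from the Mourre estimate with compact remainder supplied by Lemma~\ref{mourre esti lemma}:
$$E_H(\mcaJ)ad^{1}_{iA}(H)E_H(\mcaJ)\geq\alpha E_H(\mcaJ)+K.$$
Combining this inequality with regularity and the virial theorem yields, by the standard Mourre argument, that eigenvalues of $H$ in $\mcaJ$ are discrete with finite multiplicities. Fix $\mu\in I\subseteq\mcaJ\setminus\sigma_p(H)$: one can choose an open interval $\mcaJ'_\mu\subseteq\mcaJ$ containing $\mu$, disjoint from $\sigma_p(H)\cap\mcaJ$, and small enough that $\|K E_H(\mcaJ'_\mu)\|<\alpha/2$ (using the compactness of $K$ and the strong continuity of the spectral projections of $H$ in $\mcaJ$ away from eigenvalues). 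This yields the strict Mourre estimate
$$E_H(\mcaJ'_\mu)ad^{1}_{iA}(H)E_H(\mcaJ'_\mu)\geq\frac{\alpha}{2}E_H(\mcaJ'_\mu).$$

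With the regularity of $H$ and a strict Mourre estimate in hand on $\mcaJ'_\mu$, Theorem~\ref{Resol-Smoo} immediately delivers statements (i)--(iii) on $\mcaJ'_\mu$ for every $j\in\{0,\dots,[\beta]-1\}$ and every $s>j+\frac{1}{2}$: the uniform boundedness \eqref{jth esti of Lem}, the H\"older continuity on $\widetilde{\mcaJ'_\mu}$ with the exponent $\delta(s,j)$ appearing in \eqref{delta}, and the existence of the boundary values $\langle A\rangle^{-s}R^{\pm}_V(\mu)\langle A\rangle^{-s}$ together with the corresponding H\"older continuity of their derivatives. Since $I$ is relatively compact in $\mcaJ\setminus\sigma_p(H)$, it is covered by finitely many neighborhoods $\mcaJ'_\mu$, and patching the estimates across this cover yields the conclusions on $I$ itself.

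The main obstacle I anticipate is not analytical but rather bookkeeping: one must verify that the admissible range of indices in Theorem~\ref{Resol-Smoo} accommodates the pair $(s,j)$ with $s>j+\frac{1}{2}$, $j\leq[\beta]-1$, and that the H\"older exponent produced by the abstract theorem coincides with the $\delta(s,j)$ stated in \eqref{delta}. This is a consistency check against the formulation in \cite{JMP84} (and its Besov-space refinement in \cite{ABG96}); no further technical input beyond the abstract Mourre framework and the compact-remainder removal described above is required.
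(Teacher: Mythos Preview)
Your proposal is correct and follows essentially the same route as the paper: the paper simply states that Lemma~\ref{LAP lemma} follows by combining Lemma~\ref{mourre esti lemma} (Mourre estimate) with Lemma~\ref{regularity of H} ($H\in C^{[\beta]}(A)$) and invoking Theorem~\ref{Resol-Smoo}. One small simplification: the compact-remainder removal step you carry out explicitly is already built into the hypotheses of Theorem~\ref{Resol-Smoo} as formulated here (condition~(e) in Definition~\ref{condi-veri} allows a compact $K$, and the Remark following Theorem~\ref{Resol-Smoo} extends the conclusion to $I\subseteq\mcaJ\setminus\sigma_p(H)$), so you may invoke the abstract theorem directly without first passing to a strict Mourre estimate---though your argument is of course how that extension is proved.
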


With Lemma \ref{LAP lemma} established, we now proceed to prove Proposition \ref{LAP1}.
\begin{proof}[Proof of Proposition \ref{LAP1}]

First of all, by virtue of the resolvent identity
\begin{equation}\label{reso iden}
R_V(z)=R_V(z_0)+(z-z_0)(R_V(z_0))^2+(z-z_0)^2R_V(z_0)R_V(z)R_V(z_0),
\end{equation}
For any $\lambda\in\mcaI$, any relatively compact interval $I\subseteq\mcaJ\setminus\sigma_{p}(H)$ and $z\in\widetilde{I}$. Taking $z_0=-i$ in the \eqref{reso iden}, we obtain that
 \begin{equation}\label{-i}
 R_V(z)=R_V(-i)+(z+i)(R_V(-i))^2+(z+i)^2R_V(-i)R_V(z)R_V(-i).
 \end{equation}
 \vskip0.3cm
 \underline{$\bm{Case\ j=0.}$}~\ (i)~Based on \eqref{-i}, to obtain the \eqref{jth reso estim}, it suffices to show that for any $\frac{1}{2}<s\leq[\beta]$, one has
 \begin{equation}\label{R(-i)R(z)R(-i)}
\sup\limits_{z\in\widetilde{I}}\left\|R_V(-i)R_V(z)R_V(-i)\right\|_{\B(s,-s)}<\infty.
 \end{equation}
 In view of the estimate \eqref{jth esti of Lem} and noting that
 \begin{equation}\label{recom}
 \left<\mcaN\right>^{-s}R_V(-i)R_V(z)R_V(-i)\left<\mcaN\right>^{-s}=\underbrace{\left<\mcaN\right>^{-s}R_V(-i)\left<A\right>^{s}}\underbrace{\left<A\right>^{-s}R_V(z)\left<A\right>^{-s}}\underbrace{\left<A\right>^{s}R_V(-i)\left<\mcaN\right>^{-s}},
 \end{equation}
 to establish \eqref{R(-i)R(z)R(-i)}, by duality, it is enough to prove that
 \begin{equation*}
 \left<A\right>^{s}R_V(\pm i)\left<\mcaN\right>^{-s}\in\B(0,0),\quad \frac{1}{2}<s\leq[\beta].
 \end{equation*}
 In fact, this result holds for $0\leq s\leq[\beta]$. To see this, note that it's trivial for $s=0$. Next, we will demonstrate that
 \begin{equation}\label{beta}
 \left<A\right>^{[\beta]}R_V(\pm i)\left<\mcaN\right>^{-[\beta]}\in\B(0,0),
 \end{equation}
 and then, by complex interpolation, the desired result follows. Furthermore, the proof of \eqref{beta} can be reduced to verifying that
 \begin{equation}\label{ell}
 A^{\ell}R_V(\pm i)\left<\mcaN\right>^{-[\beta]}\in\B(0,0),\quad\forall\ 1\leq\ell\leq[\beta],\ \ell\in N^+.
 \end{equation}

 Indeed, for any $1\leq\ell\leq[\beta]$, we use the formula
 \begin{equation}\label{commu of A and R}
 ad^1_{A}(R_V(\pm i))=R_V(\pm i)ad^{1}_{A}(H)R_V(\pm i),
 \end{equation}
where $ad^1_{A}(\cdot)$ is defined in \eqref{adk}.
With the goal of combining the powers of $A$ and $\left<\mcaN\right>^{-[\beta]}$, we repeatedly apply the formula
$$AR_V(\pm i)=ad^1_{A}(R_V(\pm i))+R_V(\pm i)A.$$
 This allows us to express $A^{\ell}R_V(\pm i)\left<\mcaN\right>^{-[\beta]}$ as a finite sum of operators of the form
 $$B_{k}A^{k}\left<\mcaN\right>^{-[\beta]},\quad 0\leq k\leq\ell,$$
 where $B_{k}\in\B(0,0)$, and if it contains such term $ad^{q}_{A}(H)$, then $q$ is at most $\ell$. Since $k\leq[\beta]$, it follows that $A^{k}\left<\mcaN\right>^{-[\beta]}\in\B(0,0)$, which proves the \eqref{ell}. Therefore, the desired result (i) is established.
\vskip0.2cm
 Furthermore, (ii) and (iii) follow directly from the corresponding results in Lemma \ref{LAP lemma} and the relations \eqref{-i} and \eqref{recom}.
 \vskip0.3cm
 \underline{$\bm{ Case\ j\geq1.}$}\ For $j\geq1$, likewise, the key step is to prove (i). Since $R^{(j)}_V(z)=C_j(R_V(z))^{j}$, where $C_j$ is a constant depending on $j$, we can use \eqref{-i} and the commutative property $R_V(z)R_{V}(-i)=R_{V}(-i)R_V(z)$ to express $R^{(j)}_V(z)$ as follows:
 \begin{align*}
 R^{(j)}_V(z)=\sum\limits_{k_1+k_2+k_3=j}C(k_1,k_2,k_3,j)(z+i)^{k_2+2k_3}(R_V(-i))^{k_1+2k_2+2k_3}R^{(k_3)}_V(z).
 \end{align*}
For each term in the sum above, following in a similar approach to the case $j=0$, we can also establish the \eqref{jth reso estim}. This completes the proof.
\end{proof}
Finally, we give the proof of Lemma \ref{mourre esti lemma}.
\begin{proof}[Proof of Lemma \ref{mourre esti lemma}]
For convenience, in this proof, we denote $H_0:=\Delta^2$ and replace the notation $ad^{1}_{A}(\cdot)$ with $[\cdot,A]$.

For any $\lambda\in\mcaI=(0,16)$, to obtain \eqref{mourre estim}, the key step is to prove that it holds for $H_0$ with $K=0$. Specifically, we need to show that there exist constants
$\alpha>0$ and $\delta>0$, such that
\begin{equation}\label{H0}
E_{H_0}(\mcaJ)[H_0,iA]E_{H_0}(\mcaJ)\geq\alpha E_{H_0}(\mcaJ),\quad \mcaJ=(\lambda-\delta,\lambda+\delta).
\end{equation}

Once \eqref{H0} is established, after some deformation treatment, we have
\begin{align*}
&E_{H}(\mcaJ)[H,iA]E_{H}(\mcaJ)=E_{H_0}(\mcaJ)[H_0,iA]E_{H_0}(\mcaJ)+\\
&\underbrace{E_{H_0}(\mcaJ)[H_0,iA](E_{H}(\mcaJ)-E_{H_0}(\mcaJ))+(E_{H}(\mcaJ)-E_{H_0}(\mcaJ))[H_0,iA]E_{H}(\mcaJ)+E_{H}(\mcaJ)[V,iA]E_{H}(\mcaJ)}_{K_1}\\
&\geq \alpha E_{H_0}(\mcaJ)+K_1=\alpha E_{H}(\mcaJ)+\underbrace{\alpha( E_{H}(\mcaJ)-E_{H_0}(\mcaJ))+K_1}_{K},
\end{align*}
where the compactness $K$ follows from the fact that both $V$ and $[V,iA]$ are bounded compact operators under the assumption $|V(n)|\lesssim\left<n\right>^{-\beta}$ with $\beta>1$. This establishes \eqref{mourre estim}.

In what follows, we focus on proving \eqref{H0}. Indeed, for any $\lambda\in(0,16)$, take $0<\delta<\frac{1}{2}\min(\lambda, 16-\lambda):=\delta_0=\delta_0(\lambda)$. By Lemma \ref{regularity of H},
$$[H_0,iA]=2H_0(4-\sqrt {H_{0}}),\quad 0\leq H_0\leq16.$$
Define $g(x)=2x(4-\sqrt x)$ for $x\in[0,16]$. Then, $C(\lambda):=\min\limits_{x\in \mcaJ_1}g(x)>0$, where $\mcaJ_1=[\lambda-\delta_0,\lambda+\delta_0]$. Using functional calculus, we obtain
 \begin{equation}\label{J1}
 E_{H_0}(\mcaJ_1)[H_0,iA]E_{H_0}(\mcaJ_1)\geq C(\lambda) E_{H_0}(\mcaJ_1):=\alpha E_{H_0}(\mcaJ_1).
 \end{equation}
 Now, take $\mcaJ=(\lambda-\delta,\lambda+\delta)\subseteq\mcaJ_1$, and multiply both sides of \eqref{J1} by $E_{H_0}(\mcaJ)$. This yields the desired inequality \eqref{H0}.
\end{proof}
\section{Proof of Theorem \ref{asy theor of Mpm mu}}\label{proof of asy}
This section is dedicated to presenting the proof of asymptotic expansions of $\left(M^{\pm}\left(\mu^4\right)\right)^{-1}$.
To begin with, we come to  characterize the regular condition given  in Definition \ref{defin of regular point 1}. 

Recall that $U(n)={\rm sign} (V(n))$, $v(n)=\sqrt{|V(n)|}$. Define
$$T_{0}=U+vG_{0}v,\quad\widetilde{T}_{0}=U+v\widetilde{G}_{0}v,$$
where $G_0$ and $\widetilde{G}_0$ are integral operators with the following kernels, respectively:
\begin{align}
G_0(n,m)&=\frac{1}{12}\left(|n-m|^3-|n-m|\right),\label{kernel of G0}\\
\widetilde{G}_0(n,m)&=\frac{(-1)^{|n-m|}}{32\sqrt2}\left(2\sqrt2 |n-m|-\left(2\sqrt2-3\right)^{|n-m|}\right).\label{kernel of G0tuta}
\end{align}
Additionally, recall that $S_0$ and $\widetilde{Q}$ are the orthogonal projections onto the following spaces:
$$S_0\ell^2(\Z)=\left({\rm span}\{v,v_1\}\right)^{\bot},\quad \widetilde{Q}\ell^2(\Z)=\left({\rm span}\{\tilde{v}\}\right)^{\bot},\ v_1(n)=nv(n),\ \tilde{v}(n)=(-1)^{n}v(n).$$
Denote
\begin{align}\label{S,Stuta}
\begin{split}
S:&={\rm Ker}S_0T_0S_0\mspace{-5mu}\mid_{S_0\ell^2(\Z)}=\big\{f\in S_0\ell^{2}(\Z):S_0T_0f=0\big\},\\
\widetilde{S}:&={\rm Ker}\widetilde{Q}\widetilde{T}_0\widetilde{Q}\mspace{-5mu}\mid_{\widetilde{Q}\ell^2(\Z)}=\big\{f\in \widetilde{Q}\ell^{2}(\Z):\widetilde{Q}\widetilde{T}_0f=0\big\}.
\end{split}
\end{align}
\begin{lemma}\label{charac of regular}
{ Let $H=\Delta^2+V$ on $\Z$ and $|V(n)|\lesssim \left<n\right>^{-\beta}$ with $\beta>7$, then
\begin{itemize}
\item [(i)]$f\in S\Longleftrightarrow \exists\ \phi\in W_{\frac{3}{2}}(\Z)$ such that $H\phi=0$. Moreover, $f=Uv\phi$ and $\phi(n)=-(G_0vf)(n)+c_1n+c_2$,
    where
    \begin{equation}\label{exp of c1,c2}
    c_1=\frac{\left<T_0f,v'\right>}{\|v'\|^2_{\ell^2}},\quad c_2=\frac{\left<T_0f,v\right>}{\|V\|_{\ell^1}}-\frac{\left<v_1,v\right>}{\|V\|_{\ell^1}}c_1,\quad v'=v_1-\frac{\left<v_1,v\right>}{\|V\|_{\ell^1}}v.
    \end{equation}
\item [(ii)] $f\in \widetilde{S}\Longleftrightarrow \exists\ \phi\in W_{\frac{1}{2}}(\Z)$ such that $H\phi=16\phi$. Moreover, $f=Uv\phi$ and $\phi(n)=-(\widetilde{G}_{0}vf)(n)+(-1)^{n}c$, where
    \begin{equation}\label{exp of c}
    c=\frac{\left<\widetilde{T}_0f,\tilde{v}\right>}{\|V\|_{\ell^1}}.
    \end{equation}
\end{itemize}
}
\end{lemma}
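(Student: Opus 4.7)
The plan is to prove (i) and (ii) in parallel, organized around two ingredients: a Green-function identity for the free discrete bi-Laplacian at the respective threshold, and the exact correspondence between the moment cancellation imposed by the projections $S_0,\widetilde{Q}$ and the growth restrictions of the weighted spaces $W_{3/2}(\Z),W_{1/2}(\Z)$. For part (i) the key identity is $\Delta^2 G_0(\cdot,m)=\delta_m(\cdot)$, which I would verify from elementary discrete calculus: $\Delta|n|=2\delta_0(n)$ and $\Delta|n|^3=6|n|+2\delta_0(n)$ give $\Delta(|n|^3-|n|)=6|n|$, and a further application of $\Delta$ yields $12\delta_0(n)$. For part (ii) the analogous identity $(\Delta^2-16)\widetilde{G}_0(\cdot,m)=\delta_m(\cdot)$ can be obtained by conjugating with the operator $J$ (using $J\Delta^2J=(\Delta+4)^2$) and a direct computation exploiting that $3-2\sqrt{2}$ is the decaying root of the characteristic polynomial associated with $\Delta^2-16$.

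For the direction $(\Rightarrow)$ of (i), take $f\in S$, so that $f\in S_0\ell^2(\Z)$ and $T_0f\in\mathrm{span}\{v,v_1\}$. Pairing $T_0f$ with $v$ and with the Gram--Schmidt companion $v'=v_1-\frac{\langle v_1,v\rangle}{\|V\|_{\ell^1}}v$ recovers the scalars $c_1,c_2$ in \eqref{exp of c1,c2} such that $T_0f=c_1v_1+c_2v$ exactly. Setting $\phi:=-G_0vf+c_1n+c_2$, the identity $f=Uv\phi$ follows from $Uv\phi=-UvG_0vf+U(c_1v_1+c_2v)=-U(T_0-U)f+UT_0f=U^2f=f$, and $H\phi=0$ is immediate from $\Delta^2G_0=I$, $\Delta^2(c_1n+c_2)=0$, and $V\phi=v(Uv\phi)=vf$. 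The membership $\phi\in W_{3/2}(\Z)$ is established by the algebraic expansion $|n-m|^3=|n|^3-3\,\mathrm{sgn}(n)|n|^2m+3|n|m^2-\mathrm{sgn}(n)m^3$ (valid for $|m|<|n|$), which gives $(G_0vf)(n)=\frac{1}{12}\bigl[|n|^3\langle f,v\rangle-3\,\mathrm{sgn}(n)|n|^2\langle f,v_1\rangle\bigr]+O(|n|)$; the first two terms vanish because $f\in S_0\ell^2(\Z)$, so $G_0vf(n)=O(|n|)$ and hence $\phi(n)=O(|n|)$, placing $\phi$ in $W_{3/2}(\Z)$.

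For the reverse direction of (i), let $\phi\in W_{3/2}(\Z)$ satisfy $H\phi=0$; the decay $\beta>7$ together with the at-most-linear growth of $\phi$ gives $f:=Uv\phi\in\ell^1\cap\ell^2$, and the equation reads $\Delta^2\phi=-vf$. Since $\Delta^2(G_0vf)=vf$ and the kernel of $\Delta^2$ on sequences equals $\mathrm{span}\{1,n\}$, we conclude $\phi=-G_0vf+c_1n+c_2$ for unique $c_1,c_2$. Inserting the above expansion, the condition $\phi\in W_{3/2}$ forces the $|n|^3$ and $|n|^2\,\mathrm{sgn}(n)$ growths to cancel, i.e.\ $\langle f,v\rangle=\langle f,v_1\rangle=0$, so $f\in S_0\ell^2(\Z)$; and then $T_0f=Uf+vG_0vf=v\phi+(-v\phi+c_1v_1+c_2v)=c_1v_1+c_2v\in(S_0\ell^2(\Z))^{\perp}$, giving $S_0T_0f=0$ and $f\in S$. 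Part (ii) follows the same template with $\widetilde{G}_0$ replacing $G_0$: the intersection of $\ker(\Delta^2-16)$ with $W_{1/2}(\Z)$ is $\mathrm{span}\{(-1)^n\}$, so only a single constant $c$ enters the formula for $\phi$; the sole moment cancellation required is $\langle f,\tilde{v}\rangle=0$, i.e.\ $f\in\widetilde{Q}\ell^2(\Z)$, and $c$ is recovered from $\widetilde{T}_0f\in\mathrm{span}\{\tilde{v}\}$ exactly as in \eqref{exp of c}.

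The main technical hurdle is making the asymptotic expansion of $(G_0vf)(n)$ rigorous when $v$ is not compactly supported: the remainder after peeling off the $|n|^3$ and $|n|^2\,\mathrm{sgn}(n)$ contributions must be shown to be uniformly $O(|n|)$, and the separation of the sum into the moments $\langle f,v_k\rangle$ requires $\sum_m|m|^k|v(m)f(m)|<\infty$ for $k\le 3$. Under $\beta>7$ and $f\in\ell^2(\Z)$, these hold by weighted Cauchy--Schwarz with weight $\langle m\rangle^{\beta/2}$, which is precisely the role of the decay assumption in this lemma; the weaker analogous check for part (ii) (needing only $k\le 1$) is automatic under the same hypothesis.
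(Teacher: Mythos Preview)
Your forward direction $(\Rightarrow)$ matches the paper's, differing only in how the bound $|G_0vf(n)|\lesssim\langle n\rangle$ is obtained: the paper uses an exact algebraic decomposition of $|n-m|^3-|n-m|-|n|^3+3|n|nm$ into three pieces, each bounded by $(1+|n|)|m|^3$ uniformly in $n,m$, rather than your asymptotic expansion restricted to $|m|<|n|$.

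Your reverse direction $(\Leftarrow)$ contains a genuine error. You claim ``the kernel of $\Delta^2$ on sequences equals $\mathrm{span}\{1,n\}$''; on the space of all sequences it is $\mathrm{span}\{1,n,n^2,n^3\}$, so from $\Delta^2(\phi+G_0vf)=0$ you only learn that $\phi+G_0vf$ is a cubic polynomial. At this stage you do not know $G_0vf\in W_{3/2}$ (that uses the very moment conditions you are trying to prove), so you cannot simply restrict to $W_{3/2}$. Your route can be repaired---first establish the asymptotic $G_0vf(n)=\tfrac{1}{12}\langle f,v\rangle\,|n|^3-\tfrac14\langle f,v_1\rangle\,\mathrm{sgn}(n)\,n^2+O(|n|)$ \emph{without} assuming any cancellation, then compare the behavior as $n\to+\infty$ against $n\to-\infty$ to force both the top polynomial coefficients and the two moments to vanish simultaneously---but neither step is written, and the first needs a tail estimate for $|m|\ge|n|$ that you have not supplied. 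The same gap recurs in part (ii), where $\ker(\Delta^2-16)$ is also four-dimensional on all sequences.

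The paper avoids this circularity by a different argument: it proves $\langle f,v_k\rangle=0$ for $k=0,1$ \emph{first}, via a smooth cutoff. With $\eta\in C_0^\infty(\R)$ equal to $1$ near the origin, set $F(\delta)=\sum_n n^kV(n)\phi(n)\,\eta(\delta n)$; dominated convergence gives $F(\delta)\to\langle f,v_k\rangle$, while $V\phi=-\Delta^2\phi$ and summation by parts give $F(\delta)=-\sum_n\phi(n)\,\Delta^2\bigl(n^k\eta(\delta n)\bigr)$, and a mean-value estimate $\|\langle\cdot\rangle^s\Delta^2(\cdot^k\eta(\delta\cdot))\|_{\ell^2}\lesssim\delta^{7/2-k-s}$ together with $\phi\in W_{3/2}$ forces $F(\delta)\to 0$. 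Only after the moments are known to vanish does the paper invoke $\phi+G_0vf\in W_{3/2}$ and $\ker\Delta^2\cap W_{3/2}=\mathrm{span}\{1,n\}$.
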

\begin{remark}\label{remar of charac of regular}
{\rm Under the assumption of Theorem \ref{asy theor of Mpm mu}, as a consequence of Lemma \ref{charac of regular}, it follows that
\begin{align*}
    \begin{split}
0\ {\rm is\  a\  regular\  point\  of}\  H\ &\Leftrightarrow\  S=\{0\}\Leftrightarrow S_0T_0S_0\ {\rm is\  invertible\  in}\  S_0\ell^2(\Z).\\
 16\ {\rm is\  a\  regular\  point\  of} \ H\ &\Leftrightarrow\ \widetilde{S}=\{0\}\Leftrightarrow\widetilde{Q}\widetilde{T}_{0}\widetilde{Q}\ {\rm is\  invertible\  in}\ \widetilde{Q}\ell^2(\Z).
 \end{split}
 \end{align*}
}
\end{remark}
\begin{proof}[Proof of Lemma \ref{charac of regular}]
{\textbf{\underline{(i)}}}~``$\bm{\Longrightarrow}$"  Let $f\in S$. Then $f\in S_0\ell^2(\Z)$ and $S_0T_0f=0$. Denote by $P_0$ the orthogonal projection onto span$\{v,v_1\}$. Then $S_0=I-P_0$, and it follows that
\begin{equation}\label{Uf of S}
Uf=-vG_0vf+P_0T_0f.
\end{equation}
Let
\begin{equation}\label{v'}
v'=v_1-\frac{\left<v_1,v\right>}{\|V\|_{\ell^1}}v,
\end{equation}
so that $\{v',v\}$ forms an orthogonal basis for span$\{v,v_1\}$. In this case, we have
\begin{equation}\label{P0T0f 1}
P_0T_0f=\frac{\left<P_0T_0f,v\right>}{\|V\|_{\ell^1}}v+\frac{\left<P_0T_0f,v'\right>}{\|v'\|^2_{\ell^2}}v'=\frac{\left<T_0f,v\right>}{\|V\|_{\ell^1}}v+\frac{\left<T_0f,v'\right>}{\|v'\|^2_{\ell^2}}v'.
\end{equation}
Substituting \eqref{v'} into the second equality of \eqref{P0T0f 1}, we further obtain that
\begin{align}\label{P0T0f2}
\begin{split}
P_0T_0f&=\frac{\left<T_0f,v\right>}{\|V\|_{\ell^1}}v+\frac{\left<T_0f,v'\right>}{\|v'\|^2_{\ell^2}}\left(v_1-\frac{\left<v_1,v\right>}{\|V\|_{\ell^1}}v\right)\\
&=\frac{\left<T_0f,v'\right>}{\|v'\|^2_{\ell^2}}v_1+\left(\frac{\left<T_0f,v\right>}{\|V\|_{\ell^1}}-\frac{\left<T_0f,v'\right>\left<v_1,v\right>}{\|v'\|^2_{\ell^2}\|V\|_{\ell^1}}\right)v\\
&:=c_1v_1+c_2v.
\end{split}
\end{align}
Multiplying both sides of \eqref{Uf of S} by $U$ and substituting $P_0T_0f$ with \eqref{P0T0f2}, then
 $$f=-UvG_0vf+U(c_1v_1+c_2v)=Uv(-G_0vf+c_1n+c_2):=Uv\phi.$$

 Firstly, $\phi=-G_0vf+c_1n+c_2\in W_{\frac{3}{2}}(\Z)$. Considering that $|c_1n+c_2|\lesssim1+|n|\in W_{\frac{3}{2}}(\Z)$, it suffices to verify that $G_0vf\in W_{\frac{3}{2}}(\Z)$. Indeed, by \eqref{kernel of G0}, $\left<f,v\right>=0$ and $\left<f,v_1\right>=0$, it follows that
 \begin{align*}
 12(G_0vf)(n)&=\sum\limits_{m\in\Z}^{}\left(|n-m|^3-|n-m|\right)v(m)f(m)\\
 &=\sum\limits_{m\in\Z}^{}\left(|n-m|^3-|n-m|-n^2|n|+3|n|nm\right)v(m)f(m)\\
 &:=\sum\limits_{m\in\Z}^{}K(n,m)v(m)f(m).
 \end{align*}
 We decompose $K(n,m)$ into three parts:
 \begin{align*}
 K(n,m)&=|n-m|(n^2-2nm+m^2-1)-n^2|n|+3|n|nm\\
 &=\left(n^2(|n-m|-|n|)+|n|nm\right)-2n(|n-m|-|n|)m+(m^2-1)|n-m|\\
 &:=K_1(n,m)-K_2(n,m)+K_3(n,m).
 \end{align*}
 For $K_1(n,m)$, if $n\neq m$, then
 \begin{align*}
 |K_1(n,m)|&=\left|\frac{\left(n^2(|n-m|-|n|)+|n|nm\right)(|n-m|+|n|)}{|n-m|+|n|}\right|\\
 &=\left|\frac{n^2m^2+|n|nm(|n-m|-|n|)}{|n-m|+|n|}\right|\leq 2|n|m^2.
 \end{align*}
 Since $K_1(n,n)=0$, we always have $|K_1(n,m)|\leq 2|n|m^2$.
 As for $K_2(n,m),K_3(n,m)$, by the triangle inequality, it yields that
 $$|K_2(n,m)|\leq 2|n|m^2,\quad |K_3(n,m)|\leq (1+|n|)|m|^3.$$
 In summary, one obtains that $|K(n,m)|\lesssim (1+|n|)|m|^3 $. Thus, in view that $\beta>7$,
  $$|(G_0vf)(n)|\lesssim \sum\limits_{m\in\Z}^{}|K(n,m)||v(m)f(m)|\lesssim \left<n\right>\sum\limits_{m\in\Z}^{}\left<m\right>^3|v(m)||f(m)|\lesssim \left<n\right>\in W_{\frac{3}{2}}(\Z). $$
  Consequently, we conclude that $\phi\in W_{\frac{3}{2}}(\Z)$.

  Next, we show that $H\phi=0$. Notice that $\Delta^2G_0vf=vf$ and $vf=vUv\phi=V\phi$, it yields that
 $$H\phi=(\Delta^2+V)\phi=-\Delta^2G_0vf+V\phi=-vf+vf=0.$$
\vskip0.3cm
``$\bm{\Longleftarrow}$" Suppose that $\phi\in W_{\frac{3}{2}}(\Z)$ and satisfies $H\phi=0$. 
 Let $f=Uv\phi$. We will show that $f\in S$ and that $\phi(n)=-(G_0vf)(n)+c_1n+c_2$, where $c_1,c_2$ are defined in \eqref{exp of c1,c2}.

\textbf{On one hand}, $f\in S_0\ell^2(\Z)$, i.e., for $k=0,1$, it can be verified that
$$\left<f,v_k\right>=\sum_{n\in\Z}(Uv\phi)(n)n^kv(n)=\sum_{n\in\Z}n^kV(n)\phi(n)=0.$$
In fact, take $\eta(x)\in C^{\infty}_0(\R)$ such that $\eta(x)=1$ for $|x|\leq1$ and $\eta(x)=0$ for $|x|>2$. For $k=0,1$ and any $\delta>0$, define
$$F(\delta)=\sum_{n\in\Z}n^kV(n)\phi(n)\eta(\delta n).$$
For one thing, under the assumptions on $V$ and $\phi\in W_{\frac{3}{2}}(\Z)$, it follows from Lebesgue's dominated convergence theorem that
 $$\left<f,v_k\right>=\lim_{\delta\rightarrow0}F(\delta).$$
For another, for any $\delta>0$, using the relation $V(n)\phi(n)=-(\Delta^2\phi)(n)$ and $\eta\in C^{\infty}_{0}(\R)$, we have
$$F(\delta)=-\sum_{n\in\Z}(\Delta^2\phi)(n)n^k\eta(\delta n)=-\sum_{n\in\Z}\phi(n)(\Delta^2G_{\delta,k})(n),$$
where $G_{\delta,k}(x)=x^k\eta(\delta x)$. Next we prove that 
 for any $0<\delta<\frac{1}{3}$, $s>0$,
 \begin{equation}\label{estimate of Delta2G}
 \|\left<\cdot\right>^s(\Delta^2G_{\delta,k})(\cdot)\|_{\ell^2(\Z)}\leq C(k,s,\eta)\delta^{\frac{7}{2}-k-s},\quad k=0,1,
 \end{equation}
 where $C(k,s,\eta)$ is a constant depending on $k,s$ and $\eta$. Once this estimate is established, taking $\frac{3}{2}<s<\frac{5}{2}$, utilizing $\phi\in W_{\frac{3}{2}}(\Z)$ and H\"{o}lder's inequality, we obtain
 $$|F(\delta)|\leq C(k,s,\eta)\delta^{\frac{7}{2}-k-s}\|\left<\cdot\right>^{-s}\phi(\cdot)\|_{\ell^2(\Z)},\quad k=0,1.$$
 This implies $\lim\limits_{\delta\rightarrow0}F(\delta)=0$, which proves that $f\in S_0\ell^2(\Z)$.
 To derive \eqref{estimate of Delta2G}, we first apply the differential mean value theorem to get
 $$(\Delta^2G_{\delta,k})(n)=G^{(4)}_{\delta,k}(n-1+\Theta),$$
for some $\Theta\in[0,4]$. By Leibniz's derivative rule and the definition of $G_{\delta,k}$, one has
\begin{equation}\label{derive of G}
\left|(\Delta^2G_{\delta,k})(n)\right|=\left|G^{(4)}_{\delta,k}(n-1+\Theta)\right|\leq C_k\delta^{4-k}\sum\limits_{\ell=0}^{k}\left|\eta^{(4-\ell)}(\delta(n-1+\Theta))\right|.
\end{equation}
 Note that supp$(\eta^{(\ell)})\subseteq\{x:1\leq|x|\leq2\}$ for any $\ell\in\N^{+}$, then for any $s>0$ and $0<\delta<\frac{1}{3}$, the following estimate holds:
\begin{align*}
\|\left<\cdot\right>^s\eta^{(\ell)}(\delta(\cdot-1+\Theta))\|^2_{\ell^2(\Z)}\leq C(s,\eta)\sum\limits_{|n|\leq\frac{3}{\delta}}^{}|n|^{2s}\leq C'(s,\eta)\delta^{-2s-1},
\end{align*}
which gives the desired \eqref{estimate of Delta2G} by combining \eqref{derive of G} with the triangle inequality.

\textbf{On the other hand}, we first show that $\phi(n)=-(G_0vf)(n)+c_1n+c_2$, from which it follows that
$$S_0T_0f=S_0(U+vG_0v)f=S_0v\phi+S_0vG_0vf=S_0v\phi+S_0v(-\phi +c_1n+c_2)=S_0v\phi-S_0v\phi=0.$$
To see this, since $f\in S_0\ell^2(\Z)$ and according to ``$\bm{\Longrightarrow}$", $G_0vf\in W_{\frac{3}{2}}(\Z)$, then $\tilde{\phi}:=\phi+G_0vf\in W_{\frac{3}{2}}(\Z)$ and $\Delta^2\tilde{\phi}=H\phi=0$,
 which indicates that $\tilde{\phi}=\tilde{c}_{1}n+\tilde{c}_{2}$ for some constants $\tilde{c}_1$ and $\tilde{c}_2$. Next we determine that $\tilde{c}_1=c_1,\tilde{c}_2=c_2$. Indeed, since
$$0=H\phi=(\Delta^2+V)\phi=-vf-VG_0vf+V(\tilde{c}_{1}n+\tilde{c}_{2})=U(-vT_0f+\tilde{c}_1nv^2+\tilde{c}_2v^2),$$
then $\tilde{c}_1v_1+\tilde{c}_2v=T_0f$. Based on this, we further have
\begin{align}
\tilde{c_1}\left<v_1,v\right>+\tilde{c}_2\left<v,v\right>&=\left<T_0f,v\right> \label{eq1},\\
\tilde{c_1}\left<v_1,v_1\right>+\tilde{c}_2\left<v,v_1\right>&=\left<T_0f,v_1\right>,\label{eq2}
\end{align}
and combine that $v_1=v^{'}+\frac{\left<v_1,v\right>}{\|V\|_{\ell^1}}v$ and $\big<v^{'},v\big>=0$, it follows that
$$ \tilde{c}_1=\frac{\left<T_0f,v'\right>}{\|v'\|^2_{\ell^2}},\quad \tilde{c}_2=\frac{\left<T_0f,v\right>}{\|V\|_{\ell^1}}-\frac{\left<v_1,v\right>}{\|V\|_{\ell^1}}\tilde{c}_1.$$
Therefore, $f\in S$ and (i) is derived.
\vskip0.3cm
{\textbf{\underline{(ii)}}}~``$\bm{\Longrightarrow}$" Assume that $f\in \widetilde{S}$. Then $f\in \widetilde{Q}\ell^2(\Z)$ and $\widetilde{Q}\widetilde{T}_{0}f=0$. Recall that $\widetilde{P}=\left\|V\right\|^{-1}_{\ell^1}\left<\cdot,\tilde{v}\right>\tilde{v}$ and thus $\widetilde{Q}=I-\widetilde{P}$. Then
\begin{equation}\label{Uftuta}
Uf=-v\widetilde{G}_0vf+\widetilde{P}\widetilde{T}_0f=-v\widetilde{G}_0vf+\left\|V\right\|^{-1}_{\ell^1}\left<\widetilde{T}_0f,\tilde{v}\right>\tilde{v}:=-v\widetilde{G}_0vf+c\tilde{v}.
\end{equation}
Multiplying $U$ from both sides of \eqref{Uftuta}, one obtains that
$$f=-Uv\widetilde{G}_0vf+cU\tilde{v}=Uv(-\widetilde{G}_0vf+Jc):=Uv\phi,$$
where $(Jc)(n)=(-1)^{n}c$. 

Firstly, we prove that $\phi=-\widetilde{G}_0vf+Jc\in W_{\frac{1}{2}}(\Z)$. It is enough to show that $\widetilde{G}_0vf\in W_{\frac{1}{2}}(\Z)$. By \eqref{kernel of G0tuta},
\begin{align*}
32\sqrt2(\widetilde{G}_0vf)(n)&=\sum\limits_{m\in\Z}^{}\left(2\sqrt2 |n-m|-\left(2\sqrt2-3\right)^{|n-m|}\right)(-1)^{|n-m|}v(m)f(m)\\
&=\sum\limits_{m\in\Z}^{}(-1)^{n}\left(2\sqrt2 |n-m|-\left(2\sqrt2-3\right)^{|n-m|}\right)\tilde{v}(m)f(m)\\
&=\sum\limits_{m\in\Z}^{}(-1)^{n}\left(2\sqrt2 (|n-m|-|n|)-\left(2\sqrt2-3\right)^{|n-m|}\right)\tilde{v}(m)f(m)
\end{align*}
where we used the facts that ${(-1)^{|n-m|}=(-1)^{n+m}}$ in the second equality and $\left<f,\tilde{v}\right>$=0 in the third equality, respectively. Since $0<3-2\sqrt2<1$ and by the triangle equality, we have
\begin{align*}
\left|(\widetilde{G}_0vf)(n)\right|\lesssim \sum\limits_{m\in\Z}(1+|m|)|\tilde{v}(m)f(m)|\lesssim1\in W_{\frac{1}{2}}(\Z).
\end{align*}
Hence, $\phi\in W_{\frac{1}{2}}(\Z)$. Moreover, note that $(\Delta^2-16)\widetilde{G}_0vf=vf,(\Delta^2-16)(Jc)=0$ and $vf=V\phi$, then
$$(H-16)\phi=(\Delta^2-16+V)\phi=(\Delta^2-16)(-\widetilde{G}_0vf+Jc)+V\phi=-vf+vf=0.$$
\vskip0.3cm
``$\bm{\Longleftarrow}$" Given that $\phi\in W_{\frac{1}{2}}(\Z)$ and satisfies $H\phi=16\phi$. 
Let $f=Uv\phi$, then $f\in \widetilde{S}$ and $\phi(n)=-(\widetilde{G}_0vf)(n)+(-1)^nc$, where $c$ is defined in \eqref{exp of c}. Indeed, let $\eta$ be as in \textbf{(i)}. For any $\delta>0$, define
$$\widetilde{F}(\delta)=\sum\limits_{n\in\Z}(JV\phi)(n)\eta(\delta n).$$
Noting that $V(n)\phi(n)=-[(\Delta^2-16)\phi](n)$ and $J\Delta J=-\Delta-4$, 
we can apply the same method as in part \textbf{(i)} to obtain that
$$\left<f,\tilde{v}\right>=\lim\limits_{\delta\rightarrow0}\widetilde{F}(\delta)=-\lim\limits_{\delta\rightarrow0}\sum\limits (J\phi)(n)[(\Delta^2+8\Delta)(\eta(\delta\cdot))](n)=0.$$
Finally, it is key to show that $\phi(n)=-(\widetilde{G}_0vf)(n)+(-1)^nc$. Once this is established, then
$$\widetilde{Q}\widetilde{T}_{0}f=\widetilde{Q}(U+v\widetilde{G}_{0}v)f=\widetilde{Q}v\phi+\widetilde{Q}\tilde{v}J\widetilde{G}_0vf=\widetilde{Q}v\phi+\widetilde{Q}\tilde{v}J(-\phi+Jc)=0.$$
Therefore, $f\in \widetilde{S}$ and (ii) is proved. To see this, let
$\tilde{\phi}=\phi+\widetilde{G}_0vf$. By a similar argument as in \textbf{(i)}, we have $\tilde{\phi}\in W_{
\frac{1}{2}}(\Z)$ and $(\Delta^2-16)\tilde{\phi}=0$, which is equivalent to $(\Delta^2+8\Delta)J\tilde{\phi}=0$. This implies that $J\tilde{\phi}=\tilde{c}$ for some constant $\tilde{c}$. Moreover, using the condition $H\phi=16\phi$, one can obtain that $\tilde{c}\tilde{v}=\widetilde{T}_0f$. Thus, $\tilde{c}=\frac{\left<\widetilde{T}_0f,\tilde{v}\right>}{\|V\|_{\ell^1}}$.
\end{proof}
To establish Theorem \ref{asy theor of Mpm mu}, we will frequently utilize the following lemma.
\begin{lemma}\label{lemm of expa}
{ \cite[Lemma 2.1]{JN01} Let $\mscH$ be a complex Hilbert space. Let $A$ be a closed operator and $S$ a projection. Suppose $A+S$ has a bounded inverse. Then $A$ has a bounded inverse if and only if
$$a\equiv S-S(A+S)^{-1}S$$
has a bounded inverse in $S\mscH$, and in this case
$$A^{-1}=(A+S)^{-1}+(A+S)^{-1}Sa^{-1}S(A+S)^{-1}.$$
}
\end{lemma}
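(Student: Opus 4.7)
The plan is to reduce the invertibility of $A$ to that of $T := I - B^{-1}S$, where $B := A+S$, and then recognize $a$ as a Schur-complement-type operator on $S\mscH$. First I would note that since $B$ is invertible by hypothesis and
$$A = B - S = B\bigl(I - B^{-1}S\bigr) = BT,$$
the operator $A$ admits a bounded inverse if and only if $T$ does on $\mscH$, and in that case $A^{-1} = T^{-1}B^{-1}$.

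The heart of the argument is the equivalence: $T$ is invertible on $\mscH$ iff $a = S - SB^{-1}S$ is invertible as an operator on $S\mscH$. For the $(\Longleftarrow)$ direction, I would guess the explicit candidate
$$T^{-1} = I + B^{-1}Sa^{-1}S,$$
with $a^{-1}$ the bounded inverse of $a$ on $S\mscH$, and $Sa^{-1}S$ read as a bounded operator on all of $\mscH$ that vanishes on $(I-S)\mscH$. Verification on both sides rests on the identity $SB^{-1}S = S - a$ on $S\mscH$: for instance,
\begin{align*}
(I + B^{-1}Sa^{-1}S)\,T &= I - B^{-1}S + B^{-1}Sa^{-1}(S - SB^{-1}S) \\
&= I - B^{-1}S + B^{-1}Sa^{-1}a = I,
\end{align*}
and symmetrically for the product in the opposite order using $aa^{-1} = I_{S\mscH}$. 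For the converse $(\Longrightarrow)$, assuming $T$ is invertible on $\mscH$, I would check that $a\colon S\mscH \to S\mscH$ is a bijection: given $h \in S\mscH$, setting $g := T^{-1}h$ and applying $S$ to $g - B^{-1}Sg = h$ yields $a(Sg) = h$ (surjectivity); if $af = 0$ for some $f \in S\mscH$, then $g := B^{-1}f$ satisfies $Sg = SB^{-1}Sf = f$ and hence $Tg = B^{-1}f - B^{-1}Sg = 0$, forcing $f = Bg = 0$ (injectivity). Boundedness of $a^{-1}$ follows from the open mapping theorem, and composing the factorizations gives $A^{-1} = T^{-1}B^{-1} = B^{-1} + B^{-1}Sa^{-1}SB^{-1}$, which is the asserted formula.

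The main obstacle, more a matter of bookkeeping than substance, will be keeping careful track of which operators live on which subspace: $a$ vanishes on $(I-S)\mscH$ and is only invertible on $S\mscH$, so the symbol $a^{-1}$ must be interpreted as acting on $S\mscH$ (equivalently, $Sa^{-1}S$ as a bounded operator on $\mscH$), and the flanking $S$'s in the final formula must be preserved throughout for the algebraic cancellations to close up. A minor additional point, since $A$ is only assumed closed, is that the candidate $A^{-1} = B^{-1} + B^{-1}Sa^{-1}SB^{-1}$ automatically maps into $D(A) = D(B)$ because its range is contained in that of $B^{-1}$; once this convention is fixed, the verification is a purely algebraic exercise in $S^2 = S$ and the defining relation of $a$.
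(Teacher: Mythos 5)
The paper does not prove this lemma; it is invoked as a citation to \cite[Lemma~2.1]{JN01}, so there is no in-paper argument to compare against. Your reconstruction is essentially the standard Feshbach/Schur-complement proof: the ansatz $T^{-1}=I+(A+S)^{-1}Sa^{-1}S$, the verification using the identity $\bigl(S-S(A+S)^{-1}S\bigr)\phi=a(S\phi)$ on $S\mscH$, and the injectivity/surjectivity check in the converse direction are all correct and amount to the same algebra as in the reference, merely organized through the auxiliary bounded operator $T=I-(A+S)^{-1}S$ rather than by substituting the asserted formula for $A^{-1}$ directly. Your closing remarks on bookkeeping (that $a^{-1}$ lives on $S\mscH$, and that the candidate inverse maps into ${\rm Ran}(A+S)^{-1}=\mcaD(A)$) are exactly the right points to flag.

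One step deserves a sentence more than you give it. You open by asserting that ``$A$ admits a bounded inverse if and only if $T$ does,'' reading this off from $A=(A+S)T$. That equivalence is immediate when $A$ is bounded, but the lemma allows $A$ to be a closed \emph{unbounded} operator, and then the direction ``$A$ invertible $\Rightarrow T$ invertible'' is not automatic from the factorization: one needs $T$ to be surjective on all of $\mscH$, not just on $\mcaD(A+S)$. The quick fix is to note that $T^{-1}=I+A^{-1}S$, a bounded everywhere-defined operator, which one verifies via the resolvent identity $(A+S)^{-1}SA^{-1}=A^{-1}-(A+S)^{-1}$; alternatively, show $T$ is bounded below using boundedness of $A^{-1}$ and then use dense range. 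This is harmless for the uses of the lemma in Section~\ref{proof of asy}, where the relevant $A$'s are bounded operators on $\ell^2(\Z)$, but a proof of the lemma in the stated generality should record it.
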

\vskip0.2cm
\begin{proof}[\bf{Proof of Theorem \ref{asy theor of Mpm mu}}]
\textbf{(i)} Suppose that $0$ is a regular point of $H$ and $\beta>15$. Then by Remark \ref{remar of charac of regular}, $S_0T_0S_0$ is invertible in $S_0\ell^2(\Z)$.

Firstly, taking $N=3$ in the formula \eqref{Puiseux expan of R0 0}, namely, as $s>\frac{15}{2}$, we have
\begin{equation}\label{Pui seux of R0 take 3}
R^{\pm}_0\left(\mu^4\right)=\mu^{-3}G^{\pm}_{-3}+\mu^{-1}G^{\pm}_{-1}+G^{\pm}_0+\mu G^{\pm}_1+\mu^2G^{\pm}_{2}+\mu^3G^{\pm}_{3}+\Gamma_4(\mu)\ \ {\rm in}\ \B(s,-s),\ \mu \rightarrow0.
\end{equation}
Since $\beta>15$ and $M^{\pm}(\mu)=U+vR^{\pm}_0(\mu^4)v$, we obtain the following relation on $\ell^2(\Z)$ as $\mu\rightarrow0$,
 \begin{align*}
 M^{\pm}(\mu)=\mu^{-3}vG^{\pm}_{-3}v+\mu^{-1}vG^{\pm}_{-1}v+(U+vG^{\pm}_0v)
 +\mu vG^{\pm}_1v+\mu^2vG^{\pm}_{2}v+\mu^3vG^{\pm}_{3}v+\Gamma_4(\mu).
 \end{align*}
 Noticing that $vG^{\pm}_{-3}v=a^{\pm}P$ with $a^{\pm}=\frac{-1\pm i}{4}\|V\|_{\ell^1}$, we extract the factor $a^{\pm}\mu^{-3}$, then it can be further written as
\begin{equation}\label{M pm}
M^{\pm}\left(\mu\right)=\frac{a^{\pm}}{\mu^3}\widetilde{M}^{\pm}(\mu),
\end{equation}
where
\begin{equation}\label{M tuta}
\widetilde{M}^{\pm}(\mu)=P+\frac{1}{a^{\pm}}\mu^2vG^{\pm}_{-1}v+\frac{1}{a^{\pm}}\mu^3T_0+\frac{1}{a^{\pm}}\mu^4vG^{\pm}_{1}v+\frac{1}{a^{\pm}}\mu^5vG^{\pm}_{2}v+\frac{1}{a^{\pm}}\mu^6vG^{\pm}_{3}v+\Gamma_7(\mu).
\end{equation}
Then as $\mu\rightarrow0$, the invertibility of $M^{\pm}\left(\mu\right)$ on $\ell^{2}(\Z)$ reduces to that of $\widetilde{M}^{\pm}(\mu)$, and in this case, they satisfy the following relation:
\begin{align}\label{Relat of M and M tuta} \left(M^{\pm}\left(\mu\right)\right)^{-1}=\frac{\mu^3}{a^{\pm}}\left(\widetilde{M}^{\pm}(\mu)\right)^{-1}.
\end{align}

\underline{\textbf{Step 1:}}\  By Lemma \ref{lemm of expa}, $\widetilde{M}^{\pm}(\mu)$ is invertible on $\ell^{2}(\Z)\Leftrightarrow M^{\pm}_1(\mu):=Q-Q\left(\widetilde{M}^{\pm}(\mu)+Q\right)^{-1}Q$ is invertible on $Q\ell^{2}(\Z)$ and in this case, one has
\begin{align}\label{Relat of M tuta and M1}
\left(\widetilde{M}^{\pm}(\mu)\right)^{-1}=\left(\widetilde{M}^{\pm}(\mu)+Q\right)^{-1}\left[I+Q\left(M^{\pm}_1(\mu)\right)^{-1}Q\left(\widetilde{M}^{\pm}(\mu)+Q\right)^{-1}\right].
\end{align}
By Von Neumann expansion, a direct calculation yields that
\begin{equation}\label{M tuta plus Q}
 \widetilde{M}^{\pm}(\mu)+Q=I-\sum\limits_{k=1}^{5}\mu^{k+1}B^{\pm}_{k}+\Gamma_7(\mu),\ \mu\rightarrow 0,
\end{equation}
where
\begin{itemize}
\item $B^{\pm}_{1}=\frac{1}{a^{\pm}}vG^{\pm}_{-1}v$,\quad$B^{\pm}_{2}=\frac{1}{a^{\pm}}T_0$,\quad $B^{\pm}_{3}=\frac{1}{a^{\pm}}vG^{\pm}_{1}v-\left(\frac{1}{a^{\pm}}vG^{\pm}_{-1}v\right)^2$,
\vskip0.2cm
\item $B^{\pm}_{4}=-\left(\frac{1}{a^{\pm}}\right)^2\left(vG^{\pm}_{-1}vT_0+T_0vG^{\pm}_{-1}v\right)+\frac{1}{a^{\pm}}vG^{\pm}_{2}v$,
\vskip0.2cm
\item
$B^{\pm}_{5}=\frac{1}{a^{\pm}}vG^{\pm}_{3}v-\left(\frac{1}{a^{\pm}}\right)^2\left(vG^{\pm}_{-1}v\cdot vG^{\pm}_{1}v+T^2_0+vG^{\pm}_{1}v \cdot vG^{\pm}_{-1}v-\frac{1}{a^{\pm}}\left(vG^{\pm}_{-1}v\right)^3\right)$.
\begin{equation}\label{expr of Bkpm}
\end{equation}
\end{itemize}
Thus,
$$M^{\pm}_1(\mu)=Q-Q\left(\widetilde{M}^{\pm}(\mu)+Q\right)^{-1}Q=\frac{a^{\pm}_{-1}\mu^2}{a^{\pm}}\widetilde{M_1}^{\pm}(\mu):=\frac{1}{b^{\pm}}\mu^2\widetilde{M_1}^{\pm}(\mu),$$
where $a^{\pm}_{-1}=\frac{1\pm i}{4}$ and
\begin{equation}\label{M1 tuta}
\widetilde{M_1}^{\pm}(\mu)=QvG_{-1}vQ+b^{\pm}\sum\limits_{k=2}^{5}\mu^{k-1}QB^{\pm}_{k}Q+\Gamma_5(\mu),\ G_{-1}= \frac{1}{a^{\pm}_{-1}}G^{\pm}_{-1},\ \mu\rightarrow0.
\end{equation}
Furthermore, the invertibility of $M^{\pm}_1(\mu)$ on $Q\ell^{2}(\Z)$ can be reduced to that of $\widetilde{M_1}^{\pm}(\mu)$, and if so, then
\begin{equation}\label{Relat of M1 and M1 tuta}
\left(M^{\pm}_1(\mu)\right)^{-1}=\frac{b^{\pm}}{\mu^2}\left(\widetilde{M_1}^{\pm}(\mu)\right)^{-1}.
\end{equation}
However, $QvG_{-1}vQ$ is not invertible on $Q\ell^{2}(\Z)$. In fact, denote by
$${\rm Ker}QvG_{-1}vQ:=\{f\in Q\ell^{2}(\Z):QvG_{-1}vQf=0\}$$
 the kernel of $QvG_{-1}vQ$ on $Q\ell^{2}(\Z)$. Then we have the following claim.
\vskip0.2cm
\underline{\textbf{Claim:}} \ ${\rm Ker}QvG_{-1}vQ=S_0\ell^{2}(\Z)$ and $QvG_{-1}vQ+S_0$ is invertible on $Q\ell^{2}(\Z)$. We denote by $D_0:=\left(QvG_{-1}vQ+S_0\right)^{-1}$ its inverse.

Indeed, for any $f\in Q\ell^{2}(\Z)$, then $\left<f,v\right>=0$. By virtue of the expression $G_{-1}(n,m)=\frac{1}{8}-\frac{1}{2}|n-m|^2$, a direct calculation yields that
$$QvG_{-1}vQf=\left<f,v_1\right>Q(v_1).$$
Since $Q(v_1)\not\equiv0$ (otherwise $V\equiv0$), it implies that
$$g\in{\rm Ker}QvG_{-1}vQ\Leftrightarrow g\in Q\ell^{2}(\Z)\ {\rm and}\ QvG_{-1}vQg=0\Leftrightarrow\left<g,v\right>=0\ {\rm and}\ \left<g,v_1\right>=0\Leftrightarrow g\in S_0\ell^{2}(\Z).$$

To establish the invertibility of $QvG_{-1}vQ+S_0$, it suffices to show that it is both injective and surjective. For brevity, let $G:=QvG_{-1}vQ$. On one hand, assume that $\phi\in Q\ell^2(\Z)$ satisfies $(G+S_0)\phi=0$. Then $G\phi=-S_0\phi$. By the self-adjointness of $G$ and the fact that ${\rm Ker}G=S_0\ell^2(\Z)$, we have
$$\left<G\phi,G\phi\right>=\left<G\phi,-S_0\phi\right>=\left<\phi,-GS_0\phi\right>=0\ \Longrightarrow G\phi=0.$$
Consequently, $\phi=S_0\phi=-G\phi=0$.
On the other hand, for any $\varphi\in Q\ell^2(\Z)$, note that ${\rm Ran}G$ is closed, so $Q\ell^2(\Z)={\rm Ran}G\bigoplus{\rm Ker}G$. Thus $\varphi=\varphi_1+\varphi_2$, where $\varphi_1\in {\rm Ran}G$ and $\varphi_2\in {\rm Ker}G$. It follows that $$G\varphi=G\varphi_1=(G+S_0)\phi_1\in {\rm Ran}(G+S_0),$$ i.e., ${\rm Ran}G\subseteq{\rm Ran}(G+S_0)$. Moreover, ${\rm Ker}G\subseteq{\rm Ran}(G+S_0)$ is trivial. Hence, $Q\ell^2(\Z)={\rm Ran}(G+S_0)$. This proves the claim.

Therefore, based on this claim, we can continue the {\textbf {Step 2}} below by applying the Lemma \ref{lemm of expa} to $\widetilde{M_1}^{\pm}(\mu)$.
\vskip0.2cm
\underline{\textbf{Step 2:}} \ $\widetilde{M_1}^{\pm}(\mu)$ is invertible on $Q\ell^{2}(\Z)\Leftrightarrow M^{\pm}_{2}(\mu):=S_0-S_0\left(\widetilde{M_1}^{\pm}(\mu)+S_0\right)^{-1}S_0$ is invertible on $S_0\ell^{2}(\Z)$. In this case,
\begin{equation}\label{Relat of M1 tuta and M2}
\left(\widetilde{M_1}^{\pm}(\mu)\right)^{-1}=\left(\widetilde{M_1}^{\pm}(\mu)+S_0\right)^{-1}\left(I+S_0\left(M^{\pm}_{2}(\mu)\right)^{-1}S_0\left(\widetilde{M_1}^{\pm}(\mu)+S_0\right)^{-1}\right).
\end{equation}
By \eqref{M1 tuta}, Von-Neumann expansion and the relation $QD_0=D_0Q=D_0$, a direct calculation yields that
\begin{equation}\label{inverse of M1 tuta plus S0}
\left(\widetilde{M_1}^{\pm}(\mu)+S_0\right)^{-1}=D_0-\sum\limits_{k=1}^{4}\mu^{k}\widetilde{B}_{k}^{\pm}+D_0\Gamma_5(\mu)D_0,\ \mu\rightarrow0,
\end{equation}
with
\begin{itemize}
\item $\widetilde{B}_{1}^{\pm}=b^{\pm}D_0B^{\pm}_2D_0$,\quad$\widetilde{B}_{2}^{\pm}=b^{\pm}D_0B^{\pm}_3D_0-\left(b^{\pm}\right)^2\left(D_0B^{\pm}_2\right)^2D_0$,
\vskip0.2cm
\item
$\widetilde{B}_{3}^{\pm}=b^{\pm}D_0B^{\pm}_4D_0-\left(b^{\pm}\right)^2\left(D_0B^{\pm}_2D_0B^{\pm}_3D_0+D_0B^{\pm}_3D_0B^{\pm}_2D_0-b^{\pm}\left(D_0B^{\pm}_2\right)^3D_0\right),$
\vskip0.2cm
\item $\widetilde{B}_{4}^{\pm}=b^{\pm}D_0B^{\pm}_5D_0-\left(b^{\pm}\right)^2\left(D_0B^{\pm}_2D_0B^{\pm}_4D_0+\left(D_0B^{\pm}_3\right)^2D_0+D_0B^{\pm}_4D_0B^{\pm}_2D_0\right)$

    $\quad\ \ +\left(b^{\pm}\right)^3\left(\left(D_0B^{\pm}_2\right)^2D_0B^{\pm}_3D_0+D_0B^{\pm}_2D_0B^{\pm}_3D_0B^{\pm}_2D_0+D_0B^{\pm}_3\left(D_0B^{\pm}_2\right)^2D_0\right)$

$\quad\ \ -\left(b^{\pm}\right)^4\left(D_0B^{\pm}_2\right)^4D_0$.
\begin{equation}\label{expre of Bpm tuta}
\end{equation}
\end{itemize}
And then
\begin{equation}\label{M2}
M^{\pm}_2(\mu)=\frac{\mu}{a^{\pm}_{-1}}\left(S_0T_0S_0+a^{\pm}_{-1}\sum\limits_{k=2}^{4}\mu^{k-1}S_0\widetilde{B}_{k}^{\pm}S_0+S_0\Gamma_4(\mu)S_0\right)
\end{equation}
where we used the fact that $S_0D_0=D_0S_0=S_0$. By assumption, since $S_0T_0S_0$ is invertible on $S_0\ell^{2}(\Z)$, and we denote by $\widetilde{D}_{0}:=\left(S_0T_0S_0\right)^{-1}$, then based $S_0\widetilde{D}_{0}=\widetilde{D}_{0}=\widetilde{D}_{0}S_0$ and Von-Neumann expansion, it follows that
\begin{equation}\label{inverse of M2}
\left(M^{\pm}_2(\mu)\right)^{-1}=\frac{a^{\pm}_{-1}}{\mu}\widetilde{D}_{0}+C^{\pm}_0+C^{\pm}_{1}\mu+C^{\pm}_2\mu^2+\Gamma_3(\mu),\ \mu\rightarrow0
\end{equation}
with
\begin{itemize}
\item
$C^{\pm}_0=-\left(a^{\pm}_{-1}\right)^2\widetilde{D}_{0}\widetilde{B}_{2}^{\pm}\widetilde{D}_{0}$,\quad $C^{\pm}_{1}=-\left(a^{\pm}_{-1}\right)^2\left(\widetilde{D}_{0}\widetilde{B}_{3}^{\pm}\widetilde{D}_{0}-a^{\pm}_{-1}\left(\widetilde{D}_{0}\widetilde{B}_{2}^{\pm}\right)^2\widetilde{D}_{0}\right)$,
\vskip0.2cm
\item $C^{\pm}_2=-\left(a^{\pm}_{-1}\right)^2\widetilde{D}_{0}\widetilde{B}_{4}^{\pm}\widetilde{D}_{0}+\left(a^{\pm}_{-1}\right)^3\left(\widetilde{D}_{0}\widetilde{B}_{2}^{\pm}\widetilde{D}_{0}\widetilde{B}_{3}^{\pm}\widetilde{D}_{0}+\widetilde{D}_{0}\widetilde{B}_{3}^{\pm}\widetilde{D}_{0}\widetilde{B}_{2}^{\pm}\widetilde{D}_{0}\right)$

    $\quad\ \ -\left(a^{\pm}_{-1}\right)^4\left(\widetilde{D}_{0}\widetilde{B}_{2}^{\pm}\right)^3\widetilde{D}_{0}$.
\end{itemize}
\begin{equation}\label{expr of Cpm}
\end{equation}
Combining \eqref{inverse of M2},\eqref{Relat of M1 tuta and M2},\eqref{Relat of M1 and M1 tuta},\eqref{Relat of M tuta and M1} and \eqref{Relat of M and M tuta}, the desired \eqref{asy expan on 0} is obtained.
\vskip0.3cm
\textbf{(ii)} Assume that $16$ is a regular point of $H$ and $\beta>7$. Then by Remark \ref{remar of charac of regular}, $\widetilde{Q}\widetilde{T}_{0}\widetilde{Q}$ is invertible on $ \widetilde{Q}\ell^2(\Z)$. Take $N=1$ in \eqref{Puiseux expan of R0 2}, then as $s>\frac{7}{2}$, we have
\begin{equation}
R^{\pm}_{0}\left((2-\mu)^4\right)=\mu^{-\frac{1}{2}}\widetilde{G}^{\pm}_{-1}+\widetilde{G}^{\pm}_0+\mu^{\frac{1}{2}}\widetilde{G}^{\pm}_1+\Gamma_1(\mu),\ \mu\rightarrow0\ {\rm in}\ \B(s,-s),
\end{equation}
Since $\beta>7$, similarly, one can obtain that
\begin{equation}
M^{\pm}\left(2-\mu\right)=U+\tilde{v}JR^{\pm}_0((2-\mu)^4)J\tilde{v}=\mu^{-\frac{1}{2}}\tilde{v}\hat{G}^{\pm}_{-1}\tilde{v}+\widetilde{T}_0+\mu^{\frac{1}{2}}\tilde{v}\hat{G}^{\pm}_{1}\tilde{v}+\Gamma_{1}(\mu),\ 
\mu\rightarrow0,
\end{equation}
where 
$$\hat{G}^{\pm}_{j}=J\widetilde{G}^{\pm}_{j}J,\quad \widetilde{T}_0=U+\tilde{v}\hat{G}^{\pm}_{0}\tilde{v},\quad j=-1,0,1.$$
Noting that $\tilde{v}\hat{G}_{-1}\tilde{v}=d^{\pm}\widetilde{P}$ with $d^{\pm}=\frac{\pm i}{32}\|V\|_{\ell^1}$, we further obtain that 
\begin{equation*}
M^{\pm}\left(2-\mu\right)=\frac{d^{\pm}}{\mu^{\frac{1}{2}}}\left(\widetilde{P}+\frac{\mu^{\frac{1}{2}}}{d^{\pm}}\widetilde{T}_0+\frac{\mu}{d^{\pm}} v\widetilde{G}^{\pm}_{1}v+\Gamma_{\frac{3}{2}}(\mu)\right):=\frac{d^{\pm}}{\mu^{\frac{1}{2}}}\widetilde{M}^{\pm}(\mu),\ \mu\rightarrow0.
\end{equation*}
Then the invertibility of $M^{\pm}\left(2-\mu\right)$ on $\ell^{2}(\Z)$ reduces to that of $\widetilde{M}^{\pm}(\mu)$, and in this case, they satisfy the following relation
\begin{align}\label{Relat of M and M tuta 2} \left(M^{\pm}\left(2-\mu\right)\right)^{-1}=\frac{\mu^{\frac{1}{2}}}{d^{\pm}}\left(\widetilde{M}^{\pm}(\mu)\right)^{-1}.
\end{align}
Apply Lemma \ref{lemm of expa} to $\widetilde{M}^{\pm}(\mu)$, then
$$\widetilde{M}^{\pm}(\mu)\ {\rm is\  invertible\  in\
  }\ell^{2}(\Z)\Leftrightarrow M^{\pm}_1(\mu):=\widetilde{Q}-\widetilde{Q}\left(\widetilde{M}^{\pm}(\mu)+\widetilde{Q}\right)^{-1}\widetilde{Q}\ {\rm is\  invertible\  in\
  }\widetilde{Q}\ell^{2}(\Z).$$ In this case, one has
\begin{align}\label{Relat of M tuta and M1 2}
\left(\widetilde{M}^{\pm}(\mu)\right)^{-1}=\left(\widetilde{M}^{\pm}(\mu)+\widetilde{Q}\right)^{-1}\left[I+\widetilde{Q}\left(M^{\pm}_1(\mu)\right)^{-1}\widetilde{Q}\left(\widetilde{M}^{\pm}(\mu)+\widetilde{Q}\right)^{-1}\right].
\end{align}
By Von-Neumann expansion, it yields that
\begin{equation}
\left(\widetilde{M}^{\pm}(\mu)+\widetilde{Q}\right)^{-1}=I-\sum\limits_{k=1}^{2}\mu^{\frac{k}{2}}D^{\pm}_{k}+\Gamma_{\frac{3}{2}}(\mu),\quad\mu\rightarrow0,
\end{equation}
with
$$D^{\pm}_{1}=\frac{1}{d^{\pm}}\widetilde{T}_0,\quad D^{\pm}_{2}=\frac{1}{d^{\pm}}v\widetilde{G}^{\pm}_1v-\left(\frac{1}{d^{\pm}}\widetilde{T}_0\right)^2.$$
Then
\begin{equation}
 M^{\pm}_1(\mu)=\frac{\mu^{\frac{1}{2}}}{d^{\pm}}\left(\widetilde{Q}\widetilde{T}_0\widetilde{Q}+d^{\pm}\mu^{\frac{1}{2}}\widetilde{Q}D^{\pm}_{2}\widetilde{Q}+\Gamma_1(\mu)\right).
\end{equation}
Since $\widetilde{Q}\widetilde{T}_0\widetilde{Q}$ is invertible on $\widetilde{Q}\ell^{2}(\Z)$, we denote by $E_0:=\left(\widetilde{Q}\widetilde{T}_0\widetilde{Q}\right)^{-1}$. Then $E_0\widetilde{Q}=E_0=\widetilde{Q}E_0$ and by Von-Neumann expansion, one has
\begin{equation}\label{inverse of M1 2}
\left( M^{\pm}_1(\mu)\right)^{-1}=\frac{d^{\pm}}{\mu^{\frac{1}{2}}}E_0-\left(d^{\pm}\right)^2E_0D^{\pm}_2E_0+\Gamma_{\frac{1}{2}}(\mu),\quad\mu\rightarrow0.
\end{equation}
Combining the \eqref{inverse of M1 2},\eqref{Relat of M tuta and M1 2} and \eqref{Relat of M and M tuta 2}, we obtain the \eqref{asy expan on 2}. This completes the proof of Theorem \ref{asy theor of Mpm mu}.
\end{proof}
\appendix
\section{Commutator estimates and Mourre Theory}\label{section of Appendix}
This appendix is divided into two parts. First, we review the main results of \cite{JMP84}, which focus on commutator estimates for a self-adjoint operator with respect to a suitable conjugate operator. These estimates establish the smoothness of the resolvent as a function of the energy between suitable spaces. Second, we collect a set of sufficient conditions related to the regularity of bounded self-adjoint operators with respect to conjugate operators.

To begin, we introduce some notations and definitions for clarity and convenience.  Let $(X,\left<\cdot,\cdot\right>)$ denote a separable complex Hilbert space and $T$ be a self-adjoint operator defined on $X$ with domain $\mcaD(T)$.
\begin{itemize}
\item
Define
$$X_{+2}:=\left(\mcaD(T),\left<\cdot,\cdot\right>_{+2}\right), \quad \left<\varphi,\phi\right>_{+2}:=\left<\varphi,\phi\right>+\left<T\varphi,T\phi\right>,\quad\forall\ \varphi,\phi\in\mcaD(T),$$
\end{itemize}
and let $X_{-2}$ be the dual space of $X_{+2}$.
\begin{itemize}
\item Let $A$ be a self-adjoint operator on $X$. The sesquilinear form $[T,A]$ on $\mcaD(T)\cap\mcaD(A)$ is defined as
   \begin{equation}\label{Sesqui-form}
   [T,A](\varphi,\phi):=\left<(TA-AT)\varphi,\phi\right>,\quad \forall\ \varphi,\phi\in\mcaD(T)\cap\mcaD(A).
\end{equation}
\end{itemize}
\begin{definition}\label{condi-veri}
{\rm Let $T$ be as above and $n\geq1$ be an integer. A self-adjoint operator $A$ on $X$ is said to be conjugate to $T$ at the point $E\in\R$ and $T$ is said to be $n$-smooth with respect to $A$, if the following conditions (a)$\sim$(e) are satisfied:
\begin{itemize}
\item [(a)] $\mcaD(A)\cap\mcaD(T)$ is a core for $T$.
\item [(b)]$e^{i\theta A}$ maps $\mcaD(T)$ into $\mcaD(T)$ and for each $\phi\in\mcaD(T)$,
$$\sup_{|\theta|\leq1}\|Te^{i\theta A}\phi\|<\infty.$$
\item [({\rm$c_n$})] The form $i[T,A]$ defined on $\mcaD(T)\cap\mcaD(A)$ is bounded from below and closable. The self-adjoint operator associated with its closure is denoted by $iB_1$. Assume $\mcaD(T)\subseteq\mcaD(B_1)$. If $n>1$, assume for $j=2,\cdots,n$ that the form $i[iB_{j-1},A]$, defined on $\mcaD(T)\cap\mcaD(A)$, is bounded from below and closable. The associated self-adjoint operator is denoted by $iB_{j}$, and it is assumed that $\mcaD(T)\subseteq\mcaD(B_j)$.
\item [({\rm$d_n$})] The form $[B_n, A]$, defined on $\mcaD(T)\cap\mcaD(A)$, extends to a bounded operator from $X_{+2}$ to $X_{-2}$.
\item [(e)] There exist $\alpha>0, \delta>0$ and a compact operator $K$ on $X$ such that
\begin{equation}\label{Mourre}
E_{T}(\mcaJ)iB_1E_{T}(\mcaJ)\geq\alpha E_{T}(\mcaJ)+E_{T}(\mcaJ)KE_{T}(\mcaJ),
\end{equation}
where $\mcaJ=(E-\delta,E+\delta)$ is called the interval of conjugacy.
\end{itemize}
}
\end{definition}
\begin{theorem}\label{Resol-Smoo}
(\cite[Theorem 2.2]{JMP84}) Let $T$ be as above and $n\geq1$ an integer. Let $A$ be a conjugate operator to $T$ at $E\in\R$. Assume that $T$ is $n$-smooth with respect to $A$. Let $\mcaJ$ be the interval of conjugacy and $I\subseteq \mcaJ\cap\sigma_c(T)$ a relatively compact interval. Let $s>n-\frac{1}{2}$.
 \begin{itemize}
\item [(i)]For ${\Re}z\in I,{\Im}z\neq0$, one has
$$\|\left<A\right>^{-s}(T-z)^{-n}\left<A\right>^{-s}\|\leq c.$$
\item [(ii)] For ${\Re}z,{\Re}z'\in I,\ 0<|{\Im}z|\leq1,\ 0<|{\Im}z'|\leq1$, there exists a constant $C$ independent of $z,z'$, such that
    $$\|\left<A\right>^{-s}((T-z)^{-n}-(T-z')^{-n})\left<A\right>^{-s}\|\leq C|z-z'|^{\delta_1},$$
    where
    \begin{equation}\label{delta}
    \delta_1=\delta_1(s,n)=\frac{1}{1+\frac{sn}{s-n+\frac{1}{2}}}.
    \end{equation}
\item [(iii)] Let $\lambda\in I$. The norm limits
$$\lim\limits_{\varepsilon\downarrow0}\left<A\right>^{-s}(T-\lambda\pm i\varepsilon)^{-n}\left<A\right>^{-s}$$
exist and equal
$$\left(\frac{d}{d\lambda}\right)^{n-1}(\left<A\right>^{-s}(T-\lambda\pm i0)^{-1}\left<A\right>^{-s}),$$
where
$$\left<A\right>^{-s}(T-\lambda\pm i0)^{-1}\left<A\right>^{-s}=\lim\limits_{\varepsilon\downarrow0}\left<A\right>^{-s}(T-\lambda\pm i\varepsilon)^{-1}\left<A\right>^{-s}.$$
The norm limits are H\"{o}lder continuous with exponent $\delta_1(s,n)$ given above.

\end{itemize}
\end{theorem}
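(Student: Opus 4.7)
The plan is to follow the classical Mourre-theory strategy of \cite{JMP84}. The proof splits into three stages: (A) strengthening the Mourre estimate on $I$ into a \emph{strict} one, (B) establishing the a priori weighted bound (i) for $n=1$ via a regularized resolvent and a differential inequality, and (C) extending to $n\geq 2$, upgrading to H\"older continuity (ii), and passing to boundary values (iii). Hypotheses (a), (b), (c$_n$), (d$_n$) supply the operator-theoretic apparatus --- a core for $T$, invariance of $\mcaD(T)$ under $e^{i\theta A}$, and $n$ successive commutators $B_1,\ldots,B_n$ with controlled mapping properties between $X_{+2}$ and $X_{-2}$ --- while the Mourre estimate \eqref{Mourre} supplies the coercivity that drives the whole argument.

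For Stage A, since $I$ is relatively compact in $\mcaJ\cap\sigma_c(T)$, the spectral projection $E_T(I)$ is absolutely continuous; combining compactness of $K$ in \eqref{Mourre} with the continuity of the spectral measure on $\sigma_c(T)$ lets one shrink $\mcaJ$ so as to absorb the compact remainder into the main term, producing a strict Mourre estimate
\begin{equation*}
E_T(I')\,iB_1\,E_T(I')\geq \alpha' E_T(I') \quad\text{on a neighborhood } I' \supseteq I.
\end{equation*}
For Stage B, fix $\phi\in C_c^\infty(I')$ with $\phi\equiv 1$ on $I$ and introduce the regularized resolvent
\begin{equation*}
G_\varepsilon(z) := \bigl(T - z - i\varepsilon\,\phi(T)B_1\phi(T)\bigr)^{-1}, \qquad \Im z\geq 0,\ \varepsilon>0,
\end{equation*}
so that $\|G_\varepsilon(z)\|\leq C/\varepsilon$ uniformly in $\Re z\in I$ and $\partial_\varepsilon G_\varepsilon = iG_\varepsilon\,\phi(T)B_1\phi(T)\,G_\varepsilon$. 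Setting $F_\varepsilon(z) = \left<A\right>^{-s} G_\varepsilon(z)\left<A\right>^{-s}$, the core of the argument is to derive the differential inequality
\begin{equation*}
\bigl\|\partial_\varepsilon F_\varepsilon(z)\bigr\|\leq C_1\|F_\varepsilon(z)\|^2 + C_2\,\varepsilon^{-1+1/(2s)}\|F_\varepsilon(z)\| + C_3,
\end{equation*}
by using the strict Mourre estimate to dominate the central $\phi(T)B_1\phi(T)$ factor, together with (d$_n$) to move $\left<A\right>^{-s}$ past $G_\varepsilon$ with remainders controlled by the mapping of $B_1$ into $X_{-2}$. The exponent $1/(2s)$ reflects the standard interpolation inequality for the weight $\left<A\right>^{-s}$ and requires precisely $s>1/2$. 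Integrating from $\varepsilon=1$ down to $\varepsilon=0$ yields a uniform bound, and taking $\varepsilon\downarrow 0$ in the weak operator topology gives (i) for $n=1$.

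The extension to $n\geq 2$ exploits $(T-z)^{-n} = \tfrac{1}{(n-1)!}\partial_z^{n-1}(T-z)^{-1}$ together with a commutator expansion: each $\partial_z$ is either absorbed into an extra resolvent factor or, via the identity $[(T-z)^{-1},A] = -(T-z)^{-1}B_1(T-z)^{-1}$ iterated up to order $n$ using (c$_n$), is traded for an additional $\left<A\right>^{-1}$ on one side at the cost of requiring $s>n-\tfrac{1}{2}$. Statement (ii) then follows by real interpolation: the bound (i) at order $n$ is a uniform ($L^\infty$-type) estimate in weight $s$, while a further $z$-differentiation combined with (i) at order $n+1$ in weight $s+1$ gives a Lipschitz-type estimate; interpolating the couple by the real method $(X_0,X_1)_{\theta,\infty}$ produces exactly the H\"older exponent $\delta_1(s,n)$ of \eqref{delta}. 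Finally, (iii) follows from (ii): a family uniformly H\"older continuous on the open half-plane $\{\Im z>0\}$ extends continuously to its closure, the limits define $\tfrac{d^{j}}{d\lambda^{j}}(T-\lambda\pm i0)^{-1}$, and they inherit the H\"older exponent.

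The principal obstacle is the differential inequality of Stage B: it must be derived with the sharp $\varepsilon$-exponent that, upon integration, yields a \emph{finite} rather than polynomially divergent bound. This requires delicate bookkeeping of how the weight $\left<A\right>^{-s}$ interacts both with the regularizer $\phi(T)B_1\phi(T)$ and with $G_\varepsilon$ itself, exploiting the mapping property (d$_n$) from $X_{+2}$ to $X_{-2}$ together with the invariance property (b) to relocate weights without accumulating uncontrolled error. Once this inequality is established, the iteration to higher $n$, the real-interpolation for H\"older continuity, and the passage to the boundary are essentially formal, requiring only careful bookkeeping of constants and weight exponents.
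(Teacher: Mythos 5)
The paper does not prove Theorem \ref{Resol-Smoo} at all: it is stated in the Appendix explicitly as a citation of \cite[Theorem 2.2]{JMP84}, used as a black box in Section~\ref{proof of LAP} (via Lemma~\ref{LAP lemma}), with no argument supplied. There is therefore no ``paper's own proof'' to compare against; your proposal is a reconstruction of the Jensen--Mourre--Perry argument itself, which is outside the scope of what this paper establishes.

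As a reconstruction, your outline captures the right skeleton of the Mourre method (shrink the interval to absorb the compact remainder; regularize the resolvent with $i\varepsilon\phi(T)B_1\phi(T)$; derive a quadratic-plus-singular differential inequality in $\varepsilon$; bootstrap to higher powers and interpolate to get H\"older continuity). A few points, however, are glossed over or stated imprecisely. In Stage A the justification ``$E_T(I)$ is absolutely continuous because $I\subseteq\sigma_c(T)$'' is circular: absolute continuity on $\mcaJ$ is a \emph{consequence} of the strict Mourre estimate, not an input; what actually permits absorbing $K$ is that $E_T(\mcaJ')\to 0$ strongly as $|\mcaJ'|\to 0$ when $\lambda\notin\sigma_p(T)$, and compactness of $K$ upgrades this to norm convergence of $E_T(\mcaJ')KE_T(\mcaJ')$. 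In Stage B the differential inequality with a genuine quadratic term $C_1\|F_\varepsilon\|^2$ would, on its own, permit finite-time blow-up; the actual Mourre/JMP argument controls this by first proving a ``quadratic estimate'' $\Im\langle u,G_\varepsilon(z)u\rangle \gtrsim \varepsilon\|G_\varepsilon(z)u\|^2$ from the strict positivity of the commutator, which bounds the quadratic contribution \emph{before} one integrates the differential inequality --- this is where the hard analysis lives and your sketch does not indicate it. Finally, for $n\geq 2$ and the H\"older exponent \eqref{delta}, the JMP84 proof does not reduce to differentiating the $n=1$ bound and interpolating naively: it requires an induction in $n$ using the multiple commutators $B_1,\dots,B_n$ of hypotheses $(c_n)$ and $(d_n)$, which is why the $n$-smoothness hypothesis is stratified by $n$ rather than automatic. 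Since the paper merely cites the theorem, none of this affects the paper's correctness, but you should be aware that your proposal, as written, would not stand as a self-contained proof.
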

\begin{remark}
 {\rm We remark that the interval $I$ in Theorem \ref{Resol-Smoo} can not only be restricted in $\mcaJ\cap\sigma_c(T)$ but can actually be taken as $\mcaJ\setminus\sigma_{p}(T).$ }
\end{remark}

In verifying the conditions of this theorem, particularly conditions $(c_n)$ and $(d_n)$, it is often more convenient to examine the regularity of $T$ with respect to a suitable conjugate operator. To this end, we will revisit this concept for the case where $T$ is a bounded self-adjoint operator and present some sufficient conditions to judge this regularity. 

Let $T$ be a bounded operator. For each integer $k$, we denote $ad^{k}_{A}(T)$ as the sesquilinear form on $\mcaD(A^k)$ defined iteratively as follows:
\begin{align}\label{adk}
\begin{split}
ad^{0}_A(T)&=T,\\
ad^{1}_{A}(T)&=[T,A]=TA-AT,\\
ad^{k}_{A}(T)&=ad^{1}_{A}\left(ad^{k-1}_{A}(T)\right)=\sum\limits_{i,j>0,i+j=k}^{}\frac{k!}{i!j!}(-1)^{i}A^{i}TA ^{j}.
\end{split}
\end{align}
\begin{definition}\label{def of regularity}
{\rm
Given an integer $k\in\N^{+}$, we say that $T$ is of $C^{k}(A)$, denoted by $T\in C^{k}(A)$, if the sesquilinear form $ad^{k}_{A}(T)$ admits a continuous extension to $X$. We identify this extension with its associated bounded operator in $X$ and denote it by the same symbol.}
\end{definition}
\begin{remark}
{\rm This property is often referred to as the regularity of $T$ with respect to $A$ in many contexts. Specifically, $T\in C^{k}(A)$ holds if and only if the vector-valued function $f(t)\phi$ on $\R$ has the usual $C^{k}(\R)$ regularity for every $\phi\in X$, where $f$ is defined as follows:
\begin{align*}
f:~~\R\longrightarrow\B(X),\ t\longmapsto f(t)=e^{itA}Te^{itA}.
\end{align*}}
\end{remark}
Moreover, this property satisfies the following algebraic structure.
\begin{lemma}\label{Regularity lemma}
{  For any $k\in\N^{+}$, let $T_1,T_2$ be bounded self-adjoint operators on $X$ such that $T_1,T_2\in C^{k}(A)$. Then, $T_1+T_2\in C^{k}(A)$ and $ad^{k}_{A}(T_1+T_2)=ad^{k}_{A}(T_1)+ad^{k}_{A}(T_2)$.
}
\end{lemma}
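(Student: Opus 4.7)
The plan is to prove the lemma by induction on $k$, exploiting the bilinearity of the commutator as a sesquilinear form together with Definition \ref{def of regularity}. The underlying reason the result holds is purely algebraic: the map $T\mapsto ad^k_A(T)$ is linear on the space of operators for which the relevant sesquilinear forms make sense; the only content of the lemma is that the ``extends to a bounded operator'' condition is preserved under this linearity.

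For the base case $k=1$, I would observe that for every $\varphi,\phi\in\mcaD(A)$,
\begin{equation*}
ad^{1}_{A}(T_1+T_2)(\varphi,\phi)=\langle(T_1+T_2)\varphi,A\phi\rangle-\langle A\varphi,(T_1+T_2)\phi\rangle=ad^{1}_{A}(T_1)(\varphi,\phi)+ad^{1}_{A}(T_2)(\varphi,\phi),
\end{equation*}
as sesquilinear forms on $\mcaD(A)$. Since by hypothesis both $ad^1_A(T_j)$ ($j=1,2$) admit continuous extensions to $X$ identified with bounded operators, their sum does as well, which gives $T_1+T_2\in C^{1}(A)$ together with the claimed identity.

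For the inductive step, I would assume the statement for $k-1$, so that both $T_1+T_2\in C^{k-1}(A)$ and $ad^{k-1}_A(T_1+T_2)=ad^{k-1}_A(T_1)+ad^{k-1}_A(T_2)$ as bounded operators. Then by the definition $ad^k_A(S)=ad^1_A(ad^{k-1}_A(S))$ and the base case applied to the two bounded operators $S_j=ad^{k-1}_A(T_j)\in C^1(A)$ (which follows from $T_j\in C^k(A)$), I obtain
\begin{equation*}
ad^{k}_{A}(T_1+T_2)=ad^{1}_{A}\bigl(ad^{k-1}_{A}(T_1)+ad^{k-1}_{A}(T_2)\bigr)=ad^{k}_{A}(T_1)+ad^{k}_{A}(T_2),
\end{equation*}
with the right-hand side bounded on $X$. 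Hence $T_1+T_2\in C^k(A)$ and the identity is proved.

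There is essentially no analytic obstacle here; the only point demanding a little care is keeping the bookkeeping straight between the sesquilinear form $ad^k_A(T)$ defined a priori only on $\mcaD(A^k)$ and its bounded extension to $X$, so that ``$+$'' at each level unambiguously refers to addition of bounded operators. Once this identification is made explicit via Definition \ref{def of regularity}, the induction goes through in a single line at each step.
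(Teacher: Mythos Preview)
Your proof is correct and follows essentially the same approach as the paper: the paper's proof simply cites \cite[Section~2]{GGM04} for the case $k=1$ and then invokes ``an inductive argument,'' which is exactly the induction you spell out. One small remark: in the inductive step you apply the base case to $S_j=ad^{k-1}_A(T_j)$, which are not self-adjoint in general (they are skew-adjoint up to a power of $i$), but your base-case computation nowhere uses self-adjointness, so the argument goes through unchanged.
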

\begin{proof}
The result follows from the case $k=1$ established in \cite[Section 2]{GGM04}, combined with an inductive argument.
\end{proof}
As an application, in particular, we consider $X=\ell^2(\Z)$, $T=H=\Delta^2+V$, where $|V(n)|\lesssim \left<n\right>^{-\beta}$ for some $\beta>0$, and let $A$ be defined as in \eqref{A}. We then establish the following regularity property of $H$ with respect to $A$:
\begin{lemma}\label{regularity of H}
{ Let $H=\Delta^2+V$, where $|V(n)|\lesssim \left<n\right>^{-\beta}$ with $\beta>1$ and let $A$ be defined as in \eqref{A}. Then, $H\in C^{[\beta]}(A)$.}
\end{lemma}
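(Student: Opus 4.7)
\medskip

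\noindent\textbf{Proposal.} Since $H = \Delta^2 + V$, Lemma \ref{Regularity lemma} reduces the problem to verifying that $\Delta^2 \in C^{[\beta]}(A)$ and $V \in C^{[\beta]}(A)$ separately. In fact, I plan to show the stronger statement $\Delta^2 \in C^k(A)$ for every $k \geq 1$, and then handle $V$ by an explicit inductive computation on iterated commutators; the loss of decay at each commutation is exactly one power of $\langle n \rangle$, which will match precisely against the decay hypothesis $|V(n)| \lesssim \langle n \rangle^{-\beta}$ up to the threshold $k = [\beta]$.

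For the free part $\Delta^2$, the natural move is to transfer everything to the torus $\T$ via the Fourier transform $\mcaF$ of \eqref{fourier transform}. Under $\mcaF$, the operator $\Delta^2$ is conjugated to multiplication by the smooth function $M(x)=(2-2\cos x)^2$, while the position operator $\mcaN$ becomes $i\tfrac{d}{dx}$, and $\mcaP,\mcaP^*$ become multiplication by the smooth functions $e^{ix}-1$ and $e^{-ix}-1$ respectively. Consequently $iA = \mcaN\mcaP - \mcaP^*\mcaN$ is conjugated (on $C^\infty(\T)$) to a first-order differential operator on $\T$ whose coefficients are trigonometric polynomials. The commutator of any such first-order operator with multiplication by a smooth periodic function is again a multiplication operator by a smooth periodic function, and iterating this one sees that $ad^k_A(\Delta^2)$ is, under $\mcaF$, multiplication by a $C^\infty(\T)$ function, hence bounded on $L^2(\T)$ and therefore bounded on $\ell^2(\Z)$. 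This yields $\Delta^2 \in C^k(A)$ for every $k\in\N^+$.

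For the potential $V$, I will work directly on the lattice. A direct calculation on finitely supported sequences (using $[\mcaN,V]=0$ and the identities $[V,\mcaP]\phi(n) = (V(n)-V(n+1))\phi(n+1)$, $[V,\mcaP^*]\phi(n) = (V(n)-V(n-1))\phi(n-1)$) shows that $ad^1_A(V) = [V, iA]$ is a tridiagonal operator with entries
\[
n\bigl(V(n)-V(n+1)\bigr) \quad \text{on the superdiagonal}, \qquad (n-1)\bigl(V(n-1)-V(n)\bigr) \quad \text{on the subdiagonal},
\]
each of size $O(\langle n\rangle^{1-\beta})$ thanks to $|V(n)|\lesssim \langle n\rangle^{-\beta}$. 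Proceeding inductively, I expect to prove that $ad^k_A(V)$ is a banded matrix supported within $|n-m|\leq k$ whose entries are bounded by $C_k\langle n\rangle^{k-\beta}$. The inductive step is immediate: commuting a banded operator of this shape with $iA = \mcaN\mcaP - \mcaP^*\mcaN$ multiplies entries by a factor bounded by $\langle n\rangle$ and shifts them by one position, while preserving the telescoping-type decay estimate on the coefficients. For $k = [\beta] \leq \beta$, the entries of $ad^{[\beta]}_A(V)$ are therefore uniformly bounded in $n$, and a finitely banded matrix with uniformly bounded entries is bounded on $\ell^2(\Z)$. Hence $V \in C^{[\beta]}(A)$, and combining with the free estimate concludes the proof.

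The main obstacle is the bookkeeping in the inductive step for $V$: one must confirm, at each commutation, that the new coefficients can still be written as finite linear combinations of expressions of the form $n^{a}\bigl(V(n+b)-V(n+c)\bigr)$ (or products thereof) with $a \leq k$, so that the bound $\langle n\rangle^{k-\beta}$ is preserved. Once this structural form is established and verified to be stable under the operations $\mcaN[\cdot,\mcaP] - [\cdot,\mcaP^*]\mcaN$, the boundedness of $ad^{[\beta]}_A(V)$ follows since $k-\beta \leq 0$ when $k=[\beta]$.
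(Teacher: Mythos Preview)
Your proposal is correct and parallels the paper's argument closely. For $\Delta^2$, the paper uses the algebraic identity $ad^1_{iA}(-\Delta)=-\Delta(4+\Delta)$ from \cite{BS99} and iterates to show that $ad^k_{iA}(\Delta^2)$ is a polynomial in $-\Delta$ of degree $k+2$, hence bounded; your Fourier-side argument is exactly the same computation viewed on $\T$. For $V$, the paper simply cites \cite[Proposition~5.1]{BS99} (which gives $V\in C^k(A)$ under the condition $V(n)\to 0$ and $|(\mcaP^k V)(n)|=O(|n|^{-k})$), whereas you supply the banded-matrix induction that underlies that result. One minor correction to your bookkeeping: the entries of $ad^k_A(V)$ are not specifically of the form $n^a(V(n+b)-V(n+c))$ but rather general finite linear combinations of terms $n^a V(n+b)$ with $a\leq k$ and $|b|\leq k$; the bound $O(\langle n\rangle^{k-\beta})$ and the conclusion are unaffected.
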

\begin{proof}
First, we note that \cite[Lemma 4.1]{BS99} establishes that $ad^{1}_{iA}(-\Delta)=-\Delta(4+\Delta)$. Based on this, we claim that $\Delta^2\in\C^{\infty}(A)$. To verify this, a direct calculation yields
$$ad^{1}_{iA}(\Delta^2)=(-\Delta)[ad^{1}_{iA}(-\Delta)]+[ad^{1}_{iA}(-\Delta)](-\Delta)=2\Delta^2(4+\Delta).$$
Thus, $\Delta^2\in C^1(A)$. By repeating a similar decomposition process, one can find that for any $k\in\N^{+}$, $ad^{k}_{iA}(\Delta^2)$ is a polynomial about $-\Delta$ of degree $2+k$. Consequently, $\Delta^2\in C^{k}(A)$ for all $k$, i.e., $\Delta^2\in C^{\infty}(A)$.

As for the potential $V$, \cite[Proposition 5.1]{BS99} proves that $V\in C^{k}(A)$ for some positive integer $k$ if $V(n)$ satisfies the following decay condition:
$$V(n)\rightarrow0\ {\rm and}\  |(\mcaP^{k}V)(n)|=O(|n|^{-k}),\quad|n|\rightarrow\infty.$$
Under our assumption on $V$, we conclude that $V\in C^{[\beta]}(A)$. Combining this with the result for $\Delta^2$ and applying Lemma \ref{Regularity lemma}, the proof is complete.
\end{proof}
\normalem

\end{document}